\newenvironment{itemize*}%
  {\vspace*{-2mm}
  \begin{itemize}%
    \setlength{\itemsep}{0.5pt}}%
  {\end{itemize}
  \vspace*{-2mm}}
\newenvironment{enumerate*}%
  {\vspace*{-3mm}
  \begin{enumerate}%
    \setlength{\itemsep}{0.5pt}}%
  {\end{enumerate}
  \vspace*{-2mm}}
\newenvironment{enumerate**}%
  {\begin{enumerate}%
    \setlength{\itemsep}{0.5pt}}%
  {\end{enumerate}
  \vspace*{-2mm}}
\theoremstyle{plain}
\newtheorem{theorem}{Theorem}
\newtheorem{proposition}{Proposition}[section]
\newtheorem{corollary}[proposition]{Corollary}
\newtheorem{lemma}[proposition]{Lemma}
\theoremstyle{remark}
\newtheorem{problem}{Problem}
\newtheorem{definition}[proposition]{Definition}
\newtheorem{example}[proposition]{Example}
\newtheorem{remark}[proposition]{Remark}
\newenvironment{continuance}[1]
  {\newcommand\continuanceref{\ref{#1}}\continuancex}
  {\endcontinuancex}
\newcommand{\Graph}{\mathcal{G}} 
\newcommand{\DAG}{\mathcal{D}} 
\newcommand{\ADMG}{\mathcal{A}} 
\newcommand{\DMAG}{\mathcal{M}} 
\newcommand{\InducedSubgraph}[2]{\Graph_{sub}(#1, #2)} 
\newcommand{\ADMGprojection}[2]{\sigma_{\text{ADMG}}(#1, #2)}
\newcommand{\DMAGprojection}[2]{\sigma_{\text{DMAG}}(#1, #2)}
\newcommand{\Vertices}{\mathbf{V}} 
\newcommand{\DirectedEdges}{\mathbf{E}_{\rightarrow}} 
\newcommand{\BidirectedEdges}{\mathbf{E}_{\leftrightarrow}} 
\newcommand{\VarIndices}{\mathbf{I}} 
\newcommand{\TimeIndices}{\mathbf{T}} 
\newcommand{\ObservedVertices}{\mathbf{O}} 
\newcommand{\LatentVertices}{\mathbf{L}} 
\newcommand{\Zbold}{\mathbf{Z}} 
\newcommand{\tailhead}{{\,\rightarrow\,}} 
\newcommand{\headtail}{{\,\leftarrow\,}} 
\newcommand{\headhead}{{\,\leftrightarrow\,}} 
\newcommand{\asthead}{{\ast\!\!\!\rightarrow}} 
\newcommand{\headast}{{\leftarrow\!\!\!\ast}} 
\newcommand{\astast}{{\ast\!{-}\!\ast}} 
\newcommand{\length}[1]{len(#1)} 
\newcommand{\weight}[1]{w(#1)} 
\newcommand{\weightset}[1]{\mathbf{w}(#1)} 
\newcommand{\parents}[2]{pa(#1, #2)} 
\newcommand{\children}[2]{ch(#1, #2)} 
\newcommand{\ancestors}[2]{an(#1, #2)} 
\newcommand{\descendants}[2]{de(#1, #2)} 
\newcommand{\spouses}[2]{sb(#1, #2)} 
\newcommand{\CanonicalDAG}[1]{\DAG_{c}(#1)} 
\newcommand{\CanonicaltsDAG}[1]{\DAG^{ts}_{c}(#1)} 
\newcommand{\VarIndicesObserved}{\VarIndices_{\ObservedVertices}} 
\newcommand{\VarIndicesUnobserved}{\VarIndices_{\LatentVertices}} 
\newcommand{\TimeIndicesObserved}{\TimeIndices_{\ObservedVertices}} 
\newcommand{\tsDMAG}[2]{\DMAG_{#1}(#2)} 
\newcommand{\tsADMG}[2]{\ADMG_{#1}(#2)} 
\newcommand{\ptsDAG}{p_{\ADMG}} 
\newcommand{\ptimewindow}{p} 
\newcommand{\pcutoff}{p^\prime} 
\newcommand{\LatentVerticesTemporally}{\LatentVertices^{temp}} 
\newcommand{\LatentVerticesUnobservable}{\LatentVertices^{unob}} 
\newcommand{\SummaryGraph}[1]{\mathcal{S}(#1)} 
\newcommand{\con}[1]{\mathrm{con}(#1)} 
\newcommand{\cG}{\mathcal{G}}
\newcommand{\cW}{\mathcal{W}}
\newcommand{\cC}{\mathcal{C}}
\newcommand{\cS}{\mathcal{S}}
\newcommand{\cM}{\mathcal{M}}
\newcommand{\w}{\mathbf{w}}
\renewcommand{\c}{\mathbf{c}}
\title{Projecting infinite time series graphs to finite marginal graphs using number theory}
\author[1]{Andreas Gerhardus\thanks{\texttt{andreas.gerhardus@dlr.de}}}
\author[2, 1]{Jonas Wahl\thanks{\texttt{wahl@tu-berlin.de}}}
\author[1]{Sofia Faltenbacher\thanks{\texttt{sofia.faltenbacher@dlr.de}}}
\author[2, 1]{Urmi Ninad\thanks{\texttt{urmi.ninad@tu-berlin.de}}}
\author[1, 2]{Jakob Runge\thanks{\texttt{jakob.runge@dlr.de}}}
\affil[1]{German Aerospace Center, Institute of Data Science}
\affil[2]{Technische Universität Berlin, Institute of Computer Engineering and Microelectronics}
\begin{document}

\maketitle

\begin{abstract}
In recent years, a growing number of method and application works have adapted and applied the causal-graphical-model framework to time series data. Many of these works employ time-resolved causal graphs that extend infinitely into the past and future and whose edges are repetitive in time, thereby reflecting the assumption of stationary causal relationships. However, most results and algorithms from the causal-graphical-model framework are not designed for infinite graphs. In this work, we develop a method for projecting infinite time series graphs with repetitive edges to marginal graphical models on a finite time window. These finite marginal graphs provide the answers to $m$-separation queries with respect to the infinite graph, a task that was previously unresolved. Moreover, we argue that these marginal graphs are useful for causal discovery and causal effect estimation in time series, effectively enabling to apply results developed for finite graphs to the infinite graphs. The projection procedure relies on finding common ancestors in the to-be-projected graph and is, by itself, not new. However, the projection procedure has not yet been algorithmically implemented for time series graphs since in these infinite graphs there can be infinite sets of paths that might give rise to common ancestors. We solve the search over these possibly infinite sets of paths by an intriguing combination of path-finding techniques for finite directed graphs and solution theory for linear Diophantine equations. By providing an algorithm that carries out the projection, our paper makes an important step towards a theoretically-grounded and method-agnostic generalization of a range of causal inference methods and results to time series.
\end{abstract}

\tableofcontents

\section{Introduction}
Many research questions, from the social and life sciences to the natural sciences and engineering, are inherently causal. \emph{Causal inference} provides the theoretical foundations and a variety of methods to combine statistical or machine learning models with domain knowledge in order to quantitatively answer causal questions based on experimental and/or observational data, see for example \citet{pearl2009causality}, \citet{imbens2015causal}, \citet{Spirtes2000}, \citet{peters2017elements} and \citet{hernan2020causal}. Since domain knowledge often exists in the form causal graphs that assert qualitative cause-and-effect relationships, the \emph{causal-graphical-model framework} \citep{pearl2009causality} has become increasingly popular during the last few decades.

By now, there is a vast body of literature on causal-graphical modeling. Broadly speaking, the framework subsumes the subfields \emph{causal discovery} and \emph{causal effect identification}. In causal discovery, see for example \citet{Spirtes2000} and \citet{peters2017elements}, the goal is to learn qualitative cause-and-effect relationships (that is, the causal graph) by leveraging appropriate enabling assumptions on the data-generating process. In causal effect identification, the goal is to predict the effect of \emph{interventions} \citep{pearl2009causality}, which are idealized abstractions of experimental manipulations of the system under study, by leveraging knowledge of (or assumptions on) the causal graph. There are diverse methods and approaches for this purpose, such as the famous \emph{backdoor criterion} \citep{pearl1993bayesian}, the \emph{generalized adjustment criterion} \citep{shpitser2010validity, perkovic2018complete}, the \emph{instrumental variables} approach \citep{sargan1958estimation,angrist2009mostly}, the \emph{$do$-calculus} \citep{pearl1995causal, huang2006pearls, shpitser2006identification, shpitser2008complete} and \emph{causal transportability} \citep{bareinboim2016causal}, to name a few. In recent years, \emph{causal representation learning}, see for example \citet{schoelkopf2021toward}, has emerged as another branch and gained increasing popularity in the machine learning community. In causal representation learning, the goal is to learn causally meaningful variables that can serve as the nodes of a causal graph at an appropriate level of abstraction.

As one of its major achievements, causal-graphical modeling does not necessarily require temporal information for telling apart cause and effect. In fact, most of the causal-graphical-model framework was originally developed without reference to time \citep{pearl2009causality}. However, as many research fields specifically concern causal questions about dynamic phenomena, such as in Earth Sciences \citep{runge2019inferring}, ecology \citep{runge2023modern} or neuroscience \citep{danks2023causal}, in recent years there is a growing interest in adapting the framework to time series. More specifically, in this paper we draw our motivation from adaptations of causal-graphical modeling to the discrete-time domain; see for example \citet{runge2023causal} and \citet{camps2023discovering} for recent reviews of this setting.

In the discrete-time setting, there are, broadly, two different approaches to causal-graphical modeling. The first approach uses time-collapsed graphs, also known as \emph{summary graphs}, which represent each component time series by a single vertex and summarize causal influences across all time lags by a single edge. \emph{Granger causality} \citep{granger1969investigating} uses this approach, and various works refined and extended Granger's work, see for example \citet[see the notion of \emph{causality graphs}]{dahlhaus2003causality}, \citet{eichler2007causal}, \citet{eichler2010granger}, \citet{eichler2010graphical}, and further works discussed in \citet{assaad2022survey}. This first approach is ideally suited to problem settings in which one is not interested in the specific time lags of the causal relationships. The second approach uses time-resolved graphs, which represent each time step of each component time series by a separate vertex and thus explicitly resolve the time lags of causal influences. Examples of works that employ this approach are \citet[see the notion of \emph{time series chain graphs}]{dahlhaus2003causality}, \citet{chu2008search}, \citet{hyvarinen2010estimation}, \citet{runge2019detecting}, \citet{runge2020discovering}, and \citet{thams2022identifying}. This second approach is ideally suited to problem settings in which the specific time lags of the causal relationships are of importance. As opposed to time-collapsed graphs, time-resolved graphs extend to the infinite past and future and thus have an infinite number of vertices. However, adopting the assumption of time-invariant qualitative causal relationships (often referred to as \emph{causal stationarity}), the edges of the infinite time-resolved graphs are repetitive in time. As a result, despite being infinite, causally stationary time-resolved graphs admit a finite description. Throughout this paper, unless explicitly stated otherwise, we only consider causally stationary time-resolved graphs.

Time-resolved graphs are useful for a number of reasons: 1) knowledge of time lags is crucial for a deeper process understanding, for example, in the case of time delays of atmospheric teleconnections \citep{runge2019inferring}, and is relevant for tasks such as climate model evaluation \citep{nowack2020causal}; 2) knowledge of specific time lags allows for a more parsimonious and low-dimensional representation as opposed to modeling all past influences up to some maximal time lag; 3) causally-informed forecasting models benefit from precise time lag information \citep{runge2015optimal}; and 4) the lag-structure can render particular causal effect queries identifiable.

This paper draws its motivation from the time-resolved approach to causal-graphical modeling. Since this setting is conceptually close to the non-temporal modeling framework, one can in principle hope to straightforwardly generalize the wealth of causal effect identification methods developed for non-time-series data. This task is much less straightforward in the time-collapsed approach, where it is harder to in the first place define and then interpret causal effects between time-collapsed nodes (see for example \citet{reiter2023formalising} for a recent work in this direction). However, for causally stationary time-resolved graphs one still faces the technical complication of having to deal with infinite graphs, whereas most causal effect identification methods are designed for finite graphs. Therefore, these methods still need specific modifications for making them applicable to infinite graphs.

In this paper, we provide a new approach that resolves this complication of having to deal with infinite graphs: A method for projecting infinite causally stationary time-resolved graphs to finite marginal graphs on arbitrary finite time windows. Since this projection preserves \emph{$m$-separations} \citep{richardson2002ancestral, richardson2003markov} (cf.~\citet{pearl1988} for the related notion of \emph{$d$-separations}) as well as causal ancestral relationships, one can equivalently check the graphical criteria of many causal effect identification methods on appropriate finite marginal graphs instead of on the infinite time-resolved graph itself. In particular, one can answer $m$-separation queries with respect to an infinite time-resolved graph by asking the same query for any of its finite marginal graphs for given finite time-window lengths that contain all vertices involved in the query. Figure~\ref{fig:introduction-figure} illustrates one example of an infinite time-resolved graphs together with one of its finite marginal graphs. Intuitively speaking, the projection method implicitly takes care of the infiniteness of the time-resolved graphs and thereby relieves downstream applications (such as methods for causal effect identification and answering $m$-separation queries) from having to deal with infinite graphs. By providing this reformulation, our paper makes an important step towards a theoretically-grounded and method-agnostic generalization of causal effect identification methods to time series.

\begin{figure}[tp]
    \centering
    \includegraphics[scale=0.25]{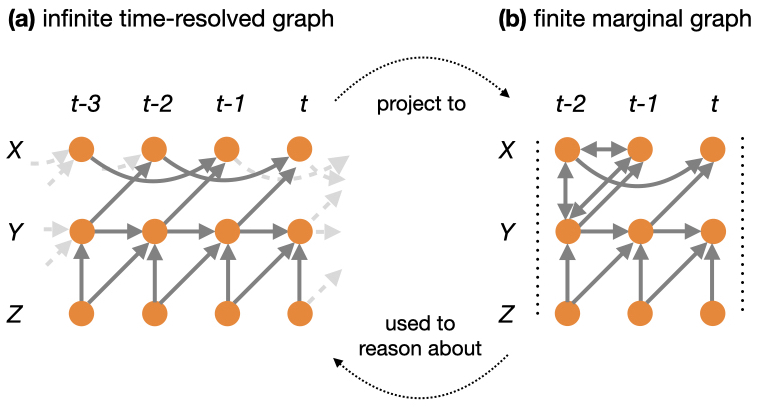}
    \caption{Part \textbf{(a)} shows an infinite time-resolved graph for a time series with three components $X$, $Y$ and $Z$. By the assumption of causal stationarity, the edges in this graph are repetitive in time, that is, the graph is invariant under shifts along its temporal dimension. Moreover, as the dashed light gray edges indicate, the graph extends infinitely both into the past and into the future. The contribution of this paper is the development of an algorithmic method that projects such infinite graphs to finite marginal graphs on arbitrary finite time windows. To illustrate, part \textbf{(b)} shows the finite marginal graph that results from projecting the infinite graph in part (a) to the time window $[t-2, t]$. Note that this finite marginal graph does not extend to the infinite past or future, as the vertical dashed lines indicate. Since the projection preserves relevant graphical properties, one can use the finite marginal graph in part (b) to reason about the infinite graph in part (a).}
    \label{fig:introduction-figure}
\end{figure}

Besides causal effect identification and $m$-separation queries, our projection method is also useful for causal discovery. The reason is that (equivalence classes of) finite marginal graphs of infinite time-resolved graphs are the natural targets of time-resolved time series causal discovery, see \citet{gerhardus2021characterization} for more details on this matter. Therefore, in order to obtain a conceptual understanding of the very targets of time-resolved causal discovery, one needs a method for constructing these finite marginal graphs---which our work provides.

As the projection procedure, we here employ the widely-used ADMG latent projection \citep{pearl1995theory} (see also for example \citet{richardson2023nested}), thereby giving rise to what we below call \emph{marginal time series ADMGs (marginal ts-ADMGs)}, see Def.~\ref{def:tsADMG-marginal} below. In Section~\ref{sec:generalize-to-DMAGs}, we then extend our results to the DMAG latent projection \citep{richardson2002ancestral, zhang2008causal}, which too is widely-used and gives rise to what we call \emph{marginal time series DMAGs (marginal ts-DMAGs)}, see Definition~\ref{def:tsDMAG} (which is adapted and generalized from \citet{gerhardus2021characterization}). While both of these projection procedures themselves are not new, their practical application to infinite time-resolved graphs is non-trivial and is, to the authors' knowledge, not yet solved in generality. The issue is that, when applied to infinite time-resolved graphs, the projection procedures require a search over a potentially infinite number of paths. In this paper, we show how to circumvent this issue by making use of the repetitive structure of the infinite time-resolved graphs in combination with the evaluation of a finite number of number-theoretic solvability problems. Thus, as an important point to note, our solution crucially relies on the assumption of causal stationarity.

\paragraph*{Related works}
The work of \citet{gerhardus2021characterization} already defines marginal ts-DMAGs and utilizes these finite graphs for the purpose of time-resolved time series causal discovery. This work also presents ts-DMAGs for several examples of infinite time-resolved graphs, but does not give a general method for their construction. The work of \citet{thams2022identifying} already considers finite marginal graphs obtained by the ADMG latent projection of infinite time series graphs and utilizes these finite graphs for the purpose of causal effect identification in time series. However, this work presents the finite marginal graphs of only a single infinite time-resolved graph (which, in addition, is of a certain special type that comes with significant simplifications) and it too does not give a general method for their construction. As opposed to these two works, we here consider a general class of infinite time-resolved graphs and derive an algorithmic method for constructing their finite marginals---both for the ADMG latent projection, see the main paper, and the DMAG latent projection, see Section~\ref{sec:generalize-to-DMAGs}.

\paragraph*{Structure and main results of this work}
In Section~\ref{sec:preliminaries}, we summarize the necessary preliminaries. In Section~\ref{sec:problem-formulation}, we first elaborate in more detail on the above-mentioned reasons for why finite marginal graphs of infinite time-resolved graphs are useful in the context of causal inference. We then formally define the finite marginal graphs obtained by the ADMG latent projection and reduce their construction to the search for common ancestors of pairs of vertices in certain infinite DAGs. This common-ancestor search is non-trivial because, in general, there is an infinite number of paths that might give rise to common ancestors. In Section~\ref{sec:refined-common-ancestor}, we first solve the common-ancestor search in a significantly simplified yet important special case and also show why the same strategy does not work in the general case. We then, for the general case, map the common-ancestor search to the number-theoretic problem of deciding whether at least one of a finite collection of linear Diophantine equations has a non-negative integer solution (\textbf{Theorem~\ref{cor.prob1}}). This result establishes an intriguing connection between graph theory and number theory, which might be of interest in its own right. Intuitively speaking, the mapping reformulates the problem of searching over the potentially infinite number of to-be-considered paths to a geometric intersection problem for still infinite but \emph{finite-dimensional} affine cones over non-negative integers. Deciding whether such cones intersect is then equivalent to deciding whether a certain linear Diophantine equation admits a non-negative integer solution. Next, building on well-established results from number theory, we provide a criterion that answers the resulting number-theoretic solvability problem in finite time (\textbf{Theorem~\ref{thm:our-linear-diophantine}}). Thereby, we obtain an algorithmic and provably correct finite-time solution to the task of constructing finite marginal graphs of infinite time-resolved graphs. As a corollary, we also present an upper bound on a finite time window to which one can restrict the infinite time-resolved graphs before projecting them to the finite marginal graphs (\textbf{Theorem~\ref{thm.upper-bound-main-theorem}}). This result provides a second solution to the problem of constructing finite marginal graphs, which appears conceptually simpler as it conceals the underlying number-theoretic problem but might computationally more expensive. In Section~\ref{sec:summary}, we present the conclusions. In Sections~\ref{sec:generalize-to-DMAGs} and \ref{sec:counterexamples}, we respectively extend our results to the finite marginal graphs obtained by the DMAG latent projection and present two examples that we omit from the main paper for brevity. In Sections~\ref{sec:proofs} and \ref{sec:pseudocode}, we provide the proofs of all theoretical claims and pseudocode for our number-theoretic solution to the projection task.

\section{Preliminaries}\label{sec:preliminaries}
In this section, we summarize graphical terminology and notation that we use and build upon throughout this work. Our notation takes inspiration from \citet{mooij2020constraint} and \citet{gerhardus2021characterization}, among others.

\subsection{Basic graphical concepts and notation}
A \emph{directed mixed graph (DMG)} is a triple $\Graph = (\Vertices, \DirectedEdges, \BidirectedEdges)$ where $\Vertices$ is the set of \emph{vertices} (also referred to as \emph{nodes}), $\DirectedEdges \subseteq \Vertices \times \Vertices$ is the set of \emph{directed edges}, and $\BidirectedEdges \subseteq \left(\Vertices \times \Vertices\right) \!/ \,\mathbb{Z}_2$ is the set of \emph{bidirected edges}. Here, the $\mathbb{Z}_2$ action identifies the bidirected edges $(i, j) \in \BidirectedEdges$ and $(j, i) \in \BidirectedEdges$ with each other. We denote a directed edge $(i, j) \in \DirectedEdges$ as $i \tailhead j$ or $j \headtail i$ and a bidirected edge $(i, j) \in \BidirectedEdges$ as $i \headhead j$ or $j \headhead i$. We use $i \asthead j$ resp.~$i \headast j$ as a wildcard for $i \tailhead j$ or $i \headhead j$ resp.~$i \headtail j$ or $i \headhead j$, and use $i \astast j$ as a wildcard for $i \asthead j$ or $i \headast j$. We say that two vertices $i, j \in \Vertices$ are \emph{adjacent} in a DMG $\Graph$ if there is an edge of any type between them, that is, if $i \astast j$ in $\Graph$. This definition allows \emph{self edges}, that is, edges of the form $i \tailhead i$ and $i \headhead i$. Moreover, the definition allows a pair of vertices $i, j \in \Vertices$ to be connected by more than one edge.

The \emph{induced subgraph} $\InducedSubgraph{\Vertices^\prime}{\Graph}$ of a DMG $\Graph = (\Vertices, \DirectedEdges, \BidirectedEdges)$ on a subset $\Vertices^\prime \subseteq \Vertices$ of the vertices is the DMG $\InducedSubgraph{\Vertices^\prime}{\Graph} = (\Vertices^\prime, \DirectedEdges^\prime, \BidirectedEdges^\prime)$ where $\DirectedEdges^\prime = \DirectedEdges \cap \left( \Vertices^\prime \times \Vertices^\prime \right)$ and $\BidirectedEdges^\prime = \BidirectedEdges \cap \left( \Vertices^\prime \times \Vertices^\prime \right) \!/ \,\mathbb{Z}_2$. Intuitively, $\InducedSubgraph{\Vertices^\prime}{\Graph}$ contains all and only those vertices in the subset $\Vertices^\prime$ as well as the edges between them.

A \emph{walk} is a finite ordered sequence $\pi = (\pi(1), e_1, \pi(2), e_2, \pi(3), \ldots, e_{n-1}, \pi(n))$ where $\pi(1), \ldots, \pi(n)$ are vertices and where for all $k = 1, \ldots, n-1$ the edge $e_k$ connects $\pi(k)$ and $\pi(k+1)$. We say that $\pi$ is \emph{between $\pi(1)$ and $\pi(n)$}. The integer $n$ is the \emph{length} of the walk $\pi$, which we also denote as $\length{\pi}$. We call the vertices $\pi(1)$ and $\pi(n)$ the \emph{endpoint vertices} on $\pi$ and call the vertices $\pi(k)$ with $1 < k < n$ the \emph{middle vertices} on $\pi$. The definition allows \emph{trivial} walks, that is, walks which consist of a single vertex and no edges. If all vertices $\pi(1), \ldots, \pi(n)$ are distinct, then $\pi$ is a \emph{path}. We can represent and specify a walk graphically, for example $v_1 \headtail v_2 \headhead v_3 \tailhead v_4$.

For $k$ and $l$ with $1 \leq k < l \leq \length{\pi}$, we let $\pi(k, l)$ denote the walk $\pi^\prime = (\pi(k), e_k, \pi(k+1), \ldots, e_{l-1}, \pi(l))$. We say that a walk (path) $\pi^\prime$ is a \emph{subwalk (subpath)} of a walk (path) $\pi$ if there are $k$ and $l$ with $1 \leq k < l \leq \length{\pi}$ such that $\pi^\prime = \pi(k, l)$. A subwalk (subpath) $\pi^\prime$ of a walk (path) $\pi$ is \emph{proper} if $\pi^\prime \neq \pi$.

A walk is \emph{into} its first vertex $\pi(1)$ if its first edge has an arrowhead at $\pi(1)$, that is, if $\pi(1, 2)$ is of the form $\pi(1) \headast \pi(2)$. If a walk is not into its first vertex, then it is \emph{out of} its first vertex. Similarly, a walk is into (resp.~out of) its last vertex $\pi(\length{\pi})$ if its last edge has (resp.~does not have) an arrowhead at $\pi(\length{\pi})$.

A middle vertex $\pi(k)$ on a walk $\pi$ is a \emph{collider on $\pi$} if the edges on $\pi$ meet head-to-head at $\pi(k)$, that is, if the subwalk $\pi(k-1, k+1)$ is of the form $\pi(k-1) \asthead \pi(k) \headast \pi(k+1)$. A middle vertex on a walk is a \emph{non-collider on $\pi$} if it is not a collider on $\pi$.

A walk is \emph{directed} if it is non-trivial and takes the form $v_1 \headtail v_2 \headtail\ldots \headtail v_n$ or $v_1 \tailhead v_2 \tailhead \ldots \tailhead v_n$. A non-trivial walk $\pi$ is a \emph{confounding walk} if, first, no middle vertex on $\pi$ is a collider and, second, $\pi$ is into both its endpoint vertices.

A walk is a \emph{cycle} if it is non-trivial and $\pi(1) = \pi(\length{\pi})$. A cycle is \emph{irreducible} if it does not have a proper subwalk that is also a cycle. Thus, a cycle is irreducible if and only if, first, no vertex other than $\pi(1)$ appears twice and, second, $\pi(1)$ appears not more than twice. A cycle that is not irreducible is \emph{reducible}. A walk is \emph{cycle-free} if it does not have a subwalk which is a cycle. A directed \emph{path} is always cycle-free. Two cycles $c_1$ and $c_2$ are \emph{equivalent to each other} if and only if $c_2$ can be obtained by i) revolving the vertices on $c_1$ or by ii) reversing the order of the vertices on $c_1$ or by iii) by a combination of these two operations.

If in a DMG $\Graph$ there is an edge $i \tailhead j$, then $i$ is a \emph{parent} of $j$ and $j$ is a \emph{child} of $i$. We denote the sets of parents and children of $i$ as, respectively, $\parents{i}{\Graph}$ and $\children{i}{\Graph}$. If there is a directed walk from $i$ to $j$ or $i = j$, then $i$ is an \emph{ancestor} of $j$ and $j$ is a \emph{descendant} of $i$. We denote the sets of ancestors and descendants of $i$ as, respectively, $\ancestors{i}{\Graph}$ and $\descendants{i}{\Graph}$. If $i \headhead j$, then $i$ and $j$ are \emph{spouses} of each other. We denote the set of spouses of $i$ as $\spouses{i}{\Graph}$.

A path $\pi$ between $i$ and $j$ is an \emph{inducing path} if, first, all its middle vertices are ancestors of $i$ or $j$ and, second, all its middle vertices are colliders on $\pi$.

An \emph{acyclic directed mixed graph (ADMG)}, which we here typically denote as $\ADMG$, is a DMG without directed cycles. A \emph{directed graph} is a DMG $(\Vertices, \DirectedEdges, \BidirectedEdges)$ without bidirected edges, that is, a DMG with $\BidirectedEdges = \emptyset$. For simplicity, we identify a directed graph with the pair $(\Vertices, \DirectedEdges)$. A \emph{directed acyclic graph (DAG)}, which we here typically denoted as $\DAG$, is an ADMG that is also a directed graph. An ADMG $\ADMG$ is \emph{ancestral} if it satisfies two conditions: First, $\ADMG$ does not have self edges. Second, $i \notin \spouses{j}{\ADMG}$ if $i \in \ancestors{j}{\ADMG}$. It follows that an ancestral ADMG has most one edge between any pair of vertices.

The $m$-separation criterion \citep{richardson2002ancestral} extends the $d$-separation criterion \citep{pearl1988} from DAGs to ADMGs: A path $\pi$ between the vertices $i$ and $j$ in an ADMG with vertex set $\Vertices$ is \emph{$m$-connecting} given a set $\Zbold \subseteq \Vertices \setminus \{i, j\}$ if, first, no non-collider on $\pi$ is in $\Zbold$ and, second, every collider on $\pi$ is an ancestor of some element in $\Zbold$. If a path is not $m$-connecting given $\Zbold$, then the path is \emph{$m$-blocked}. The vertices $i$ and $j$ are \emph{$m$-connected} given $\Zbold$ if there is at least one path between $i$ and $j$ that is $m$-connecting given $\Zbold$. If the vertices $i$ and $j$ are not $m$-connected given $\Zbold$, then they are \emph{$m$-separated} given $\Zbold$.

Let $\ADMG = (\Vertices, \DirectedEdges, \BidirectedEdges)$ be an ADMG without self edges. Then, its \emph{canonical DAG} $\CanonicalDAG{\ADMG}$ is the directed graph $(\Vertices^\prime, \DirectedEdges^\prime)$ where $\Vertices^\prime = \Vertices \cup \LatentVertices$ with $\LatentVertices = \{l_{ij} ~|~ (i, j) \in \BidirectedEdges\}$ and $\DirectedEdges^\prime = \DirectedEdges \cup \{(l_{ij}, i) ~|~ l_{ij} \in \LatentVertices\} \cup \{(l_{ij}, j) ~|~ l_{ij} \in \LatentVertices\}$ (cf.~Section 6.1 of \citet{richardson2002ancestral}).\footnote{The definition of canonical DAGs in \citet{richardson2002ancestral} requires the ADMG to be ancestral. However, we can use the very same definition also for non-ancestral ADMGs without self edges.} Intuitively, we obtain $\CanonicalDAG{\ADMG}$ from $\ADMG$ by replacing each bidirected edge $i \headhead j$ in $\ADMG$ with $i \headtail l_{ij} \tailhead j$. It follows that acyclicity of $\ADMG$ carries over to $\CanonicalDAG{\ADMG}$, which means that $\CanonicalDAG{\ADMG}$ is indeed a DAG.

\subsection{The ADMG latent projection}\label{sec:latent-projections}
In many applications, some of the vertices $\Vertices$ of a graph $\Graph$ serving as a graphical model might correspond to unobserved variables. We can formalize this situation by a partition $\Vertices = \ObservedVertices \,\dot{\cup}\, \LatentVertices$ of the vertices $\Vertices$ into the \emph{observed vertices $\ObservedVertices$} and the \emph{unobserved / latent vertices $\LatentVertices$}. If one is predominantly interested in reasoning about the observed vertices, then it is often convenient to project $\Graph$ to a \emph{marginal graph} on the observed vertices only---provided the projection preserves certain graphical properties of interest. In this paper, we consider the following widely-used projection procedure.

\begin{definition}[ADMG latent projection \citep{pearl1995theory}, see also for example \citet{richardson2023nested}]\label{def:ADMG-latent-projection}
Let $\ADMG$ be an ADMG with vertex set $\Vertices = \ObservedVertices \,\dot{\cup}\, \LatentVertices$ that has no self edges. Then, its \emph{marginal ADMG $\ADMGprojection{\ObservedVertices}{\ADMG}$ on $\ObservedVertices$} is the graph with vertex set $\ObservedVertices$ such that
\begin{enumerate}
    \item there is a directed edge $i \tailhead j$ in $\ADMGprojection{\ObservedVertices}{\ADMG}$ if and only if in $\ADMG$ there is at least one directed path from $i$ to $j$ such that all middle vertices on this path are in $\LatentVertices$, and
    \item there is a bidirected edge $i \headhead j$ in $\ADMGprojection{\ObservedVertices}{\ADMG}$ if and only if in $\ADMG$ there is at least one confounding path between $i$ and $j$ such that all middle vertices on this path are in $\LatentVertices$.
\end{enumerate}
\end{definition}

\noindent It follows that $i \in \ancestors{j}{\ADMGprojection{\ObservedVertices}{\ADMG}}$ if and only if $i, j \in \ObservedVertices$ and $i \in \ancestors{j}{\ADMG}$. The acyclicity of $\ADMG$ thus carries over to $\ADMGprojection{\ObservedVertices}{\ADMG}$, so that $\ADMGprojection{\ObservedVertices}{\ADMG}$ is an ADMG indeed. Moreover, the definitions imply that $\ADMGprojection{\ObservedVertices}{\ADMG}$ does not have self edges. There can be more than one edge between a pair of vertices in $\ADMGprojection{\ObservedVertices}{\ADMG}$, namely $i \headhead j$ plus $i \tailhead j$ or $i \headtail j$. Thus, in particular, the marginal ADMG $\ADMGprojection{\ObservedVertices}{\ADMG}$ is not necessarily ancestral. Two observed vertices $i$ and $j$ are $m$-separated given $\Zbold$ in $\ADMG$ if and only if $i$ and $j$ are $m$-separated given $\Zbold$ in $\ADMGprojection{\ObservedVertices}{\ADMG}$, see Proposition 1 in \citet{richardson2023nested}.\footnote{Proposition 1 in \citet{richardson2023nested} only applies to ADMG latent projections of DAGs rather than of ADMGs without self edges, but the proof in \citet{richardson2023nested} also works for the latter more general case.}

In Section~\ref{sec:generalize-to-DMAGs}, we further consider the DMAG latent projection \citep{richardson2002ancestral, zhang2008causal}, which is a different widely-used projection procedure. We also show how our results adapt to that case.

\subsection{Causal graphical time series models}\label{sec:background-time-series-models}
This paper draws its motivation from causally stationary structural vector autoregressive processes with acyclic contemporaneous interactions (see for example \citet{malinsky2018causal} and \citet{gerhardus2021characterization} for formal definitions). We employ these potentially multivariate and potentially non-linear $\mathbb{Z}$-indexed stochastic processes with causal meaning by considering them as structural causal models \citep{bollen1989structural, pearl2009causality, peters2017elements}. Moreover, we allow for dependence between the noise variables (also known as innovation terms). The property of causal stationarity then requires that both the qualitative cause-and-effect relationships as well as the qualitative dependence structure between the noise variables are invariant in time.

In particular, we are interested in the causal graphs (see for example \citet{pearl2009causality}) of such processes. These causal graphs represent the processes' qualitative cause-and-effect relationships by directed edges and non-zero dependencies between the noise variables of the processes by bidirected edges, and they have the following three special properties:
\begin{enumerate}
\item Since every vertex $(i, s) \in \Vertices$ corresponds to a particular time step $s$ of a particular component time series $X^i$, the vertex set factorizes as $\Vertices = \VarIndices \times \TimeIndices$ where $\VarIndices$ is the \emph{variable index set} and $\TimeIndices \subseteq \mathbb{Z}$ the \emph{time index set}. Thus, using the terminology of \citet{gerhardus2021characterization}, the causal graphs have \emph{time series structure}. The \emph{lag} of an edge $(i, s) \astast (j, u)$ is the non-negative integer $|u - s|$. An edge is \emph{lagged} if its lag is greater or equal than one, else it is \emph{contemporaneous}. We sometimes write $X^i_s$ instead of $(i, s)$ for clarity. 
\item Since causation cannot go back in time, the directed edges do not point back in time. That is, $(i, s) \tailhead (j, u)$ only if $s \leq u$. Thus, using the terminology of \citet{gerhardus2021characterization}, the causal graphs are \emph{time ordered}.\footnote{Contemporaneous edges are \emph{not} in conflict with the intuitive notion of time order: A cause in reality still precedes its effect, but the true time difference can be smaller than the observed time resolution.}
\item Due to causal stationarity, the edges are repetitive in time. Thus, using the terminology of \citet{gerhardus2021characterization}, the causal graphs have \emph{repeating edges}.
\end{enumerate}

\noindent Consequently, the causal graphs of interest are of the following type.

\begin{definition}[Time series ADMG, generalizing Definition 3.4 of \citet{gerhardus2021characterization}]\label{def:tsADMG-infinite}
A \emph{time series ADMG (ts-ADMG)} is an ADMG with time series structure (that is, $\Vertices = \VarIndices \times \TimeIndices$), that has time index set $\TimeIndices = \mathbb{Z}$, that is time ordered, and that has repeating edges.
\end{definition}

\begin{figure}[tb]
    \centering
    \includegraphics[scale=0.25]{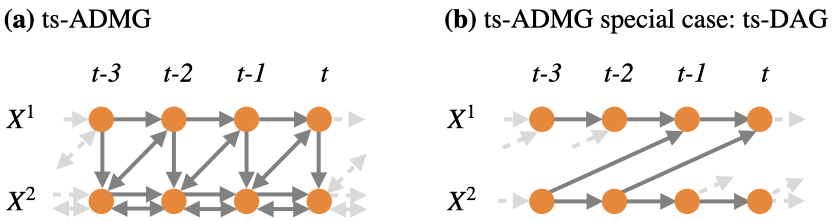}
    \caption{\textbf{(a)} Example of a ts-ADMG with variable index set $\VarIndices = \{1, 2\}$. \textbf{(b)} Example of a ts-ADMG that is also a ts-DAG, defined by the absence of bidirected edges.}
    \label{fig:tsADMG}
\end{figure}

\noindent Figure~\ref{fig:tsADMG} shows two ts-ADMGs for illustration. \emph{Time series DAGs (ts-DAGs)} \citep{gerhardus2021characterization} are ts-ADMGs without bidirected edges, see part (b) of Figure~\ref{fig:tsADMG} for an example. The absence of bidirected edges corresponds to the assumption of independent noise variables. In the literature, ts-DAGs are also known as \emph{time series chain graphs} \citep{dahlhaus2003causality}, \emph{time series graphs} \citep{runge2012escaping} and \emph{full time graphs} \citep{peters2017elements}. Throughout this paper, unless explicitly stated otherwise, we only consider ts-ADMGs that do not have self edges.

To stress the repetitive edge structure of ts-ADMGs, we often specify the time index of a vertex in relation to an arbitrary reference time step $t$, that is, we often write $(i, t-\tau)$ instead of, say, $(i, s)$. Let $\DirectedEdges^t = \{(i, t-\tau) \tailhead (j, t) \, \in \, \DirectedEdges \} \subsetneq \DirectedEdges$ be the subset of directed edges pointing into a vertex at time $t$, and let $\BidirectedEdges^t = \{(i, t-\tau) \headhead (j, t) \, \in \, \BidirectedEdges~|~ \tau \geq 0 \} \subsetneq \BidirectedEdges$ be the subset of bidirected edges between a vertex at time $t$ and a vertex at or before $t$. Then, the triple $(\VarIndices, \DirectedEdges^t, \BidirectedEdges^t)$ uniquely specifies a ts-ADMG.

Throughout this paper, unless explicitly stated otherwise, we impose two mild conditions on all ts-ADMGs: First, we require the variable index set $\VarIndices$ to be finite. On the level of the modeled time series processes, this requirement restricts to processes with finitely many component time series. Second, we require both $\DirectedEdges^t$ and $\BidirectedEdges^t$ to be finite sets (equivalently, we require all vertices to have finite in-degree). Given a finite variable index set, this second requirement is equivalent to the \emph{maximal lag $\ptsDAG$} defined as $\ptsDAG = \sup \, \{ |\tau_i - \tau_j| ~|~ \text{$(i,t -\tau_i) \astast (j, t-\tau_j)$ in $\Graph$}\} = \sup \, \{ |\tau_i| ~|~ \text{$(i,t -\tau_i) \asthead (j, t)$ in $\Graph$}\}$ being finite. On the level of the modeled time series processes, the second requirement thus restricts to processes of finite order.

The \emph{weight} $\weight{\pi}$ of a walk $\pi = ((i_1, s_1) \astast (i_2, s_2) \astast \ldots \astast (i_n, s_n))$ is the non-negative integer $|s_1 - s_n|$. If $\pi$ is of the form $\pi = ((i_1, s_1) \headtail (i_2, s_2) \headtail \ldots \headtail (i_n, s_n))$, then its weight $\weight{\pi} = s_1 - s_n$ equals the sum $\weight{\pi} = \sum_{k = 1}^{n-1} s_{k} - s_{k+1}$ of the lags $s_k - s_{k+1}$ of its edges; similarly for walks that are directed from $(i_1, s_1)$ to $(i_n, s_n)$. If $\pi_1$ is a directed walk from $(i, s_i)$ to $(j, s_j)$ and $\pi_2$ is a directed walk from $(j, s_j)$ to $(k, s_k)$, then $\weight{\pi_1} + \weight{\pi_2} = \weight{\pi}$ where $\pi$ is the directed walk from $(i, s_i)$ to $(k, s_k)$ obtained by appending $\pi_2$ to $\pi_1$ at their common vertex $(j, s_j)$.

\section{Finite marginal time series graphs}\label{sec:problem-formulation}
In this section, we motivate, define and start to approach the projection of ts-ADMGs to finite marginal graphs. To begin, Section~\ref{sec:projection-motivation} explains why the finite marginal graphs are useful for answering $m$-separation queries in the ts-ADMGs as well as for causal discovery and causal effect estimation in time series. Section~\ref{sec:projection-definition} then follows up with a formal definition of the finite marginal graphs, and Section~\ref{sec:reduction-past-confounding-paths} reduces the involved projection to the search for common ancestors in ts-DAGs.

\subsection{Motivation}\label{sec:projection-motivation}
When interpreted as a causal graph, a ts-ADMG entails various claims about the associated multivariate structural time series process.

An important type of claims are \textbf{independencies corresponding to m-separations} \citep{richardson2002ancestral, richardson2003markov}. For a \emph{finite} ADMG $\ADMG$, the \emph{causal Markov condition} \citep{spirtes2000causation} says that an $m$-separation $\mathbf{X} \perp\!\!\!\perp_{\ADMG} \mathbf{Y} ~\vert~ \mathbf{Z}$ in the graph $\ADMG$ implies the corresponding independence $\mathbf{X} \perp\!\!\!\perp \mathbf{Y} ~\vert~ \mathbf{Z}$ in all associated probability distributions \citep{verma1990causal, geiger1990identifying, richardson2003markov}. As opposed to that, for a ts-ADMG $\ADMG$, which is an \emph{infinite} graph, the same implication does not immediately follow: As an additional complication of the time series setting, it is non-trivial to say whether a given structural vector autoregressive process (cf.~first paragraph of Section~\ref{sec:background-time-series-models}) specifies a well-defined probability distribution---in the terminology of \citet{bongers2018causal}, whether the process admits a \emph{solution}---and what the properties of such solutions are. Thus, many works \emph{assume} the existence of a solution with the desired properties, see for example \citet{entner2010causal} and \citet{malinsky2018causal} in the context of causal discovery. According to \citet[Theorem 3.3. combined with Definition 2.1 and Example 2.2]{dahlhaus2003causality}, the causal Markov condition provably holds for the special case of stationary linear vector autoregressive processes with Gaussian innovation terms, see also  \citet[Theorem 1]{thams2022identifying}. What is important for our work here, independent of how one argues for the causal Markov condition, one still deals with the task of asserting $m$-separations $\mathbf{X} \perp\!\!\!\perp_{\ADMG} \mathbf{Y} ~\vert~ \mathbf{Z}$ in an infinite ts-ADMG $\ADMG$. To make this assertion, one must assert that in $\ADMG$ there is no path between $\mathbf{X}$ and $\mathbf{Y}$ that is active given $\mathbf{Z}$. However, as the ts-ADMG is infinite, there might be an infinite number of paths that could potentially be active. It is thus a non-trivial task to decide whether a given $m$-separation holds in a given ts-ADMG, and the authors are not aware of an existing general solution to it (also not in the special case of ts-DAGs). This task was the authors' original motivation for the presented study.

In this paper, we solve this task as follows: We develop an algorithm that performs the ADMG projection of (infinite) ts-ADMGs to finite marginal ADMGs on a finite time window $\TimeIndicesObserved = \{t-\tau ~|~ 0 \leq \tau \leq \ptimewindow\} \subsetneq \TimeIndices = \mathbb{Z}$ where $0 \leq \ptimewindow < \infty$. This projection algorithm implicitly solves the $m$-separation task because, if all vertices in $\mathbf{X} \cup \mathbf{Y} \cup \mathbf{Z}$ are within the time window $\TimeIndicesObserved$, then the $m$-separation $\mathbf{X} \perp\!\!\!\perp_{\ADMG} \mathbf{Y} ~\vert~ \mathbf{Z}$ holds in the (infinite) ts-ADMG if and only if it holds in the finite marginal ADMG. Of course, this approach merely shifts the difficulty to an equally non-trivial task: constructing the finite marginal ADMGs.

The ability to decide about $m$-separations is also necessary for causal reasoning. For example, most methods for \textbf{causal effect identification} in ADMGs---such as the (generalized) backdoor criterion \citep{pearl1993bayesian, pearl2009causality, maathuis2015generalized}, the ID-algorithm \citep{tian2002general, shpitser2006identification, shpitser2006identification_2, huang2006pearls} or graphical criteria to choose optimal adjustment sets \citep{runge2021necessary}---require the evaluation of $m$-separation statements in specific subgraphs of the causal graph. In order to apply these methods to ts-ADMGs with the goal of identifying time-resolved causal effects in structural time series processes,\footnote{Recall that the alternative approach of causal effect identification based on time-collapsed graphs is conceptually less straightforward and typically has less identification power.} one first needs a solution for evaluating $m$-separations in certain (still infinite) subgraphs of ts-ADMGs. More generally speaking, most of the graphical-model based causal inference framework applies to finite graphs, whereas specific modifications might be necessary for application to the (infinite) ts-ADMGs. Our approach of projecting ts-ADMGs to finite marginal ADMGs solves the graphical part of this problem, since one can equivalently check the relevant graphical criteria in the finite marginal ADMGs instead of the ts-ADMG. Therefore, our results constitute one step towards making large parts of the causal-graphical-model literature directly applicable to time series. The second step towards this goal, which is independent of our contribution and in general still open (cf.~the second paragraphs in the current subsection), is to more generally understand and prove in which cases the causal Markov condition holds for structural time series process.

\citet{thams2022identifying} is an example of a work that already uses finite marginals of an infinite time series graph for the purpose of causal effect identification. Specifically, that work uses finite ADMG latent projections of a ts-DAG (not a ts-ADMG) in the context of instrumental variable regression \citep{bowden1990instrumental} for time series. However, that work i) gives the finite marginals of only \emph{one} specific ts-DAG (which, in addition, is of the restricted special type discussed in Section~\ref{sec:special-case-all-lag-$1$-auto}) and ii) presents these finite marginal in an ad-hoc way. Contrary to that, in our paper, we i) consider general ts-ADMGs and ii) present a provably correct algorithm that constructs the finite marginal graphs.

Moreover, when using \textbf{causal discovery} approaches to learn (Markov equivalence classes of) ts-ADMGs from data, in practice, one is always restricted to a finite number of time steps. The natural targets of time-resolved time series causal discovery thus are (equivalence classes of) finite marginals of ts-ADMGs or ts-DAGs on finite time windows. In these finite marginal graphs, the time steps before the considered time window inevitably act as confounders.\footnote{To avoid this confounding by past time steps, one needs stronger assumptions. For example, as the causal discovery algorithms PCMCI \citep{runge2019detecting} and PCMCI$^+$ \citep{runge2020discovering} use, the assumption that no component time series are unobserved altogether (such that, effectively, one deals with a ts-DAG) in combination with a known or assumed upper bound on the maximal lag $\ptsDAG$ of that ts-DAG.} Indeed, the causal discovery algorithm tsFCI \citep{entner2010causal} aims to infer equivalence classes of finite marginal graphs that arise as DMAG projections of ts-DAGs, see \citet{gerhardus2021characterization} for a detailed explanation. Similarly, the causal discovery algorithms SVAR-FCI \citep{malinsky2018causal} and LPCMCI \citep{gerhardus2020high} aim to infer equivalence classes of subgraphs of these marginal DMAGs. A method for constructing finite marginal DMAGs from a given ts-ADMG or ts-DAG is, thus, needed to formally understand the target graphs of such time series causal discovery algorithms, which is the basis for an analysis of these algorithms. Further, with the ability to construct finite marginal DMAGs of ts-ADMGs one can even improve on the identification power of these state-of-the-art causal discovery algorithms, see Algorithm 1 in \citet{gerhardus2021characterization}. While here we only consider finite marginal ADMGs, in Section~\ref{sec:generalize-to-DMAGs} we extend our results to finite marginal DMAGs.

\subsection{Definition of marginal ADMGs}\label{sec:projection-definition}
The considerations in Section~\ref{sec:projection-motivation} motivate us to consider the ADMG projections of (infinite) ts-ADMGs to finite time windows $\TimeIndicesObserved = \{t-\tau ~|~ 0 \leq \tau \leq \ptimewindow\}$. In these projections, all vertices outside the observed time window $\TimeIndicesObserved$ are treated as unobserved. Hence, adopting the terminology of \citet{gerhardus2021characterization}, we call a vertex $(i, t-\tau)$ \emph{temporally observed} if $t-\tau \in \TimeIndicesObserved$ and else we call it \emph{temporally unobserved}.

In many applications where a ts-ADMG serves as a causal graphical model for a multivariate structural time series process, some of the component time series of the process might be unobserved altogether. We formalize this situation by introducing a partition $\VarIndices = \VarIndicesObserved \,\dot{\cup}\, \VarIndicesUnobserved$ of the ts-ADMG's variable index set $\VarIndices$ into the indices $\VarIndicesObserved$ of observed component time series and the indices $\VarIndicesUnobserved$ of unobserved component time series. Following \citet{gerhardus2021characterization}, we say that the component time series $X^i$ with $i \in \VarIndicesObserved$ and all corresponding vertices $(i, t-\tau)$ are \emph{observable}, whereas the component time series $X^i$ with $i \in \VarIndicesUnobserved$ and all corresponding vertices $(i, t-\tau)$ are \emph{unobservable}. For intuition, we often write $O^i$ (respl~$L^i$) instead of $X^i$ if $X^i$ is observable (resp.~unobservable). We require the set $\VarIndicesObserved$ to be non-empty, because else there would be no observed vertices, whereas $\VarIndicesUnobserved$ can but does not need to be empty.

Putting together these notions, the sets of observed and unobserved vertices respectively are $\ObservedVertices = \VarIndicesObserved \times \TimeIndicesObserved$ and $\LatentVertices = \Vertices \setminus \ObservedVertices = \left(\VarIndices \times \TimeIndices\right) \setminus \left(\VarIndicesObserved \times \TimeIndicesObserved\right)$. That is, a vertex is observed if and only if it is observable and temporally observed. We thus arrive at the following definition.

\begin{definition}[Marginal time series ADMG, adapting Definition 3.6 of \citet{gerhardus2021characterization}]\label{def:tsADMG-marginal}
Let $\ADMG$ be a ts-ADMG with variable index set $\VarIndices$, let $\VarIndicesObserved \subseteq \VarIndices$ be non-empty, let $\TimeIndicesObserved$ be $\TimeIndicesObserved = \{t-\tau ~|~ 0 \leq \tau \leq \ptimewindow\}$ and let $\ObservedVertices = \VarIndicesObserved \times \TimeIndicesObserved$. Then, its \emph{marginal time series ADMG (marginal ts-ADMG) $\tsADMG{\ObservedVertices}{\ADMG}$ on $\ObservedVertices$} is the ADMG $\ADMGprojection{\ObservedVertices}{\ADMG}$. We call the non-negative integer $\ptimewindow$ the \emph{observed time window length}.
\end{definition}

\begin{remark}
To avoid confusion between ts-ADMGs (which are infinite graphs, see Definition~\ref{def:tsADMG-infinite}) and marginal ts-ADMGs (which are finite graphs, see Definition~\ref{def:tsADMG-marginal}), we often attach the attribute ``infinite'' to the former (``infinite ts-ADMG'') and the attribute ``finite'' to the latter (``finite marginal ts-ADMG''). To avoid confusion, we stress that the \emph{number of temporally observed time steps} is $p+1$; for example, the observed time window $[t, t]$ has the observed time window length $p=0$ and $1 = p+1$ temporally observed time steps.
\end{remark}

\begin{figure}[tb]
    \centering
    \includegraphics[scale=0.25]{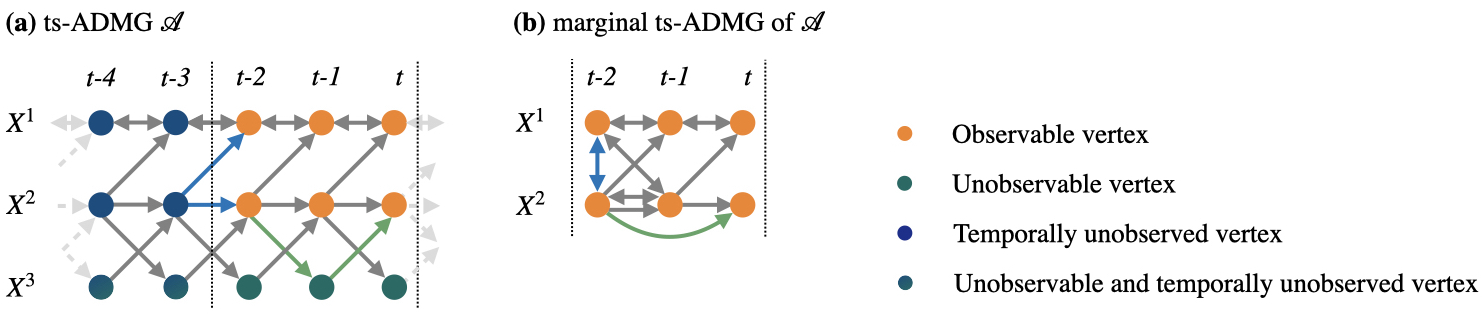}
    \caption{\textbf{(a)} An infinite ts-ADMG $\ADMG$. \textbf{(b)} The finite marginal ts-ADMG $\tsADMG{\ObservedVertices}{\ADMG}$ for $\VarIndicesObserved = \{1, 2\}$ and $\ptimewindow = 2$. The blue-colored path $X^1_{t-2} \headhead X^2_{t-3} \tailhead X^2_{t-2}$ (resp.~the green-colored path $X^2_{t-2} \tailhead X^3_{t-1} \tailhead X^2_{t}$) in $\ADMG$ induces the blue-colored edge $X^1_{t-2} \headhead X^2_{t-2}$ (resp.~the green-colored edge $X^2_{t-2} \tailhead X^2_t$) in $\tsADMG{\ObservedVertices}{\ADMG}$.}
    \label{fig:example-marginal-tsADMG}
\end{figure}

\noindent Figure~\ref{fig:example-marginal-tsADMG} shows an example of a ts-ADMG and a corresponding finite marginal ts-ADMG for illustration. We are now ready to formally state the goal and contribution of this paper.

\begin{problem}\label{problem:eventual-goal}
Develop an algorithm that, for a given triple of
\begin{itemize}
\item an arbitrary infinite ts-ADMG $\ADMG$,
\item a given non-empty set $\VarIndicesObserved$ of the observable component time series' variable indices and
\item a given length $\ptimewindow$ of the observed time window,
\end{itemize}
provably determines the finite marginal ts-ADMG $\tsADMG{\ObservedVertices}{\ADMG}$ in finite time. Here, $\ADMG$ is only subject to the following requirements: absence of self edges, finiteness of its variable index set $|\VarIndices| < \infty$ and finiteness of its maximal lag $\ptsDAG < \infty$.
\end{problem}

\begin{remark}
For a given triple $(\ADMG, \VarIndicesObserved, \ptimewindow)$ it is sometimes possible to manually find $\tsADMG{\ObservedVertices}{\ADMG}$ by ``looking at'' the paths in $\ADMG$. However, Problem~\ref{problem:eventual-goal} asks for a general algorithmic procedure that works for every choice of $(\ADMG, \VarIndicesObserved, \ptimewindow)$. We stress that we do not impose any restrictions other than those stated. In particular, we do not restrict $\ptimewindow$ in relation to the maximal lag $\ptsDAG$ of the ts-ADMG. Moreover, we do not impose connectivity assumptions on the ts-ADMG $\ADMG$. In particular, every component time series (including the unobserved component time series) is allowed but not required to be auto-dependent at any time lag. To the author's knowledge, Problem~\ref{problem:eventual-goal} remained unsolved prior to our work.
\end{remark}

Problem~\ref{problem:eventual-goal} is non-trivial because the set $\LatentVertices = \left(\VarIndices \times \TimeIndices\right) \setminus \left(\VarIndicesObserved \times \TimeIndicesObserved\right)$ of latent vertices includes the set $\LatentVerticesTemporally = \VarIndices \times \left\{(-\infty, t-\ptimewindow-1] \cup [t+1, \infty)\right\}$ of temporally unobserved vertices and, hence, is infinite. Therefore, for any given pair of observed vertices $(i, t-\tau_i)$ and $(j, t-\tau_j)$ there might be infinitely many paths in the infinite ts-ADMG $\ADMG$ that could potentially induce an edge $(i, t-\tau_i) \astast (j, t-\tau_j)$ in the finite marginal ts-ADMG $\tsADMG{\ObservedVertices}{\ADMG}$. As we will see below, the way around this complication is the repeating edges property of ts-ADMGs. This property allows to effectively restrict to a finite search space when combined with an evaluation of number-theoretic solvability problems.

\subsection{Reduction to common-ancestor search}\label{sec:reduction-past-confounding-paths}
In this section, we identify the missing ingredient for solving Problem~\ref{problem:eventual-goal} to be an algorithm for the exhaustive search of common ancestors in infinite ts-ADMGs. To this end, we reduce the marginal ts-ADMG projection in three steps. First, in Section~\ref{subsec:tsADMG-to-tsDAG}, we reduce the marginal ts-ADMG projection task for infinite ts-ADMGs to the projection task for the simpler infinite ts-DAGs. Second, in Section~\ref{subsec:ADMGtosimpleADMG}, we reduce the marginal ts-ADMG projection task for infinite ts-DAGs with arbitrary subsets of unobservable component time series to the projection task for infinite ts-DAGs without unobservable components. We refer to the map from an infinite ts-DAGs without unobservable component time series to its marginal ts-ADMG projection as the \emph{simple marginal ts-ADMG} projection. Third, in Section~\ref{sec:reduction-to-common-ancestor-search}, we reduce the simple marginal ts-ADMG projection task to the exhaustive search for common ancestors in infinite ts-DAGs. Figure~\ref{fig:reduction-steps} illustrates these reduction steps. We conclude with brief remarks on extensions in Section~\ref{subsec:projection-generalization}.

\begin{figure}[tb]
    \centering
    \includegraphics[scale=0.25]{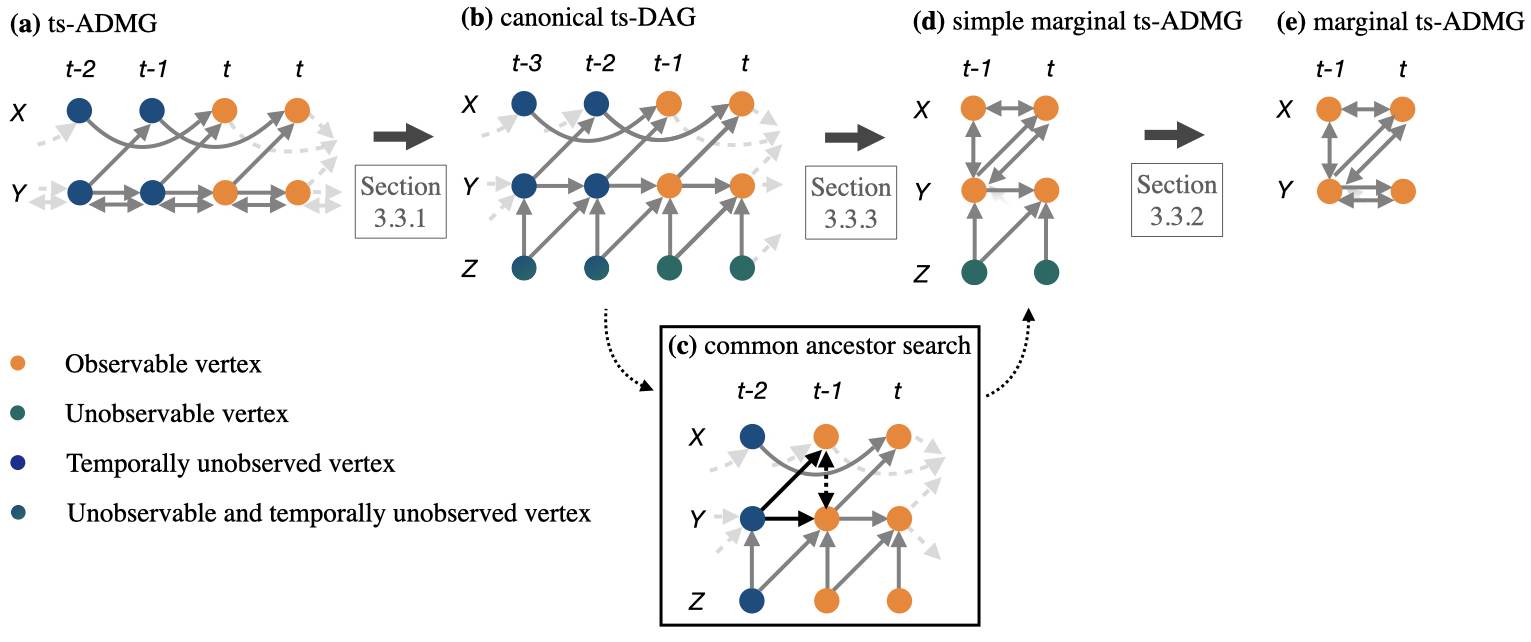}
    \caption{Illustration of the stepwise approach to go from an infinite ts-ADMG $\ADMG$ in part \textbf{(a)} to the finite marginal ts-ADMG $\tsADMG{\ObservedVertices}{\ADMG}$ in part \textbf{(e)}. First, see part \textbf{(b)} and Section~\ref{subsec:tsADMG-to-tsDAG}, we determine the infinite canonical ts-DAG $\CanonicaltsDAG{\ADMG}$ of the infinite ts-ADMG $\ADMG$. Second, see part \textbf{(d)} and Section~\ref{sec:reduction-to-common-ancestor-search}, we determine the simple finite marginal ts-ADMG $\tsADMG{\ObservedVertices^\prime}{\CanonicaltsDAG{\ADMG}}$ of the canonical ts-DAG $\CanonicaltsDAG{\ADMG}$. This steps involves a common-ancestor search in the canonical ts-DAG $\CanonicaltsDAG{\ADMG}$, see part \textbf{(c)}, which is the topic of Section~\ref{sec:refined-common-ancestor}. Third, see Section~\ref{subsec:ADMGtosimpleADMG}, we determine the finite marginal ts-ADMG $\tsADMG{\ObservedVertices}{\ADMG}$ from the simple marginal ts-ADMG $\tsADMG{\ObservedVertices^\prime}{\CanonicaltsDAG{\ADMG}}$ according to the equality of graphs $\tsADMG{\ObservedVertices}{\ADMG} = \ADMGprojection{\ObservedVertices}{\tsADMG{\ObservedVertices^\prime}{\CanonicaltsDAG{\ADMG}}}$.
    }
    \label{fig:reduction-steps}
\end{figure}

\subsubsection{Reduction to the marginal ts-ADMG projection of infinite ts-DAGs}\label{subsec:tsADMG-to-tsDAG}
To replace infinite ts-ADMGs with infinite ts-DAGs, we employ the following definition.

\begin{definition}[Canonical time series DAG, adapted from Definition 4.13 in \citet{gerhardus2021characterization}]\label{def:canonical-tsDAG}
Let $\ADMG = (\VarIndices \times \mathbb Z, \DirectedEdges, \BidirectedEdges)$ be a ts-ADMG. The \emph{canonical time series DAG (canonical ts-DAG) $\CanonicaltsDAG{\ADMG}$ of $\ADMG$} is the ts-DAG $\CanonicaltsDAG{\ADMG} = (\VarIndices^{ca}\times \mathbb Z, \DirectedEdges^{ca})$ where
\begin{itemize}
\item $\VarIndices^{ca} = \VarIndices \cup \mathbf{J}$ with $\mathbf{J} = \{(i, j, \tau)~\vert~ \text{$X^{i}_{t-\tau} \headhead X^j_t$ in $\ADMG$ with $\tau > 0$ or ($i < j$ and $\tau = 0$)}\}$ and
\item $\DirectedEdges^{ca} = \DirectedEdges \cup \{X^{(i, j, \tau)}_{s-\tau} \tailhead X^j_s ~\vert~ s \in \mathbb Z, (i, j, \tau)\in \mathbf J\} \cup \{X^{(i, j, \tau)}_{s} \tailhead X^i_s ~\vert~ s \in \mathbb Z, (i, j, \tau)\in \mathbf J\}$.
\end{itemize}
\end{definition}

\begin{figure}[tb]
    \centering
    \includegraphics[scale=0.25]{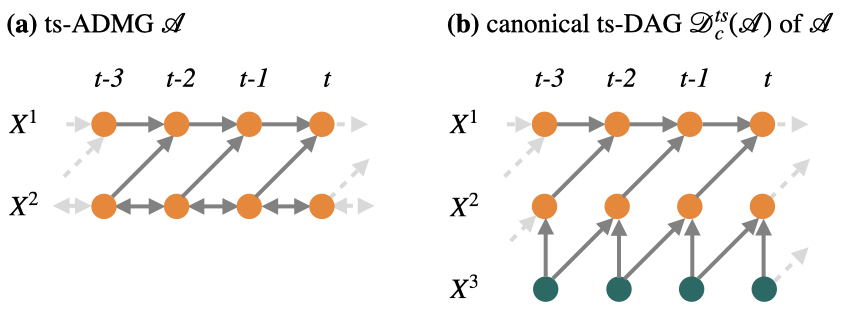}
    \caption{\textbf{(a)} A ts-ADMG. \textbf{(b)} The canonical ts-DAG of the ts-ADMG in (a).}
    \label{fig:canonical_tsDAG}
\end{figure}

\noindent Figure~\ref{fig:canonical_tsDAG} shows a canonical ts-DAG and the corresponding ts-ADMG for illustration. Intuitively, we obtain the canonical ts-DAG $\CanonicaltsDAG{\ADMG}$ by replacing all bidirected edges $X^j_{s-\tau} \headhead X^i_s$ of the ts-ADMG $\ADMG$ with paths $X^j_{s-\tau} \headtail X^{(i,j,\tau)}_{s-\tau} \tailhead X^i_s$ where the $X^{(i,j,\tau)}$ are auxiliary unobservable component time series. It follows that $\CanonicaltsDAG{\ADMG} = \ADMG$ if and only if $\ADMG$ is a ts-DAG. Moreover, acyclicity, time order and the property of repeating edges carry over from $\ADMG$ to $\CanonicaltsDAG{\ADMG}$, so the canonical ts-DAG $\CanonicaltsDAG{\ADMG}$ is indeed a ts-DAG. The definition also implies that, first, $\ADMG$ and $\CanonicaltsDAG{\ADMG}$ have the same directed paths and, second, there is a one-to-one correspondence between confounding paths through unobserved vertices in $\ADMG$ and the same type of paths in $\CanonicaltsDAG{\ADMG}$. Lastly, $\ADMG$ and $\CanonicaltsDAG{\ADMG}$ have the same $m$-separations among their shared vertices. By combining these observations, we arrive at the following result.

\begin{proposition}\label{prop:reduction-to-projection-of-tsDAG}
Let $\tsADMG{\ObservedVertices}{\ADMG}$ be the finite marginal ts-ADMG of the infinite ts-ADMG $\ADMG$ on the set $\ObservedVertices$ of observed vertices. Then, $\tsADMG{\ObservedVertices}{\ADMG} = \tsADMG{\ObservedVertices}{\CanonicaltsDAG{\ADMG}}$ where $\tsADMG{\ObservedVertices}{\CanonicaltsDAG{\ADMG}}$ is the finite marginal ts-ADMG of the infinite canonical ts-DAG $\CanonicaltsDAG{\ADMG}$ of $\ADMG$.
\end{proposition}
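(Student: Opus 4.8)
The plan is to prove $\tsADMG{\ObservedVertices}{\ADMG} = \ADMGprojection{\ObservedVertices}{\ADMG}$ equals $\tsADMG{\ObservedVertices}{\CanonicaltsDAG{\ADMG}} = \ADMGprojection{\ObservedVertices}{\CanonicaltsDAG{\ADMG}}$ by showing that the two graphs have the same vertex set and the same edges. Since both marginal ts-ADMGs are projections onto the same observed vertex set $\ObservedVertices = \VarIndicesObserved \times \TimeIndicesObserved$ (note $\VarIndicesObserved \subseteq \VarIndices \subseteq \VarIndices^{ca}$, and the auxiliary component time series in $\mathbf{J}$ are by construction unobservable, hence outside $\ObservedVertices$), the vertex sets agree trivially. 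It then remains to compare edges, and by Definition~\ref{def:ADMG-latent-projection} this reduces to two claims: (i) for $i, j \in \ObservedVertices$, there is a directed path from $i$ to $j$ with all middle vertices latent in $\ADMG$ if and only if there is such a path in $\CanonicaltsDAG{\ADMG}$; and (ii) the analogous statement for confounding paths with all middle vertices latent.

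For claim (i), I would use the observation already recorded in the text that $\ADMG$ and $\CanonicaltsDAG{\ADMG}$ have \emph{exactly the same} directed edges among the vertices of $\ADMG$, and that the auxiliary vertices $X^{(i,j,\tau)}$ have no incoming directed edges (they are pure sources pointing only to the two endpoints of the bidirected edge they replace). Hence any directed path in $\CanonicaltsDAG{\ADMG}$ between two vertices of $\ADMG$ cannot pass through an auxiliary vertex as a middle vertex (a middle vertex on a directed path has an incoming edge on the path), so directed paths in the two graphs between vertices of $\ADMG$ coincide; restricting further to paths whose middle vertices lie in $\LatentVertices$ (the same set on both sides, since the auxiliary vertices are never among the middle vertices anyway) gives the equivalence. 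This is the routine direction.

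For claim (ii), the heart of the matter is the bijection between confounding paths through latent vertices in $\ADMG$ and in $\CanonicaltsDAG{\ADMG}$ that the text alludes to. A confounding path in $\ADMG$ is a sequence of directed subpaths glued at colliderless non-collider vertices, and is into both endpoints; concretely it has the shape $v_0 \headtail \cdots \headtail m \tailhead \cdots \tailhead v_\ell$ possibly with one bidirected edge, or more generally $v_0 \astast \cdots \astast v_\ell$ with no middle collider and arrowheads at both ends. The map replaces each bidirected edge $a \headhead b$ occurring on such a path by $a \headtail X^{(\cdot)} \tailhead b$, inserting the auxiliary vertex as a new (non-collider) middle vertex, and leaves directed edges untouched; the inverse map contracts each such auxiliary vertex back to a bidirected edge. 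One checks that this map sends confounding paths to confounding paths (the inserted auxiliary vertex is a non-collider, no colliders are created or destroyed, endpoints and their arrowheads are unchanged) and preserves the property that all middle vertices are latent (the auxiliary vertices are unobservable, the other middle vertices are unchanged and their status in $\LatentVertices$ is unchanged since $\ObservedVertices$ is the same). I would also verify that this map is a bijection on the relevant sets of paths, which is immediate from the explicit inverse, so a confounding latent path between $i$ and $j$ exists in $\ADMG$ if and only if one exists in $\CanonicaltsDAG{\ADMG}$.

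The main obstacle is bookkeeping in claim (ii): one must be careful that bidirected \emph{self-edges} do not occur (ruled out by the standing assumption that ts-ADMGs have no self edges), that the endpoints $i$ and $j$ of the confounding path may themselves be endpoints of a bidirected edge that gets split (so the inserted auxiliary vertex becomes an adjacent-to-endpoint middle vertex, which is fine), and that multiple bidirected edges along the same confounding path are handled simultaneously by the map. I would phrase the argument once and for all in terms of walks, then restrict to paths, so that the gluing-and-splitting operations are clean. Once these edge equivalences are established, Definition~\ref{def:ADMG-latent-projection} gives $\ADMGprojection{\ObservedVertices}{\ADMG} = \ADMGprojection{\ObservedVertices}{\CanonicaltsDAG{\ADMG}}$ edge by edge, which by Definition~\ref{def:tsADMG-marginal} is exactly $\tsADMG{\ObservedVertices}{\ADMG} = \tsADMG{\ObservedVertices}{\CanonicaltsDAG{\ADMG}}$, completing the proof.
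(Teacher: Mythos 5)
Your proposal is correct and follows essentially the same route as the paper: the paper's proof simply points to the observations that $\ADMG$ and $\CanonicaltsDAG{\ADMG}$ have the same directed paths and that there is a one-to-one correspondence (splitting each bidirected edge into $a \headtail X^{(\cdot)} \tailhead b$ and contracting back) between confounding paths through unobserved vertices, which is exactly the two-claim argument you spell out. Your version merely makes explicit the bookkeeping (auxiliary vertices are unobservable sources, so they never appear as middle vertices of directed paths and always appear as latent non-colliders on confounding paths) that the paper leaves implicit.
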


\subsubsection{Reduction to the simple marginal ts-ADMG projection}\label{subsec:ADMGtosimpleADMG}
The projection of an infinite ts-DAG $\DAG$ to its finite marginal ts-ADMG $\tsADMG{\ObservedVertices}{\DAG}$ on the set $\ObservedVertices = \VarIndicesObserved \times \TimeIndicesObserved$ of observed vertices marginalizes out all unobserved vertices $\LatentVertices = \Vertices \setminus \ObservedVertices$. This set of unobserved vertices consists of, first, all vertices that are either strictly before time $t-\ptimewindow$ or strictly after time $t$ (temporally unobserved) and, second, all unobservable vertices within the observed time window $[t-\ptimewindow, t]$ (temporally observed but unobservable). Accordingly, $\LatentVertices = \LatentVerticesTemporally \,\dot{\cup}\, \LatentVerticesUnobservable$ where $\LatentVerticesTemporally = \VarIndices \times \left\{(-\infty, t-\ptimewindow-1] \cup [t+1, \infty)\right\}$ and $\LatentVerticesUnobservable = \VarIndicesUnobserved \times [t-\ptimewindow, t]$.

This partition of $\LatentVertices$ is useful for our purpose because the ADMG latent projection commutes with partitioning the set of unobserved vertices. Thus, to determine $\tsADMG{\ObservedVertices}{\DAG}$, we can first marginalize $\DAG$ over $\LatentVerticesTemporally$ and then marginalize the resulting graph over $\LatentVerticesUnobservable$.

\begin{proposition}\label{prop:IgnoreFutureVertices}
Let $\DAG = (\VarIndices \times \mathbb{Z}, \DirectedEdges)$ be an infinite ts-DAG, let $\ObservedVertices = \VarIndicesObserved \times \TimeIndicesObserved$ with $\VarIndicesObserved \subseteq \VarIndices$ non-empty and $\TimeIndicesObserved = \{t-\tau ~|~ 0 \leq \tau \leq \ptimewindow\}$ where $\ptimewindow < \infty$, and let $\ObservedVertices^\prime = \VarIndices \times \TimeIndicesObserved$. Then, $\tsADMG{\ObservedVertices}{\DAG} = \ADMGprojection{\ObservedVertices}{\tsADMG{\ObservedVertices^\prime}{\DAG}}$ where $\ADMGprojection{\ObservedVertices}{\tsADMG{\ObservedVertices^\prime}{\DAG}}$ is the ADMG latent projection to $\ObservedVertices$ of the finite marginal ts-ADMG $\tsADMG{\ObservedVertices^\prime}{\DAG}$ of $\DAG$ on $\ObservedVertices^\prime$.
\end{proposition}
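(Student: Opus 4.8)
The plan is to deduce Proposition~\ref{prop:IgnoreFutureVertices} from a general \emph{tower property} of the ADMG latent projection: for any ADMG $\ADMG$ without self edges and any nested subsets $\ObservedVertices \subseteq \mathbf{W} \subseteq \Vertices$ of its vertex set, one has $\ADMGprojection{\ObservedVertices}{\ADMG} = \ADMGprojection{\ObservedVertices}{\ADMGprojection{\mathbf{W}}{\ADMG}}$. Granting this, the proposition follows by taking $\ADMG = \DAG$, $\mathbf{W} = \ObservedVertices^\prime = \VarIndices \times \TimeIndicesObserved$ (a legitimate observed-vertex set, with observable index set $\VarIndices \neq \emptyset$, and finite because $\ptimewindow < \infty$), and $\ObservedVertices = \VarIndicesObserved \times \TimeIndicesObserved \subseteq \ObservedVertices^\prime$: by Definition~\ref{def:tsADMG-marginal}, $\tsADMG{\ObservedVertices}{\DAG} = \ADMGprojection{\ObservedVertices}{\DAG}$ and the inner graph $\tsADMG{\ObservedVertices^\prime}{\DAG} = \ADMGprojection{\ObservedVertices^\prime}{\DAG}$, and by the remark following Definition~\ref{def:ADMG-latent-projection} this inner graph is again an ADMG without self edges, so the outer projection $\ADMGprojection{\ObservedVertices}{\tsADMG{\ObservedVertices^\prime}{\DAG}}$ is well defined. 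One should note that neither the statement nor the proof of the tower property is affected by $\DAG$ being infinite: Definition~\ref{def:ADMG-latent-projection} and all path notions apply verbatim to graphs with infinitely many vertices, and the only intermediate object, $\ADMGprojection{\ObservedVertices^\prime}{\DAG}$, is in fact finite.

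To prove the tower property I would show that the two ADMGs on vertex set $\ObservedVertices$ have the same directed edges and the same bidirected edges. Write $\LatentVertices = \Vertices \setminus \ObservedVertices$, $\LatentVertices_1 = \Vertices \setminus \mathbf{W}$, and $\LatentVertices_2 = \mathbf{W} \setminus \ObservedVertices$, so $\LatentVertices = \LatentVertices_1 \,\dot{\cup}\, \LatentVertices_2$ (for the application, $\LatentVertices_1 = \LatentVerticesTemporally$ and $\LatentVertices_2 = \LatentVerticesUnobservable$). For the directed edges: given a directed path in $\ADMG$ from $i$ to $j$ with all middle vertices in $\LatentVertices$, cut it at its middle vertices lying in $\mathbf{W}$; each resulting segment is a directed path whose middle vertices lie in $\LatentVertices_1$, hence realizes a directed edge of $\ADMGprojection{\mathbf{W}}{\ADMG}$, and concatenating these edges gives a directed walk in $\ADMGprojection{\mathbf{W}}{\ADMG}$ from $i$ to $j$ through $\LatentVertices_2$, which by acyclicity reduces to a directed path of the same kind. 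Conversely, substituting into a directed path of $\ADMGprojection{\mathbf{W}}{\ADMG}$ through $\LatentVertices_2$ the $\ADMG$-directed-paths through $\LatentVertices_1$ realizing its edges yields a directed walk in $\ADMG$ from $i$ to $j$ with all middle vertices in $\LatentVertices_1 \cup \LatentVertices_2 = \LatentVertices$, which again reduces to a directed path. This gives equality of the directed-edge sets.

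For the bidirected edges the argument has the same shape but is more delicate, and I expect this to be the main obstacle. The structural input is that a confounding path in an ADMG contains at most one bidirected edge (two consecutive ones force a collider), so it has the normal form $i \headtail \cdots \headtail \ell \tailhead \cdots \tailhead j$ or $i \headtail \cdots \headtail a \headhead b \tailhead \cdots \tailhead j$ with the two directed portions through the relevant vertex set and a unique ``apex'' ($\ell$ or the edge $a \headhead b$). Using this one shows: cutting a confounding path of $\ADMG$ through $\LatentVertices$ at its $\mathbf{W}$-middle-vertices yields one confounding segment (the one containing the apex) and otherwise directed segments, each through $\LatentVertices_1$, whose images concatenate to a confounding walk of $\ADMGprojection{\mathbf{W}}{\ADMG}$ through $\LatentVertices_2$; conversely, substituting $\ADMG$-paths realizing the edges of a confounding path of $\ADMGprojection{\mathbf{W}}{\ADMG}$ through $\LatentVertices_2$ yields a confounding walk of $\ADMG$ through $\LatentVertices$. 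In both directions one must check that no middle vertex of the concatenated walk becomes a collider: at a splice vertex because it was a non-collider on the original confounding path and the substituted pieces keep the same edge-marks at their endpoints, and inside a substituted piece because directed paths have no colliders and substituted confounding pieces are confounding by construction.

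The genuinely nonroutine point, absent from the directed case, is to pass from a confounding \emph{walk} between $i$ and $j$ through a vertex set $S$ to a confounding \emph{path} between $i$ and $j$ through $S$. Here I would again invoke the apex normal form: a confounding walk decomposes at its apex into a directed walk from the apex to $i$ and one from the apex to $j$, each of which reduces (by acyclicity) to a directed path through $S$; if these two directed paths still share a vertex, one resolves the overlap by the standard ``last common vertex'' trimming argument (relocating the apex to the common vertex closest to one endpoint), which leaves two internally vertex-disjoint directed paths through $S$ whose union with the apex is a confounding path. Once this walk-to-path reduction is established, the bidirected-edge sets of $\ADMGprojection{\ObservedVertices}{\ADMG}$ and $\ADMGprojection{\ObservedVertices}{\ADMGprojection{\mathbf{W}}{\ADMG}}$ coincide, which completes the proof of the tower property and hence of Proposition~\ref{prop:IgnoreFutureVertices}.
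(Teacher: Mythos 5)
Your proposal is correct and follows essentially the same route as the paper: the paper proves Proposition~\ref{prop:IgnoreFutureVertices} by invoking exactly your ``tower property'' (its Lemma~\ref{lemma:AMDG-projection-partitioning-of-latents}, commutation of the ADMG latent projection with partitioning the latent vertices), whose proof is the same four-way comparison of directed and bidirected edges via cutting/substituting realizing paths and a final confounding-walk-to-path trimming. Your apex-normal-form packaging of the bidirected case is a mild reorganization of the paper's explicit case distinction on the location of the root vertex, and the one detail you leave implicit---that the relocated apex cannot coincide with $i$ or $j$---follows since these observed endpoints can appear only once on a walk whose middle vertices all lie in $\LatentVertices$.
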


\noindent Crucially, the marginal ts-ADMG $\tsADMG{\ObservedVertices^\prime}{\DAG}$ is a finite graph. Hence, finding the ADMG latent projection $\ADMGprojection{\ObservedVertices}{\tsADMG{\ObservedVertices^\prime}{\DAG}}$ of $\tsADMG{\ObservedVertices^\prime}{\DAG}$ is a solved problem. To solve Problem~\ref{problem:eventual-goal}, it is thus sufficient to find an algorithm for projecting infinite ts-DAGs $\DAG$ to finite marginal ts-ADMGs $\tsADMG{\ObservedVertices^\prime}{\DAG}$ with $\ObservedVertices^\prime = \VarIndices \times \TimeIndicesObserved$, that is, for the special case of no unobservable component time series. We refer to this simplified projection as the \emph{simple marginal ts-ADMG projection}.

\subsubsection{Reduction to common-ancestor search in ts-DAGs}\label{sec:reduction-to-common-ancestor-search}
In the following, we first consider the directed edges and then the bidirected edges of  the simple marginal ts-ADMG $\tsADMG{\ObservedVertices^\prime}{\DAG}$. To recall, the projection of an infinite ts-DAG $\DAG$ to its simple finite marginal ts-ADMG $\tsADMG{\ObservedVertices^\prime}{\DAG}$ marginalizes over the set $\LatentVerticesTemporally = \VarIndices \times \left\{(-\infty, t-\ptimewindow-1] \cup [t+1, \infty)\right\}$ of temporally unobserved vertices only.

By point 1 in Definition~\ref{def:ADMG-latent-projection}, there is a directed edge $(i, t-\tau_i) \tailhead (j, t-\tau_j)$ in $\tsADMG{\ObservedVertices^\prime}{\DAG}$ if and only if in $\DAG$ there is at least one directed path $\pi = ((i, t-\tau_i) \tailhead \ldots \tailhead (j, t-\tau_j))$ such that all middle vertices on $\pi$, if any, are in $\LatentVerticesTemporally$. Since time order of $\DAG$ restricts such $\pi$ to the time window $[t-\tau_i, t-\tau_j] \subseteq \TimeIndicesObserved$ and since $\LatentVerticesTemporally$ contains only vertices outside of $\TimeIndicesObserved$, we see that $\pi = ((i, t-\tau_i) \tailhead (j, t-\tau_j))$ and thus get the following result.

\begin{proposition}\label{prop:directed-edges-tsADMG-simple}
Let $(i, t-\tau_i)$ and $(j, t-\tau_j)$ be vertices in $\tsADMG{\ObservedVertices^\prime}{\DAG}$ with $\ObservedVertices^\prime = \VarIndices \times \TimeIndicesObserved$. Then, $(i, t-\tau_i) \tailhead (j, t-\tau_j)$ in $\tsADMG{\ObservedVertices^\prime}{\DAG}$ if and only if $(i, t-\tau_i) \tailhead (j, t-\tau_j)$ in $\DAG$.
\end{proposition}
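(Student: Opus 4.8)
The plan is to unfold Definition~\ref{def:ADMG-latent-projection} of the ADMG latent projection and then exploit the time order of $\DAG$. First I would record that when $\ObservedVertices^\prime = \VarIndices \times \TimeIndicesObserved$ the unobserved set is exactly $\LatentVertices = \Vertices \setminus \ObservedVertices^\prime = \LatentVerticesTemporally = \VarIndices \times \left( (-\infty, t-\ptimewindow-1] \cup [t+1, \infty) \right)$; in particular $\LatentVerticesTemporally$ contains no vertex whose time index lies in $\TimeIndicesObserved = [t-\ptimewindow, t]$. By point~1 of Definition~\ref{def:ADMG-latent-projection}, the edge $(i, t-\tau_i) \tailhead (j, t-\tau_j)$ is present in $\tsADMG{\ObservedVertices^\prime}{\DAG} = \ADMGprojection{\ObservedVertices^\prime}{\DAG}$ if and only if $\DAG$ contains a directed path $\pi$ from $(i, t-\tau_i)$ to $(j, t-\tau_j)$ all of whose middle vertices lie in $\LatentVerticesTemporally$, so the claim reduces to showing that, whenever such a $\pi$ exists, it can be taken to be a single edge (and conversely).

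The ``if'' direction is immediate: a single directed edge $(i, t-\tau_i) \tailhead (j, t-\tau_j)$ of $\DAG$ is a directed path with no middle vertices, so the condition on middle vertices is vacuously met.

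For the ``only if'' direction I would argue as follows. Let $\pi$ be a directed path as above. Since $\DAG$ is time ordered, its directed edges do not point back in time, so the time indices along $\pi$ are non-decreasing; hence $t-\tau_i \leq t-\tau_j$ and every vertex on $\pi$ has time index in $[t-\tau_i, t-\tau_j]$. Because $(i, t-\tau_i)$ and $(j, t-\tau_j)$ are vertices of $\tsADMG{\ObservedVertices^\prime}{\DAG}$, they are temporally observed, i.e.\ $\tau_i, \tau_j \in \{0, \ldots, \ptimewindow\}$, so $[t-\tau_i, t-\tau_j] \subseteq [t-\ptimewindow, t] = \TimeIndicesObserved$. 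Consequently every middle vertex of $\pi$ would have time index in $\TimeIndicesObserved$, contradicting membership in $\LatentVerticesTemporally$ unless $\pi$ has no middle vertex at all; thus $\pi = ( (i, t-\tau_i) \tailhead (j, t-\tau_j) )$ is a single directed edge of $\DAG$.

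I do not expect a genuine obstacle here: the result is a direct consequence of time order, and the only points that need a little care are the orientation of the interval $[t-\tau_i, t-\tau_j]$ (fixed by the direction in which time runs along a directed path) and the degenerate case $(i,t-\tau_i) = (j,t-\tau_j)$, which is excluded because directed paths are cycle-free and because $\DAG$ has no self edges, so that both sides of the equivalence are then vacuously false. I would present the two directions together in just a few lines.
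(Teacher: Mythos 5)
Your proposal is correct and follows essentially the same argument as the paper, which likewise unfolds point~1 of Definition~\ref{def:ADMG-latent-projection} and uses time order to confine any qualifying directed path to $[t-\tau_i, t-\tau_j] \subseteq \TimeIndicesObserved$, forcing it to be a single edge. The paper states this reasoning in the paragraph immediately preceding the proposition and simply refers to it as the proof.
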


\noindent Thus, we can directly read off the directed edges of $\tsADMG{\ObservedVertices^\prime}{\DAG}$ from $\DAG$. The remaining task is to find the bidirected edges of $\tsADMG{\ObservedVertices^\prime}{\DAG}$.

By point 2 in Definition~\ref{def:ADMG-latent-projection}, there is a bidirected edge $(i, t-\tau_i) \headhead (j, t-\tau_j)$ in $\tsADMG{\ObservedVertices^\prime}{\DAG}$ if and only if in $\DAG$ there is at least one confounding path $\pi = ((i, t-\tau_i) \headast \ldots \asthead (j, t-\tau_j))$ such that all middle vertices on $\pi$, if any, are in $\LatentVerticesTemporally$. Recall that $\LatentVerticesTemporally$ consists of, first, all vertices strictly before time $t-\ptimewindow$ and, second, all vertices strictly after time $t$. Due to time order of $\DAG$ and the definitional requirement that confounding paths do not have colliders, such $\pi$ cannot contain vertices strictly after time $\max(t-\tau_i, t-\tau_j) \leq t$. We thus find that all middle vertices of $\pi$, if any, are strictly before $t-\ptimewindow$ and arrive at the following result.

\begin{proposition}\label{prop:bidirected-edges-tsADMG-simple}
Let $(i, t-\tau_i)$ and $(j, t-\tau_j)$ be vertices in $\tsADMG{\ObservedVertices^\prime}{\DAG}$ with $\ObservedVertices^\prime = \VarIndices \times \TimeIndicesObserved$. Then, $(i, t-\tau_i) \headhead (j, t-\tau_j)$ in $\tsADMG{\ObservedVertices^\prime}{\DAG}$ if and only if there are (not necessarily distinct) vertices $(k, t-\tau_k)$ with $\tau_k > \ptimewindow$ and $(l, t-\tau_l)$ with $\tau_l > \ptimewindow$ that have a common ancestor in $\DAG$.
\end{proposition}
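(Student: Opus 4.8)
The plan is to characterize the confounding paths through $\LatentVerticesTemporally$ that Definition~\ref{def:ADMG-latent-projection}(2) asks about and to show they exist exactly when the stated common-ancestor condition holds. First I would recall the structural observation made in the paragraph preceding the statement: by time order of $\DAG$ and since a confounding path has no colliders, a confounding path $\pi$ between $(i,t-\tau_i)$ and $(j,t-\tau_j)$ whose middle vertices all lie in $\LatentVerticesTemporally$ cannot use any vertex strictly after time $t$ (such a vertex would have to be a collider, as both $\tau_i,\tau_j\ge 0$), and hence its middle vertices all lie strictly before time $t-\ptimewindow$, i.e.\ in the ``past'' component $\VarIndices\times(-\infty,t-\ptimewindow-1]$ of $\LatentVerticesTemporally$. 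So I may restrict attention to confounding paths of this restricted form.

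Next I would prove the two directions. For the ``if'' direction, suppose $(k,t-\tau_k)$ with $\tau_k>\ptimewindow$ and $(l,t-\tau_l)$ with $\tau_l>\ptimewindow$ have a common ancestor $(m,t-\tau_m)$ in $\DAG$, and that there are directed paths $(i,t-\tau_i)\headtail\cdots\headtail(k,t-\tau_k)$ and $(l,t-\tau_l)\headtail\cdots\headtail(j,t-\tau_j)$; but wait --- I need to be careful, because the statement only gives that $(k,t-\tau_k)$ and $(l,t-\tau_l)$ have a common ancestor, not that they are ancestors of $i$ and $j$. Re-reading: the claim is that $(i,t-\tau_i)\headhead(j,t-\tau_j)$ in the marginal graph iff \emph{there exist} such $(k,t-\tau_k),(l,t-\tau_l)$ --- so implicitly $i=k$, $\tau_i=\tau_k$ is \emph{not} required; rather the intended reading is that the past vertices $(k,t-\tau_k)$ and $(l,t-\tau_l)$ that have a common ancestor are precisely the first past vertices reached from $(i,t-\tau_i)$ and $(j,t-\tau_j)$ along the two halves of the confounding path. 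Thus for the ``if'' direction I would take $(i,t-\tau_i)=(k,t-\tau_k)$ and $(j,t-\tau_j)=(l,t-\tau_l)$ if $\tau_i,\tau_j>\ptimewindow$ --- but since $i,j$ are in $\ObservedVertices^\prime$ we have $\tau_i,\tau_j\le\ptimewindow$, so this cannot be literally what is meant. The correct reading must be: the confounding path decomposes, at its (unique, since no colliders) ``lowest'' vertex $(m,t-\tau_m)$, into $(i,t-\tau_i)\headtail\cdots\headtail(m,t-\tau_m)$ and $(m,t-\tau_m)\tailhead\cdots\tailhead(j,t-\tau_j)$, and the condition is that stepping one edge out from $(i,t-\tau_i)$ and from $(j,t-\tau_j)$ lands in the past and those two past vertices have a common ancestor. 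So in the write-up I would first state and prove the lemma that \emph{a confounding path through $\LatentVerticesTemporally$ between two vertices $u,v$ in the observed window exists iff the neighbors of $u,v$ on that path lie in the past and have a common ancestor}, then observe that $(i,t-\tau_i)$'s neighbor on such a path is some $(k,t-\tau_k)$ with $(k,t-\tau_k)\headtail(i,t-\tau_i)$ or $(k,t-\tau_k)\tailhead(i,t-\tau_i)$; since the path is into $(i,t-\tau_i)$ (confounding), it is $(k,t-\tau_k)\tailhead(i,t-\tau_i)$, hence by time order $\tau_k\ge\tau_i$, and since the neighbor is in $\LatentVerticesTemporally$ and the path past-only, $\tau_k>\ptimewindow$; symmetrically for $(l,t-\tau_l)$.

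I would then argue the equivalence cleanly as follows. ($\Rightarrow$) Given a confounding path $\pi$ through $\LatentVerticesTemporally$: by the structural observation its interior is past-only; since $\pi$ is into $(i,t-\tau_i)$ its second vertex $(k,t-\tau_k)$ satisfies $(k,t-\tau_k)\tailhead(i,t-\tau_i)$ and lies in the past so $\tau_k>\ptimewindow$, and symmetrically the second-to-last vertex $(l,t-\tau_l)$ satisfies $(l,t-\tau_l)\tailhead(j,t-\tau_j)$ with $\tau_l>\ptimewindow$; the subwalk $\pi(2,\,\length{\pi}-1)$ is a confounding walk among past vertices with no colliders, hence (having no colliders and being into neither endpoint, it is either trivial, so $(k,t-\tau_k)=(l,t-\tau_l)$, or) a ``reversed-V'' directed-out-of-its-top shape, which exhibits a common ancestor of $(k,t-\tau_k)$ and $(l,t-\tau_l)$. ($\Leftarrow$) Given $(k,t-\tau_k)\tailhead(i,t-\tau_i)$, $(l,t-\tau_l)\tailhead(j,t-\tau_j)$ with $\tau_k,\tau_l>\ptimewindow$ and a common ancestor $(m,t-\tau_m)$ of the two past vertices: concatenate the edge $(k,t-\tau_k)\tailhead(i,t-\tau_i)$ reversed, a directed path $(m,t-\tau_m)\tailhead\cdots\tailhead(k,t-\tau_k)$ reversed, a directed path $(m,t-\tau_m)\tailhead\cdots\tailhead(l,t-\tau_l)$, and the edge $(l,t-\tau_l)\tailhead(j,t-\tau_j)$; this walk is into both $(i,t-\tau_i)$ and $(j,t-\tau_j)$ and has no colliders (the only possible collider would be at $(m,t-\tau_m)$, but the path goes out of $(m,t-\tau_m)$ on both sides), so it is a confounding walk, and by time order $\tau_m\ge\tau_k>\ptimewindow$ while every interior vertex has time index $\le t-\ptimewindow-1$, i.e.\ lies in $\LatentVerticesTemporally$; finally, a confounding \emph{walk} through $\LatentVerticesTemporally$ yields a confounding \emph{path} through $\LatentVerticesTemporally$ by a standard shortcutting argument (one can always extract a path; one must check that removing repeated vertices from a confounding walk keeps it confounding and keeps interior vertices in $\LatentVerticesTemporally$, which holds since shortcutting only deletes vertices). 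The main obstacle, and the only place needing real care, is this last walk-to-path reduction together with getting the collider bookkeeping exactly right at the apex vertex $(m,t-\tau_m)$ --- in particular ruling out that shortcutting a confounding walk could create a collider or push an interior vertex out of the past window, and handling the degenerate case where the apex coincides with $(k,t-\tau_k)$ or $(l,t-\tau_l)$ or where $(k,t-\tau_k)=(l,t-\tau_l)$. Everything else is a direct unwinding of the definitions of confounding path, time order, and $\LatentVerticesTemporally$.
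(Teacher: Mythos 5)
Your proposal is correct and takes essentially the same route as the paper, whose own proof consists solely of the structural observation in the paragraph preceding the proposition (time order plus the absence of colliders forces all middle vertices of the relevant confounding paths to lie strictly before $t-\ptimewindow$) and leaves the remaining bookkeeping implicit; you supply exactly that bookkeeping, and your reading of the under-specified statement---that $(k,t-\tau_k)$ and $(l,t-\tau_l)$ are the parents of $(i,t-\tau_i)$ and $(j,t-\tau_j)$ reached by the first and last edges of the confounding path---is the intended one, as Algorithm~\ref{algo:simple-ts-ADMG} confirms. One small inaccuracy in the forward direction: the interior subpath $\pi(2,\length{\pi}-1)$ is neither necessarily ``into neither endpoint'' nor necessarily a confounding walk---it can be trivial, directed in either direction, or confounding---but since it is a collider-free path in a DAG its unique source is in every case a common ancestor of $(k,t-\tau_k)$ and $(l,t-\tau_l)$, so your conclusion stands.
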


\begin{remark}\label{remark:bidirected-edges-tsADMG-simple}
Recall that every vertex is its own ancestor. Consequently, $(k, t-\tau_k)$ and $(l, t-\tau_l)$ have a common ancestor if at least one of the following conditions is true:
\begin{itemize}
    \item The vertices $(k, t-\tau_k)$ and $(l, t-\tau_l)$ are equal.
    \item There is a directed path from $(k, t-\tau_k)$ to $(l, t-\tau_l)$.
    \item There is a directed path from $(l, t-\tau_l)$ to $(k, t-\tau_k)$.
    \item There is a confounding path between $(k, t-\tau_k)$ and $(l, t-\tau_l)$.
\end{itemize}
\end{remark}

Due to the repeating edges property of ts-DAGs, the vertices $(k, t-\tau_k)$ and $(l, t-\tau_l)$ have a common ancestor if and only if the time-shifted vertices $(k, t-\tau_k+\Delta t)$ and $(l, t-\tau_l+ \Delta t)$ with $\Delta t \in \mathbb Z$ have a common ancestor. Choosing $\Delta t = \min(\tau_k, \tau_l)$, we can place at least one of the time-shifted vertices at time $t$. We thus reduced the simple ts-ADMG projection and, by extension, Problem~\ref{problem:eventual-goal} to the following problem.

\begin{problem}\label{problem:common-ancestor}
Let $(i, t-\tau_i)$ and $(j, t)$, where $\tau_i$ is a non-negative integer, be two vertices in an arbitrary infinite ts-DAG $\DAG$ (subject only to $\ptsDAG < \infty$ and $\vert \VarIndices \vert < \infty$). Decide in finite time whether $(i, t-\tau_i)$ and $(j, t)$ have a common ancestor in $\DAG$.
\end{problem}

Problem~\ref{problem:common-ancestor} is still non-trivial because ts-DAGs extend to the infinite past and, hence, there might be infinitely many paths that can potentially give rise to a common ancestor of a given pair of vertices. In Section~\ref{sec:refined-common-ancestor}, we present a method that, despite this complication, answers common-ancestor queries in ts-DAGs in finite time.

To aid computational efficiency, we note the following: Proposition~\ref{prop:bidirected-edges-tsADMG-simple} together with the repeating edges property of ts-DAGs implies that $(i,t-\tau_i-\Delta t) \headhead (j, t-\tau_j-\Delta t)$ with $0 < \Delta t \leq \ptimewindow - \max(\tau_i, \tau_j)$ is in $\tsADMG{\ObservedVertices^\prime}{\DAG}$ if $(i,t-\tau_i) \headhead (j, t-\tau_j)$ is in $\tsADMG{\ObservedVertices^\prime}{\DAG}$. Hence, it is advantageous to first apply Proposition~\ref{prop:bidirected-edges-tsADMG-simple} to pairs of vertices $(i,t-\tau_i)$ and $(j, t-\tau_j)$ with small $\max(\tau_i, \tau_j)$: If we find that these vertices are connected by a bidirected edge, then we can automatically infer the existence of $\ptimewindow - \max(\tau_i, \tau_j)$ further bidirected edges.

\subsubsection{Extensions}\label{subsec:projection-generalization}
We can extend the above findings in two ways: First, in Section~\ref{sec:generalize-to-DMAGs} we show how to determine the finite marginal DMAG projection of an infinite ts-ADMG $\ADMG$ from the finite marginal ts-ADMG $\tsADMG{\ObservedVertices}{\ADMG}$. Second, given the ability to determine arbitrary finite marginal ts-ADMGs, we can also determine the ADMG latent projection of ts-ADMGs to \emph{arbitrary finite} sets $\ObservedVertices^{\prime\prime}$ of observed vertices. Indeed, let $\ObservedVertices$ be any set of the form $\ObservedVertices = \VarIndicesObserved \times \TimeIndicesObserved$ with $\ObservedVertices^{\prime\prime} \subseteq \ObservedVertices$. Then, because the ADMG latent projection commutes with partitioning the set of latent vertices, we can determine $\ADMGprojection{\ObservedVertices^{\prime\prime}}{\ADMG}$ as $\ADMGprojection{\ObservedVertices^{\prime\prime}}{\DAG} = \ADMGprojection{\ObservedVertices^{\prime\prime}}{\tsADMG{\ObservedVertices}{\ADMG}}$. Since $\tsADMG{\ObservedVertices}{\ADMG}$ is a finite graph, finding its projection $\ADMGprojection{\ObservedVertices^{\prime\prime}}{\tsADMG{\ObservedVertices}{\DAG}}$ is a solved problem.

\section{Common-ancestor search in ts-DAGs}\label{sec:refined-common-ancestor}
In this section, we present the three main results of this paper. These results solve Problem~\ref{problem:common-ancestor} (that is, the common-ancestor search in infinite ts-DAGs) and, hence, by extension also Problem~\ref{problem:eventual-goal} (that is, the construction of finite marginal ts-ADMGs of infinite ts-ADMGs). To begin, in Section~\ref{sec:special-case-all-lag-$1$-auto}, we restrict ourselves to ts-DAGs that have lag-$1$ auto-dependencies in every component and present a simple solution to Problem~\ref{problem:common-ancestor} in this special case. In Section~\ref{subsec:approaches-do-not-work}, we explain why the same approach does not work for general ts-DAGs and discuss two simple heuristics that do not work either. In Section~\ref{subsec:general-solution}, we present our solution to Problem~\ref{problem:common-ancestor}. To this end, we first reformulate Problem~\ref{problem:common-ancestor} in terms of \emph{multi-weighted directed graphs} (see Definition~\ref{Definitionm-w-graph} and Problem~\ref{prob1}). Theorem~\ref{cor.prob1} then reduces Problem~\ref{problem:common-ancestor} to the problem of deciding whether any of a finite number of \emph{linear Diophantine equations} has a non-negative integer solution. Moreover, utilizing standard results from number theory, we show how to answer these solvability questions in finite time by Theorem~\ref{thm:our-linear-diophantine}. The combination of Theorems~\ref{cor.prob1} and \ref{thm:our-linear-diophantine} thus solves Problem~\ref{problem:common-ancestor}. As a corollary, in Theorem~\ref{thm.upper-bound-main-theorem} we present an easily computable bound $\pcutoff(\DAG)$ such that the common-ancestor searches of Problem~\ref{problem:common-ancestor} can be restricted to the segment of the ts-DAG $\DAG$ inside the finite time window $[t-\pcutoff(\DAG),t]$. Thus, Theorem~\ref{thm.upper-bound-main-theorem} constitutes an alternative solution to Problem~\ref{problem:common-ancestor}.

\subsection{Special case of all lag-1 auto-dependencies}\label{sec:special-case-all-lag-$1$-auto}
As an important special case, we first restrict to ts-DAGs $\DAG$ that have lag-$1$ auto-dependencies everywhere, that is, to ts-DAGs $\DAG$ which for all $i \in \VarIndices$ have the edge $(i, t-1) \tailhead (i,t)$. To solve Problem~\ref{problem:common-ancestor} in this special case, we make use of the following well-known concept.

\begin{definition}[Summary graph, cf.~for example \citet{peters2017elements}]\label{def:summary-graph}
Let $\DAG = (\VarIndices \times \mathbb{Z}, \DirectedEdges)$ be a ts-DAG. The \emph{summary graph $\SummaryGraph{\DAG}$ of $\DAG$} is the directed graph with vertex set $\VarIndices$ such that $i \tailhead j$ in $\SummaryGraph{\DAG}$ if and only if there is at least one $\tau$ such that $(i, t-\tau) \tailhead (j, t)$ in $\DAG$.\footnote{Sometimes, see for example the definition in \citet{peters2017elements}, summary graphs specifically exclude self edges $i \tailhead i$. Here, we allow for self edges. Note that self-edges $i \tailhead i$ in the summary graph correspond to lagged auto-dependencies $(i,t-\tau) \tailhead (i, t)$ in the time-resolved graph (here, the ts-DAG).}
\end{definition}

\noindent Figure~\ref{fig:heuristic_summary_graph} illustrates the concept of summary graphs, which above we also referred to as time-collapsed graphs. Intuitively, we obtain the summary graph $\SummaryGraph{\DAG}$ by collapsing the ts-DAG $\DAG$ along its temporal dimension. Thus, although the ts-DAG $\DAG$ is acyclic by definition, its summary graph $\SummaryGraph{\DAG}$ can be cyclic. Due to our standing assumption that $|\VarIndices|<\infty$, the summary graph $\SummaryGraph{\DAG}$ is a finite graph and all path searches in it will terminate in finite time. This fact makes summary graphs a promising tool for answering Problem~\ref{problem:common-ancestor}. Indeed, whenever there is a confounding path (resp.~a directed path) between $(i, t-\tau_i)$ and $(j, t)$ in the ts-DAG $\DAG$, then the projection of that path to $\SummaryGraph{\DAG}$---obtained by forgetting the time indices of the vertices on the path---is a confounding walk (resp.~a directed walk) from $i$ to $j$ in $\SummaryGraph{\DAG}$. In the special case of all lag-$1$ auto-dependencies, also the converse is true and we obtain the following result.

\begin{proposition}\label{prop:common-ancestors-all-lag-$1$-are-there}
Let $\DAG$ be a ts-DAG that for all $i \in \VarIndices$ has the edge $(i,t-1) \tailhead (i,t)$, and let $(i, t-\tau_i)$ with $\tau_i \geq 0$ and $(j, t)$ be two vertices in $\DAG$. Then, $(i, t-\tau_i)$ and $(j, t)$ have a common ancestor in $\DAG$ if and only if the vertices $i$ and $j$ have a common ancestor in $\SummaryGraph{\DAG}$.
\end{proposition}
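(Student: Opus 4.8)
The plan is to prove the two directions of the biconditional separately, with the forward direction being essentially immediate and the backward direction carrying the real content.

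For the forward direction, suppose $(i, t-\tau_i)$ and $(j, t)$ have a common ancestor $(k, t-\tau_k)$ in $\DAG$. Then there are directed paths (possibly trivial) from $(k, t-\tau_k)$ to $(i, t-\tau_i)$ and from $(k, t-\tau_k)$ to $(j, t)$. Projecting each such path to the summary graph by forgetting time indices yields a directed walk in $\SummaryGraph{\DAG}$, as already noted in the text preceding the proposition. Hence $k$ is an ancestor of both $i$ and $j$ in $\SummaryGraph{\DAG}$, so $i$ and $j$ have the common ancestor $k$ there. (One should also handle the degenerate subcases where the common ancestor coincides with $(i,t-\tau_i)$ or $(j,t)$ — these just give trivial walks, which still witness the ancestral relation since every vertex is its own ancestor.)

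For the backward direction, suppose $k$ is a common ancestor of $i$ and $j$ in $\SummaryGraph{\DAG}$. The plan is to lift the two directed walks in $\SummaryGraph{\DAG}$ — one from $k$ to $i$, one from $k$ to $j$ — to directed paths in $\DAG$, and crucially to arrange that the two lifted paths share a common starting vertex in $\DAG$. First I would lift each summary-graph edge $a \tailhead b$ to a concrete time-resolved edge $(a, t'-\tau_{ab}) \tailhead (b, t')$ in $\DAG$ for some lag $\tau_{ab} \geq 0$; concatenating these along a walk of length $n$ in $\SummaryGraph{\DAG}$ gives, after choosing the time offset of the endpoint, a directed walk in $\DAG$ ending at any prescribed vertex of component $b$. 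This walk need not be a path, but its set of vertices still contains a genuine directed path between the same endpoints (one can extract a directed path from any directed walk in a DAG). So from the $k$-to-$i$ walk we obtain, for a suitable lag $m_i \geq 0$, a directed path from $(k, t-\tau_i - m_i)$ to $(i, t-\tau_i)$ in $\DAG$; similarly from the $k$-to-$j$ walk a directed path from $(k, t - m_j)$ to $(j, t)$ for a suitable lag $m_j \geq 0$.

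The remaining step — and this is where the lag-$1$-auto-dependency hypothesis is essential, and where I expect the only real subtlety — is to make the two source vertices $(k, t-\tau_i - m_i)$ and $(k, t - m_j)$ coincide. Without loss of generality $\tau_i + m_i$ and $m_j$ differ; say $\tau_i + m_i \le m_j$ (the other case is symmetric), with difference $d = m_j - (\tau_i + m_i) \ge 0$. Since $\DAG$ has the edge $(k, s-1) \tailhead (k, s)$ for every $s$, there is a directed path of length $d+1$ from $(k, t - m_j)$ to $(k, t - \tau_i - m_i)$ along these auto-edges. Prepending this auto-path to the $k$-to-$i$ path yields a directed path from $(k, t - m_j)$ all the way to $(i, t-\tau_i)$; together with the $k$-to-$j$ path from $(k, t-m_j)$ to $(j,t)$, we have exhibited $(k, t - m_j)$ as a common ancestor of $(i, t - \tau_i)$ and $(j, t)$ in $\DAG$, as desired. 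The one thing to double-check carefully is that the auto-dependency chain can always be used to bridge the temporal gap in the correct direction, i.e. that after the symmetry reduction we are always asked to move \emph{backward} in time from the later source vertex to the earlier one, which is exactly what the edges $(k,s-1)\tailhead(k,s)$ allow; this is what fails for general ts-DAGs and motivates Section~\ref{subsec:approaches-do-not-work}.
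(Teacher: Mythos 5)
Your proposal is correct and follows essentially the same strategy as the paper's proof: project/lift between the ts-DAG and the summary graph, and use the lag-$1$ auto-edges $(k,s-1)\tailhead(k,s)$ to align the time indices of the two lifted copies of the common ancestor $k$. The only (harmless) difference is organizational — the paper builds a single connecting walk between $(i,t-\tau_i)$ and $(j,t)$ and then reduces it to a path of one of the four types in Remark~\ref{remark:bidirected-edges-tsADMG-simple}, whereas you directly exhibit the common-ancestor vertex together with two directed walks, which suffices since ancestorship is defined via directed walks (and in a DAG any directed walk is automatically a path).
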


We can understand the if-part of Proposition~\ref{prop:common-ancestors-all-lag-$1$-are-there} as follows: Suppose there is a confounding path between $i$ and $j$ in $\SummaryGraph{\DAG}$. By splitting this confounding path at its (unique) root vertex $k$, we obtain a directed path $\pi_{ki}$ from $k$ to $i$ and a directed path $\pi_{kj}$ from $k$ to $j$ in $\SummaryGraph{\DAG}$. By definition of the summary graph and the repeating edges property of ts-DAGs, in the ts-DAG $\DAG$ there thus are a directed path $\rho_{ki}$ from $(k, t-\tau_{ki})$ to $(i,t-\tau_i)$ for some $\tau_{ki} \geq \tau_i$ and a directed path $\rho_{kj}$ from $(k, t-\tau_{kj})$ to $(j, t)$ for some $\tau_{kj} \geq 0$. If $\tau_{kj} = \tau_{ki}$, then we can directly join $\rho_{ki}$ and $\rho_{kj}$ at their common vertex $(k, t-\tau_{ki}) = (k, t-\tau_{kj})$ to obtain a confounding walk $\rho$. If $\tau_{kj} \neq \tau_{ki}$, then an additional step is needed: Due to the assumption that $\DAG$ has all lag-$1$ auto-dependencies, there is the path $\rho_{kk} = ((k, t-\max(\tau_{ki},\tau_{ki})) \tailhead (k, t-\max(\tau_{ki},\tau_{ki})+1) \tailhead \ldots \tailhead (k, t-\min(\tau_{ki},\tau_{ki}))$ in the ts-DAG $\DAG$. By appropriately concatenating the paths $\rho_{ki}$, $\rho_{kj}$ and $\rho_{kk}$, we obtain a confounding walk $\rho$. If $\rho$ is a path, we thus showed that $(i, t-\tau_i)$ and $(j, t)$ have a common ancestor in $\DAG$. If $\rho$ is not a path, then we can remove a sufficiently large subwalk from $\rho$ to obtain a path $\tilde{\rho}$ which is either a confounding path or directed from $(i, t-\tau_i)$ to $(j, t)$ or directed from $(j, t)$ to $(i, t-\tau_i)$ or trivial. Thus, $(i, t-\tau_i)$ to $(j, t)$ have a common ancestor in $\DAG$ (see Remark~\ref{remark:bidirected-edges-tsADMG-simple} for clarification of what a common ancestor is). The cases in which $i=j$ or in which $i$ and $j$ are connected by a directed path (cf.~Remark~\ref{remark:bidirected-edges-tsADMG-simple}) follow similarly.

While ts-DAGs with lag-$1$ auto-dependencies in all component time series are an important special case, restricting to that case is \emph{not} standard in the literature. For example, none of the causal discovery works \citet{entner2010causal}, \citet{malinsky2018causal} or \citet{gerhardus2020high} makes this assumption. Conversely, \citet{mastakouri2020necessary} even specifically assumes a subset of the components to not have lag-$1$ auto-dependencies. Thus, we are not satisfied with Proposition~\ref{prop:common-ancestors-all-lag-$1$-are-there} and move to the general case.

\subsection{Approaches that do not work in general} \label{subsec:approaches-do-not-work}
We now turn to general ts-DAGs, that is, we no longer require that $(i,t-1) \tailhead (i, t)$ for all $i \in \VarIndices$. In Sections~\ref{subsec:summary-graph-insufficient} and \ref{subsect:simple-heuristics-insufficient}, we consider two approaches that might appear promising at first but are in general not sufficient to decide about common ancestorship in ts-DAGs.

\subsubsection{Common-ancestor search in the summary graph}\label{subsec:summary-graph-insufficient}
The following example shows that, for general ts-DAGs, common ancestorship of $i$ and $j$ in the summary graph $\SummaryGraph{\DAG}$ does not necessarily imply common ancestorship of $(i,t-\tau_i)$ and $(j, t)$ in the ts-DAG $\DAG$.

\begin{example}\label{example:with-summary-graph}
In the summary graph in part (b) of Figure~\ref{fig:heuristic_summary_graph}, the vertex $Y$ is a common ancestor of $X$ and $Z$. However, in the corresponding ts-DAG in part (a) of Figure~\ref{fig:heuristic_summary_graph}, the vertices $X_t$ and $Z_{t-1}$ do not have a common ancestor.
\end{example} 

\begin{figure}[tb]
    \centering
    \includegraphics[scale=0.25]{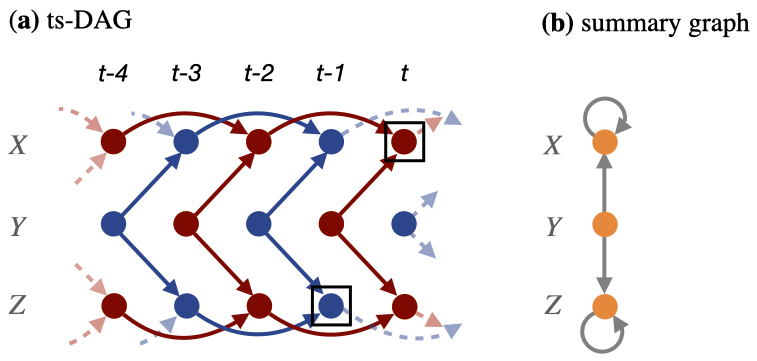}
    \caption{A ts-DAG and its summary graph, respectively see part \textbf{(a)} and part \textbf{(b)}. See also Example~\ref{example:with-summary-graph}.}
    \label{fig:heuristic_summary_graph}
\end{figure}

To understand why Proposition~\ref{prop:common-ancestors-all-lag-$1$-are-there} does not generalize to general ts-DAGs, we recall its justification from the last paragraph of Section~\ref{sec:special-case-all-lag-$1$-auto}: The argument there involved the path $\rho_{kk} = ((k, t-\max(\tau_{ki},\tau_{ki})) \tailhead (k, t-\max(\tau_{ki},\tau_{ki})+1) \tailhead \ldots \tailhead (k, t-\min(\tau_{ki},\tau_{ki}))$. This path only exists if there is the edge $(k,t-1) \tailhead (k,t)$, which need not be the case for general ts-DAGs. Put differently, the existence of a confounding path between $i$ and $j$ in $\SummaryGraph{\DAG}$ does imply the existence of a path $\rho_{ki}$ from $(k, t-\tau_{ki})$ to $(i,t-\tau_i)$ for some $\tau_{ki} \geq \tau_i$ and a directed path $\rho_{kj}$ from $(k, t-\tau_{kj})$ to $(j, t)$ for some $\tau_{kj} \geq 0$. But, in general, there is no pair of such paths with $\tau_{ki} = \tau_{kj}$, that is, a pair of such paths which start at the same time step.

The opposite direction, however, still holds: Common ancestorship of $(i,t-\tau_i)$ and $(j, t)$ in $\DAG$ does imply common ancestorship of $i$ and $j$ in $\SummaryGraph{\DAG}$. Thus, common ancestorship in $\SummaryGraph{\DAG}$ is a necessary but not sufficient condition for common ancestorship in $\DAG$.

\subsubsection{Simple heuristics for a finite time window}\label{subsect:simple-heuristics-insufficient}
A different approach would proceed as follows: Given a pair of vertices $(i,t-\tau_i)$ and $(j, t)$ with $0 \leq \tau_i \leq \ptimewindow$, search for common ancestors of that pair within a sufficiently large but finite time window $[t-\pcutoff(\DAG),t]$ of the ts-DAG $\DAG$. While for every fixed $\DAG$ there indeed is some non-negative integer $\pcutoff(\DAG)$ such that $(i,t-\tau_i)$ and $(j, t)$ have a common ancestor in $\DAG$ if and only if they have a common ancestor within the time window $[t-\pcutoff(\DAG),t]$, this fact itself is of little practical use: To solve Problem~\ref{problem:common-ancestor}, one would need to know such an integer $\pcutoff(\DAG)$ \emph{a priori}. Therefore it might be tempting to come up with heuristics for choosing $\pcutoff(\DAG)$ for a given $\DAG$, such as the following:

\begin{itemize}
    \item Let $\pcutoff(\DAG) = \ptimewindow + \pcutoff_{sum}(\DAG)$, where $\ptimewindow$ is the observed time window length and $\pcutoff_{sum}(\DAG)$ is the sum of all lags of edges in $\DirectedEdges^t \cup \BidirectedEdges^t$.
    \item Let $\pcutoff(\DAG) = \ptimewindow + \pcutoff_{prod}(\DAG)$, where $\ptimewindow$ is the observed time window length and $\pcutoff_{prod}(\DAG)$ is the product of all non-zero lags of edges in $\DirectedEdges^t \cup \BidirectedEdges^t$.
\end{itemize}

\noindent However, Examples~\ref{ref:counterexample-1} and \ref{ref:counterexample-2} in Section~\ref{sec:counterexamples} show that neither of these simple heuristics work in general. What is more, even if one were to come up with a heuristic for which one would not find a counterexample, then one would still have to prove this heuristic.

In Section~\ref{sec:formula-cutoff}, we will eventually prove a formula for a sufficiently large $\pcutoff(\DAG)$ by considering bounds for minimal non-negative integer solutions of linear Diophantine equations. However, this formula is far from obvious and we expect an explicit common-ancestor search in the corresponding time window to be computationally more expensive than a direct application of the intermediate number-theoretic result from which the formula is derived.

\subsection{General solution to the common-ancestor search in ts-DAGs} \label{subsec:general-solution}
In this section, we present the main results of Section~\ref{sec:refined-common-ancestor}. These results provide a general solution to Problem~\ref{problem:common-ancestor}.

\subsubsection{Time series DAGs as multi-weighted directed graphs} \label{subsubsec:multi-weighted}
In order to express our results on the common-ancestor search in ts-DAGs efficiently, we reformulate the information contained in ts-DAGs and, correspondingly, Problem~\ref{problem:common-ancestor} in the language of \emph{multi-weighted directed graphs}. This reformulation gives us access to standard techniques of discrete graph combinatorics. We begin with the definition of multi-weighted directed graphs.

\begin{definition}[Multi-weighted directed graph] \label{Definitionm-w-graph}
A \emph{multi-weighted directed graph (MWDG)} is a tuple $(\cG,\w)$ of a directed graph $\cG = (\VarIndices,\DirectedEdges)$ and a collection $\w = (\weightset{e})_{e \in \DirectedEdges}$ of multi-weights $\weightset{e}$. The multi-weights $\emptyset \neq \weightset{e} \subsetneq \mathbb N_0$ are \emph{finite} non-empty sets of non-negative integers. We call $w(e) \in \weightset{e}$ a \emph{weight} of $e$.
\end{definition}

\noindent A multi-weighted directed graph $(\cG,\w)$ is \emph{weakly acyclic} if it satisfies both of the following conditions:
\begin{enumerate}
\item If $e = i \tailhead i$, then $0 \notin \weightset{e}$.
\item The directed graph $\Graph^0 = (\VarIndices, \DirectedEdges^0)$ with $\DirectedEdges^0 = \{e \in \DirectedEdges ~\vert~ 0 \in \weightset{e} \}$ is acyclic.
\end{enumerate}
For $\pi$ a trivial or directed walk in a MWDG $\cG$, we define its multi-weight $\weightset{\pi}$ as
\begin{align*}
\weightset{\pi} = \begin{cases}
\left\{ \sum_{e \in \mathbf{E}_{\pi}} w(e) ~\middle\vert~ w(e) \in \weightset{e}\right\} \equiv \sum_{e \in \mathbf{E}_{\pi}} \weightset{e} \quad &\text{if $\pi$ is directed} \\
\{0\} \quad &\text{if $\pi$ is trivial}
\end{cases}
\end{align*}
where $\mathbf{E}_{\pi}$ denotes the sequence of edges on $\pi$. We call $w(\pi) \in \weightset{\pi}$ a \emph{weight} of $\pi$. To abbreviate notation below, given two sets $A,B \subseteq \mathbb N_0$ and a non-negative integer $n$ we define
\begin{align*}
A + B = \left\{ a +b ~\vert~ \in A, \ b \in B \right\} \quad \text{and} \quad n \cdot A = \left\{n \cdot a ~\vert~ a \in A \right\} \, .
\end{align*}

In order to understand the connection between ts-DAGs and weakly acyclic MWDGs, we employ the following definition.
\begin{definition} \label{Definitionm-w-summary-graph}
Let $\DAG = (\VarIndices \times \mathbb{Z}, \DirectedEdges)$ be a ts-DAG. Then, the \emph{multi-weighted summary graph $\mathcal{S}_{\w}(\DAG)$ of $\DAG$} is the multi-weighted directed graph $\mathcal{S}_{\w}(\DAG) = (\mathcal{S}(\DAG), \w)$ where
\begin{itemize}
\item $\mathcal{S}(\DAG)$ is the summary graph of $\DAG$ in the sense of Definition~\ref{def:summary-graph} and 
\item $\w = (\weightset{i \tailhead j})_{i \tailhead j \in \DirectedEdges}$ with $\weightset{i \tailhead j} = \{ \tau ~\vert~ (i, t-\tau) \tailhead (j, t) \in \DAG\}$.
\end{itemize}
\end{definition}

\begin{figure}[tb]
    \centering
    \includegraphics[scale=0.25]{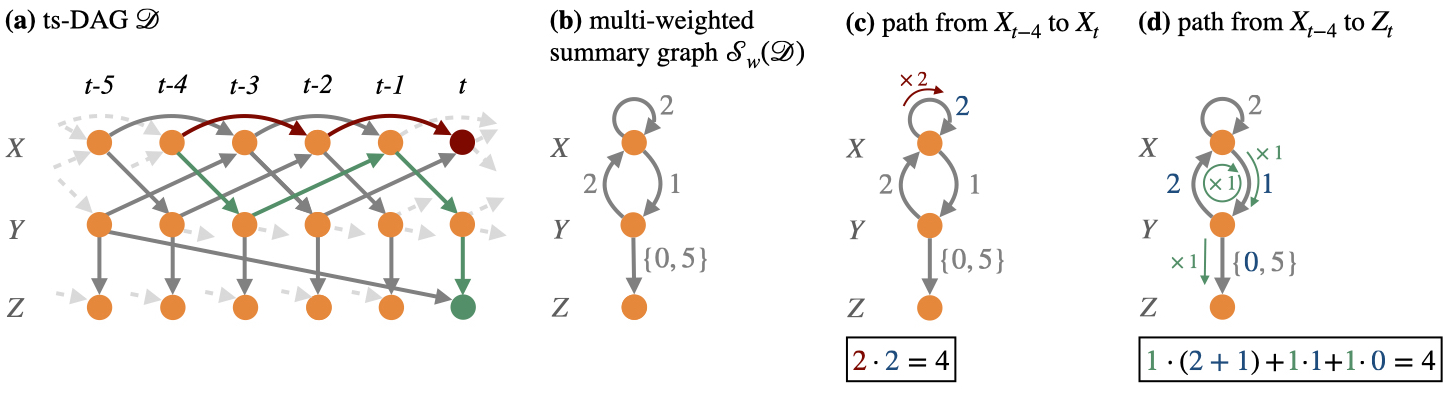}
    \caption{\textbf{(a)} A ts-DAG in which we wish to find common ancestor of $X_t$ and $Z_t$. \textbf{(b)} The multi-weighted summary graph of this ts-DAG. We display the multi-weights next to the corresponding edges; for example, $\w(Y \tailhead Z) = \{0, 5\}$ because in the ts-DAG there are the edges $Y_t \tailhead Z_t$ and $Y_{t-5} \tailhead Z_t$. For multi-weights with only a single element, for example $\w(X \tailhead X) = \{2\}$, we omit the set brackets for simplicity. \textbf{(c)} Illustration of how the red-colored path $\pi_1 = (X_{t-4} \tailhead X_{t-2} \tailhead X_t)$ in the ts-DAG maps to the multi-weighted summary graph, including the information on the weight $\weight{\pi_1} = 4 \in \w(\pi_1)$ of this path. \textbf{(d)} Illustration of how the green-colored path $\pi_2 = (X_{t-4} \tailhead Y_{t-3} \tailhead X_{t-2} \tailhead Y_t \tailhead Z_t)$ in the ts-DAG maps to the multi-weighted summary graph, including the information on the weight $\weight{\pi_2} = 4 \in \w(\pi_2)$ of this path. See also Examples~\ref{example:running-example-1}, \ref{example:running-example-2}, \ref{example:running-example-3} and \ref{example:running-example-4}.}
    \label{fig:multi_weighted_summary_graph}
\end{figure}

\noindent Parts (a) and (b) of Figure~\ref{fig:multi_weighted_summary_graph} illustrate the concept of multi-weighted summary graphs. Due to acyclicity and the absence of self edges, the multi-weighted summary graph $\mathcal{S}_{\w}(\DAG)$ of a ts-DAG $\DAG$ is a weakly acyclic MWDG. Conversely, a weakly acyclic MWDG $(\Graph, \w)$ where $\Graph = (\VarIndices, \DirectedEdges^{\Graph})$ induces the ts-DAG $\DAG = (\VarIndices \times \mathbb{Z}, \DirectedEdges)$ where $\DirectedEdges = \{(i, t-\tau) \tailhead (j, t) ~\vert~ (i, j) \in \DirectedEdges^{\Graph}, \tau \in \w(i\tailhead j), t\in \mathbb{Z}\}$. These mappings between ts-DAGs and weakly acyclic MWDGs are one-to-one and inverses of each other. Consequently, we can re-express every statement about ts-DAGs on the level of weakly acyclic MWDGs. The following result gives the corresponding reformulation of common ancestorship in ts-DAGs.

\begin{proposition}\label{prop:common-ancestorship-reexpression}
Let $\DAG$ be a ts-DAG, and let $(i,t-\tau)$ and $(j,t)$ with $\tau \geq 0$ be two vertices in $\DAG$. Then, $(i,t-\tau)$ and $(j,t)$ have a common ancestor in $\DAG$ if and only if in the multi-weighted summary graph $\mathcal{S}_{\w}(\DAG)$ of $\DAG$ there are trivial or directed walks $\pi$ and $\pi^\prime$ that satisfy all of the following conditions:
\begin{enumerate}
\item Their first vertex is equal, that is, $\pi(1) = \pi'(1)$. 
\item They respectively end at the vertices $i$ and $j$, that is, $\pi(\length{\pi}) = i$ and $\pi'(\length{\pi'}) = j $.
\item They have weights $w(\pi) \in \weightset{\pi}$ and $w(\pi') \in \weightset{\pi'}$ with $w(\pi)+\tau = w(\pi')$.
\end{enumerate}
\end{proposition}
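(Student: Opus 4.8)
The plan is to prove the equivalence by transporting walks back and forth between $\DAG$ and its multi-weighted summary graph $\mathcal{S}_{\w}(\DAG)$, using the one-to-one correspondence between ts-DAGs and weakly acyclic MWDGs recorded in the paragraph after Definition~\ref{Definitionm-w-summary-graph}; concretely, the directed edges of $\DAG$ are exactly the edges $(k, s-\tau') \tailhead (l, s)$ where $k \tailhead l$ is an edge of the summary graph $\SummaryGraph{\DAG}$, $\tau' \in \weightset{k \tailhead l}$, and $s \in \mathbb{Z}$ is arbitrary. I would first record the following walk dictionary, both halves of which are immediate from Definition~\ref{Definitionm-w-summary-graph} and the weight identities recalled at the end of Section~\ref{sec:background-time-series-models}. \emph{Projection:} if $\rho$ is a directed walk in $\DAG$ from $(k,s)$ to $(l,u)$ (so $s \leq u$ by time order), then forgetting time indices turns $\rho$ into a directed walk $\pi$ in $\mathcal{S}_{\w}(\DAG)$ from $k$ to $l$, the lag of every edge of $\rho$ lies in the corresponding multi-weight, and hence $\weight{\rho} = u - s \in \weightset{\pi}$. \emph{Lifting:} conversely, given a directed walk $\pi$ in $\mathcal{S}_{\w}(\DAG)$ from $k$ to $l$, a choice of edge weights realising some $w(\pi) \in \weightset{\pi}$, and any $u \in \mathbb{Z}$, reading $\pi$ from $l$ backwards and successively lifting each edge to the $\DAG$-edge whose head occupies the time index already fixed and whose lag equals the chosen weight produces a directed walk $\rho$ in $\DAG$ from $(k, u - w(\pi))$ to $(l, u)$. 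Trivial walks correspond to trivial walks of weight $0$ under both maps.

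For the ``only if'' direction, suppose $(i, t-\tau)$ and $(j, t)$ have a common ancestor $(k, s)$ in $\DAG$, witnessed by trivial or directed walks $\rho$ from $(k,s)$ to $(i, t-\tau)$ and $\rho'$ from $(k,s)$ to $(j, t)$. Projecting both gives trivial or directed walks $\pi$ from $k$ to $i$ and $\pi'$ from $k$ to $j$ in $\mathcal{S}_{\w}(\DAG)$, which share the first vertex $k$ (Condition~1) and end at $i$ and $j$ (Condition~2). Setting $w(\pi) := \weight{\rho} = (t-\tau) - s \in \weightset{\pi}$ and $w(\pi') := \weight{\rho'} = t - s \in \weightset{\pi'}$, we obtain $w(\pi) + \tau = (t - \tau - s) + \tau = t - s = w(\pi')$, which is Condition~3.

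For the ``if'' direction, suppose trivial or directed walks $\pi$ (from $k$ to $i$) and $\pi'$ (from $k$ to $j$) in $\mathcal{S}_{\w}(\DAG)$ carry weights $w(\pi) \in \weightset{\pi}$ and $w(\pi') \in \weightset{\pi'}$ with $w(\pi) + \tau = w(\pi')$. Lifting $\pi$ with target time $u = t - \tau$ yields a trivial or directed walk $\rho$ in $\DAG$ from $(k, (t-\tau) - w(\pi))$ to $(i, t-\tau)$, and lifting $\pi'$ with target time $u = t$ yields a trivial or directed walk $\rho'$ from $(k, t - w(\pi'))$ to $(j, t)$. Because $(t-\tau) - w(\pi) = t - (\tau + w(\pi)) = t - w(\pi')$, the walks $\rho$ and $\rho'$ have the same first vertex $(k, t - w(\pi'))$, so this vertex is a common ancestor of $(i, t-\tau)$ and $(j, t)$ in $\DAG$.

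I do not expect a genuine obstacle here; the step that demands the most care is the bookkeeping of time indices — consistently reading the weight of a directed walk as the non-negative decrease of time index between its endpoints, and treating trivial walks as the weight-$0$ degenerate case so that the argument simultaneously covers $i = j$ with $\tau = 0$ and, more generally, all four situations listed in Remark~\ref{remark:bidirected-edges-tsADMG-simple}. The one place where the ts-DAG structure is genuinely used is the lifting step: it is exactly the repeating-edges property of $\DAG$ (encoded in the MWDG correspondence by the freedom to take $s \in \mathbb{Z}$ arbitrary) that guarantees each lifted edge is indeed an edge of $\DAG$, so that the constructed $\rho$ and $\rho'$ really are walks in $\DAG$.
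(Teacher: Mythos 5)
Your proposal is correct and follows essentially the same route as the paper's proof: project walks to the multi-weighted summary graph by forgetting time indices for the ``only if'' direction, and lift walks back into the ts-DAG via the repeating-edges property with matched weights for the ``if'' direction. The only (harmless) differences are cosmetic: you work directly with the two directed-or-trivial walks emanating from the common ancestor rather than splitting a single confounding/directed path at its root vertex, and you skip the paper's final step of trimming the concatenated lifted walk into a path, which is indeed unnecessary since ancestorship is defined via walks.
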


Using Proposition~\ref{prop:common-ancestorship-reexpression}, we can reformulate Problem~\ref{problem:common-ancestor} on the level of MWDGs as follows.

\begin{problem} \label{prob1}
Let $(\cG,\w)$ be a weakly acyclic MWDG, let $\tau \in \mathbb{N}_0$, and let $i$, $j$ and $k$ be (not necessarily distinct) vertices in $\cG$. Decide in finite time whether
\begin{align*}
\underbrace{\left(\bigcup_{\pi \in \cW(k,i)} \weightset{\pi} + \{ \tau \} \right) }_{\equiv \mathbf{W}_{\tau}(k,i)}\cap \underbrace{\left(\bigcup_{\pi' \in \cW(k,j)} \weightset{\pi'}\right)}_{\equiv \mathbf{W}_{0}(k,j)} \neq \emptyset.
\end{align*}
Here, $\cW(k,i\neq k)$ is the set of directed walks in $\cG$ from $k$ to $i$ and $\cW(k,i=k)$ is the set of directed walks in $\cG$ from $k$ to $i=k$ plus the trivial walk $(i)$; similarly for $\cW(k,j)$.
\end{problem}

\noindent The following example illustrates Proposition~\ref{prop:common-ancestorship-reexpression} and Problem~\ref{prob1}.

  \begin{example}\label{example:running-example-1}
Consider the ts-DAG in part (a) of Figure~\ref{fig:multi_weighted_summary_graph}. In this graph, $X_{t-4}$ is an ancestor of $X_t$ through the red-colored directed path $\rho = (X_{t-4} \tailhead X_{t-2} \tailhead X_t)$ and an ancestor of $Z_t$ through the green-colored directed path $\rho^\prime = (X_{t-4} \tailhead Y_{t-3} \tailhead X_{t-1} \tailhead Y_t \tailhead Z_t)$. Consequently, $X_{t-4}$ is a common ancestor of $X_t$ and $Z_t$ in the ts-DAG. Now turn to the corresponding multi-weighted summary graph in part (b) of the same figure. In this graph, as parts (c) and (d) of the same figure illustrate, there are the corresponding directed walks $\pi = (X \tailhead X \tailhead X)$ and $\pi^\prime = (X \tailhead Y \tailhead X \tailhead Y \tailhead Z)$. Note that we can respectively map $\rho$ and $\rho^\prime$ to $\pi$ and $\pi^\prime$ by removing the time indices from the vertices on $\rho$ and $\rho^\prime$. For the multi-weights of $\pi$ and $\pi^\prime$ we find $\w(\pi) = 2\cdot \w(X \tailhead X) = 2 \cdot \{2\} = \{4\}$ and $\w(\pi^\prime) = 2 \cdot \w(X\tailhead Y) + 2 \cdot \w(Y\tailhead X) + 2 \cdot \w(Y \tailhead Z) = 2 \cdot \{2\} + 2 \cdot \{1\} + 1 \cdot \{0, 5\} = \{4, 9\}$. Thus, setting $i = X$, $j = Z$ and $\tau = 0$, we see that $\pi$ and $\pi^\prime$ satisfy all three properties listed in Proposition~\ref{prop:common-ancestorship-reexpression} (with $\weight{\pi} = \weight{\pi} +\tau  = \weight{\pi^\prime} = 4$). Moreover, setting $k = X$, we see that the set intersection in Problem~\ref{prob1} is indeed non-empty.
\end{example}

\subsubsection{Solution of Problem~\ref{prob1}} \label{subsubsec.solution-to-problem}
To express our solution of Problem \ref{prob1}, given a tuple $(a_0,a_1,\dots,a_{\mu})$ where $0 \leq \mu < \infty$ and $a_0 \in \mathbb{N}_0$ and $a_\alpha \in \mathbb{N}$ for all $1 \leq \alpha \leq \mu$, we define the affine convex cone over non-negative integers of that tuple as
\begin{align}\label{eq:cones}
\mathrm{con}(a_0;a_1,\dots,a_{\mu}) = \left\{ a_0 + \sum_{\alpha=1}^{\mu} n_{\alpha} \cdot a_{\alpha} \ \bigg| \ n_{\alpha} \in \mathbb{N}_0 \text{ for all } 1 \leq \alpha \leq \mu \right\}.
\end{align}
Our solution of Problem \ref{prob1} rests on the following lemma that specifies a finite decomposition of the sets $\mathbf{W}_{\tau}(k,i)$ into such cones.

\begin{lemma} \label{thm.main1}
Let $(\cG,\w)$ be a weakly acyclic MWDG, let $\tau \in \mathbb{N}_0$, and let $i$ and $k$ be (not necessarily distinct) vertices in $\cG$. Then, there exist \emph{finite} sets
\begin{itemize}
\item $\cW_0(k,i)$,
\item $\mathcal{M}_{\pi}$ for every $\pi \in \cW_0(k,i)$,
\item and $D_{\tau}(\pi,\mathcal S)$ for every $\pi \in \cW_0(k,i)$ and every $\mathcal{S} \in \mathcal{M}_{\pi}$
\end{itemize}
such that
\begin{align}
\mathbf{W}_{\tau}(k,i) = \bigcup_{\pi\in \cW_0(k,i)}\, \bigcup_{\mathcal{S} \in \mathcal{M}_{\pi}} \,\bigcup_{(a_0,\dots,a_{\mu}) \in D_{\tau}(\pi,\mathcal S)}\mathrm{con}(a_0;a_1,\dots,a_{\mu}) \, . \label{eq:main-decomposition}
\end{align}
\end{lemma}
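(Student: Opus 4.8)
The plan is to replace the infinite index set $\cW(k,i)$ by finitely much combinatorial data. Since the weight set $\weightset{\pi}$ depends on a walk $\pi$ only through its multiset of traversed edges, the first step is a normal form for such multisets. A classical fact — flow decomposition combined with Euler's criterion for directed multigraphs — says that a multiset $M$ of edges of $\cG$ is the edge multiset of some directed walk from $k\neq i$ to $i$ exactly when $M$ has out-degree minus in-degree equal to $+1$ at $k$, $-1$ at $i$, and $0$ at every other vertex, and the support of $M$ is connected. Consequently every $\pi\in\cW(k,i)$ has edge multiset $M_p+\sum_{c\in\mathcal C}n_cM_c$ with $p$ a simple path from $k$ to $i$, $\mathcal C$ a finite set of simple cycles, $n_c\ge 1$, and $\mathrm{supp}(p)\cup\bigcup_{c\in\mathcal C}\mathrm{supp}(c)$ connected; conversely every such combination is the edge multiset of some walk. (For $k=i$ the path part is the trivial walk, carrying no edges, and the trivial walk itself corresponds to $\mathcal C=\emptyset$.) As $\cG$ is finite there are only finitely many admissible pairs $(p,\mathcal C)$.

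Next I would compute the weight set of such a template. The subtle point is that the multi-weight of a walk chooses an edge-weight \emph{independently for each traversal} — the sum defining $\weightset{\pi}$ runs over the sequence $\mathbf E_\pi$, not over distinct edges. Matching the $M_p(e)+\sum_{c\ni e}n_c$ traversals of each edge $e$ bijectively to one ``slot'' of $p$ or of a copy of a cycle $c\ni e$ shows that the weight set of a walk with edge multiset $M_p+\sum_cn_cM_c$ equals $\weightset p+\sum_{c\in\mathcal C}\bigl(\underbrace{\weightset c+\dots+\weightset c}_{n_c}\bigr)$, where $\weightset p=\sum_{e\in p}\weightset e$ and $\weightset c=\sum_{e\in c}\weightset e$. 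Hence
\begin{align*}
\mathbf W_\tau(k,i)=\{\tau\}+\bigcup_{p}\ \bigcup_{\mathcal C}\ \bigcup_{n_c\ge 1}\Bigl(\weightset p+\sum_{c\in\mathcal C}(\underbrace{\weightset c+\dots+\weightset c}_{n_c})\Bigr),
\end{align*}
with the first two unions finite.

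It then remains to turn each template into finitely many cones. For a nonempty finite $A\subseteq\mathbb N_0$ with nonzero elements $a_1,\dots,a_r$ one has $\bigcup_{n\ge 1}(\underbrace{A+\dots+A}_{n})=\bigcup_{v\in A}\con{v;a_1,\dots,a_r}$, a short numerical-semigroup computation. Applying this to each $\weightset c$, distributing the Minkowski sum over the finite sets $\weightset p$ and $\weightset c$, and absorbing $\tau$ into the apex, each inner union becomes a finite union of cones $\con{\tau+u+\sum_{c\in\mathcal C}v_c;\,a_1,\dots,a_\mu}$ indexed by $u\in\weightset p$ and $(v_c)_{c\in\mathcal C}$ with $v_c\in\weightset c$, where $a_1,\dots,a_\mu$ enumerate the nonzero elements of $\bigcup_{c\in\mathcal C}\weightset c$. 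This is the asserted decomposition: take $\cW_0(k,i)$ to be the simple paths from $k$ to $i$ (plus the trivial walk when $k=i$), $\mathcal M_\pi$ the finite set of cycle-collections $\mathcal C$ with $\mathrm{supp}(\pi)\cup\bigcup_{c\in\mathcal C}\mathrm{supp}(c)$ connected, and $D_\tau(\pi,\mathcal S)$ the finite collection of tuples just produced.

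The main obstacle is the exactness claimed in the first step: one needs not just that path-plus-cycles combinations give walks, but the converse characterization of \emph{all} realizable edge multisets (in particular the necessity of the connectivity condition), since this is what keeps \eqref{eq:main-decomposition} an equality rather than a mere inclusion. The second delicate point is the bookkeeping in the second step, where one must carry $n$-fold Minkowski sums $\weightset c+\dots+\weightset c$ and not collapse them to the strictly smaller scalings $n\cdot\weightset c$. Granting these, the third step is routine Minkowski-sum and cone arithmetic. (Weak acyclicity is not needed for this lemma; it is inherited from the standing setup and used elsewhere to tie $\cG$ to a genuine ts-DAG.)
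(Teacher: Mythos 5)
Your argument is correct, and it takes a genuinely different route from the paper's. The paper stays at the level of walks: it defines an iterative ``cycle resolution'' that strips the first irreducible cycle off a walk until a cycle-free base walk remains, proves that weight sets add along this resolution, characterizes the admissible sets of cycle classes as a set monoid generated by paths on a ``graph of cycles'' starting at the touch set of the base walk, and finally compresses the resulting union of positive cones into non-negative cones via a closure/access-point construction. You instead forget the walk and classify the realizable edge \emph{multisets} by the directed-Euler/flow-decomposition criterion (divergence $+1/-1/0$ plus connected support), which collapses the paper's insertion/removal surgery and monoid machinery into one classical fact; your admissibility condition on $(\pi,\mathcal C)$ (connectedness of $\mathrm{supp}(\pi)\cup\bigcup_{c}\mathrm{supp}(c)$) is equivalent to the paper's condition that every component of the induced subgraph of the graph of cycles on $\mathcal S$ meets $\mathrm{touch}(\pi)$, so the two index sets match up. What the paper's longer route buys is compression: its final $\mathcal{M}_{\pi}$, built from access points, can be substantially smaller than the set of all admissible cycle collections you union over, which reduces the number of Diophantine equations passed to Theorem~\ref{cor.prob1}; your decomposition is a correct drop-in replacement (and yields the same apex bound $a_0\le\tau+L+M$ needed for Theorem~\ref{thm.upper-bound-main-theorem}) but would benefit from the same reduction. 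The one genuine obligation you leave open is the exactness of the Euler characterization, but you name it precisely and it is indeed the standard theorem.

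Your second ``delicate point'' deserves emphasis, because your bookkeeping there is sharper than the paper's written argument. Since $\weightset{\pi}\equiv\sum_{e\in\mathbf E_\pi}\weightset{e}$ is a Minkowski sum over edge \emph{occurrences} (as the proof of Proposition~\ref{prop:common-ancestorship-reexpression} requires: distinct traversals of the same summary edge may carry distinct lags), a cycle class $\c$ traversed $n$ times contributes the $n$-fold Minkowski power of $\weightset{\c}$, not the scaling $n\cdot\weightset{\c}$: for $\weightset{\c}=\{2,3\}$ the value $5$ is attainable with two traversals but lies in no $n\cdot\{2,3\}$. Your identity $\bigcup_{n\ge1}(\weightset{\c}+\cdots+\weightset{\c})=\bigcup_{v\in\weightset{\c}}\mathrm{con}(v;a_1,\dots,a_r)$, with $a_1,\dots,a_r$ \emph{all} elements of $\weightset{\c}$ as generators of a single cone, is exactly what is needed here; a formulation that attaches only one generator $w(\c)\in\weightset{\c}$ per cycle class, as in the paper's expression for $B_\tau((\pi',\mathcal S))$ and its tuple sets, yields $\mathrm{con}(a_0;2)\cup\mathrm{con}(a_0;3)$ where $\mathrm{con}(a_0;2,3)$ is required and thus misses such mixed weights. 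So your care on this point is not pedantry; it is necessary for the equality in eq.~\eqref{eq:main-decomposition}.
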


\noindent Here, $\cW_0(k, i \neq k )$ is the finite set of cycle-free directed walks from $k$ to $i$ in the finite graph $\cG$ and $\cW_0(k,i=k) = \{(k)\}$ is the single-element set consisting of the trivial walk $(k)$. As their construction is more involved, we postpone the definitions of the sets $\mathcal{M}_{\pi}$ and $D_{\tau}(\pi,\mathcal S)$ to the proof of Lemma~\ref{thm.main1} in Section~\ref{subsec.proof-of-decomposition-theorem}. Instead, we here give a intuitive justification by means of Remark~\ref{remark:theorem-1} (readers can skip this remark without loosing the conceptual flow of this section).

\begin{remark}\label{remark:theorem-1}
By definition of the set $\mathbf{W}_{\tau}(k,i)$, we see that $w \in \mathbf{W}_{\tau}(k,i)$ if and only if there is a directed walk $\pi_{cyc}$ from $k$ to $i$ (or, if $k=i$, a trivial walk $\pi_{cyc}$ consisting of $k=i$ only) such that $w = \tau + w(\pi_{cyc})$ with $w(\pi_{cyc}) \in \w(\pi_{cyc})$. We now face the complication that, since the graph $\Graph$ can be cyclic (recall that the summary graph $\mathcal{S}(\DAG)$ of a ts-DAG $\DAG$ can be cyclic), there potentially is an infinite number of directed walks. This potential infiniteness is the manifestation of the potentially infinite number of paths that might give rise to common ancestorship in ts-DAGs as faced when approaching the problem in the formulation of Problem~\ref{problem:common-ancestor}. The idea to work around this complication is as follows: Given a in general cyclic directed walk $\pi_{cyc}$ from $k$ to $i$, we can map $\pi_{cyc}$ to a cycle-free path $\pi \in \cW_0(k,i)$ from $k$ to $i$ by ``collapsing'' the cycles on $\pi_{cyc}$.\footnote{In general, $\pi$ depends on the order in which we ``collapse'' the cycles on $\pi_{cyc}$. This non-uniqueness is, however, not relevant to the argument.} Conversely, we can obtain $\pi_{cyc}$ from $\pi$ by ``inserting'' certain cycles a certain number of times in a certain order. Consequently, there are non-negative integers $n_{1}, \ldots, n_{\mu}$ where $0 \leq \mu < \infty$ such that $\w(\pi_{cyc}) \subseteq \w(\pi) + n_1 \cdot \w(c_1) + \dots + n_\mu \cdot \w(c_\mu)$ where $c_1, \ldots, c_\mu$ are cycles in $\Graph$. Since we can obtain every cycle by appropriately combining irreducible cycles, we can without loss of generality assume that the $c_1, \ldots, c_\mu$ are irreducible. We now wish to set $a_0 = \tau + w(\pi)$ where $w(\pi) \in \w(\pi)$ and $a_\alpha = w(c_{\alpha})$ where $w(c_{\alpha}) \in \w(c_{\alpha})$ for all $1 \leq \alpha \leq \mu$ and take the union over $\mathrm{con}(a_0;a_1,\dots,a_{\mu})$ for all such tuples $(a_0;a_1,\dots,a_{\mu})$. There are at most finitely many of such tuples because every walk has a finite set of multi-weights, and $a_\alpha \neq 0$ due to weak acyclicity. This choice of tuples is correct if all of the irreducible cycles $c_1, \ldots, c_\mu$ intersect $\pi$. In general, however, some of the $c_1, \ldots, c_\mu$ might not intersect $\pi$. For example, for $\mu = 2$ it could be that $c_2$ intersects $c_1$ but not $\pi$, thus translating to the constraint that $n_2 > 0$ only if $n_1 > 0$. In order to avoid such constraints, we instead extend $\pi$ to the walk $\pi^{ext}$ by ``inserting'' the cycle $c_1$ once into $\pi$. Then, the above argument applies with $\pi^{ext}$ replacing $\pi$, and we let both $\pi$ and $\pi^{ext}$ be elements of $\mathcal{M}_{\pi}$.\footnote{Formally, $\mathcal{M}_{\pi}$ is not a set of walks. Thus, to be formally correct, we should rather say ``we let both $\pi$ and $\pi^{ext}$ correspond to an element of $\mathcal{M}_{\pi}$'', but we use the above informal formulation for simplicity.} We might need to add other such extensions of $\pi$ to $\mathcal{M}_{\pi}$ and might even need to build extensions of $\pi^{ext}$ (for example, if $\mu = 3$ and $c_3$ intersects $c_2$ but neither of $c_1$ and $\pi$, and $c_2$ intersects $c_1$ but not $\pi$), which too are added to $\mathcal{M}_{\pi}$. However, since there are only finitely many irreducible cycles, this process terminates and the set $\mathcal{M}_{\pi}$ remains finite.
\end{remark}

To illustrate Lemma~\ref{thm.main1}, the following examples continue Example~\ref{example:running-example-1}.

\begin{example}\label{example:running-example-2}
For the multi-weighted summary graph in part (b) of Figure~\ref{fig:multi_weighted_summary_graph}, the set $\cW_0(k = X, j = Z)$ contains the element $\tilde{\pi}^\prime = (X \tailhead Y \tailhead Z)$. This walk intersects the irreducible cycles $c_1 = (X \tailhead X)$ and $c_2 = (X \tailhead Y \tailhead X)$. We can ``insert'' these cycles an arbitrary number of times into $\tilde{\pi}^\prime$, say $n^\prime_1$ and $n^\prime_2$ times, to obtain a directed walk $\tilde{\pi}_{n^\prime_1, n^\prime_2}^\prime \in \cW(k = X, j = Z)$ from $X$ to $Z$ with $\w(\tilde{\pi}^\prime_{n^\prime_1, n^\prime_2}) = \w(\tilde{\pi}^\prime) + n^\prime_1 \cdot \w(c_1) + n^\prime_2 \cdot \w(c_2)$. We thus see that $\w(\tilde{\pi}_{n^\prime_1, n^\prime_2}^\prime) \in \mathbf{W}_{\tau = 0}(k = X, j = Z)$ for all $(n^\prime_1, n^\prime_2) \in \mathbb{N}_0 \times \mathbb{N}_0$ and, hence, find the inclusion $\mathrm{con}(a_0; a_1, a_2) \subseteq \mathbf{W}_{\tau = 0}(k = X, j = Z)$ for all $(a_0; a_1, a_2) \in (\{\tau = 0\} + \w(\tilde{\pi}^\prime)) \times \w(c_1) \times \w(c_2) = \{1, 6\} \times \{2\} \times \{3\} =  \{(1; 2, 3), (6; 2, 3)\}$. Also note that $\tilde{\pi}^\prime_{0, 1} = \pi^\prime$ and $\weight{\rho^\prime} = 4 \in \w(\pi^\prime) = \w(\tilde{\pi}^\prime_{0, 1})$ with $\rho^\prime = (X_{t-4} \tailhead Y_{t-3} \tailhead X_{t-1} \tailhead Y_t \tailhead Z_t)$ and $\pi^\prime = (X \tailhead Y \tailhead X \tailhead Y \tailhead Z)$ as in Example~\ref{example:running-example-1}. Conversely, $\pi^\prime$ maps to $\tilde{\pi}^\prime$ by ``collapsing'' the irreducible cycle $c_2 = (X \tailhead Y \tailhead X)$ on $\pi^\prime$ to the single vertex $(X)$.

The set $\cW_0(k = X, i = X)$ contains the trivial walk $\tilde{\pi} = (X)$. Using arguments along the same lines as before, we see that $\mathrm{con}(a_0; a_1, a_2) \subseteq \mathbf{W}_{\tau = 0}(k = X, i = X)$ for all $(a_0; a_1, a_2) \in (\{\tau = 0\} + \w(\tilde{\pi})) \times \w(c_1) \times \w(c_2) = \{0\} \times \{2\} \times \{3\} = \{(0; 2, 3)\}$.
\end{example}

The importance of Lemma~\ref{thm.main1} for our purpose lies in two facts: First, the union on the right-hand-side of eq.~\eqref{eq:main-decomposition} is over a \emph{finite} number of affine cones $\mathrm{con}(a_0;a_1,\dots,a_{\mu})$. Second, while the cones $\mathrm{con}(a_0;a_1,\dots,a_{\mu})$ themselves can be infinite (the cone is infinite if and only if $\mu > 0$), they take the particular form as specified by the right-hand-side of eq.~\eqref{eq:cones}. Intuitively speaking, we have thus found regularity in the potentially infinite set $\mathbf{W}_{\tau}(k,i)$. Utilizing this regularity, we can solve Problem~\ref{prob1} as follows.

\begin{theorem}[Solution of Problem \ref{prob1}] \label{cor.prob1}
Let $(\cG,\w)$ be a weakly acylic MWDG, let $\tau \in \mathbb{N}_0$ be a non-negative integer, and let $i, j, k$ be (not necessarily distinct) vertices in $\cG$. Then, 
\begin{align*}
\mathbf{W}_{\tau}(k,i) \cap \mathbf{W}_{0}(k,j) \neq \emptyset
\end{align*}
if and only if there exist
\begin{itemize}
\item $ (a_0,\dots,a_{\mu}) \in D_{\tau}(\pi,\mathcal S)$ where $\mathcal{S} \in \mathcal{M}_{\pi}$ and $\pi \in  \cW_0(k,i)$, as well as
\item $ (a_0',\dots,a_{\nu}') \in D_{0}(\pi',\mathcal S')$ where $\mathcal{S}' \in \mathcal{M}_{\pi'}$ and $ \pi' \in  \cW_0(k,j)$ 
\end{itemize}
such that the linear Diophantine equation
\begin{align} \label{eq.new_Diophantine}
 a_0 + \sum_{\alpha=1}^{\mu} n_{\alpha} \cdot a_{\alpha} =  a'_0 + \sum_{\beta=1}^{\nu} n'_{\beta} \cdot a'_{\beta},
\end{align}
has a non-negative integer solution $(n_1,\dots,n_{\mu};n'_1,\dots,n'_{\nu})\in \mathbb{N}_0^{\mu}\times \mathbb{N}_0^{\nu}$. Moreover, we can decide about the existence versus non-existence of such a solution in finite time.
\end{theorem}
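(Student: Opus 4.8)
The plan is to obtain Theorem~\ref{cor.prob1} as an essentially formal consequence of the cone decomposition in Lemma~\ref{thm.main1}, so that the whole argument reduces to elementary bookkeeping plus the (classical) decidability of linear Diophantine feasibility. First I would apply Lemma~\ref{thm.main1} twice: once to $\mathbf{W}_{\tau}(k,i)$ and once to $\mathbf{W}_{0}(k,j)$ (the latter being the instance with $\tau$ replaced by $0$ and $i$ by $j$). This expresses each of the two sets, via eq.~\eqref{eq:main-decomposition}, as a \emph{finite} union of affine cones $\mathrm{con}(a_0;a_1,\dots,a_{\mu})$ of the form \eqref{eq:cones}, indexed in the first case by triples $\big(\pi,\mathcal S,(a_0,\dots,a_{\mu})\big)$ with $\pi\in\cW_0(k,i)$, $\mathcal S\in\mathcal M_{\pi}$, $(a_0,\dots,a_{\mu})\in D_{\tau}(\pi,\mathcal S)$, and analogously in the second case. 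Since set intersection distributes over finite unions, $\mathbf{W}_{\tau}(k,i)\cap\mathbf{W}_{0}(k,j)\neq\emptyset$ holds if and only if some cone occurring in the decomposition of $\mathbf{W}_{\tau}(k,i)$ meets some cone occurring in the decomposition of $\mathbf{W}_{0}(k,j)$.

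The next step is to translate ``two cones meet'' into the Diophantine condition of the theorem. By the definition \eqref{eq:cones}, an integer lies in $\mathrm{con}(a_0;a_1,\dots,a_{\mu})\cap\mathrm{con}(a_0';a_1',\dots,a_{\nu}')$ exactly when it can be written simultaneously as $a_0+\sum_{\alpha}n_{\alpha}a_{\alpha}$ and as $a_0'+\sum_{\beta}n_{\beta}'a_{\beta}'$ for suitable $n_{\alpha},n_{\beta}'\in\mathbb N_0$; eliminating the common value, this is precisely the requirement that the linear Diophantine equation \eqref{eq.new_Diophantine} admit a non-negative integer solution $(n_1,\dots,n_{\mu};n_1',\dots,n_{\nu}')$. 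Chaining this with the previous paragraph yields the stated equivalence, the tuples $(a_0,\dots,a_{\mu})\in D_{\tau}(\pi,\mathcal S)$ and $(a_0',\dots,a_{\nu}')\in D_{0}(\pi',\mathcal S')$ ranging over exactly the cones in the two decompositions.

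For the finite-time decidability I would argue from two observations. First, Lemma~\ref{thm.main1} guarantees that $\cW_0(k,i)$, each $\mathcal M_{\pi}$, and each $D_{\tau}(\pi,\mathcal S)$ (and their primed counterparts) are finite and explicitly constructible; hence only finitely many equations of the form \eqref{eq.new_Diophantine} arise, and the algorithm simply iterates over all pairs of tuples. Second, for a single such equation---rearranged as $\sum_{\alpha}n_{\alpha}a_{\alpha}-\sum_{\beta}n_{\beta}'a_{\beta}'=a_0'-a_0$ with all $a_{\alpha},a_{\beta}'\geq 1$ by weak acyclicity---deciding whether a non-negative integer solution exists is a classical finite-time task, reducible via gcd/B\'ezout bounds on minimal solutions to a bounded search; an explicit effective criterion is provided below in Theorem~\ref{thm:our-linear-diophantine}, which I would invoke here to make the procedure fully constructive. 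Together these give an algorithm that terminates, completing the proof.

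I expect no genuine obstacle inside this proof: all the real difficulty---taming the a priori infinite family of directed walks emanating from $k$ in the possibly cyclic summary graph and repackaging it into a finite collection of finite-dimensional affine cones---has already been absorbed into Lemma~\ref{thm.main1}, so what remains is mostly unwinding definitions and using distributivity of intersections over finite unions. The one point that still needs care is the finite-time decidability of a single linear Diophantine feasibility problem over $\mathbb N_0$; a bare existence statement is classical, but turning it into the concrete computable test we want is exactly the content of Theorem~\ref{thm:our-linear-diophantine}.
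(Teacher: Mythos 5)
Your proposal is correct and follows essentially the same route as the paper's own proof: apply Lemma~\ref{thm.main1} to both $\mathbf{W}_{\tau}(k,i)$ and $\mathbf{W}_{0}(k,j)$, distribute the intersection over the finite unions of cones, observe that two cones of the form \eqref{eq:cones} intersect exactly when eq.~\eqref{eq.new_Diophantine} has a solution in $\mathbb{N}_0^{\mu}\times\mathbb{N}_0^{\nu}$, and defer the finite-time decidability of each such equation to Theorem~\ref{thm:our-linear-diophantine}. You also correctly identify that all the substantive difficulty is absorbed into Lemma~\ref{thm.main1} and Theorem~\ref{thm:our-linear-diophantine}, with the present theorem being essentially bookkeeping.
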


Theorem~\ref{cor.prob1} reduces Problem~\ref{prob1}, and by extension Problem~\ref{problem:common-ancestor} (that is, the common-ancestor search in infinite ts-DAGs) and Problem~\ref{problem:eventual-goal} (that is, the construction of finite marginal ts-ADMGs of infinite ts-ADMGs), to a number theoretic problem: deciding whether one of the finitely many linear Diophantine equations in Theorem~\ref{cor.prob1} admits a solution that consists entirely of \emph{non-negative} integers. Theorem~\ref{thm:our-linear-diophantine}, in Section~\ref{sec:solvability-check} below, shows how to make these decisions in finite time. Combining Theorem~\ref{cor.prob1} and Theorem~\ref{thm:our-linear-diophantine} thus solves Problem~\ref{prob1} and, by extension, also Problem~\ref{problem:common-ancestor} and Problem~\ref{problem:eventual-goal}. In Section~\ref{sec:pseudocode}, we also provide pseudocode for this solution.

\begin{proof}[Proof of Theorem~\ref{cor.prob1}]
Using the decomposition of $\mathbf{W}_{\tau}(k,i)$ and $\mathbf{W}_{0}(k,j)$ as stated by Lemma~\ref{thm.main1}, we see that 
\begin{align*}
\mathbf{W}_{\tau}(k,i) \cap \mathbf{W}_{0}(k,j) \neq \emptyset
\end{align*}
if and only if there are $(a_0,\dots,a_{\mu}) \in D_{\tau}(\pi,\mathcal S)$ where $\mathcal{S} \in \mathcal{M}_{\pi}$ and $ \pi \in  \cW_0(k,i)$ as well as $ (a_0',\dots,a_{\nu}') \in D_{0}(\pi',\mathcal S')$ where $\mathcal{S}' \in \mathcal{M}_{\pi'}$ and $ \pi^\prime \in  \cW_0(k,j)$ such that
\begin{align*}
\mathrm{con}(a_0;a_1,\dots,a_{\mu}) \cap \mathrm{con}(a'_0;a'_1,\dots,a'_{\nu}) \neq \emptyset \, .
\end{align*} 
By definition of the affine cones $\mathrm{con}(a_0;a_1,\dots,a_{\mu})$ and $\mathrm{con}(a'_0;a'_1,\dots,a'_{\nu})$, their intersection is non-empty if and only if eq.~\eqref{eq.new_Diophantine} has a solution in $\mathbb{N}_0^{\mu}\times \mathbb{N}_0^{\nu}$.

Theorem~\ref{thm:our-linear-diophantine} in Section~\ref{sec:solvability-check} below shows the second part of Theorem~\ref{cor.prob1}, namely that we can answer these solvability queries in finite time.
\end{proof}

To illustrate Theorem~\ref{cor.prob1}, the following examples continues Examples~\ref{example:running-example-1} and \ref{example:running-example-2}.

\begin{example}\label{example:running-example-3}
In Example~\ref{example:running-example-2} above, we found that $\mathrm{con}(a_0; a_1, a_2) \subseteq \mathbf{W}_{\tau = 0}(k = X, i = X)$ with $(a_0; a_1, a_2) = (0; 2, 3)$ and $\mathrm{con}(a^\prime_0; a^\prime_1, a^\prime_2) \subseteq \mathbf{W}_{\tau = 0}(k = X, j = Z)$ with $(a^\prime_0; a^\prime_1, a^\prime_2) = (1; 2, 3)$. This combination of tuples gives rise to the linear Diophantine equation
\begin{equation*}
0 + n_1 \cdot 2 + n_2 \cdot 3 = 1 + n_1^\prime \cdot 2 + n_2^\prime \cdot 3 \, ,
\end{equation*}
which has the non-negative integer solution $(n_1, n_2; n_1^\prime, n_2^\prime) = (2, 0; 0, 1)$ that corresponds to the common ancestor $X_{t-4}$ of $X_t$ and $Z_t$ in the ts-DAG in part (a) of Figure~\ref{fig:multi_weighted_summary_graph}, cf.~Example~\ref{example:running-example-1}. Further solutions are $(n_1, n_2; n_1^\prime, n_2^\prime) = (2 + n, 0; n, 1)$ and $(n_1, n_2; n_1^\prime, n_2^\prime) = (n, 1; 1+n, 0)$ for all $n \in \mathbb{N}_0$, from which we conclude that $X_{t-\tau^\prime}$ is a common ancestor of $X_t$ and $Z_t$ for all $\tau^\prime \in \{3, 4, 5, \ldots, \} \subseteq \mathbf{W}_{\tau=0}(k=X,i=X) \cap \mathbf{W}_{0}(k=X,j=Z)$.

We remark that we discuss these explicit solutions solely for illustration, whereas for Theorem~\ref{cor.prob1} we only need to know whether at least one non-negative integer solution exists.
\end{example}

\subsubsection{Existence of non-negative integers solutions of linear Diophantine equations}\label{sec:solvability-check}
Our solution of Problem~\ref{prob1} through Theorem~\ref{cor.prob1} depends on a method for deciding whether a linear Diophantine equation admits a solution that consists of non-negative integers only. Importantly, we only need to decide whether a solution \emph{exists or not}. If a solution exists, then we \emph{do not necessarily need to explicitly find a solution}, although in some subcases we will do so. Not having to find explicit solutions reduces the required computational effort  significantly.

To derive conditions for deciding whether or not eq.~\eqref{eq.new_Diophantine} admits a non-negative integer solution, we make use of the following two standard results from the number theory literature.

\begin{lemma}[See for example \citet{andreescu_introduction_2010}]\label{lemma:linear-diophantine-general}
Let $a_0$ be an integer and let $a_1, \, \ldots, \, a_{\rho}$ with $\rho \geq 1$ be non-zero integers. Then, the linear Diophantine equation $a_0 = \sum_{\alpha \, = \, 1}^\rho n_{\alpha} \cdot a_\alpha$ has an integer solution $(n_1, \, \ldots, \, n_\rho) \in \mathbb Z^{\rho}$ if and only if $a_0 \!\!\mod \!\gcd(a_1, \, \ldots, \, a_\rho) = 0$ where $\gcd(\cdot)$ denotes the greatest common divisor. \hfill $\qedsymbol$
\end{lemma}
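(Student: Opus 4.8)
The plan is to prove the two directions of the equivalence separately; the forward direction is immediate from the definition of the greatest common divisor, while the backward direction rests on the \emph{generalized Bézout identity}.

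For the ``only if'' direction, suppose $(n_1,\dots,n_\rho)\in\mathbb{Z}^\rho$ satisfies $a_0=\sum_{\alpha=1}^\rho n_\alpha a_\alpha$, and write $d=\gcd(a_1,\dots,a_\rho)$. Since $d$ divides each $a_\alpha$, it divides the integer linear combination $\sum_\alpha n_\alpha a_\alpha=a_0$, hence $a_0 \bmod d = 0$. This step needs nothing beyond the definition of $d$ as a common divisor.

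For the ``if'' direction, the key ingredient is that $d$ is itself expressible as an integer linear combination of $a_1,\dots,a_\rho$, i.e.\ there exist $x_1,\dots,x_\rho\in\mathbb{Z}$ with $\sum_{\alpha=1}^\rho x_\alpha a_\alpha = d$. I would establish this by induction on $\rho$: for $\rho=1$ one has $\gcd(a_1)=|a_1|=\pm a_1$; for the inductive step, using $\gcd(a_1,\dots,a_\rho)=\gcd\bigl(\gcd(a_1,\dots,a_{\rho-1}),a_\rho\bigr)$ together with the two-variable Bézout identity $\gcd(g,a_\rho)=yg+za_\rho$ and the induction hypothesis $g=\gcd(a_1,\dots,a_{\rho-1})=\sum_{\alpha=1}^{\rho-1}x'_\alpha a_\alpha$, one obtains the claim by substitution. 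The two-variable Bézout identity follows from the standard observation that $\{xa+yb : x,y\in\mathbb{Z}\}$ is a subgroup of $(\mathbb{Z},+)$, hence equals $c\mathbb{Z}$ for its least positive element $c$, and $c=\gcd(a,b)$ because $c$ divides both $a$ and $b$ (by minimality and the division algorithm) while any common divisor of $a$ and $b$ divides $c$. Granting the representation $d=\sum_\alpha x_\alpha a_\alpha$, if $d\mid a_0$ write $a_0=md$ with $m\in\mathbb{Z}$ and set $n_\alpha=m x_\alpha$; then $\sum_\alpha n_\alpha a_\alpha = m\sum_\alpha x_\alpha a_\alpha = md = a_0$, so a solution exists.

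The only non-trivial point is the generalized Bézout representation; everything else is bookkeeping, and this is classical enough that one may simply invoke it (hence the citation to \citet{andreescu_introduction_2010}), with the inductive argument above available as a self-contained route. One remark worth flagging for the subsequent development: this argument produces an \emph{integer}, not necessarily non-negative, solution, which is precisely why Theorem~\ref{thm:our-linear-diophantine} later requires a separate and more delicate analysis of when a non-negative integer solution exists.
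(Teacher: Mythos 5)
Your proof is correct; note that the paper itself supplies no argument for this lemma, treating it as a standard fact and closing the statement with a citation to \citet{andreescu_introduction_2010}, so your write-up simply fills in the classical textbook argument (divisibility for the ``only if'' direction, generalized B\'ezout by induction for the ``if'' direction) that the reference would contain. Your closing remark correctly identifies why the paper needs the separate, non-negativity-aware analysis of Theorem~\ref{thm:our-linear-diophantine}.
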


\begin{lemma}[See for example \citet{ramirez_alfonsin_diophantine_2005}]\label{lemma:frobenius-number}
Let $\rho \geq 1$ and let $a_1, \, \ldots, \, a_{\rho}$ be positive integers with $\gcd(a_1, \, \ldots, \, a_\rho) = 1$. Then, there is a unique largest integer $f(a_1, \, \ldots, \, a_{\rho})$, known as the \emph{Frobenius number} of $a_1, \, \ldots, \, a_{\rho}$, such that there are no non-negative integers $n_1, \, \ldots, \, n_\rho$ with $\sum_{\alpha \, = \, 1}^\rho n_{\alpha} \cdot a_\alpha= f(a_1, \, \ldots, \, a_{\rho})$. In particular, for all integers $a_0 > f(a_1, \, \ldots, \, a_{\rho})$ \emph{there are} non-negative integers $n_1, \, \ldots, \, n_\rho$ with $a_0 = \sum_{\alpha \, = \, 1}^\rho n_{\alpha} \cdot a_\alpha$. \hfill $\qedsymbol$
\end{lemma}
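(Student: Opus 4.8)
The plan is to prove Lemma~\ref{lemma:frobenius-number} by studying the numerical semigroup $S = \{\sum_{\alpha} n_{\alpha} a_{\alpha} ~\vert~ n_{\alpha} \in \mathbb{N}_0 \text{ for all } 1 \leq \alpha \leq \rho\}$ generated by $a_1, \dots, a_{\rho}$, and establishing two facts: (i) $S$ contains every sufficiently large integer, and (ii) $S$ omits at least one integer (indeed, every negative one). Granting (i) and (ii), the set $\mathbb{Z} \setminus S$ is non-empty and bounded above, hence has a largest element; this element is by definition the Frobenius number $f(a_1, \dots, a_{\rho})$, and its uniqueness is automatic for ``the largest element of a set''. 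The concluding ``in particular'' claim then follows at once: any integer $a_0 > f(a_1, \dots, a_{\rho})$ lies in $S$ precisely because $f(a_1, \dots, a_{\rho})$ is the largest integer \emph{not} in $S$.

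First I would prove (i) by singling out the generator $a_1$ and sorting the elements of $S$ according to their residue modulo $a_1$. Since $\gcd(a_1, \dots, a_{\rho}) = 1$, Lemma~\ref{lemma:linear-diophantine-general} --- applied to the list $a_1, \dots, a_{\rho}$ with an arbitrary right-hand side $r$ --- shows that every residue class modulo $a_1$ is attained by some \emph{integer} combination of $a_2, \dots, a_{\rho}$; adding sufficiently large non-negative multiples of $a_1$ to each coefficient turns this into a \emph{non-negative} combination without altering its residue modulo $a_1$. Hence for each $r \in \{0, 1, \dots, a_1 - 1\}$ there is an element of $S$ congruent to $r$ modulo $a_1$, and I let $m_r$ denote the smallest such element.

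Then, for any integer $w \geq \max_{0 \leq r < a_1} m_r$, setting $r = w \bmod a_1$ we have $w - m_r = q a_1$ with $q \in \mathbb{N}_0$, so $w = m_r + q a_1 \in S$ because $m_r \in S$, $a_1 \in S$, and $S$ is closed under addition; this proves (i). Fact (ii) is immediate, since every element of $S$ is a sum of non-negative multiples of positive integers and therefore non-negative, so no negative integer lies in $S$. Combining, $\mathbb{Z} \setminus S$ is bounded above (by $(\max_{r} m_r) - 1$) and non-empty (it contains $-1$), which makes $f(a_1, \dots, a_{\rho}) := \max (\mathbb{Z} \setminus S)$ well-defined and finishes the argument. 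In the degenerate case where some $a_i = 1$ one gets $S = \mathbb{N}_0$ and $f(a_1, \dots, a_{\rho}) = -1$, and the statement still holds verbatim.

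The step that requires genuine care, as opposed to routine bookkeeping, is the passage from ``every residue class modulo $a_1$ is hit by an integer combination of $a_2, \dots, a_{\rho}$'' to ``every residue class modulo $a_1$ is hit by a \emph{non-negative} integer combination of $a_2, \dots, a_{\rho}$''. Stated structurally, this is the observation that the submonoid of the \emph{finite} group $\mathbb{Z} / a_1 \mathbb{Z}$ generated under addition by the images of $a_2, \dots, a_{\rho}$ is automatically a subgroup --- because in a finite group the inverse of any element is a positive power of it --- and hence equals all of $\mathbb{Z} / a_1 \mathbb{Z}$ as soon as $\gcd(a_1, \dots, a_{\rho}) = 1$. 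Everything else (closure of $S$ under addition, the division-with-remainder step bounding $w$, and extracting a maximum of a bounded-above non-empty subset of $\mathbb{Z}$) is straightforward.
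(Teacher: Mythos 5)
Your proof is correct. Note, however, that the paper does not prove Lemma~\ref{lemma:frobenius-number} at all: it is quoted as a standard result from the literature (\citet{ramirez_alfonsin_diophantine_2005}) with the $\qedsymbol$ placed directly after the statement, so there is no in-paper argument to compare against. What you have supplied is the classical self-contained proof via the numerical semigroup $S$ generated by $a_1,\ldots,a_\rho$, essentially the Ap\'ery-set argument: every residue class modulo $a_1$ contains an element of $S$ (because $\gcd(a_1,\ldots,a_\rho)=1$ gives an integer representation of each residue, and shifting each coefficient by a multiple of $a_1$ makes it non-negative without changing the residue), so every integer at least $\max_r m_r$ lies in $S$, while no negative integer does; hence $\mathbb{Z}\setminus S$ is non-empty and bounded above and its maximum is $f(a_1,\ldots,a_\rho)$. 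All steps check out, including the edge cases $\rho=1$ and $\min_\alpha a_\alpha=1$ (where $f=-1$), and the ``in particular'' clause is indeed immediate from the definition of $f$ as the largest integer outside $S$. The only mild caveat is that your argument establishes existence but gives no quantitative bound on $f$; the paper later relies on the explicit Schur--Brauer bound (Remark~\ref{rem.search-avoiding-condition}), which it likewise imports from the literature rather than proving, so nothing downstream is affected.
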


By itself, Lemma~\ref{lemma:linear-diophantine-general} is not sufficient to decide whether eq.~\eqref{eq.new_Diophantine} has a non-negative integer solution since Lemma~\ref{lemma:linear-diophantine-general} deals with all integer solutions rather than only the non-negative ones. However, in combination with Lemma~\ref{lemma:frobenius-number} we arrive at the following result.

\begin{theorem}
\label{thm:our-linear-diophantine}
Consider the linear Diophantine equation~\eqref{eq.new_Diophantine} and write $c = a_0-a_0^\prime$, $g_a = \gcd(a_1, \, \ldots, \, a_\mu)$, $g_{a^\prime} = \gcd(a^\prime_1, \, \ldots, \, a^\prime_\nu)$ and $g_{aa^\prime} = \gcd(g_a,g_{a^\prime})$. Then, the following mutually exclusive and collectively exhaustive cases answer the question whether this equation has at least one non-negative integer solution for the unknowns $n_1, \, \ldots, \, n_\mu, \, n^\prime_1, \, \ldots, \, n^\prime_\nu$:
\begin{enumerate}
\item \underline{$\mu = 0$ and $\nu = 0$.}
There is a non-negative integer solution if and only if $c = 0$.
\item \underline{$\mu = 0$ and $\nu \neq 0$ and $c > 0$.}
If $c \!\! \mod \! g_{a^\prime} \neq 0$, then there is no non-negative integer solution. If $c \!\! \mod \! g_{a^\prime} = 0$, then there is a non-negative integer solution if and only if there is a solution within the finite search space $\{0, \, \ldots, \lfloor{\tfrac{c}{a^\prime_1}}\rfloor\} \times \dots \times \{0, \, \ldots, \lfloor{\tfrac{c}{a^\prime_\nu}}\rfloor\}$.
\item \underline{($\mu = 0$ and $\nu \neq 0$ and $c \leq 0$) or ($\mu \neq 0$ and $\nu = 0$ and $c \geq 0$).} There is a non-negative integers solution if and only if $c = 0$.
\item \underline{$\mu \neq 0$ and $\nu = 0$ and $c < 0$.}
If  $-c \!\! \mod \! g_a \neq 0$, then there is no non-negative integer solution. If  $-c \!\! \mod \! g_a = 0$, then there is a non-negative integer solution if and only if there is a solution within the finite search space $\{0, \, \ldots, \lfloor{-\tfrac{c}{a_1}}\rfloor\} \times \dots \times \{0, \, \ldots, \lfloor{-\tfrac{c}{a_\mu}}\rfloor\}$.
\item \underline{$\mu \neq 0$ and $\nu \neq 0$.}
There is a non-negative integer solution if and only $c \! \! \mod g_{aa^\prime} = 0$.
\end{enumerate}
\end{theorem}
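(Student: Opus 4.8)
The plan is to translate the equation into a statement about numerical semigroups. Write $S_a=\{\sum_{\alpha=1}^{\mu}n_\alpha a_\alpha \mid n_\alpha\in\mathbb N_0\}$ for the semigroup generated by $a_1,\dots,a_\mu$, with the convention $S_a=\{0\}$ when $\mu=0$, and define $S_{a'}$ analogously. Then \eqref{eq.new_Diophantine} has a non-negative integer solution if and only if there are $x\in S_a$ and $y\in S_{a'}$ with $a_0+x=a_0'+y$, i.e.\ with $y-x=c$. In this form the only data that matter are the positivity of the generators and the value of $c$, and each case of the theorem corresponds to a different shape of the pair $(S_a,S_{a'})$.

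First I would dispose of the degenerate cases. If $\mu=\nu=0$ (Case~1) the equation is literally $a_0=a_0'$. When exactly one semigroup is trivial and $c$ has the ``wrong'' sign — this covers Case~3 — one side of $y-x=c$ is forced to vanish while the other is a non-negative integer, so a solution exists iff $c=0$. In the remaining one-sided situations, Cases~2 and~4, the equation reads $\sum_\beta n_\beta' a_\beta'=c$ (respectively $\sum_\alpha n_\alpha a_\alpha=-c$) with a positive right-hand side; here I first invoke Lemma~\ref{lemma:linear-diophantine-general} to discard the subcase $g_{a'}\nmid c$ (respectively $g_a\nmid(-c)$), in which not even an integer solution exists, and otherwise observe that in any solution $n_\beta' a_\beta'\le c$, hence $n_\beta'\le\lfloor c/a_\beta'\rfloor$, so the whole solution set sits inside the stated finite box and can be checked exhaustively.

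The substantive case is Case~5, $\mu,\nu\ge 1$, where the claim is that a non-negative solution exists iff $g_{aa'}\mid c$. The forward direction is immediate from Lemma~\ref{lemma:linear-diophantine-general}: a non-negative solution is in particular an integer solution of $\sum_\beta n_\beta' a_\beta'-\sum_\alpha n_\alpha a_\alpha=c$, whose coefficient list $(a_1',\dots,a_\nu',a_1,\dots,a_\mu)$ has greatest common divisor $\gcd(g_{a'},g_a)=g_{aa'}$, forcing $g_{aa'}\mid c$. For the converse I would construct a large explicit solution. Since $\gcd(a_1/g_a,\dots,a_\mu/g_a)=1$, Lemma~\ref{lemma:frobenius-number} provides a bound $B_a$ (namely $g_a$ times the Frobenius number of the $a_\alpha/g_a$) such that every multiple of $g_a$ exceeding $B_a$ lies in $S_a$, and symmetrically a bound $B_{a'}$ for $S_{a'}$. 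Because $g_{aa'}=\gcd(g_a,g_{a'})$ divides $-c$, the linear congruence $g_a\, m\equiv -c \pmod{g_{a'}}$ is solvable, and its solutions form an arithmetic progression, so $m$ may be taken arbitrarily large. Put $x=g_a m$ and $y=x+c$: then $y$ is a multiple of $g_{a'}$ by the choice of $m$, and once $m$ is large enough both $x>B_a$ and $y>B_{a'}$, so $x\in S_a$ and $y\in S_{a'}$; unwinding the memberships yields non-negative $n_\alpha,n_\beta'$ with $a_0+\sum_\alpha n_\alpha a_\alpha=a_0+x=a_0'+y=a_0'+\sum_\beta n_\beta' a_\beta'$.

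Finite-time decidability then holds in every case, since each criterion is a test $c=0$, a bounded number of $\gcd$ computations and modular reductions, the solvability of a single linear congruence, or a search over an explicitly bounded box. I expect the converse of Case~5 to be the only real obstacle: the delicate point is upgrading ``an integer solution exists'' to ``a non-negative solution exists,'' and this is exactly where the Frobenius bound of Lemma~\ref{lemma:frobenius-number} is needed — one must produce a free parameter $m$ that can be pushed far enough out that both semigroup-membership conditions hold simultaneously, and it is precisely the hypothesis $g_{aa'}\mid c$ that makes the congruence governing $m$ solvable.
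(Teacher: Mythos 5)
Your proof is correct and follows essentially the same route as the paper's: the degenerate cases are handled identically, and in Case 5 your device of solving the congruence $g_a\,m\equiv -c \pmod{g_{a'}}$, pushing $m$ far out along the arithmetic progression of solutions, and then applying the Frobenius bound to both sides is the same mechanism as the paper's Bézout-based auxiliary equation $\tilde c+\tilde g_a k_a=\tilde g_{a'}k_b$ followed by a large shift of $(k_a,k_b)$. The numerical-semigroup phrasing is only a repackaging, not a different argument.
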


\begin{remark}\label{rem.search-avoiding-condition}
In the if-and-only-if subcases of cases 2 and 4 of Theorem~\ref{thm:our-linear-diophantine}, it is not always necessary to run an explicit search for a solution within the described finite search spaces. Focusing on case 2  because case 4 is similar, if $c \!\! \mod \! g_{a^\prime} = 0$ and moreover $c \geq g_{a^\prime} \cdot f^\prime(\tfrac{a^\prime_1}{g_{a^\prime}} , \, \ldots, \, \tfrac{a^\prime_\nu}{g_{a^\prime}} )$ where $f^\prime(d_1, \, \ldots, \, d_\rho) = \left( \min_\alpha d_{\alpha} -1 \right) \cdot \left( \max_\alpha d_{\alpha} -1 \right)$, then there always exists a non-negative integer solution and an explicit search can thus be avoided. This claim holds because of the upper bound $f^\prime(d_1, \, \ldots, \, d_\rho) \geq f(d_1, \, \ldots, \, d_\rho) + 1$ on the Frobenius number \citep{brauer_problem_1942} that is attributed to Schur and Brauer.
\end{remark}

Theorem~\ref{thm:our-linear-diophantine} immediately translates into a finite-time algorithm for deciding whether or not the linear Diophantine equation~\eqref{eq.new_Diophantine} has a non-negative integer solution. Of the five cases in Theorem~\ref{thm:our-linear-diophantine}, cases 2 and 4 are the computationally most expensive ones as they potentially require to run an explicit solution search within the specified finite search spaces; for example, in case 2, whether the reduced equation $c = \sum_{\beta \,=\, 1}^{\nu} n^\prime_\beta \cdot a^\prime_\beta$ has a solution within the finite search space $\{0, \, \ldots, \lfloor{\tfrac{c}{a^\prime_1}}\rfloor\} \times \dots \times \{0, \, \ldots, \lfloor{\tfrac{c}{a^\prime_\nu}}\rfloor\}$. One can, in principle, perform this explicit search in a brute-force way. However, using that the search for a non-integer solution to the reduced equation is a special case of the \emph{subset sum problem} \citep{bringmann_near-linear_2017}, there are also more refined search algorithms. The subset sum problem is a well-studied NP-complete combinatorial problem that dynamic programming algorithms can solve in pseudo-polynomial time \citep{pisinger_linear_1999}. For example, the \textsf{R}-package \textsf{nilde} \citep{arnqvist2019nilde} provides an implementation of such a more refined algorithm. Moreover, in order to altogether avoid the explicit searches if possible, one can use Remark~\ref{rem.search-avoiding-condition} as follows: Focusing on case 2, if the necessary condition $c \!\! \mod \! g_{a^\prime} = 0$ is met, then one can subsequently check the sufficient condition $c \geq g_{a^\prime} \cdot f^\prime(\tfrac{a^\prime_1}{g_a^\prime}, \, \ldots, \, \tfrac{a^\prime_\nu}{g_{a^\prime}})$ before moving to the explicit search. If this sufficient condition is met, then there is a non-negative integer solution and one does not need to run the explicit search at all.\footnote{To further reduce the number of explicit searches, one can use the following approach: Let $\mathcal{L}_1, \ldots, \mathcal{L}_{A}$ be the finite collection of linear Diophantine equations specified by Theorem~\ref{cor.prob1}. The most straightforward approach is to consider these equations in the sequential order in which they are given. However, if case 2 or 4 of Theorem~\ref{cor.prob1} applies to $\mathcal{L}_1$, then this sequential approach entails to potenially run the respective explicit searches before even looking at $\mathcal{L}_2$; and similar for the other equations. Instead, we might in a first phase only consider those equations to which case 1, 3 or 5 of Theorem~\ref{cor.prob1} applies and then, only if we do not find a solution in the first phase, move to considering the other equations in a second phase. Various further computational improvements seem possible, but here we content ourselves with the explained conceptual solution.}

To illustrate Theorem~\ref{thm:our-linear-diophantine} and continue with our running example, the following example continues Examples~\ref{example:running-example-1}, \ref{example:running-example-2} and \ref{example:running-example-3}.

\begin{example}\label{example:running-example-4}
In Example~\ref{example:running-example-3}, we considered the linear Diophantine equation
\begin{equation*}
0 + n_1 \cdot 2 + n_2 \cdot 3 = 1 + n_1^\prime \cdot 2 + n_2^\prime \cdot 3 \, .
\end{equation*}
Comparing with the general form of eq.~\eqref{eq.new_Diophantine}, we see that $\mu = 2$ and $\nu = 2$, so case 5 of Theorem~\ref{thm:our-linear-diophantine} applies. Since $c = a_0 - a_0^\prime = 0 - 1 = -1$ and $g_{aa^\prime} = \gcd(\gcd(a_1, a_2), \gcd(a_1^\prime, a_2^\prime)) = \gcd(\gcd(2, 3), \gcd(2, 3)) = 1$, the condition $c \! \! \mod g_{aa^\prime} = 0$ is fulfilled, from which we re-discover that the equation has a non-negative integer solution. Note that, to draw this conclusion, there is no need to explicitly find a solution. 
\end{example}

\subsubsection{A formula for the cutoff point}\label{sec:formula-cutoff}
As a corollary to Proposition~\ref{prop:common-ancestorship-reexpression}, Theorem~\ref{cor.prob1}, Theorem~\ref{thm:our-linear-diophantine} and Remark~\ref{rem.search-avoiding-condition}, we can now derive a formula for a finite cutoff point $\pcutoff(\DAG) < \infty$ with the property that one can restrict the common-ancestor search in ts-DAGs $\DAG$ to the finite interval $[t-\pcutoff(\DAG),t]$ (cf.~the discussion in Section~\ref{subsec:summary-graph-insufficient}).

\begin{theorem} \label{thm.upper-bound-main-theorem}
Let $\DAG = (\VarIndices \times \mathbb{Z}, \DirectedEdges)$ be a ts-DAG and let $\mathcal{S}_{\w}(\DAG) = (\mathcal{S}(\DAG), \w)$ be its multi-weighted summary graph. Denote the (finite) set of equivalence classes of irreducible cycles in $\mathcal{S}(\DAG)$ as $\mathcal{C}$ and define the quantities
\begin{align*}
&K = \max_{\mathbf{c} \in \mathcal{C}} \max \w(\mathbf{c}) \,\, &&\text{(maximal weight of any irreducible cycle in $\SummaryGraph{\DAG}$)\, ,} \\
&L = \max_{k,i \in \VarIndices} \max_{\pi\in \cW_0(k,i)} \max \w(\pi) \,\,  &&\text{(maximal weight of any directed or trivial path in $\SummaryGraph{\DAG}$)\, ,} \\
&M = \sum_{\mathbf{c} \in \mathcal{C}} \max \w(\mathbf{c}) \,\,  &&\text{(sum over the maximal weights of all irreducible cycles)\, .} 
\end{align*}
Note that the multi-weight of an equivalence class of cycles is well-defined since any representative of the class has the same multi-weight.

Let $(i,t-\tau)$ and $(j,t)$ with $0 \leq \tau \leq \ptimewindow$ be two vertices in $\DAG$. Then, $(i,t-\tau)$ and $(j,t)$ have a common ancestor in the infinite ts-DAG $\DAG$ if and only if $(i,t-\tau)$ and $(j,t)$ have a common ancestor in the finite segment of $\DAG$ on the time window $[t-\pcutoff(\DAG), t]$ where
\begin{align}\label{eq:cutoff}
\pcutoff(\DAG) = \left(K^2+1\right) \cdot \left(\ptimewindow + L+M \right) + K\cdot\left[\left(K-1\right)^2 +1\right] \, .
\end{align}
\end{theorem}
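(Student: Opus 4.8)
The plan is to establish the non-trivial inclusion---that the existence of \emph{any} common ancestor in the infinite $\DAG$ already forces one inside the window $[t-\pcutoff(\DAG),t]$---by bounding the minimal admissible ``age'' $s$ of a common ancestor through the number-theoretic decomposition behind Theorems~\ref{cor.prob1} and~\ref{thm:our-linear-diophantine}. The converse inclusion is immediate, since the finite segment of $\DAG$ on $[t-\pcutoff(\DAG),t]$ is an induced subgraph, so a common ancestor there, together with its two witnessing directed walks, lies in $\DAG$ as well.

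For the forward direction, assume $(i,t-\tau)$ and $(j,t)$ have a common ancestor in $\DAG$. By Proposition~\ref{prop:common-ancestorship-reexpression} (equivalently, through the reformulation in Problem~\ref{prob1}) there is a vertex $k\in\VarIndices$ with $\mathbf{W}_{\tau}(k,i)\cap\mathbf{W}_{0}(k,j)\neq\emptyset$, and every element $s$ of this intersection witnesses a common ancestor $(k,t-s)$ whose two directed walks run forward from time $t-s$ and hence stay in $\VarIndices\times[t-s,t]$. It therefore suffices to show that, whenever non-empty, the set $\mathbf{W}_{\tau}(k,i)\cap\mathbf{W}_{0}(k,j)$ has a smallest element at most $\pcutoff(\DAG)$; the corresponding $(k,t-s)$ is then the desired common ancestor inside the window. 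Fixing such a $k$, I would use Lemma~\ref{thm.main1} to write both sets as finite unions of affine cones, so that non-empty intersection forces $\mathrm{con}(a_0;a_1,\dots,a_\mu)\cap\mathrm{con}(a'_0;a'_1,\dots,a'_\nu)\neq\emptyset$ for one pair of cones, reducing the task to bounding the minimal element of a single such cone intersection. The construction underlying Lemma~\ref{thm.main1} (cf.~Remark~\ref{remark:theorem-1}) controls the ingredients by $K$, $L$, $M$ and $\ptimewindow$: each offset obeys $a_0\le\ptimewindow+L+M$, because $a_0$ equals $\tau\le\ptimewindow$ plus the weight of a cycle-free directed or trivial path (weight at most $L$) extended by inserting each irreducible cycle at most once (extra weight at most $M$); and each slope $a_\alpha$ is the weight of an irreducible cycle, so $1\le a_\alpha\le K$, with $a_\alpha\ge1$ being weak acyclicity.

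With these bounds I would perform the same case split on $\mu,\nu$ as in Theorem~\ref{thm:our-linear-diophantine}. If $\mu=\nu=0$, or exactly one of the two is zero, the cone intersection is a singleton equal to an offset, hence at most $\ptimewindow+L+M$. If $\mu,\nu\ge1$, put $g_a=\gcd(a_1,\dots,a_\mu)$ and $g_{a'}=\gcd(a'_1,\dots,a'_\nu)$. By Lemmas~\ref{lemma:linear-diophantine-general} and~\ref{lemma:frobenius-number}, $\mathrm{con}(a_0;a_1,\dots,a_\mu)$ contains every integer congruent to $a_0$ modulo $g_a$ that exceeds $a_0+g_a\,f(a_1/g_a,\dots,a_\mu/g_a)$, and the Schur--Brauer bound used in Remark~\ref{rem.search-avoiding-condition} yields $g_a\,f(a_1/g_a,\dots,a_\mu/g_a)\le K((K-1)^2-1)<K(K-1)^2$ (since $g_a\le K$ and $a_\alpha/g_a\in\{1,\dots,K\}$); the same holds for the primed cone. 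When the intersection is non-empty---by Theorem~\ref{thm:our-linear-diophantine}, case~5, exactly when $\gcd(g_a,g_{a'})$ divides $a_0-a'_0$---the integers it contains include an arithmetic progression of common difference $\operatorname{lcm}(g_a,g_{a'})\le g_a g_{a'}\le K^2$ whose least term is at most $\max(a_0+g_a\,f(\cdots),\;a'_0+g_{a'}\,f(\cdots))+\operatorname{lcm}(g_a,g_{a'})\le(\ptimewindow+L+M)+K(K-1)^2+K^2$. Since $\mu,\nu\ge1$ entails at least one irreducible cycle and hence $M\ge1$, this quantity is dominated by the stated $\pcutoff(\DAG)=(K^2+1)(\ptimewindow+L+M)+K((K-1)^2+1)$; taking the minimum over the finitely many cone pairs and over $k$ then finishes the forward direction.

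The part I expect to be most delicate is the bookkeeping of the last step: pinning down the precise structure of the finite sets $\mathcal{M}_\pi$ and $D_{\tau}(\pi,\mathcal S)$ produced in the proof of Lemma~\ref{thm.main1}---which is what justifies $a_0\le\ptimewindow+L+M$ and $a_\alpha\le K$---and then assembling the Frobenius-number and $\operatorname{lcm}$ estimates into exactly the advertised polynomial $\pcutoff(\DAG)$. In particular one must decide whether the clean ``shift down by $\operatorname{lcm}(g_a,g_{a'})$'' step is carried out verbatim or replaced by a coarser bound on the non-negative solution vectors $(n_1,\dots,n_\mu;n'_1,\dots,n'_\nu)$ of the Diophantine equation~\eqref{eq.new_Diophantine}; the latter is the likely origin of the generous coefficient $K^2+1$ in front of $\ptimewindow+L+M$, and checking that the stated closed form genuinely dominates every case is where the remaining effort concentrates.
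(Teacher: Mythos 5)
Your proposal is correct and follows essentially the same route as the paper: reduce via Proposition~\ref{prop:common-ancestorship-reexpression} and Lemma~\ref{thm.main1} to a non-empty intersection of two affine cones, bound the offsets by $\ptimewindow+L+M$ and the slopes by $K$, split on whether $\mu$ or $\nu$ vanish, and in the main case extract a small element of the intersection from the Frobenius-number tails via the Schur--Brauer bound (this is exactly the content of Lemmas~\ref{lem.first-estimates}--\ref{lem.main-p-estimate}). The only divergence is the final step, where you locate a common element of the two tails by the arithmetic-progression/$\operatorname{lcm}$ structure of $\{a_0+g_a\mathbb{Z}\}\cap\{a'_0+g_{a'}\mathbb{Z}\}$, whereas the paper's Lemma~\ref{lem.D-upper-estimate} parametrizes the solutions via B\'ezout's identity and thereby picks up the $\tfrac{g_ag_{a'}}{g_{aa'}}\cdot\tfrac{|a'_0-a_0|}{g_{aa'}}$ term responsible for the coefficient $K^2+1$; your variant gives a slightly tighter intermediate bound, and your check that it is still dominated by the stated $\pcutoff(\DAG)$ (using $M\geq 1$ whenever a cycle exists) is sound.
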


Theorem~\ref{thm.upper-bound-main-theorem} provides a direct solution to Problem~\ref{problem:common-ancestor} and, by extension, to Problem~\ref{problem:eventual-goal}. Specifically, given a ts-ADMG $\ADMG$, we can determine its marginal ts-ADMG $\tsADMG{\ObservedVertices}{\ADMG}$ by the equality $\tsADMG{\ObservedVertices}{\ADMG} = \ADMGprojection{\ObservedVertices}{\ADMG_{[t-\pcutoff(\DAG)-\ptimewindow, t]}}$ where $\ADMG_{[t-\pcutoff(\DAG)-\ptimewindow, t]}$ is the finite segment of $\ADMG$ on the finite time window $[t-\pcutoff(\DAG)-\ptimewindow, t]$ with $\pcutoff(\DAG)$ as in eq.~\eqref{eq:cutoff}. Since $\ADMG_{[t-\pcutoff(\DAG), t]}$ is a finite graph, its projection to $\ADMGprojection{\ObservedVertices}{\ADMG_{[t-\pcutoff(\DAG), t]}}$ is a solved problem. While conceptually simple, we expect this approach of using Theorem~\ref{thm.upper-bound-main-theorem} to be computationally more expensive than the approach of using Theorem~\ref{cor.prob1} and Theorem~\ref{thm:our-linear-diophantine}. The rational behind this expectation is that the bound $\pcutoff(\DAG)$ is rather rough, such that $\ADMG_{[t-\pcutoff(\DAG)-\ptimewindow, t]}$ can potentially be large.

\begin{example}\label{example:running-example-5}
Consider once more the ts-DAG $\DAG$ and its multi-weighted summary graph in parts (a) and (b) of Figure~\ref{fig:multi_weighted_summary_graph}. There are exactly two equivalence classes of irreducible cycles, namely $\mathbf{c}_1 = [X \tailhead X]$ and $\mathbf{c}_2 = [X \tailhead Y \tailhead X]$. The multi-weights of these equivalence classes are $\w(\mathbf{c}_1) = \{2\}$ and $\w(\mathbf{c}_2) = \{3\}$, such that $K = 3$ and $M = 5$. Moreover, $L = 6$ corresponding to the directed path $\pi = (X \tailhead Y \tailhead Z)$ with $6 \in \w(\pi) = \{1, 6\}$. Inserting these numbers into eq.~\eqref{eq:cutoff}, we get $\pcutoff(\DAG) = 10\cdot p + 125$.
\end{example}

\section{Conclusions}\label{sec:summary}
\paragraph*{Summary}
In this paper, we considered the projection of infinite time series graphs with latent confounders (infinite ts-ADMGs, see Definition~\ref{def:tsADMG-infinite}) to marginal graphs on finite time windows by means of the ADMG latent projection (finite marginal ts-ADMGs, see Definition~\ref{def:tsADMG-marginal}). While the projection procedure itself is not new, its practical execution on infinite graphs is non-trivial and had previously not been approached in generality. To close this conceptual gap, we first reduced the considered projection task (see Problem~\ref{problem:eventual-goal}) to the search for common ancestors in infinite ts-DAGs (see Problem~\ref{problem:common-ancestor}). We then further reduced this common-ancestor search to the task of deciding whether any of a finite number of linear Diophantine equations has a non-negative integer solution (see Problem~\ref{prob1} and Theorem~\ref{cor.prob1}). Thus, we established an intriguing connection between the theory of infinite graphs with repetitive edges and number theory. Building on standard results from number theory, we then derived criteria with which one can answer the corresponding solvability queries in finite time (see Theorem~\ref{thm:our-linear-diophantine}). Thus, by the combination of Theorems~\ref{cor.prob1} and \ref{thm:our-linear-diophantine}, we provided a solution to both the common-ancestor search in infinite ts-DAGs (Problem~\ref{problem:common-ancestor}) and the task of constructing finite marginal ts-ADMGs (Problem~\ref{problem:eventual-goal}). In Section~\ref{sec:pseudocode}, we also provide pseudocode that implements this solution. As a corollary to this solution, we derived a finite upper bound on a time window to which one can restrict the common ancestor searches relevant for the projection task (see Theorem~\ref{thm.upper-bound-main-theorem}). This result constitutes an alternative and conceptually simple solution to Problems~\ref{problem:eventual-goal} and \ref{problem:common-ancestor}, but we expect it to be computationally disadvantageous as compared to the number theoretic solution by means of Theorems~\ref{cor.prob1} and \ref{thm:our-linear-diophantine}. In Section~\ref{sec:generalize-to-DMAGs}, we further show how to execute the DMAG latent projection of infinite ts-ADMGs by utilizing the finite marginal ts-ADMGs.

\paragraph*{Significance}
The finite marginal graphs are useful tools for answering $m$-separation queries in infinite time series graphs as well as for causal effect identification and causal discovery in time series, see Section~\ref{sec:projection-motivation}. In particular, provided the causal Markov condition holds with respect to the infinite time series graph, the entirety of causal effect identification results for finite graphs directly applies to finite marginal graphs, whereas specific modifications might be necessary for applying these methods to infinite time series graphs. Therefore, we envision our results to be widely applicable in future research on causal inference in time series.

\paragraph*{Limitations}
As we stated in the previous paragraph, the applicability of causal effect estimation methods to the finite marginal graphs is contingent on the causal Markov condition holding with respect to the infinite time series graphs. Moreover, the projection to the finite marginal graphs inherently comes with the choice of an observed time window length, and any derived statement about (non-)identifiability with respect to the finite marginal graphs will be contingent on that choice. Lastly, our projection methods make the assumption of causal stationarity (note, however, that we really need this assumption only for the ``half-infinite'' graph that extends from the infinite past to some arbitrary finite time step).

\section*{Acknowledgments}
The authors thank Christoph Käding for helpful discussions at early stages of this project.

J.W., U.N., and J.R. received funding from the European Research Council (ERC) Starting Grant CausalEarth under the European Union’s Horizon 2020 research and innovation program (Grant Agreement No. 948112).

S.F. was enrolled at Technische Universität Berlin while working on this project.

\begin{appendix}

\section{Generalization to the DMAG latent projection}\label{sec:generalize-to-DMAGs}
In the main paper, we considered the projection of infinite ts-ADMGs (see Definition 2.2) to finite marginal ts-ADMGs (see Definition 3.1) by means of the ADMG latent projection (see Definition 2.1, due to \citet{pearl1995theory}, see also for example \citet{richardson2023nested}). Here, we extend our results to the DMAG latent projection \citep{richardson2002ancestral, zhang2008causal}, which is another widely-used projection procedure for representing causal knowledge in the presence of unobserved confounders.

\begin{definition}[DMAG latent projection \citep{richardson2002ancestral, zhang2008causal}]\label{def:DMAG-latent-projection}
Let $\ADMG$ be an ancestral ADMG with vertex set $\Vertices = \ObservedVertices \,\dot{\cup}\, \LatentVertices$. Then, its \emph{marginal DMAG $\DMAGprojection{\ObservedVertices}{\ADMG}$ on $\ObservedVertices$} is the bidirected graph with vertex set $\ObservedVertices$ such that
\begin{enumerate}
    \item there is an edge $i \astast j$ in $\DMAGprojection{\ObservedVertices}{\ADMG}$ if and only if $i \neq j$ and there is no set $\Zbold \subseteq \ObservedVertices \setminus \{i, j\}$ that $m$-separates $i$ and $j$ in $\ADMG$;
    \item an edge $i \astast j$ in $\DMAGprojection{\ObservedVertices}{\ADMG}$ is of the form $i \tailhead j$ if and only if $i \in \ancestors{j}{\ADMG}$ and, thus, $i \headhead j$ if and only if $i \notin \ancestors{j}{\ADMG}$ and $j \notin \ancestors{i}{\ADMG}$.
\end{enumerate}
\end{definition}

\noindent It follows that $i \in \ancestors{j}{\DMAGprojection{\ObservedVertices}{\ADMG}}$ if and only if $i, j \in \ObservedVertices$ and $i \in \ancestors{j}{\ADMG}$, so $\DMAGprojection{\ObservedVertices}{\ADMG}$ is an ADMG. The definition also readily implies that $\DMAGprojection{\ObservedVertices}{\ADMG}$ is ancestral, and \citet{richardson2002ancestral} shows that in $\DMAGprojection{\ObservedVertices}{\ADMG}$ there is no inducing path $\pi$ between non-adjacent vertices. Since a \emph{directed maximal ancestral graph (DMAG)} by definition is an ancestral ADMG that does not have inducing paths between non-adjacent vertices \citep{richardson2002ancestral, mooij2020constraint}, we thus see that $\DMAGprojection{\ObservedVertices}{\ADMG}$ is a DMAG indeed. Moreover, two observed vertices $i$ and $j$ are $m$-separated given $\Zbold$ in $\ADMG$ if and only if $i$ and $j$ are $m$-separated given $\Zbold$ in $\DMAGprojection{\ObservedVertices}{\ADMG}$, see Theorem 4.18 in \citet{richardson2002ancestral}. For an explanation of the difference between the ADMG and DMAG latent projections see, for example, Section 3.3 of \citet{triantafillou2015constraint}.

In complete analogy to Definition 3.1, we now define \emph{marginal time series DMAGs} \citep{gerhardus2021characterization} as DMAG latent projections of infinite ts-ADMGs.

\begin{definition}[Marginal time series DMAG, generalizing Definition 3.6 of \cite{gerhardus2021characterization}]\label{def:tsDMAG}
Let $\ADMG$ be a ts-ADMG with variable index set $\VarIndices$, let $\VarIndicesObserved \subseteq \VarIndices$ be non-empty, let $\TimeIndicesObserved$ be $\TimeIndicesObserved = \{t-\tau ~\vert~ 0 \leq \tau \leq \ptimewindow\}$ and let $\ObservedVertices = \VarIndicesObserved \times \TimeIndicesObserved$. Then, its \emph{marginal time series DMAG (marginal ts-DMAG) $\tsDMAG{\ObservedVertices}{\ADMG}$ on $\ObservedVertices$} is the DMAG $\DMAGprojection{\ObservedVertices}{\CanonicaltsDAG{\ADMG}}$, where $\CanonicaltsDAG{\ADMG}$ is the (infinite) canonical ts-DAG of $\ADMG$.
\end{definition}

\begin{remark}
\cite{gerhardus2021characterization} calls marginal ts-DMAGs simply ``ts-DMAGs'' (that is, does not use the attribute ``marginal''). However, since in the main paper we use the term ``marginal ts-ADMG'' to distinguish these finite graphs from the infinite ts-ADMGs, we here use the terminolgy ``marginal ts-DMAGs'' for consistency. Moreover, Definition 3.6 of \cite{gerhardus2021characterization} only applies to the special case of infinite ts-DAGs instead of infinite ts-ADMGs. Noting that $\CanonicaltsDAG{\ADMG} = \ADMG$ if $\ADMG$ is a ts-DAG, we see that Definition~\ref{def:tsDMAG} is indeed a proper generalization of Definition 3.6 of \cite{gerhardus2021characterization}. Lastly, we need to define $\tsDMAG{\ObservedVertices}{\ADMG}$ as $\DMAGprojection{\ObservedVertices}{\CanonicaltsDAG{\ADMG}}$ rather than as $\DMAGprojection{\ObservedVertices}{\ADMG}$ because Definition~\ref{def:DMAG-latent-projection} requires the input graph to be \emph{ancestral}, and hence $\DMAGprojection{\ObservedVertices}{\ADMG}$ is in general formally undefined.
\end{remark}

In analogy to marginal ts-ADMGs, the construction of marginal ts-DMAGs is non-trivial because there might be infinitely many paths in the infinite ts-ADMG $\ADMG$ that could potentially induce an edge $(i, t-\tau_i) \astast (j, t-\tau_j)$ in the finite marginal ts-DMAG $\tsDMAG{\ObservedVertices}{\ADMG}$. To the authors' knowledge, the construction of finite marginal ts-DMAGs has not yet been solved in the literature. Here, as a corollary to the results of the main paper, we solve this non-trival task by means of the following result.

\begin{proposition}\label{prop:get-ts-DMAG}
Let $\ADMG$ be an infinite ts-ADMG with variable index set $\VarIndices$, and let $\ObservedVertices = \VarIndicesObserved \times \TimeIndicesObserved$ with $\VarIndicesObserved \subseteq \VarIndices$ non-empty and $\TimeIndicesObserved = \{t-\tau ~\vert~ 0 \leq \tau \leq \ptimewindow\}$ where $\ptimewindow < \infty$. Then, $\tsDMAG{\ObservedVertices}{\ADMG} = \DMAGprojection{\ObservedVertices}{\CanonicalDAG{\tsADMG{\ObservedVertices}{\ADMG}}}$ where $\CanonicalDAG{\tsADMG{\ObservedVertices}{\ADMG}}$ is the (finite) canonical DAG of the finite marginal ts-ADMG $\tsADMG{\ObservedVertices}{\ADMG}$ of $\ADMG$.
\end{proposition}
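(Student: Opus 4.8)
The plan is to prove the claim by reducing the asserted equality of two DMAGs to the equality of the underlying $m$-separation and ancestral structures on the observed vertices, and then transporting these structures through the two composed latent projections. Write $\DAG := \CanonicaltsDAG{\ADMG}$ for the (infinite) canonical ts-DAG of $\ADMG$, so that $\DAG$ is a DAG without self edges and hence an ancestral ADMG, and $\tsDMAG{\ObservedVertices}{\ADMG} = \DMAGprojection{\ObservedVertices}{\DAG}$ by Definition~\ref{def:tsDMAG}. Write $\ADMG^\prime := \tsADMG{\ObservedVertices}{\ADMG}$; by Proposition~\ref{prop:reduction-to-projection-of-tsDAG} together with Definition~\ref{def:tsADMG-marginal} we have $\ADMG^\prime = \ADMGprojection{\ObservedVertices}{\DAG}$, a finite ADMG on $\ObservedVertices$ without self edges, so $\CanonicalDAG{\ADMG^\prime}$ is a finite DAG without self edges (hence ancestral) and $\DMAGprojection{\ObservedVertices}{\CanonicalDAG{\ADMG^\prime}}$ is well defined. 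The goal is thus $\DMAGprojection{\ObservedVertices}{\DAG} = \DMAGprojection{\ObservedVertices}{\CanonicalDAG{\ADMG^\prime}}$.

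First I would observe, reading off Definition~\ref{def:DMAG-latent-projection}, that for any ancestral ADMG $\ADMG^{\prime\prime}$ whose vertex set contains $\ObservedVertices$ the marginal DMAG $\DMAGprojection{\ObservedVertices}{\ADMG^{\prime\prime}}$ is completely determined by two pieces of data: (i) for each unordered pair $\{i,j\} \subseteq \ObservedVertices$ with $i \neq j$, whether \emph{some} $\Zbold \subseteq \ObservedVertices \setminus \{i,j\}$ $m$-separates $i$ and $j$ in $\ADMG^{\prime\prime}$ (this fixes the skeleton), and (ii) for each ordered pair $i, j \in \ObservedVertices$, whether $i \in \ancestors{j}{\ADMG^{\prime\prime}}$ (this fixes the edge marks). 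Hence it suffices to show that $\DAG$ and $\CanonicalDAG{\ADMG^\prime}$ agree on (i) and (ii).

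Next I would verify the elementary fact that the canonical-DAG construction inverts the ADMG latent projection on the observed vertices, namely $\ADMGprojection{\ObservedVertices}{\CanonicalDAG{\ADMG^\prime}} = \ADMG^\prime$. Indeed, every latent node $l_{ij}$ of $\CanonicalDAG{\ADMG^\prime}$ has in-degree $0$ and children exactly $i$ and $j$, and no two latent nodes are adjacent; therefore a directed path in $\CanonicalDAG{\ADMG^\prime}$ all of whose middle vertices lie in $\{l_{ij}\}$ must in fact be a single edge $i \tailhead j$, present iff $i \tailhead j$ in $\ADMG^\prime$, and a confounding path all of whose middle vertices lie in $\{l_{ij}\}$ must be of the form $i \headtail l_{ij} \tailhead j$, present iff $i \headhead j$ in $\ADMG^\prime$. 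By Definition~\ref{def:ADMG-latent-projection} the projection $\ADMGprojection{\ObservedVertices}{\CanonicalDAG{\ADMG^\prime}}$ therefore has precisely the directed and bidirected edges of $\ADMG^\prime$.

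Finally I would conclude by invoking the two invariance properties of the ADMG latent projection stated in Section~\ref{sec:latent-projections}. For (i): by Proposition~1 in \citet{richardson2023nested} --- which, by the footnote there, applies also to ADMGs without self edges, and whose path-by-path argument is unaffected by the graph being infinite --- vertices $i, j \in \ObservedVertices$ are $m$-separated given $\Zbold \subseteq \ObservedVertices \setminus \{i,j\}$ in $\DAG$ if and only if they are in $\ADMGprojection{\ObservedVertices}{\DAG} = \ADMG^\prime$, and likewise they are $m$-separated in $\CanonicalDAG{\ADMG^\prime}$ if and only if they are in $\ADMGprojection{\ObservedVertices}{\CanonicalDAG{\ADMG^\prime}} = \ADMG^\prime$; so $\DAG$ and $\CanonicalDAG{\ADMG^\prime}$ give the same answer to (i). For (ii): by the property noted immediately after Definition~\ref{def:ADMG-latent-projection}, for $i,j \in \ObservedVertices$ we have $i \in \ancestors{j}{\DAG}$ iff $i \in \ancestors{j}{\ADMG^\prime}$, and similarly $i \in \ancestors{j}{\CanonicalDAG{\ADMG^\prime}}$ iff $i \in \ancestors{j}{\ADMG^\prime}$; so $\DAG$ and $\CanonicalDAG{\ADMG^\prime}$ give the same answer to (ii). By the reduction of the second paragraph this yields $\DMAGprojection{\ObservedVertices}{\DAG} = \DMAGprojection{\ObservedVertices}{\CanonicalDAG{\ADMG^\prime}}$, which is the assertion. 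I expect the only delicate points to be the bookkeeping in the third paragraph (checking that no spurious edge is created or destroyed when passing from $\ADMG^\prime$ to $\CanonicalDAG{\ADMG^\prime}$ and back, in particular the collapse of confounding paths through the latent nodes) and the explicit remark that the cited finite-graph statements transfer verbatim to the infinite graph $\DAG$.
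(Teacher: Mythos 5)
Your proposal is correct and follows essentially the same route as the paper's proof: both reduce the claimed equality to showing that the two graphs agree on $m$-separations and ancestral relationships among the observed vertices (which is all the DMAG latent projection depends on), and both transport these through the ADMG latent projection via Proposition 1 of \citet{richardson2023nested} and through the canonical-DAG construction. The only cosmetic difference is that you justify the canonical-DAG leg via the round-trip identity $\ADMGprojection{\ObservedVertices}{\CanonicalDAG{\tsADMG{\ObservedVertices}{\ADMG}}} = \tsADMG{\ObservedVertices}{\ADMG}$ plus a second application of that proposition, whereas the paper directly invokes the standard correspondence between $m$-separations in an ADMG and those in its canonical DAG.
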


\begin{remark}
Proposition~\ref{prop:get-ts-DMAG} is similar to a known relation between the ADMG and DMAG latent projections as specified, for example, by Theorem 13 and Algorithm 1 of \cite{triantafillou2015constraint}. Note that $\DMAGprojection{\ObservedVertices}{\tsADMG{\ObservedVertices}{\ADMG}}$ is in general ill-defined because $\tsADMG{\ObservedVertices}{\ADMG}$ need not be ancestral, hence we use $\DMAGprojection{\ObservedVertices}{\CanonicalDAG{\tsADMG{\ObservedVertices}{\ADMG}}}$ instead. 
\end{remark}

\noindent Importantly, the results of the main paper enable us to algorithmically construct the finite marginal ts-ADMG $\tsADMG{\ObservedVertices}{\ADMG}$, and the construction of its finite canonical DAG $\CanonicalDAG{\tsADMG{\ObservedVertices}{\ADMG}}$ as well as the projection of this finite graph to $\DMAGprojection{\ObservedVertices}{\CanonicalDAG{\tsADMG{\ObservedVertices}{\ADMG}}}$ by means of the DMAG latent projection are solved problems. Thus, we solved the task of algorithmically constructing finite marginal ts-DMAGs.

Lastly, similar to the discussion in Section 3.3.4, we can further project $\tsDMAG{\ObservedVertices}{\ADMG}$ to $\DMAGprojection{\ObservedVertices^{\prime\prime}}{\tsDMAG{\ObservedVertices}{\ADMG}} = \DMAGprojection{\ObservedVertices^{\prime\prime}}{\ADMG}$ with $\ObservedVertices^{\prime\prime} \subseteq \ObservedVertices$ arbitrary. Hence, we also solved the construction arbitrary finite DMAG latent projections $\DMAGprojection{\ObservedVertices^{\prime\prime}}{\ADMG}$ of infinite ts-ADMGs.

\section{Counterexamples}\label{sec:counterexamples}
Here, we provide the counterexamples to the simple heuristics considered in Section 4.2.2.

\begin{example}\label{ref:counterexample-1}
Consider the infinite ts-DAG $\DAG$ in part (a) of Figure~\ref{fig:heuristic_sums}, which for $\VarIndicesObserved = \VarIndices = \{X, Y\}$ and $\ptimewindow = 1$ projects to the finite marginal ts-ADMG $\tsADMG{\ObservedVertices}{\DAG}$ in part (b) of the same figure. We show that this case is a counterexample to the first of the two heuristics considered in Section 4.2.2. To this end, we first note that this heuristic prescribes restricting to the time window $[t-10, t]$, where $10$ is the observed time window length $\ptimewindow = 1$ plus the sum of lags $\pcutoff_{sum}(\DAG) = 9 = 5 + 3 + 1$ of all edges in $\DirectedEdges^t \cup \BidirectedEdges^t = \{X_{t-5} \tailhead X_t, \, Y_{t-3} \tailhead Y_t, \, Y_{t-1} \tailhead X_t\}$. Now consider the blue-colored vertices $X_{t-1}$ and $Y_t$. These vertices have the common ancestor $Y_{t-12}$ by means of the path $X_{t-1} \headtail X_{t-6} \headtail X_{t-11} \headtail Y_{t-12} \tailhead Y_{t-9}\tailhead Y_{t-6} \tailhead Y_{t-3} \tailhead Y_t$, but no common ancestor within the time window $[t-10, t]$. Thus, while there is the bidirected edge $X_{t-1} \headhead Y_t$ in the marginal ts-ADMG $\tsADMG{\ObservedVertices}{\DAG}$, this edge is not in the ADMG latent projection $\ADMGprojection{\ObservedVertices}{\DAG_{[t-10, t]}}$ of the segment $\DAG_{[t-10, t]}$ of $\DAG$ on $[t-10, t]$ as shown in part (c) of Figure~\ref{fig:heuristic_sums}. Also the edges $X_{t-1} \headhead X_t$ and $Y_{t-1} \headhead X_t$ are in $\tsADMG{\ObservedVertices}{\DAG}$ but not in $\ADMGprojection{\ObservedVertices}{\DAG_{[t-10, t]}}$.
\end{example}

\renewcommand{\thefigure}{A}
\begin{figure}[tbp]
    \centering
    \includegraphics[scale=0.25]{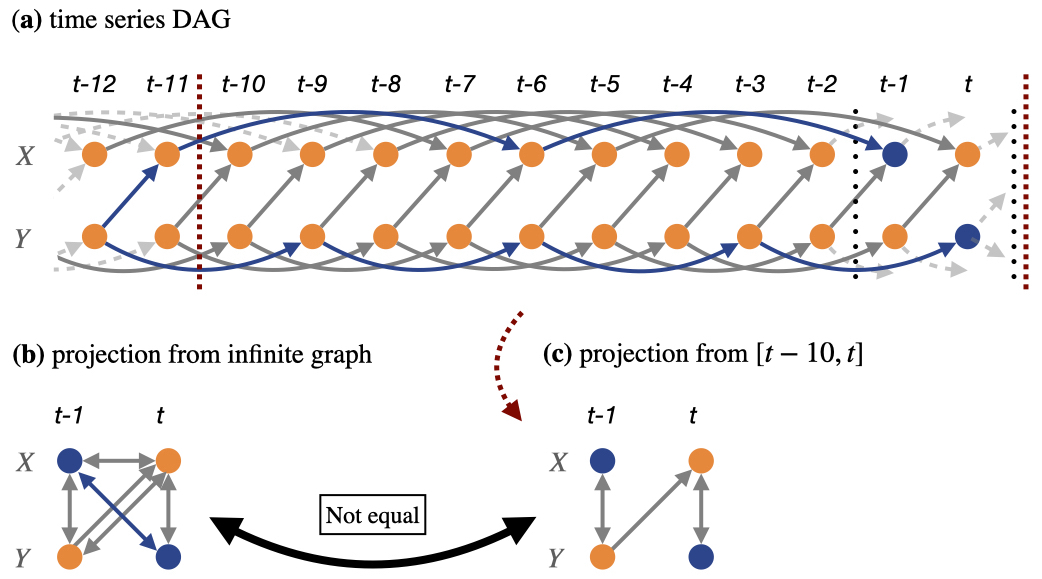}
    \caption{\textbf{(a)} A ts-DAG $\DAG$. \textbf{(b)} The marginal ts-ADMG $\tsADMG{\ObservedVertices}{\DAG}$ for $\VarIndicesObserved = \VarIndices$ and $\ptimewindow = 1$. \textbf{(c)} The ADMG latent projection $\ADMGprojection{\ObservedVertices}{\DAG_{[t-10, t]}}$ of the finite segment $\DAG_{[t-10, t]}$ of $\DAG$ on $[t-10, t]$. See also Example~\ref{ref:counterexample-1}.}
    \label{fig:heuristic_sums}
\end{figure}

\begin{example}\label{ref:counterexample-2}
Consider the infinite ts-DAG $\DAG$ in part (a) of Figure~\ref{fig:heuristic_product_lags}, which for \newline $\VarIndicesObserved = \VarIndices = \{X^1, X^2, X^3, X^4, X^5\}$ and $\ptimewindow = 1$ projects to the finite marginal ts-ADMG $\tsADMG{\ObservedVertices}{\DAG}$ in part (c) of the same figure. We show that this case is a counterexample to the second of the two heuristics considered in Section 4.2.2. To this end, we first note that this heuristic prescribes restricting to the time window $[t-2, t]$, where $2$ is the observed time window length $\ptimewindow = 1$ plus the product of all non-zero lags $\pcutoff_{prod}(\DAG) = 1 = 1^9$ of edges in $\DirectedEdges^t \cup \BidirectedEdges^t = \{X^1_{t-1} \tailhead X^1_t, \, \ldots, X^5_{t-1} \tailhead X^5_t, \, X^2_{t-1}\tailhead X^1_t, \, X^3_{t-1} \tailhead X^2_t, \, X^3_{t-1} \tailhead X^4_t, \, X^4_{t-1} \tailhead X^5_t\}$. Now consider the blue-colored vertices $X^1_{t-1}$ and $X^5_{t-1}$. These vertices have the common ancestor $X^3_{t-3}$ by means of the path $X^1_{t-1} \headtail X^2_{t-2} \headtail X^3_{t-3} \tailhead X^4_{t-2} \tailhead X^5_{t-1}$, but no common ancestor within the time window $[t-2, t]$. Consequently, while there is the bidirected edge $X^1_{t-1} \headhead X^5_{t-1}$ in the marginal ts-ADMG $\tsADMG{\ObservedVertices}{\DAG}$, this edge is not in the ADMG latent projection $\ADMGprojection{\ObservedVertices}{\DAG_{[t-2, t]}}$ of the segment $\DAG_{[t-2, t]}$ of $\DAG$ on $[t-2, t]$ as shown in part (b) of Figure~\ref{fig:heuristic_product_lags}. Moreover, there are several other bidirected edges that are in $\tsADMG{\ObservedVertices}{\DAG}$ but not in $\ADMGprojection{\ObservedVertices}{\DAG_{[t-2, t]}}$.
\end{example}

\renewcommand{\thefigure}{B}
\begin{figure}[tbp]
    \centering
    \includegraphics[scale=0.24]{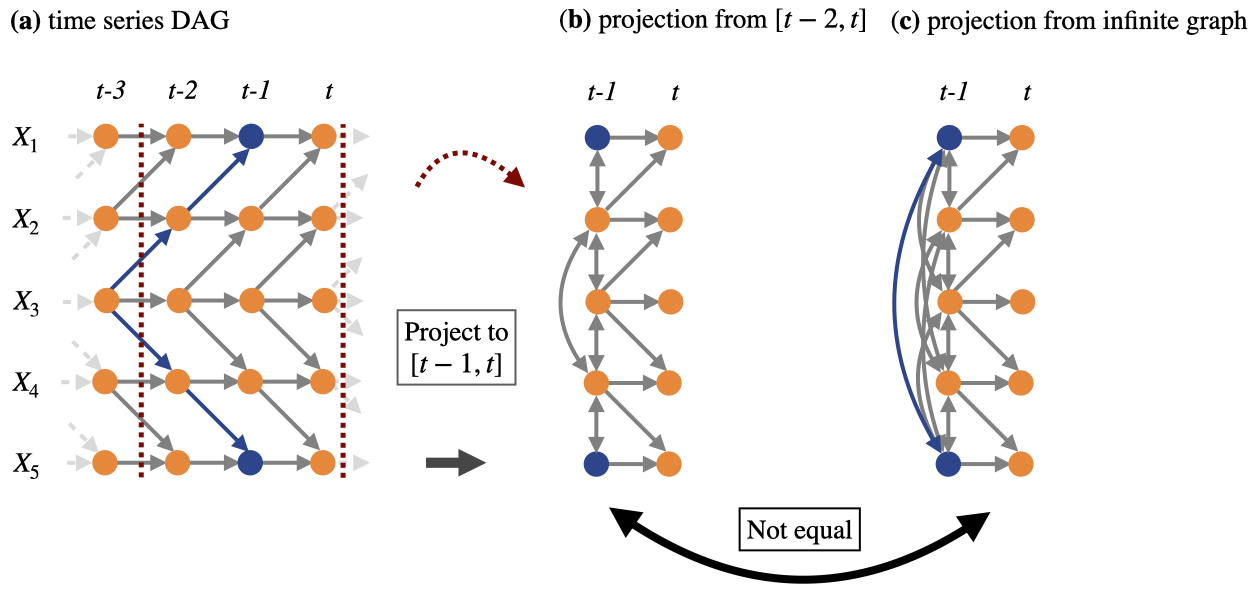}
    \caption{\textbf{(a)} A ts-DAG $\DAG$. \textbf{(b)} The ADMG latent projection $\ADMGprojection{\ObservedVertices}{\DAG_{[t-2, t]}}$ of the finite segment $\DAG_{[t-2, t]}$ of $\DAG$ on $[t-2, t]$ for $\VarIndicesObserved = \VarIndices$ and $\ptimewindow = 1$. \textbf{(c)} The marginal ts-ADMG $\tsADMG{\ObservedVertices}{\DAG}$. See also Example~\ref{ref:counterexample-2}.}
    \label{fig:heuristic_product_lags}
\end{figure}

\section{Proofs}\label{sec:proofs}
Here, we provide proofs for all theoretical claims of the main paper and of Section~\ref{sec:generalize-to-DMAGs}. We also state and prove several auxiliary results needed for this task. We split this section in three parts: First, the proofs of all propositions. Second, the proof of Lemma 4.8. Third, the proof of all theorems.

\subsection{Proofs of all propositions}

\begin{proof}[Proof of Proposition 3.5]
See the explanations in the paragraph between Definition 3.4 and Proposition 3.5.
\end{proof}

\begin{lemma}\label{lemma:AMDG-projection-partitioning-of-latents}[See also Remark~\ref{remark:not-novel}]
Let $\DAG$ be a DAG with vertex set $\Vertices = \ObservedVertices \,\dot{\cup}\, \LatentVertices_1 \,\dot{\cup}\, \LatentVertices_2$. Then, the ADMG latent projection commutes with partitioning the latent vertices, that is, the equality of graphs \newline $\ADMGprojection{\ObservedVertices}{\DAG} = \ADMGprojection{\ObservedVertices}{\ADMGprojection{\ObservedVertices\cup \LatentVertices_1}{\DAG}}$ holds.
\end{lemma}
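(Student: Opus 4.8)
The plan is to establish the equality of the two ADMGs $\ADMGprojection{\ObservedVertices}{\DAG}$ and $\ADMGprojection{\ObservedVertices}{\ADMGprojection{\ObservedVertices\cup\LatentVertices_1}{\DAG}}$ by comparing them edge by edge. Both have vertex set $\ObservedVertices$ and, by the properties recorded right after Definition~\ref{def:ADMG-latent-projection}, both are ADMGs without self edges, so it suffices to show that they contain the same directed edges and the same bidirected edges. Abbreviate $\ADMG_1 = \ADMGprojection{\ObservedVertices\cup\LatentVertices_1}{\DAG}$ and $\LatentVertices = \LatentVertices_1\,\dot{\cup}\,\LatentVertices_2$. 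Two elementary observations will be used throughout: (i) a directed walk in the DAG $\DAG$ cannot repeat a vertex (else it would contain a directed cycle), so it is a directed path; and (ii) if $\rho$ is a directed path in $\DAG$ both of whose endpoints lie in $\ObservedVertices\cup\LatentVertices_1$, then the subsequence of its vertices lying in $\ObservedVertices\cup\LatentVertices_1$ is the vertex sequence of a directed path in $\ADMG_1$ with the same endpoints and all remaining vertices in $\LatentVertices_1$ — each gap between two consecutive such vertices is a subpath of $\rho$ with all internal vertices in $\LatentVertices_2$, hence a directed edge of $\ADMG_1$ — and conversely every directed edge of $\ADMG_1$ lifts to such a subpath. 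For the directed edges the claim is then immediate: if $i\tailhead j$ in $\ADMGprojection{\ObservedVertices}{\DAG}$, a witnessing directed $\DAG$-path with internal vertices in $\LatentVertices$ becomes by (ii) a directed $\ADMG_1$-path from $i$ to $j$ with internal vertices in $\LatentVertices_1$; conversely such an $\ADMG_1$-path lifts edge-by-edge to a directed $\DAG$-walk from $i$ to $j$ with internal vertices in $\LatentVertices$, which by (i) is a path.

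The bidirected edges are the heart of the matter, and I would route everything through a single auxiliary statement about DAGs: for a DAG $\Graph'$ with latent set $\mathbf{L}'$ and distinct observed vertices $i,j$, there is an edge $i\headhead j$ in its ADMG latent projection if and only if some latent vertex $v$ admits directed $\Graph'$-paths to $i$ and to $j$ each of whose vertices, except the last one, is latent. The ``only if'' is the familiar source structure of a confounding path in a DAG; the ``if'' follows by letting $w$ be the vertex closest to $i$ that the two directed paths share and splicing their $w$-to-$i$ and $w$-to-$j$ tails — which, by the choice of $w$, meet only at $w$ — into a genuine confounding path whose middle vertices are all latent. I would also record the (routine) structure of a confounding path in an ADMG: read from $i$, it is a directed chain of arrowheads into $i$, then at most one bidirected edge, then a directed chain of arrowheads into $j$ (when there is no bidirected edge the two chains meet at a single source vertex).

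One then compares $\ADMGprojection{\ObservedVertices}{\DAG}$ and $\ADMGprojection{\ObservedVertices}{\ADMG_1}$ through this criterion, crucially using that $\ADMG_1=\ADMGprojection{\ObservedVertices\cup\LatentVertices_1}{\DAG}$ is itself the ADMG latent projection of the DAG $\DAG$ with latent set $\LatentVertices_2$. From an $\ADMG_1$-confounding path with internal vertices in $\LatentVertices_1$, of the shape just described, one lifts the directed stretches to $\DAG$ by observation (ii) and lifts the lone bidirected edge by applying the criterion to $\DAG$ with latents $\LatentVertices_2$; concatenating and using (i), one obtains a latent $v\in\LatentVertices$ with directed $\DAG$-paths to $i$ and $j$ having all-but-last vertex in $\LatentVertices$, which by the criterion for $\DAG$ is exactly $i\headhead j$ in $\ADMGprojection{\ObservedVertices}{\DAG}$. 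Conversely, given $v\in\LatentVertices$ and two such directed $\DAG$-paths to $i$ and $j$, arranged (as in the proof of the criterion) to meet only at $v$: if $v\in\LatentVertices_1$, applying (ii) to both and joining them at $v$ gives a confounding path in $\ADMG_1$ with a directed fork at $v$ and internal vertices in $\LatentVertices_1$; if $v\in\LatentVertices_2$, let $a$ and $b$ be the first vertices in $\ObservedVertices\cup\LatentVertices_1$ met on the two paths, note $a\ne b$ and $a\in\{i\}\cup\LatentVertices_1$, $b\in\{j\}\cup\LatentVertices_1$, observe that the initial $v$-to-$a$ and $v$-to-$b$ stretches have all internal vertices in $\LatentVertices_2$ and hence witness an edge $a\headhead b$ in $\ADMG_1$, and use (ii) to turn the remaining $a$-to-$i$ and $b$-to-$j$ stretches into directed $\ADMG_1$-paths with internal vertices in $\LatentVertices_1$; concatenating yields a confounding path $i\headtail\cdots\headtail a\headhead b\tailhead\cdots\tailhead j$ in $\ADMG_1$ with internal vertices in $\LatentVertices_1$ (degenerating to $i\headhead j$ when $a=i$ and $b=j$). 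Either way the two projections agree on $i\headhead j$, completing the comparison.

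I expect the bidirected-edge bookkeeping in the previous paragraph to be the main obstacle. The delicate point is verifying that, when a DAG confounding path is pushed down to the coarser graph $\ADMG_1$, its source always lands in a controlled set — inside $\LatentVertices_1$, inside $\LatentVertices_2$, or coinciding with an endpoint — and, in particular, that a directed fork of $\DAG$ straddling the $\LatentVertices_1/\LatentVertices_2$ partition correctly turns into a bidirected edge of $\ADMG_1$; getting these case distinctions and the attendant degenerate cases (source equal to $i$ or $j$, zero-length chains, a single bidirected edge $i\headhead j$) exactly right is the only nontrivial part. Everything else — that directed walks in a DAG are paths, that passing to the ``not-in-$\LatentVertices_2$'' subsequence of a path commutes with Definition~\ref{def:ADMG-latent-projection}, and the elementary shapes of confounding paths in DAGs and in ADMGs — is routine, and in the write-up I would isolate the confounding-path structure as a short separate claim and carry out the splicing there rather than inside the main argument.
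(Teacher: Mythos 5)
Your proposal is correct, and in substance it is the same argument as the paper's: both proofs compare the two projections edge by edge, obtain directed edges by passing to the subsequence of vertices in $\ObservedVertices\cup\LatentVertices_1$ (downward) and by concatenating edge-lifts and invoking acyclicity (upward), and handle bidirected edges by a case analysis on where the source of the confounding path lands relative to the partition $\LatentVertices_1\,\dot\cup\,\LatentVertices_2$. The one genuine organizational difference is that you factor the bidirected-edge bookkeeping through an auxiliary characterization --- $i\headhead j$ in the projection of a DAG iff some latent $v$ has directed all-latent paths to $i$ and to $j$ --- and do all splicing once, inside the proof of that criterion; the paper instead manipulates the confounding paths directly in each of the two implications, with its case split phrased as ``root vertex in $\ObservedVertices\cup\LatentVertices_1$ or not'' and an explicit repeated-vertex repair (the $a_1,a_2$ construction) at the end of the upward direction. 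Your route buys a cleaner, reusable statement (essentially the common-ancestor viewpoint of Proposition 3.8 and Remark 3.9) and localizes the delicate path-versus-walk issue in one place, at the cost of having to verify separately that the criterion applies both to $\DAG$ with latents $\LatentVertices_2$ (to read off bidirected edges of the intermediate ADMG) and to $\DAG$ with latents $\LatentVertices_1\cup\LatentVertices_2$; the paper's inline version avoids the auxiliary lemma but repeats the splicing logic. The degenerate cases you flag (source equal to an endpoint, empty directed chains, the single edge $a\headhead b$ with $a=i$, $b=j$) are exactly the ones the paper's case analysis also has to absorb, and your treatment of them is sound.
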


\begin{remark}\label{remark:not-novel}
The statement in Lemma~\ref{lemma:AMDG-projection-partitioning-of-latents} seems to be well known in the field, so we do not claim novelty in this regard. However, since we did not find a formal proof in the literature, we here include such a formal proof. We stress that $\DAG$ is allowed to be an infinite graph and that $\LatentVertices_1$, $\LatentVertices_2$ are allowed to be infinite sets. 
\end{remark}

\begin{proof}[Proof of Lemma~\ref{lemma:AMDG-projection-partitioning-of-latents}]
Let $i \tailhead j$ be in $\ADMGprojection{\ObservedVertices}{\DAG}$. Then, $i, j \in \ObservedVertices$ and there is a directed path $\pi$ from $i$ to $j$ in $\DAG$ such that no middle vertex on $\pi$, if any, is in $\ObservedVertices$. Among all vertices on $\pi$, let $v_1, v_2, \ldots v_n$, where $n \geq 2$ and $v_1 = i$ and $v_n = j$, be the ordered sequence of vertices on $\pi$ that are in $\ObservedVertices \cup \LatentVertices_1$. Then, for all $1 \leq k \leq n-1$ the subpath $\pi(k, k+1)$ is a directed path from $v_k$ to $v_{k+1}$ in $\DAG$ such that all middle vertices on $\pi(k, k+1)$, if any, are in $\LatentVertices_2$. Consequently, for all $1 \leq k \leq n-1$ there is the edge $v_k \tailhead v_{k+1}$ in $\ADMGprojection{\ObservedVertices \cup \LatentVertices_1}{\DAG}$. By appending these edges, we obtain the concatenation $\pi^\prime = (v_1 \tailhead \ldots \tailhead v_n)$, which is a path in $\ADMGprojection{\ObservedVertices \cup \LatentVertices_1}{\DAG}$. That the concatenation $\pi^\prime$ is a path rather than a walk follows because all vertices on $\pi^\prime$ are also on $\pi$, which is a path. The path $\pi^\prime$ is a directed path from $v_1 = i$ to $v_n = j$ in $\ADMGprojection{\ObservedVertices \cup \LatentVertices_1}{\DAG}$ and no middle vertex on $\pi^\prime$ is in $\ObservedVertices$. Therefore, there is the edge $i \tailhead j$ in $\ADMGprojection{\ObservedVertices}{\ADMGprojection{\ObservedVertices \cup \LatentVertices_1}{\DAG}}$.

Let $i \tailhead j$ be in $\ADMGprojection{\ObservedVertices}{\ADMGprojection{\ObservedVertices \cup \LatentVertices_1}{\DAG}}$. Then, $i, j \in \ObservedVertices$ and there is a directed path $\pi$ from $i$ to $j$ in $\ADMGprojection{\ObservedVertices \cup \LatentVertices_1}{\DAG}$ such that all middle vertices on $\pi$, if any, are in $\LatentVertices_1$. Let $v_1, v_2, \ldots v_n$, where $n \geq 2$ and $v_1 = i$ and $v_n = j$, be the ordered sequence of all vertices on $\pi$. Then, for all $1 \leq k \leq n-1$ there is the edge $v_k \tailhead v_{k+1}$ in $\ADMGprojection{\ObservedVertices \cup \LatentVertices_1}{\DAG}$. Hence, for all $1 \leq k \leq n-1$ there is a directed path $\pi^\prime_k$ from $v_k$ to $v_{k+1}$ in $\DAG$ such that all middle vertices on $\pi^\prime_k$, if any, are in $\LatentVertices_2$. Let $\pi^\prime$ be the walk in $\DAG$ obtained by appending the sequence of paths $\pi^\prime_1, \ldots, \pi^\prime_{n-1}$ at the respective common vertices $v_2, \ldots, v_{n-1}$. This walk $\pi^\prime$ is directed from $v_1 = i$ to $v_j = j$. Thus, due to acyclicity of $\DAG$, the walk $\pi^\prime$ is a path. Moreover, all middle vertices on $\pi^\prime$, if any, are in $\LatentVertices_1 \,\dot{\cup}\, \LatentVertices_2$. Therefore, there is the edge $i \tailhead j$ in $\ADMGprojection{\ObservedVertices}{\DAG}$.

Let $i \headhead j$ be in $\ADMGprojection{\ObservedVertices}{\DAG}$. Then, $i, j \in \ObservedVertices$ and there is a confounding path $\pi$ between $i$ to $j$ in $\DAG$ such that no middle vertex on $\pi$, if any, is in $\ObservedVertices$. Among all vertices on $\pi$, let $v_1, v_2, \ldots v_n$, where $n \geq 2$ and $v_1 = i$ and $v_n = j$, be the ordered sequence of vertices on $\pi$ that are in $\ObservedVertices \cup \LatentVertices_1$. We now distinguish two mutually exclusive and collectively exhaustive cases:
\begin{itemize}
\item \textit{Case 1: The unique root vertex on $\pi$ is in $\ObservedVertices \cup \LatentVertices_1$.} Then, there is $l$ with $2 \leq l \leq n-1$ such that $v_l$ is the unique root vertex on $\pi$. Moreover, for all $1 \leq k \leq l-1$ the subpath $\pi(k, k+1)$ is a directed path from $v_{k+1}$ to $v_k$ such that all middle vertices on $\pi(k, k+1)$, if any, are in $\LatentVertices_2$, and for all $l \leq k \leq n-1$ the subpath $\pi(k, k+1)$ is a directed path from $v_{k}$ to $v_{k+1}$ such that all middle vertices on $\pi(k, k+1)$, if any, are in $\LatentVertices_2$. Consequently, for all $1 \leq k \leq l-1$ there is the edge $v_k \headtail v_{k+1}$ in $\ADMGprojection{\ObservedVertices \cup \LatentVertices_1}{\DAG}$ and for all $l \leq k \leq n-1$ there is the edge $v_k \tailhead v_{k+1}$ in $\ADMGprojection{\ObservedVertices \cup \LatentVertices_1}{\DAG}$.
\item \textit{Case 2: The unique root vertex on $\pi$ is not in $\ObservedVertices \cup \LatentVertices_1$}. Then, there is a $l$ with $1 \leq l \leq n-1$ such that the subpath $\pi(l, l+1)$ is a confounding path between $v_l$ and $v_{l+1}$ such that all middle vertices on $\pi(l, l+1)$, if any, are in $\LatentVertices_2$. Consequently, there is the edge $v_l \headhead v_{l+1}$ in $\ADMGprojection{\ObservedVertices \cup \LatentVertices_1}{\DAG}$. Moreover, for all $1 \leq k \leq l-1$ the subpath $\pi(k, k+1)$ is a directed path from $v_{k+1}$ to $v_k$ such that all middle vertices on $\pi(k, k+1)$, if any, are in $\LatentVertices_2$, and for all $l+1 \leq k \leq n-1$ the subpath $\pi(k, k+1)$ is a directed path from $v_{k}$ to $v_{k+1}$ such that all middle vertices on $\pi(k, k+1)$, if any, are in $\LatentVertices_2$. Consequently, for all $1 \leq k \leq l-1$ there is the edge $v_k \headtail v_{k+1}$ in $\ADMGprojection{\ObservedVertices \cup \LatentVertices_1}{\DAG}$ and for all $l+1 \leq k \leq n-1$ there is the edge $v_k \tailhead v_{k+1}$ in $\ADMGprojection{\ObservedVertices \cup \LatentVertices_1}{\DAG}$.
\end{itemize}
Let $\pi^\prime$ be the path in $\ADMGprojection{\ObservedVertices \cup \LatentVertices_1}{\DAG}$ obtained by appending these edges at the vertices $v_2, \ldots v_{n-1}$, which is of the form $\pi^\prime = (v_1 \headtail \ldots \headtail v_l \tailhead \ldots \tailhead v_n)$ or of the form $\pi^\prime = (v_1 \headtail \ldots \headtail v_l \headhead v_{l+1} \tailhead \ldots \tailhead v_n)$ and where $v_{l} = v_1$ and $v_{l+1} = v_n$ are allowed. That the concatenation $\pi^\prime$ is a path rather than a walk follows because all vertices on $\pi^\prime$ are also on $\pi$, which is a path. The path $\pi^\prime$ is a confounding path between $v_1 = i$ and $v_n = j$ in $\ADMGprojection{\ObservedVertices \cup \LatentVertices_1}{\DAG}$ such that no middle vertex on $\pi^\prime$, if any, is in $\ObservedVertices$. Therefore, there is the edge $i \headhead j$ in $\ADMGprojection{\ObservedVertices}{\ADMGprojection{\ObservedVertices \cup \LatentVertices_1}{\DAG}}$.

Let $i \headhead j$ be in $\ADMGprojection{\ObservedVertices}{\ADMGprojection{\ObservedVertices \cup \LatentVertices_1}{\DAG}}$. Then, $i, j \in \ObservedVertices$ and there is a confounding path $\pi$ between $i$ to $j$ in $\ADMGprojection{\ObservedVertices \cup \LatentVertices_1}{\DAG}$ such that all middle vertices on $\pi$, if any, are in $\LatentVertices_1$. Let $v_1, v_2, \ldots v_n$, where $n \geq 2$ and $v_1 = i$ and $v_n = j$, be the ordered sequence of all vertices on $\pi$. We now distinguish two mutually exclusive and collectively exhaustive cases:
\begin{itemize}
\item \textit{Case 1: There is no bidirected edge on $\pi$.} Then, there is $l$ with $2 \leq l \leq n-1$ such that for all $1 \leq k \leq l-1$ the subpath $\pi(l, l+1)$ is $v_k \headtail v_{k+1}$ and for all $l \leq k \leq n-1$ the subpath $\pi(l, l+1)$ is $v_k \tailhead v_{k+1}$. Hence, for all $1 \leq k \leq l-1$ there is a directed path $\pi^\prime_k$ from $v_{k+1}$ to $v_k$ in $\DAG$ such that all middle vertices on $\pi^\prime_k$, if any, are in $\LatentVertices_2$, and for all $l \leq k \leq n-1$ there is a directed path $\pi^\prime_k$ from $v_{k}$ to $v_{k+1}$ in $\DAG$ such that all middle vertices on $\pi^\prime_k$, if any, are in $\LatentVertices_2$.
\item \textit{Case 2: There is a bidirected edge on $\pi$.} Then, there is $l$ with $1 \leq l \leq n-1$ such that the subpath $\pi(l, l+1)$ is $v_l \headhead v_{l+1}$. Consequently, there is a confounding path $\pi^\prime_l$ between $v_l$ and $v_{l+1}$ in $\DAG$ such that all middle vertices on $\pi^\prime_l$, if any, are in $\LatentVertices_2$. Moreover, for all $1 \leq k \leq l-1$ the subpath $\pi(k, k+1)$ is $v_k \headtail v_{k+1}$ and for all $l+1 \leq k \leq n-1$ the subpath $\pi(k, k+1)$ is $v_k \tailhead v_{k+1}$. Hence, for all $1 \leq k \leq l-1$ there is a directed path $\pi^\prime_k$ from $v_{k+1}$ to $v_k$ in $\DAG$ such that all middle vertices on $\pi^\prime_k$, if any, are in $\LatentVertices_2$, and for all $l+1 \leq k \leq n-1$ there is a directed path $\pi^\prime_k$ from $v_{k}$ to $v_{k+1}$ in $\DAG$ such that all middle vertices on $\pi^\prime_k$, if any, are in $\LatentVertices_2$.
\end{itemize}
Let $\pi^\prime$ be the walk in $\DAG$ obtained by appending the sequence of paths $\pi^\prime_1, \ldots, \pi^\prime_{n-1}$ at the respective common vertices $v_2, \ldots, v_{n-1}$. Not all vertices on the concatenation $\pi^\prime$, which is a confounding walk, are necessarily also on $\pi$. Thus, there can be vertices that appear more than once on $\pi^\prime$. However, all vertices on $\pi^\prime$ that are not also on $\pi$ are necessarily in $\LatentVertices_2$. Therefore, neither $v_1 = i$ nor $v_n = j$ appears more than once on $\pi^\prime$. We now distinguish two mutually exclusive and collectively exhaustive cases:
\begin{itemize}
\item \textit{Case 1: No vertex appears more than once in $\pi^\prime$.} Then, $\pi^\prime$ is a path. Let $\tilde{\pi}$ be $\pi^\prime$.
\item \textit{Case 2: At least one vertex appears more than once in $\pi^\prime$.} Let $w_1, \ldots, w_m$, where $m \geq n$ and $w_1 = i$ and $w_m = j$, be the ordered sequence of all vertices on $\pi^\prime$. Let $a_1$ be the minimum over all $b$ with $1 \leq b \leq m$ such that $w_b$ appears more than once on $\pi^\prime$. Then, $a_1 \neq 1$ and $a_1 \neq m$ because neither $w_1 = i$ nor $w_m = j$ appears more than once on $\pi^\prime$, and $a_1 \neq m-1$ because else $w_a$ could, given the definition of $a_1$, only appear once on $\pi^\prime$. Thus, in summary, $2 \leq a_1 \leq m-2$. Moreover, $\pi^\prime(1, a_1)$ is either a directed path from $w_{a_1}$ to $w_1$ (namely if $w_{a_1}$ is on $\pi^\prime$ between $w_1$ and (including) the unique root vertex on $\pi$) or a confounding path between $w_{1}$ and $w_{a_1}$ (namely if $w_{a_1}$ is on $\pi^\prime$ between $w_m$ and (excluding) the unique root vertex on $\pi$). Let $a_2$ be the maximum over all $b$ with $a_1 < b \leq m-1$ such that $w_{b} = w_{a_1}$. Then $a_1 < a_2 \leq m-1$. Moreover, $\pi^\prime(a_2, m)$ is either a directed path from $w_{a_2}$ to $w_m$ (namely if $w_{a_2}$ is on $\pi^\prime$ between $w_m$ and (including) the unique root vertex on $\pi$) or a confounding path between $w_{m}$ and $w_{a_2}$ (namely if $w_{a_2}$ is on $\pi^\prime$ between $w_1$ and (excluding) the unique root vertex on $\pi$). Note that $\pi^\prime(1, a_1)$ and $\pi^\prime(a_2, m)$ cannot both be a confounding walk at the same time. Let $\tilde{\pi}$ be the walk obtained by appending $\pi^\prime(1, a_1)$ and $\pi^\prime(a_2, m)$ at their common vertex $w_{a_1} = w_{a_2}$. By definition of $a_1$ and $a_2$, this walk $\tilde{\pi}$ is a path. 
\end{itemize}
The path $\tilde{\pi}$ is a confounding path between $i$ and $j$ in $\DAG$ such that no middle vertex on $\tilde{\pi}$, if any, is in $\ObservedVertices$. Therefore, there is the edge $i \headhead j$ in $\ADMGprojection{\ObservedVertices}{\DAG}$.
\end{proof}

\begin{proof}[Proof of Proposition 3.6]
Use Lemma~\ref{lemma:AMDG-projection-partitioning-of-latents} with $\LatentVertices_1 = \LatentVerticesUnobservable$ and $\LatentVertices_2 = \LatentVerticesTemporally$.
\end{proof}

\begin{proof}[Proof of Proposition 3.7]
See the explanations in the paragraph directly above Proposition 3.7.
\end{proof}

\begin{proof}[Proof of Proposition 3.8]
See the explanations in the paragraph directly above Proposition 3.8.
\end{proof}

\begin{proof}[Proof of Proposition 4.2]
\textbf{Only if}. The premise is that the vertices $(i, t-\tau_i)$ and $(j,t)$ with $\tau_i \geq 0$ have a common ancestor in the ts-DAG $\DAG$. Thus, there is a path $\rho$ between in $(i,t-\tau_i)$ and $(j, t)$ in $\DAG$ that is i) a confounding walk or ii) directed from $(i,t-\tau_i)$ to $(j,t)$ or iii) directed from $(j, t)$ to $(i,t-\tau_i)$ or iv) trivial. Let $\pi$ be the projection of $\rho$ to the summary graph $\mathcal{S}(\DAG)$, which is obtained by ``removing'' the time indices of the vertices on $\rho$. Then, $\pi$ is a walk between $i$ and $j$ in $\mathcal{S}(\DAG)$ that is i) a confounding path or ii) directed from $i$ to $j$ or iii) directed from $j$ to $i$ or iv) trivial. If $\pi$ is a path, we have thus shown that $i$ and $j$ have a common ancestor in $\mathcal{S}(\DAG)$. If $\pi$ is not a path, then let $k$ be the vertex closest to $i$ on $\pi$ (including $i$ itself) that appears more than once on $\pi$. Let $\tilde{\pi}$ be the subwalk of $\pi$ obtained by collapsing the subwalk of $\pi$ between the first and last appearance of $k$ to the single vertex $k$. Then, $\tilde{\pi}$ is a path between $i$ and $j$ in $\mathcal{S}(\DAG)$ that is i) a confounding path or ii) directed from $i$ to $j$ or iii) directed from $j$ to $i$ or iv) trivial. Hence, $i$ and $j$ have a common ancestor in $\mathcal{S}(\DAG)$.

\textbf{If.} The premise is that the vertices $i$ and $j$ have a common ancestor in the summary graph $\mathcal{S}(\DAG)$. Thus, there is a path $\pi$ between in $i$ and $j$ in $\DAG$ that is i) a confounding path or ii) directed from $i$ to $j$ or iii) directed from $j$ to $i$  or iv) trivial. We now distinguish these four cases:
\begin{itemize}
\item \textit{Case i).} Then, the arguments in the paragraph directly below Proposition 4.2 show that $(i,t-\tau_i)$ and $(j, t)$ have a common ancestor in $\DAG$.
\item \textit{Case ii).} Then, in the ts-DAG $\DAG$ there is a directed path $\rho_{ij}$ from $(i, t-\tau_{ij})$ to $(j, t)$ for some $\tau_{ij} \geq 0$. Moreover, since $\DAG$ has all lag-$1$ autocorrelations, there is the (potentially trivial) path $\rho_{ii} = ((i, t-\max(\tau_i, \tau_{ij})) \tailhead (i, t-\max(\tau_i, \tau_{ij})+1) \tailhead \ldots \tailhead (i, t-\min(\tau_i, \tau_{ij})))$. By concatenating the paths $\rho_{ij}$ and $\rho_{ii}$ at their common endpoint vertex, which is $(i, t-\max(\tau_i, \tau_{ij}))$ if $\tau_{ij} > \tau_i$ and $(i, t-\min(\tau_i, \tau_{ij}))$ if $\tau_{ij} \leq \tau_i$, we obtain a walk $\rho$ between $(i, t-\tau_i)$ and $(j, t)$ that is a confounding walk (if $\tau_{ij} > \tau_i$) or a directed walk from $(i, t-\tau_i)$ to $(j, t)$ (if $\tau_{ij} \leq \tau_i$). If $\rho$ is a path, we have thus shown that $(i, t-\tau_i)$ and $(j, t)$ have a common ancestor in $\DAG$. If $\rho$ is not a path, then let $(i, t-\tilde{\tau}_i)$ be the vertex closest to $(i, t-\tau_i)$ on $\rho$ (including $(i, t-\tau_i)$ itself) that appears more than once on $\rho$. Let $\tilde{\rho}$ be the walk obtained by collapsing the subwalk of $\rho$ between the first and last appearance of $(i,t-\tilde{\tau}_i)$ to the single vertex $(i,t-\tilde{\tau}_i)$. Then, $\tilde{\rho}$ is a path between $(i,t-\tau_i)$ and $(j, t)$ in $\DAG$ that is a confounding path or directed from $(i,t-\tau_i)$ to $(j, t)$ or trivial. Hence, we have shown that $(i,t-\tau_i)$ and $(j,t)$ have a common ancestor in $\DAG$.
\item \textit{Case iii).} Then, in the ts-DAG $\DAG$ there is a directed path $\rho_{ji}$ from $(j, t-\tau_{ji})$ to $(i, t-\tau_i)$ for some $\tau_{ji} \geq \tau_i \geq 0$. Moreover, since $\DAG$ has all lag-$1$ autocorrelations, there is the path $\rho_{jj} = ((j, t-\tau_{ji}) \tailhead (j, t-\tau_{ji} +1) \tailhead \ldots \tailhead (j, t))$. By concatenating the paths $\rho_{ji}$ and $\rho_{jj}$ at their common endpoint vertex $(j, t-\tau_{ji})$, we obtain a walk $\rho$ between $(i, t-\tau_i)$ and $(j, t)$ that is a confounding walk (if $\tau_{ji} > 0$) or a directed walk from $(j, t)$ to $(i, t-\tau_i)$ (if $\tau_{ij} = 0$, which also implies $\tau_i = 0$). If $\rho$ is a path, we have thus shown that $(i, t-\tau_i)$ and $(j, t)$ have a common ancestor in $\DAG$. If $\rho$ is not a path, then let $(j, t-\tilde{\tau}_j)$ be the vertex closest to $(j, t)$ on $\rho$ (including $(j,t)$ itself) that appears more than once on $\rho$. Let $\tilde{\rho}$ be walk obtained by collapsing the subwalk of $\rho$ between the first and last appearance of $(j, t)$ to the single vertex $(j,t)$. Then, $\tilde{\rho}$ is a path between $(i,t-\tau_i)$ and $(j, t)$ in $\DAG$ that is a confounding path or directed from $(j,t)$ to $(i, t-\tau_i)$ or trivial. Hence, we have shown that $(i,t-\tau_i)$ and $(j,t)$ have a common ancestor in $\DAG$.
\item \textit{Case iv).} Then, $i = j$ and, using the fact that the ts-DAG $\DAG$ has all lag-$1$ autocorrelations, in $\DAG$ there is the directed path $\rho = ((i,t-\tau_i) \tailhead (i,t-\tau_i+1) \tailhead \ldots \tailhead (i, t))$ from $(i, t-\tau_i)$ to $(i, t) = (j, t)$ if $\tau_i > 0$. Hence, $(i,t-\tau_i)$ and $(j,t)$ have a common ancestor in $\DAG$.
\end{itemize}
We have thus proven the claim.
\end{proof}

\begin{proof}[Proof of Proposition 4.6]
\textbf{Only if}. The premise is that $(i,t-\tau)$ and $(j, t)$ with $\tau \geq 0$ have a common ancestor in $\DAG$. Thus, in $\DAG$ there is a path $\rho$ between $(i,t-\tau)$ and $(j, t)$ that is i) a confounding path or ii) directed from $(i, t-\tau)$ to $(j, t)$ or iii) directed from $(j, t)$ to $(i, t-\tau)$ or iv) trivial. By splitting $\rho$ at its (unique) root vertex $(k,t-\tau_k)$,\footnote{We define the root vertex of a trivial path to be the single vertex on that trivial path.} we obtain a path $\rho_{ki}$ from $(k,t-\tau_k)$ to $(i,t-\tau)$ and a path $\rho_{kj}$ from $(k, t-\tau_k)$ to $(j, t)$. Then, $\rho_{ki}$ is the trivial path consisting of $(k,t-\tau_k) = (i, t-\tau)$ or directed from $(k, t-\tau_k)$ to $(i, t-\tau)$. Similarly, $\rho_{kj}$ is the trivial path consisting of $(k,t-\tau_k) = (j, t)$ or directed from $(k, t-\tau_k)$ to $(j, t)$.

Let $\pi$ and $\pi^\prime$ respectively be the projections of $\rho_{ki}$ and $\rho_{kj}$ to the summary graph $\mathcal{S}(\DAG)$, which are obtained by ``removing'' the time indices of the vertices. Then, $\pi$ is the trivial path consisting of $k=i$ or a directed walk from $k$ to $i$, and $\pi^\prime$ is the trivial path consisting of $k=j$ or a directed walk from $k$ to $j$. Consequently, $\pi$ and $\pi^\prime$ satisfy the first two of the three conditions in Proposition 4.6.

To show that $\pi$ and $\pi^\prime$ also satisfy the third condition, let $e^{1}, \ldots ,e^{n_{ki}}$ be the (possibly empty) ordered sequence of edges on $\rho_{ki}$, and let $f^{1}, \ldots ,f^{n_{kj}}$ be the (possibly empty) ordered sequence of edges on $\rho_{kj}$. Since $\rho_{ki}$ is directed if it is non-trivial, we then get $w(\rho_{ki}) = \sum_{a=1}^{n_{ki}} w(e^{a})$ in terms of the lags $w(e^{a})$ of the edges $e^a$ if $n_{ki} \geq 1$ and $w(\rho_{ki}) = 0$ if $n_{ki} = 0$ (note that $\rho_{ki}$ is trivial if and only if $n_{ki} = 0$). Similarly, $w(\rho_{kj}) = \sum_{b=1}^{n_{kj}} w(f^{b})$ in terms of the lags $w(f^{b})$ of the edges $f^b$ if $n_{kj} \geq 1$ and $w(\rho_{kj}) = 0$ if $n_{kj} = 0$. Moreover, $w(\rho_{ki}) + \tau = w(\rho_{kj})$ because $\rho_{ki}$ and $\rho_{kj}$ have $(k,t-\tau_k)$ as their common root vertex.

Let $e_{\mathcal{S}}^{1}, \ldots ,e_{\mathcal{S}}^{n_{ki}}$ be the (possibly empty) ordered sequence of edges on $\pi$, and let $f_{\mathcal{S}}^{1}, \ldots ,f_{\mathcal{S}}^{n_{kj}}$ be the (possibly empty) ordered sequence of edges on $\pi^\prime$. Then, $e_{\mathcal{S}}^{a}$ is for all $1 \leq a \leq n_{ki}$ the projection of $e^a$ to $\mathcal{S}(\DAG)$ and $f_{\mathcal{S}}^{b}$ is for all $1 \leq b \leq n_{kj}$ the projection of $f^b$ to $\mathcal{S}(\DAG)$. Thus, $w(e^a) \in \w(e_{\mathcal{S}}^{a})$ for all $1 \leq a \leq n_{ki}$ and $w(f^b) \in \w(f_{\mathcal{S}}^{b})$ for all $1 \leq b \leq n_{kj}$, where $\w(\cdot)$ denotes a multi-weight. Then, by definition of the multi-weight of directed walks, $w(\pi) = \sum_{a=1}^{n_{ki}} w(e^{a}) = w(\rho_{ki}) \in \w(\pi)$ if $n_{ki} \geq 1$. If $n_{ki} = 0$, then $0 \in \w(\pi) = \{0\}$ by definition of the multi-weight of a trivial walk. Similarly, $w(\pi^\prime) = \sum_{b=1}^{n_{kj}} w(f^{b}) = w(\rho_{kj}) \in \w(\pi^\prime)$ if $n_{kj} \geq 1$, and $0 \in \w(\pi^\prime) = \{0\}$ if $n_{kj} = 0$. Hence, $\pi$ and $\pi^\prime$ also satisfy the third condition in Proposition 4.6.

\textbf{If.} The premise is that there are paths $\pi$ and $\pi^\prime$ in the summary graph $\mathcal{S}(\DAG)$ that satisfy all of the three conditions in Proposition 4.6. Let $w(\pi) \in \w(\pi)$ and $w(\pi^\prime) \in \w(\pi^\prime)$ with $w(\pi) + \tau = w(\pi^\prime)$, which exist due to the third condition in Proposition 4.6. Further, let $e_{\mathcal{S}}^{1}, \ldots ,e_{\mathcal{S}}^{n_{ki}}$ be the (possibly empty) ordered sequence of edges on $\pi$, and let $f_{\mathcal{S}}^{1}, \ldots ,f_{\mathcal{S}}^{n_{kj}}$ be the (possibly empty) ordered sequence of edges on $\pi^\prime$. Then, by definition of the multi-weight of directed walks, for all $1 \leq a \leq n_{ki}$ there is $w(e_{\mathcal{S}}^a) \in \w(e_{\mathcal{S}}^a)$ such that $w(\pi) = \sum_{a=1}^{n_{ki}} w(e_{\mathcal{S}}^a)$ if $n_{ki} \geq 1$. If $n_{ki} = 0$, then $w(\pi) = 0$ by definition of the multi-weight of a trivial walk. Similarly, for all $1 \leq b \leq n_{kj}$ there is $w(f_{\mathcal{S}}^b) \in \w(f_{\mathcal{S}}^b)$ such that $w(\pi^\prime) = \sum_{b=1}^{n_{kj}} w(f_{\mathcal{S}}^b)$ if $n_{kj} \geq 1$, and $w(\pi^\prime) = 0$ if $n_{kj} = 0$.

Suppose, for the moment, that $n_{ki} \geq 1$. Since $w(e_{\mathcal{S}}^a) \in \w(e_{\mathcal{S}}^a)$ for all $1 \leq a \leq n_{ki}$, according to the definition of the multi-weighted summary graph for all $1 \leq a \leq n_{ki}$ there is the edge $(k_a, t-w(e_{\mathcal{S}}^a)) \tailhead (k_{a+1}, t)$ in $\DAG$. Here, $k_1, \ldots, k_{n_{ki}}$ is the ordered sequence of vertices on $\pi$. We denote $k_1 =k$ and infer from the second condition in Proposition 4.6 that $k_{n_{ki}} = i$. Using the repeating edges property of ts-DAGs to appropriately shift these edges backwards in time and recalling that the weight of a directed walk is the sum the lags of its edges, we see that in $\DAG$ there is a directed path $\rho_{ki}$ from $(k_1, t-w(\pi)-\tau) = (k, t-w(\pi)-\tau)$ to $(k_{n_{ki}}, t-\tau) = (i,t-\tau)$. If $n_{ki} = 0$, then we let $\rho_{ki}$ be the trivial path consisting of $(i,t-\tau)$ only. Similarly, if $n_{kj} \geq 1$, then in $\DAG$ there is a directed path $\rho_{kj}$ from $(l_1, t-w(\pi^\prime))$ to $(j, t)$. Here, $l_1$ is the fist vertex on $\pi^\prime$, for which $l_1 = k$ according to first condition of Proposition 4.6. If $n_{kj} = 0$, then we let $\rho_{kj}$ be the trivial path consisting of $(j, t)$ only. Since $w(\pi) + \tau = w(\pi^\prime)$, the paths $\rho_{ki}$ and $\rho_{kj}$ have $(k,t-w(\pi)-\tau) = (k, t-w(\pi^\prime)) = (l_1, t-w(\pi^\prime))$ as a common endpoint vertex.

Let $\rho$ be the walk obtained by concatenating $\rho_{ki}$ and $\rho_{kj}$ at this common endpoint vertex. This walk $\rho$ is i) a confounding walk between $(i,t-\tau)$ and $(j, t)$ or ii) directed from $(i,t-\tau)$ to $(j, t)$ or iii) directed from $(j, t)$ to $(i,t-\tau)$ or iv) the trivial path consisting of $(i,t-\tau) = (j, t)$ only. If $\rho$ is a walk, we have thus shown that $(i,t-\tau)$ and $(j, t)$ have a common ancestor in $\DAG$. If $\rho$ is not a path, then we can remove a sufficiently large subwalk from $\rho$ to obtain a path $\tilde{\rho}$ which is i) a confounding path between $(i, t-\tau)$ and $(j, t)$ or ii) directed from $(i, t-\tau)$ to $(j, t)$ or iii) directed from $(j, t)$ to $(i, t-\tau_i)$ or iv) trivial. Hence, $(i,t-\tau)$ and $(j, t)$ have a common ancestor in $\DAG$.
\end{proof}

\begin{proof}[Proof of Proposition~\ref{prop:get-ts-DMAG}]
According to Proposition 1 in \citet{richardson2023nested}, the $m$-separations in the marginal ADMG $\ADMGprojection{\ObservedVertices}{\ADMG} = \tsADMG{\ObservedVertices}{\ADMG}$ are in one-to-one correspondence with the $d$-separations between vertices in $\ObservedVertices$ in the ADMG $\ADMG$.\footnote{Strictly speaking, Proposition 1 in \citet{richardson2023nested} only applies to ADMG latent projections of DAGs rather than of ADMGs without self edges. However, the proof of Proposition 1 in \citet{richardson2023nested} equally works for this more general case.} Moreover, as straightforwardly follows from the definition of the ADMG latent projection (see Definition 2.1, due to \citet{pearl1995theory}, see also for example \citet{richardson2023nested}), for all $i, j \in \ObservedVertices$ we have that $i$ is an ancestor of $j$ in $\ADMGprojection{\ObservedVertices}{\ADMG}$ if and only if $i$ is an ancestor of $j$ in $\ADMG$.

The definition of canonical DAGs (see the last paragraph of Section 2.2, cf.~also Section 6.1 of \citet{richardson2002ancestral}) straightforwardly implies that $m$-separations in the ADMG $\ADMGprojection{\ObservedVertices}{\ADMG}$ are in one-to-one correspondence with $d$-separations between vertices in $\ObservedVertices$ in its canonical DAG $\CanonicalDAG{\ADMGprojection{\ObservedVertices}{\ADMG}} = \CanonicalDAG{\tsADMG{\ObservedVertices}{\ADMG}}$. Moreover, as another implication of the definition of canonical DAGs, for all $i, j \in \ObservedVertices$ we have that $i$ is an ancestor of $j$ in $\CanonicalDAG{\ADMGprojection{\ObservedVertices}{\DAG}}$ if and only if $i$ is an ancestor of $j$ in $\ADMGprojection{\ObservedVertices}{\DAG}$.

Putting together the previous observations, we see that $m$-separations $\CanonicalDAG{\tsADMG{\ObservedVertices}{\ADMG}}$ between vertices in $\ObservedVertices$ are in one-to-one correspondence with $m$-separations in $\ADMG$ between vertices in $\ObservedVertices$. Moreover, for all $i, j \in \ObservedVertices$ we see that $i$ is an ancestor of $j$ in $\CanonicalDAG{\tsADMG{\ObservedVertices}{\ADMG}}$ if and only if $i$ is an ancestor of $j$ in $\ADMG$. The equality $\tsDMAG{\ObservedVertices}{\ADMG} = \DMAGprojection{\ObservedVertices}{\CanonicalDAG{\tsADMG{\ObservedVertices}{\ADMG}}}$ now follows because the DMAG latent projection (see Definition~\ref{def:DMAG-latent-projection}, due to \citet{richardson2002ancestral} and \citet{zhang2008causal}) only requires knowledge of $m$-separations and ancestral relationships between observed vertices.
\end{proof}

\subsection{Proof of Lemma 4.8} \label{subsec.proof-of-decomposition-theorem}
In order to prove Lemma 4.8, we first prove the following slightly adjusted version of it.

\begin{lemma} \label{thm.support-thm-1}
There exist \emph{finite} sets
\begin{itemize}
\item $\cW_0(k,i)$,
\item $\mathcal{N}_{\pi^\prime}$ for every $\pi^\prime \in \cW_0(k,i)$,
\item and $E_{\tau}(\pi^\prime,\mathcal S)$ for every $\pi^\prime \in \cW_0(k,i)$ and every $\mathcal{S} \in \mathcal{N}_{\pi^\prime}$
\end{itemize}
such that
\begin{align*}
\mathbf{W}_{\tau}(k,i) = \bigcup_{\pi' \in \cW_0(k,i)}\, \bigcup_{\mathcal{S} \in \mathcal{N}_{\pi'}}\, \bigcup_{(a_0,\dots,a_{\mu}) \in E_{\tau}(\pi',\mathcal S)}\mathrm{con}_+(a_0;\dots,a_{\mu})\, . 
\end{align*} 
\end{lemma}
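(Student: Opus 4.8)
The plan is to make rigorous the cycle-collapse/cycle-reinsertion idea sketched in Remark~\ref{remark:theorem-1}. Write $\cG = (\VarIndices,\DirectedEdges)$ for the underlying directed graph of the given weakly acyclic MWDG, and let $\mathcal{C}$ be the (finite) set of equivalence classes of irreducible cycles of $\cG$. Because $\cG$ is finite, the set $\cW_0(k,i)$ of cycle-free directed walks from $k$ to $i$ (with the convention $\cW_0(k,k)=\{(k)\}$) is finite, and every $\weightset{e}$, every $\weightset{\pi'}$ with $\pi'\in\cW_0(k,i)$ and every $\weightset{\mathbf c}$ with $\mathbf c\in\mathcal{C}$ is a finite subset of $\mathbb{N}_0$ — the multi-weight of a cycle being well-defined on equivalence classes. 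For finite $A\subseteq\mathbb{N}_0$ I write $A^{\oplus n}$ for the $n$-fold Minkowski sum $A+\dots+A$ (with $A^{\oplus0}=\{0\}$). The goal is to exhibit $\mathbf{W}_{\tau}(k,i)$ as a finite union of affine cones and read off $\cW_0(k,i)$, the $\mathcal{N}_{\pi'}$ and the $E_{\tau}(\pi',\mathcal S)$ from the construction.

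First I would set up the \emph{collapse map}. Given $\pi\in\cW(k,i)$, repeatedly locate an irreducible cycle occurring as a subwalk (any non-path walk contains one, namely a shortest subwalk returning to a vertex) and delete its edges while keeping its basepoint; each step strictly shortens the walk, so the process terminates in a cycle-free walk $\pi'\in\cW_0(k,i)$ from $k$ to $i$, having removed a multiset $R$ of irreducible cycles, recorded up to equivalence. Two observations are central: (i)~the edge sequence of $\pi$ is the disjoint union of that of $\pi'$ and those of the cycles in $R$, so by additivity of multi-weights under concatenation $\weightset{\pi}=\weightset{\pi'}+\sum_{\mathbf c\in R}\weightset{\mathbf c}$ (Minkowski sum, with multiplicity); and (ii)~the \emph{incidence graph} with node set $\{\pi'\}\cup\mathrm{supp}(R)$, two nodes joined when they share a vertex of $\cG$, is connected — by induction on the number of collapse steps, since the basepoint at which a cycle is removed lies on the walk present at that moment and hence, after the remaining collapses, on $\pi'$ or on another removed cycle. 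This gives the inclusion ``$\subseteq$'' in the decomposition below.

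Next I would run the construction in reverse. Call $T\subseteq\mathcal{C}$ \emph{$\pi'$-admissible} if $\pi'\cup\bigcup_{\mathbf c\in T}\mathbf c$ is connected in $\cG$. For $\pi'$-admissible $T$ and any multiplicities $(n_{\mathbf c})_{\mathbf c\in T}$ with $n_{\mathbf c}\geq1$, order the cycles of $T$ along a spanning tree of the incidence graph rooted at $\pi'$ and insert them into $\pi'$ in that order — each first at a vertex it shares with the walk built so far, then the remaining copies anywhere — obtaining a walk in $\cW(k,i)$ with multi-weight $\weightset{\pi'}+\sum_{\mathbf c\in T}\weightset{\mathbf c}^{\oplus n_{\mathbf c}}$. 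Together with Step~1 this yields
\[
\mathbf{W}_{\tau}(k,i)\;=\;\{\tau\}\,+\!\!\bigcup_{\pi'\in\cW_0(k,i)}\ \bigcup_{T\ \pi'\text{-admissible}}\Bigl(\weightset{\pi'}+\sum_{\mathbf c\in T}\ \bigcup_{n\geq1}\weightset{\mathbf c}^{\oplus n}\Bigr).
\]
To finish, I would turn the right-hand side into a finite union of cones over $\mathbb{N}_0$: for finite $A=\{w_1,\dots,w_s\}\subseteq\mathbb{N}_0$ one has $\bigcup_{n\geq1}A^{\oplus n}=\bigcup_{w\in A}\mathrm{con}(w;w_1,\dots,w_s)$, while $\weightset{\pi'}=\bigcup_{a\in\weightset{\pi'}}\{a\}$; distributing the finitely many Minkowski sums over these finite unions — using $\mathrm{con}(b;\mathbf u)+\mathrm{con}(b';\mathbf u')=\mathrm{con}(b+b';\mathbf u,\mathbf u')$ — rewrites each bracketed set as a finite union of cones whose generators are the elements of the $\weightset{\mathbf c}$, $\mathbf c\in T$, and whose base point is $\tau$ plus one element of $\weightset{\pi'}$ plus one designated ``first'' weight from each $\weightset{\mathbf c}$. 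Encoding the data (the admissible $T$, and a designated first weight for each $\mathbf c\in T$) as an element $\mathcal S\in\mathcal{N}_{\pi'}$ and collecting the resulting generator tuples $(a_0,\dots,a_\mu)$ into $E_{\tau}(\pi',\mathcal S)$ produces exactly the decomposition asserted by the Lemma; all these sets are finite since $\mathcal{C}$, each $\weightset{\mathbf c}$ and each $\weightset{\pi'}$ are. Finally, weak acyclicity (its two conditions jointly forbid a directed cycle through only weight-$0$ edges) forces $0\notin\weightset{\mathbf c}$ for every $\mathbf c$, so each cone generator is a positive integer, as the notation $\mathrm{con}_+$ requires.

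I expect the main obstacle to be making Steps~2 and~3 mesh so that the family of cones is \emph{exactly} $\mathbf{W}_{\tau}(k,i)$ and not a strict superset. The $\pi'$-admissibility (connectivity) condition is precisely what prevents over-counting — without it a cone would contain points that ``use'' a cycle reachable only through another cycle that they do not use — and the spanning-tree reinsertion argument is what shows nothing is under-counted; getting both airtight, together with the bookkeeping of multi-valued edge weights when a base path or a cycle repeats an edge (handled by the ``first weight'' splitting), is the real work. As an aside, the mere existence of a finite cone decomposition also follows from the semilinearity of Parikh images of regular languages specialized to dimension one; but the explicit construction above is what the subsequent solvability analysis (Theorems~\ref{cor.prob1} and~\ref{thm:our-linear-diophantine}) builds on.
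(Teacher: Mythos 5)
Your proof follows the same collapse-and-reinsert strategy as the paper's, and its two load-bearing arguments are essentially the paper's proofs of the two inclusions in Lemma~\ref{lem.monoids}: the induction showing that the basepoint of each removed cycle survives onto the base walk or onto a later-removed cycle (hence the incidence graph is connected), and the spanning-tree re-insertion for the converse. The execution differs in two ways worth recording. First, the paper makes the collapse canonical (always remove the \emph{first} irreducible subcycle, Definition~\ref{def.nesting-resolution-walk}) and needs Lemma~\ref{lem.cycle_insertion} to control how insertion interacts with that canonical resolution; your non-canonical collapse is lighter and suffices, since only the existence of some decomposition is required. Second, the paper encodes admissibility via the graph of cycles and the set monoid $\mathcal{N}_{\pi'}$ generated by node sets of paths starting at $\mathrm{touch}(\pi')$; your incidence-graph connectivity is an equivalent condition ($T\in\mathcal{N}_{\pi'}$ iff every component of the induced subgraph of $\cG_{\cC}$ on $T$ meets $\mathrm{touch}(\pi')$). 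One caution here: ``$\pi'\cup\bigcup_{\c\in T}\c$ is connected in $\cG$'' must mean connectivity of the vertex-sharing incidence graph, not of the induced subgraph of $\cG$ — two cycles joined only by an edge of $\cG$ belonging to neither cannot be chained by insertion — and since your spanning-tree step relies on the former reading, you should state the definition that way.

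The genuine divergence is in the weight bookkeeping, and there your version is the more careful one. Traversing a cycle class $\c$ a total of $n$ times contributes the $n$-fold Minkowski sum $\weightset{\c}^{\oplus n}$, and your identity $\bigcup_{n\geq1}A^{\oplus n}=\bigcup_{w\in A}\mathrm{con}(w;w_1,\dots,w_s)$ packages this into cones whose generator list contains \emph{every} element of $\weightset{\c}$. The paper's displayed formula for $B_{\tau}((\pi',\mathcal S))$ instead uses $\bigcup_{n\geq 1} n\cdot\weightset{\c}$ and indexes cones by a \emph{single} chosen generator per cycle class (the factor $\prod_{\c\in\mathcal S}\weightset{\c}$ in $E_{\tau}((\pi',\mathcal S))$); when some $|\weightset{\c}|\geq 2$, say $\weightset{\c}=\{2,3\}$ for a self-loop, that misses achievable weights such as $5=2+3$ from two traversals at different lags, so your all-generators treatment is the one that actually equals $\mathbf{W}_{\tau}(k,i)$. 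Two loose ends remain on your side. The cones you construct are $\mathrm{con}$-cones (coefficients in $\mathbb{N}_0$, base point shifted by one designated weight per class), whereas the lemma asks for $\mathrm{con}_+$-cones; the conversion $\mathrm{con}(a_0;a_1,\dots,a_{\mu})=\bigcup_{S\subseteq\{1,\dots,\mu\}}\mathrm{con}_+(a_0;(a_{\alpha})_{\alpha\in S})$ is routine but should be stated, and note that the subscript $+$ refers to positive \emph{coefficients} $n_{\alpha}$, not positive generators — your closing sentence conflates the two (weak acyclicity is indeed what guarantees the generators themselves are positive, which both cone notions require).
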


\noindent As opposed to Lemma 4.8, Lemma~\ref{thm.support-thm-1} uses cones $\mathrm{con}_+(a_0;\dots,a_{\mu})$ over positive integers, instead of cones over non-negative integers, defined as
\begin{align*}
\mathrm{con}_+(a_0;a_1,\dots,a_{\mu}) = \left\{ a_0 + \sum_{\alpha=1}^{\mu} n_{\alpha} \cdot a_{\alpha} \ \bigg| \ n_{\alpha} \in \mathbb{N} \text{ for all } 1 \leq \alpha \leq \mu \right\} \, ,
\end{align*}
that is, with positive integers $n_{\alpha}$. From this adjusted version of the statement, we will then derive Lemma 4.8 by sorting the involved cones over positive integers more economically into cones over non-negative integers.

Before we define the sets in Lemmas 4.8 and~\ref{thm.support-thm-1} in detail, we need to introduce additional notation on walks in directed graphs and discuss a few results on cycles.

\subsubsection{Operations on walks in directed graphs} \label{subsubsec.operations-walks}
Given a finite directed graph $\cG$, let $\cW(\cG)$ be the union of the set of walks $\cW(i, j)$ for all pairs of (not necessarily distinct) vertices $i$ and $j$ in $\cG$, with $\cW(i, j)$ as defined in Problem 3. We define the following operations on elements in $\cW(\cG)$.
\begin{itemize}
\item \textbf{Concatenation}: Given walks $\pi, \pi' \in \cW(\cG)$ with $\pi(\length{\pi}) = \pi'(1)$, we define their \emph{concatenation} as $\pi \circ \pi' = (\pi(1),\dots,\pi(\length{\pi}),\pi'(2),\dots, \pi'(\length{\pi'}))$.
\item \textbf{Removal}: Given a walk $\pi \in \cW(\cG)$ with subcycle $c = \pi(a,b)$ (that is, $a < b$ and $\pi(a) = \pi(b)$), we denote by $\pi \backslash c = (\pi(1),\dots, \pi(a),\pi(b+1),\dots, \pi(\length{\pi}))$ the walk obtained by collapsing $c$ on $\pi$ to the single vertex $\pi(a) = \pi(b)$. We call the index $a$ the \emph{removal point} of $c$ from $\pi$. 
\item \textbf{Insertion}: Given a walk $\pi \in \cW(\cG)$, an integer $i \in \{ 1,\dots, \length{\pi} \}$ and a directed cycle \newline $c = (c(1),\dots, c(m)) \in \cW(\cG)$ with $c(1) = c(m) = \pi(i)$, we let $\pi \cup_i c$ denote the walk $(\pi(1),\dots,\pi(i-1),c(1),\dots, c(m), \pi(i+1),\dots, \pi(\length{\pi}))$ obtained by inserting $c$ into $\pi$ at point $i$. 
\end{itemize}

\subsubsection{Cycles in directed graphs} \label{subsubsec.cycles}
Recall that the finite group $\mathbb{Z}_n$ acts on the set of cycles of length $n$ by revolving the vertices, that is, through the group action 
\[ \alpha_n(\Bar{k})\left((c(1),\dots,c(n))\right) = (c(\overline{1+k}),\dots,c(\overline{n+k}))\, \]
where $c$ is a cycle, $\Bar{k} \in \mathbb{Z}_n$ and $\Bar{\ell} = \ell\!\! \mod n \in \mathbb{Z}_n$. These group actions $\alpha_n$ induce an equivalence relation on the set of irreducible cycles such that two irreducible cycles $c_1$ and $c_2$ are equivalent (denoted as $c_1 \sim c_2$) if and only if, first, $c_1$ and $c_2$ have the same length $n$ and, second, there is $\Bar{k} \in \mathbb{Z}_n$ such that $c_2 = \alpha_n(\Bar{k})(c_1)$. We denote the set of all equivalence classes of \emph{irreducible} cycles in $\cW(\cG)$ by $\cC$ and the corresponding equivalence classes themselves in bold font, for example $\mathbf{c} \in \cC$. We denote the set of nodes on an equivalence class $\mathbf{c}$ by $\mathrm{nodes}(\mathbf{c})$, which is well-defined because all elements in the equivalence class $\mathbf{c}$ share the same nodes.\footnote{In Section 2.1, we defined a slightly different equivalence relation on the set of cycles according to which two cycles are also equivalent if one of them is the other read in the reverse direction (for example, $i\tailhead j \tailhead i$ and $i \headtail j \headtail i$). However, here we only consider cycles in the set of walks $\cW(\cG)$, which by definition only contains walks of the form $\tailhead \ldots \tailhead$ (and not $\headtail \ldots \headtail$). Thus, given the restriction to cycles in $\cW(\cG)$, the two definitions are in fact equivalent.}

It is convenient to define another graphical object, the \emph{graph of cycles}, which encodes how ``far'' different equivalence classes in $\mathcal{C}$ are from each other.

\begin{definition} \label{def.graph-of-cycles}
Let $\cG$ be a directed graph and let $\cC$ be the set of its irreducible cycle classes as defined in the previous paragraph. Then, we define the \emph{graph of cycles} $\cG_{\cC} = (\cC,\mathbf{U})$ as the \emph{undirected} graph with node set $\cC$ and undirected edges such that
\begin{align*}
(\c,\c') \in \mathbf{U} \qquad \text{if and only if } \qquad \c \neq \c' \quad \text{and} \quad \mathrm{nodes}(\c) \cap \mathrm{nodes}(\c') \neq \emptyset \, .
\end{align*}
\end{definition}

\begin{example} \label{ex.graph-of-cycles}
    Let $\cG$ be the directed graph with vertex set $\Vertices = \{1, 2, 3, 4\}$ given by $1 \to 2 \to3 \to4 \to3 \to2$. Then, $\mathcal{C} = \{\mathbf{c}_1, \mathbf{c}_2\}$ with $\c_1 = [(3,4,3)]$ (and $(4,3,4) \sim (3,4,3)$) and $\c_2 = [(2,3,2)]$ (and $(3,2,3) \sim (2,3,2)$). The graph of cycles $\cG_{\cC}$ is $\c_1 - \c_2$. 
\end{example}

The geodesic distance on the graph of cycles $\cG_{\cC}$ equips the set $\mathcal{C}$ with a metric $d$, that is,
\begin{align*}
d(\c,\c') = \begin{cases} \infty \quad \text{if} \quad \c, \c' \text{ are not connected by a path in $\cG$}\, ; \\
\text{length of the shortest (possibly trivial) path on } \cG_{\cC} \text{ between } \c \text{ and } \c' \, .
 \end{cases}
\end{align*}  

\noindent Given a subset $\mathcal{C}^\prime \subseteq \cC$, we define $d(\c,\mathcal{D})= \min_{\mathbf{c}^\prime \in \mathcal{C}^\prime} d(\c,\c')$. Then, we extend $d$ to a notion of distance between a cycle class $\c$ and a walk $\pi \in \cW(\cG)$ on the original graph $\cG$ by defining
\begin{align*}
d(\pi,\c) = 1 + d(\c, \mathrm{touch}(\pi))\, , 
\end{align*}
where we define the subset $\mathrm{touch}(\pi) \subseteq \cC$ by
\begin{align*}
\c' \in \mathrm{touch}(\pi) \qquad \text{if and only if} \qquad \mathrm{nodes}(\c') \cap \mathrm{nodes}(\pi) \neq \emptyset\, .
\end{align*}
In other words, $\mathrm{touch}(\pi)$ is the set of cycle classes that ``touch'' the walk $\pi$ in the sense of sharing a node with $\pi$.

\begin{continuance}{ex.graph-of-cycles}
For $\pi  = (1 \tailhead 2 \tailhead 3)$ in $\cG$ we have $\mathrm{touch}(\pi) = \{\c_1, \c_2\}$ and $d(\pi, \mathbf{c_2}) = 1$. For $\pi^\prime = (1\tailhead 2)$ we have $\mathrm{touch}(\pi^\prime) = \{\c_1\}$ and $d(\pi^\prime, \mathbf{c_2}) = 2$.
\end{continuance}

Next, for given $\pi \in \cW(\cG)$, we define the set $\mathcal{P}_{\pi}$ to consist of
\begin{itemize}
\item all (possibly trivial) undirected paths $\xi$ on the graph of cycles $\cG_{\cC}$ that start at a point $\xi(1)\in \mathrm{touch}(\pi)$ and
\item the empty path,
\end{itemize}
and then consider the set projection
\begin{align*}
\Lambda_{\pi}: \,\, &\mathcal{P}_{\pi} \to 2^{\cC}\\
&(\xi(1),\dots, \xi(\length{\xi})) \mapsto \{ \xi(1),\dots, \xi(\length{\xi}) \} \subseteq \cC
\end{align*}
that maps an element of $\mathcal{P}_{\pi}$ to its (possibly empty) set of nodes (which is consequently a subset of $\cC$). The range of this map generates the \emph{set monoid} $\mathcal{N}_{\pi}$ defined as the unique smallest subset $\mathcal{N}_{\pi} \subseteq 2^{\cC}$ such that 
\begin{itemize}
\item $\Lambda_{\pi}(\mathcal{P}_{\pi})  \subseteq \mathcal{N}_{\pi}$
and
\item if $\mathcal A, \mathcal B \in \mathcal{N}_{\pi}$, then $\mathcal A \cup \mathcal B \in \mathcal{N}_{\pi}$.
\end{itemize}
Note that $\emptyset \in \mathcal{N}_{\pi}$ because the empty path is in $\mathcal{P}_{\pi}$.

To prove Lemma~\ref{thm.support-thm-1}, we would like to define a procedure that iteratively removes irreducible cycles from walks until the remaining walk is free of cycles (or trivial). Since the choice of cycle that is to be removed is not unique, we choose to always remove the first irreducible cycle by convention. To see that this choice is well-defined, consider a directed walk $\pi$ from $i$ to $j$ on a directed graph $\cG$ such that $\pi$ contains a cycle. The number $b= \min \{b' ~\vert~ \pi(1,b') \text{ contains an irreducible cycle}\}$ is then well-defined. Moreover the subwalk $\pi(1,b)$ contains a unique irreducible cycle that ends at $\pi(b)$. Indeed, by definition of $b$ any subcycle of $\pi(1,b)$ must end at $\pi(b)$ and if $c = \pi(a,b)$ and $c' = \pi(a',b)$ with $a<a'$ were different irreducible subcycles then $c$ could not be irreducible as it must contain $c'$. Therefore, the following procedure is well-defined:

\begin{definition} \label{def.nesting-resolution-walk}
Let $\pi$ be a directed walk from $i$ to $j$ on a directed graph $\cG$. Then, the \emph{cycle resolution} of  $\pi$ is the finite (possibly empty) sequence of irreducible cycles $(c_1,\dots, c_{\nu})$ defined as follows:
\begin{itemize} \setlength\itemsep{0.5em}
\item $c_1$ is the first irreducible subcycle of $d_0=\pi$, that is, $c_1 = \pi(a_1,b_1)$ for some $a_1, b_1 \in \{1,\dots, \length{d_0} \}$ with $a_1 < b_1$ is an irreducible subcycle of $d_0(1, b_1)$ and there is no irreducible subcycle $c'$ of $d_0(1,b_1)$ besides $c_1$.
\item $c_i$ for $i=2,\dots, \nu$ is the first irreducible subcycle of $d_{i} = d_{i-1} \backslash c_{i-1}$, that is, $c_i = d_i(a_i,b_i)$ for some $a_i, b_i \in \{1,\dots, \length{d_i} \}$ with $a_i < b_i$ is an irreducible subcycle of $d_i(1, b_i)$ and there is no irreducible subcycle $c'_i$ on $d_i(1,b_i)$ besides $c_i$.
\item $d_{\nu} \backslash c_{\nu}$ is a cycle-free directed walk from $i$ to $j$ if $i \neq j$ and the trivial walk $(i)$ without edges if $i=j$.
\end{itemize}
We call the resulting cycle-free walk $\mathrm{bs}(\pi) = d_{\nu} \backslash c_{\nu}$ the \emph{base walk} of the resolution and say that $\pi$ \emph{resolves to} $\pi'$ if $\pi' = \mathrm{bs}(\pi)$. 
\end{definition}

\begin{continuance}{ex.graph-of-cycles}
The walk $\pi = (1 \tailhead 2 \tailhead 3 \tailhead 4 \tailhead 3 \tailhead 2)$ in $\cG$ has the cycle resolution $(c_1,c_2)$ with $c_1 = (3,4,3)$ and $c_2 = (2,3,2)$ and the base walk $\mathrm{bs}(\pi) = (1,2)$.
\end{continuance}

Let us record how insertion of cycles interacts with cycle resolutions.

\begin{lemma} \label{lem.cycle_insertion}
Let $\pi$ be a directed walk in $\cW(\cG)$ with cycle resolution $(c_1,\dots, c_{\nu})$ and in-between walks $(d_1,\dots, d_{\nu})$ as in Definition~\ref{def.nesting-resolution-walk}. Let $\c$ be an irreducible cycle class with $\mathrm{nodes}(\pi) \cap \mathrm{nodes}(\c) \neq \emptyset$, let $j = \min \{ k ~\vert~ \pi(k) \in \mathrm{nodes}(\c) \} $ be the first node of $\pi$ that lies on $\c$ and let $c \in \c$ be the unique member of $\c$ starting and ending at $\pi(j)$. 

Consider the walk $\tilde{\pi} = \pi \cup_j c$ and its cycle resolution $(\tilde{c}_1,\dots, \tilde{c}_{\mu})$. Then, both of the following statements hold:
\begin{itemize}
\item[(1)] $\mu = \nu+1$ and for any $\tilde{c}_l$ we have that $\tilde{c}_l = c_k$ for some $k=1,\dots, \nu$ or $\tilde{c}_l = c$.
\item[(2)] $\mathrm{bs}(\tilde{\pi}) = \mathrm{bs}(\pi)$. 
\end{itemize}
\end{lemma}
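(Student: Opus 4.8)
The plan is to prove both claims simultaneously by carefully tracking what the cycle-resolution procedure of Definition~\ref{def.nesting-resolution-walk} does to $\tilde\pi = \pi \cup_j c$. Write $\pi = (\pi(1),\dots,\pi(j),\dots,\pi(\length\pi))$ and $c = (\pi(j) = c(1), c(2), \dots, c(m) = \pi(j))$, so that $\tilde\pi = (\pi(1),\dots,\pi(j-1), c(1),\dots,c(m),\pi(j+1),\dots,\pi(\length\pi))$. The first thing to establish is that the inserted copy of $c$ is itself the first irreducible subcycle encountered by the resolution procedure when run on $\tilde\pi$. Since $c$ is \emph{irreducible} and $j = \min\{k \mid \pi(k)\in\mathrm{nodes}(\c)\}$, none of the vertices $\pi(1),\dots,\pi(j-1)$ lies on $\c$, hence no vertex among $\pi(1),\dots,\pi(j-1)$ is repeated within the initial segment of $\tilde\pi$ up to the end of the inserted $c$ — except possibly repetitions already present in $\pi(1,j)$ itself. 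I would argue that any irreducible subcycle of $\pi(1,j)$ would also be the first irreducible subcycle of $\pi$, contradicting that we chose $j$ to be a node \emph{on} $\pi$ at which we insert; more precisely, one shows that $\pi(1,j)$, viewed as a prefix of the base walk's construction, is cycle-free up to the point where $c$ starts, so that the first irreducible subcycle of $\tilde\pi$ is exactly the inserted $c = \tilde c_1$. (A small case distinction handles whether $\pi(1,j)$ already contains earlier cycles of $\pi$; the key point is that the resolution always removes the \emph{first} irreducible subcycle, and inserting $c$ at $\pi(j)$ does not create any new irreducible subcycle that closes strictly before $c$ does.)

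Once $\tilde c_1 = c$ is identified and removed, the resulting walk is $\tilde\pi \backslash c = \pi$ by the very definitions of insertion and removal (inserting a cycle at a vertex and then collapsing it returns the original walk). From this point on, the resolution procedure applied to $\tilde\pi$ proceeds \emph{verbatim} as the resolution procedure applied to $\pi$: we have $d_1^{\tilde\pi} = \tilde\pi \backslash \tilde c_1 = \pi = d_0^{\pi}$, and by induction $d_{l+1}^{\tilde\pi} = d_l^{\pi}$ and $\tilde c_{l+1} = c_l$ for all $l = 1,\dots,\nu$, terminating with $d_{\nu+1}^{\tilde\pi}\backslash \tilde c_{\nu+1} = d_\nu^\pi \backslash c_\nu = \mathrm{bs}(\pi)$. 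This gives $\mu = \nu + 1$, the membership statement $\tilde c_l \in \{c_1,\dots,c_\nu\}\cup\{c\}$, and $\mathrm{bs}(\tilde\pi) = \mathrm{bs}(\pi)$ all at once. The induction is essentially bookkeeping: at each stage the walk being processed is literally the same walk, so ``the first irreducible subcycle'' is the same object and the removal point is the same index.

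The main obstacle I anticipate is the base case, i.e.\ rigorously proving that the inserted copy of $c$ really is the first irreducible subcycle of $\tilde\pi$ and that removing it recovers exactly $\pi$ rather than some other collapse. Two subtleties need care. First, $\pi(1,j)$ is \emph{not} assumed cycle-free — $\pi$ may already have cycles before reaching $\pi(j)$ — so I cannot simply say ``nothing happens before $c$''; instead I must argue that whatever the first irreducible subcycle of $\pi$ is, inserting $c$ at $\pi(j)$ does not insert itself \emph{before} the closing point of that cycle, unless $\pi(j)$ lies within an earlier cycle, in which case one reduces to the situation after that earlier cycle is removed. The cleanest route is probably a direct induction on $\length\pi$ or on the number $\nu$ of cycles in the resolution of $\pi$, peeling off $c_1^\pi$ first when $c_1^\pi$ closes before $\pi(j)$, and otherwise observing $j$ lands in the ``cycle-free prefix'' region so that $c$ is genuinely first. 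Second, one must confirm the removal point of $c$ from $\tilde\pi$ is precisely the index of $c(1)$, so that $\tilde\pi \backslash c$ splices back to $(\pi(1),\dots,\pi(j),\pi(j+1),\dots) = \pi$; this is immediate from the definition of removal once $c$ is the chosen first irreducible subcycle. Everything after the base case is a routine induction, so I would devote most of the write-up to stating and proving a clean auxiliary claim handling the first step, then dispatch the rest by the verbatim-identity argument sketched above.
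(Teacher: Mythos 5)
Your proposal is correct and follows essentially the same route as the paper's proof: an induction on $\nu$ in which one analyzes where the first irreducible subcycle of $\tilde{\pi}$ closes, uses the minimality of $j$ (no vertex $\pi(1),\dots,\pi(j-1)$ lies on $\c$) together with the irreducibility of $c$ to rule out any cycle closing strictly inside the inserted copy, and otherwise peels off the first subcycle $c_1$ of $\pi$ and recurses on $d_1$ with the shifted insertion point. The subtleties you flag (that $\pi(1,j)$ need not be cycle-free, and that removal must splice back to exactly $\pi$) are precisely the cases the paper's proof handles, so filling in your sketch would reproduce that argument.
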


\begin{proof}[Proof of Lemma~\ref{lem.cycle_insertion}]
Write $c = (c(1),\dots, c(s))$ and $\pi = (\pi(1),\dots, \pi(m))$ and $\tilde{\pi} = (\tilde{\pi}(1) = \pi(1),\dots, \tilde{\pi}(j) = c(1),\dots, \tilde{\pi}(j+s) = c(s),\dots, \tilde{\pi}(m+s) = \pi(m))$. Moreover, denote the in-between walks of $\tilde{\pi}$ by $\tilde{d}_0,\dots, \tilde{d}_{\mu}$. We prove the claim by induction over $\nu$.

\textbf{Induction base case:} Suppose that $\nu = 0$. In this case, $\pi$ does not have any cycles and hence $\pi = \mathrm{bs}(\pi)$. 
Consider the first irreducible cycle $\tilde{c}_1 = \tilde{\pi}(a,b) $ of $\tilde{\pi}$. Then $b \leq j + s$ as otherwise $\tilde{c}_1$ would not be the first irreducible subcycle of $\tilde{\pi}$. We distinguish two cases.
\begin{itemize}
\item Case 1: If $b <j+s$, then we must have $b>j$ as otherwise $\tilde{c}_1$ would be a subcycle of $\pi$, which was assumed to be cycle-free. In addition, we must have $a < j$ as otherwise $\tilde{c}_1$ would be a proper subcycle of $c$ which was assumed to be irreducible. But if $a<j$ and $j\leq b < j+s$, then $\tilde{\pi}(a) = \tilde{\pi}(b) \in \mathrm{nodes}(\c)$ contradicts the definition of $j$ as the first touch point. In other words, $b <j+s$ is not possible. 
\item Case 2: If $b = j+s$, then we must have $a \leq j$ as otherwise $\tilde{c}_1$ would be a proper subcycle of $c$ which was assumed to be irreducible. Similarly, we cannot have $a < j$ as in this case $c$ would be a proper subcycle of $\tilde{c}_1$ again contradicting irreducibility. Thus, $a = j$ such that $\tilde{c}_1 = c$ and $\tilde{d}_1 = \pi = \mathrm{bs}(\pi)$ is cycle-free. The cycle resolution of $\tilde{\pi}$ is thus $(c)$ and has length $\nu +1 = 1$. 
\end{itemize}

\textbf{Induction Hypothesis:} Suppose we have proven the claim for all $\nu$ with $0 \leq \nu \leq \nu^\prime$.

\textbf{Induction Step:} Suppose that $\nu = \nu^\prime + 1$. Consider the first irreducible cycle $\tilde{c}_1 = \tilde{\pi}(a,b) $ of $\tilde{\pi}$. Then, as for the induction base case, $b \leq j + s$ as otherwise $\tilde{c}_1$ would not be the first irreducible subcycle of $\tilde{\pi}$. We distinguish three cases.
\begin{itemize}
\item Case 1: If $j< b <j+s$, then we must have $a<j$ by irreducibility of $c$. But then we again find that $\tilde{\pi}(a) = \tilde{\pi}(b)$ violates the defining property of $j$. Thus, the case $j< b <j+s$ is impossible. 
\item Case 2: If $b = j +s$, then we can argue exactly as for $\nu = 0$ that $a=j$ and thus $\tilde{c}_1 = c_1$ and $\tilde{d}_1 = \pi$, such that the cycle resolution of $\tilde{\pi}$ is $(c,c_1,\dots,c_{\nu})$ and $\mathrm{bs}(\tilde{\pi}) = \mathrm{bs}(\pi)$.
\item Case 3: If $b \leq j$, then $\tilde{c}_1$ must have been a subcycle of $\pi$ already, more precisely the first subcycle of $\pi$, that is, $\tilde{c}_1 = c_1$. Thus, the path $\pi_{\circ} = d_1$ has a cycle resolution  of length $\nu - 1 = \nu^\prime$, its first touch point with $\c$ is $j_{\circ} = j- \length{c_1}$ and $\mathrm{bs}(\pi) = \mathrm{bs}(\pi_{\circ})$ and $\tilde{\pi}\backslash c_1 = \pi_{\circ} \cup_{j_{\circ}} c$. Applying the induction hypothesis with $\pi_{\circ} $ replacing $\pi$ then completes this case.
\end{itemize}
We have thus proven the claim.
\end{proof}

Cycle resolutions offer a convenient way to decompose the multi-weight of a directed walk. The proof of the following lemma is immediate.

\begin{lemma} \label{lem.weight-decomp}
Let $\pi$ be a directed walk from $i$ to $j$ with cycle resolution $(c_1,\dots, c_{\nu})$ and base walk $\pi'$. Then,
\begin{align*}
\weightset{\pi} = \weightset{\pi'} + \sum_{i=1}^{\nu} \weightset{c_i}\, .
\end{align*}
In addition, for any $\c \in \cC$ and any $c,c' \in \c$, we have $\w(c) = \w(c')$, so that $\w(\c)= \w(c)$ is well-defined, that is, $\w(\c)$ does not depend on the choice of $c \in \c$.
\hfill $\qedsymbol$
\end{lemma}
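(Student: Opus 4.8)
The plan is to reduce the whole statement to one elementary fact about how the \emph{Removal} operation of Section~\ref{subsubsec.operations-walks} acts on the edge sequence of a walk, and then to telescope along the cycle resolution. First I would record the following: if $\rho$ is a directed walk with a subcycle $c=\rho(a,b)$, then the edge sequence $\mathbf{E}_{\rho}$ is obtained from $\mathbf{E}_{\rho\backslash c}$ by splicing in the edge sequence $\mathbf{E}_{c}$; in particular $\mathbf{E}_{\rho}$ and the concatenation of $\mathbf{E}_{\rho\backslash c}$ with $\mathbf{E}_{c}$ coincide as finite multisets of edges, with no edge lost or double-counted. Since by definition the multi-weight of a non-trivial directed walk $\sigma$ is $\weightset{\sigma}=\sum_{e\in\mathbf{E}_{\sigma}}\weightset{e}$, and since the addition $A+B$ of finite subsets of $\mathbb{N}_0$ is commutative and associative with neutral element $\{0\}$, this splicing fact immediately gives
\begin{align*}
\weightset{\rho}=\weightset{\rho\backslash c}+\weightset{c}\, ,
\end{align*}
where $\weightset{\rho\backslash c}$ is read as $\{0\}$ in the case that $\rho\backslash c$ is the trivial walk.

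Next I would iterate this one-step identity along the cycle resolution. By Definition~\ref{def.nesting-resolution-walk}, the resolution $(c_1,\dots,c_{\nu})$ of $\pi$ comes equipped with a chain of directed walks $\pi=d_0,d_1,\dots,d_{\nu}=\pi'$ in which each $d_k$ arises from $d_{k-1}$ by removing the subcycle $c_k$. Applying the one-step identity $\nu$ times yields
\begin{align*}
\weightset{\pi}=\weightset{d_0}=\weightset{d_1}+\weightset{c_1}=\cdots=\weightset{d_{\nu}}+\sum_{k=1}^{\nu}\weightset{c_k}=\weightset{\pi'}+\sum_{k=1}^{\nu}\weightset{c_k}\, ,
\end{align*}
which is the claimed decomposition. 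The degenerate situations are immediate: if $\nu=0$ then $\pi'=\pi$ and the sum is empty, i.e.\ equal to $\{0\}$; and if $\pi'$ is the trivial walk $(i)$ then $\weightset{\pi'}=\{0\}$ by the definition of the multi-weight of a trivial walk, so the identity still holds.

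For the second assertion I would use only that two representatives of the same irreducible cycle class differ by a rotation. Fix $\c\in\cC$ and $c,c'\in\c$. By the definition of $\cC$ in Section~\ref{subsubsec.cycles}, the cycle $c'$ is obtained from $c$ by revolving its vertices via the action $\alpha_n$, which is a cyclic permutation of the edge sequence; hence $\mathbf{E}_{c}$ and $\mathbf{E}_{c'}$ agree as multisets. Commutativity of the addition of subsets of $\mathbb{N}_0$ then gives $\weightset{c}=\sum_{e\in\mathbf{E}_{c}}\weightset{e}=\sum_{e\in\mathbf{E}_{c'}}\weightset{e}=\weightset{c'}$, so $\weightset{\c}:=\weightset{c}$ is well defined.

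I do not anticipate a genuine obstacle here; the statement is essentially bookkeeping. The only points that require a little care are verifying that the \emph{Removal} operation really does partition the edge sequence exactly as claimed (in particular when the collapsed cycle is adjacent to an endpoint of $\rho$), handling the trivial base walk through the convention $\weightset{(i)}=\{0\}$, and making the empty-sum case $\nu=0$ consistent via the neutral element $\{0\}$ of the set addition. Beyond that, everything follows from commutativity and associativity of $+$ on finite subsets of $\mathbb{N}_0$.
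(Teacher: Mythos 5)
Your proposal is correct and matches what the paper intends: the paper declares this lemma's proof ``immediate,'' and your argument simply makes the implicit bookkeeping explicit (the Removal operation partitions the edge multiset, multi-weights are Minkowski sums over edges, telescoping along the resolution, and rotation-invariance of the edge multiset for cycle classes). The only nitpick is your indexing of the intermediate walks --- in Definition~\ref{def.nesting-resolution-walk} the base walk is $d_{\nu}\backslash c_{\nu}$ rather than $d_{\nu}$ itself --- but this does not affect the validity of the telescoping.
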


Consider now the following map on the set of directed walks on a directed graph $\cG$:
\begin{align*}
&\Gamma: \mathcal{W}(\cG) \to 2^{\cC} \, \text{, where} \\
&\Gamma(\pi) = \{ \c \in \cC ~\vert~ \ \text{there is } c \in \c \text{ that appears on the cycle resolution of } \pi \} \subseteq \cC \, .
\end{align*}

\noindent Then, with $\cW(i, j)$ as in Problem 3 and $\cW_0(i,j)$ as in Lemma 4.8, we readily obtain the set decomposition

\begin{align*}
\cW(i,j) &= \bigcup_{\pi' \in \cW_0(i,j)} \{ \pi \in \cW(i,j) ~\vert~ \mathrm{bs}(\pi) = \pi'\} \\
&=  \bigcup_{\pi' \in \cW_0(i,j)}\, \bigcup_{\mathcal S \subseteq \cC} \{ \pi \in \cW(i,j) ~\vert~ \mathrm{bs}(\pi) = \pi' \text{ and } \Gamma(\pi) = \mathcal S \}\\
&= \bigcup_{(\pi',\mathcal S) \in \cW_0(i,j) \times 2^{\cC}} \{ \pi \in \cW(i,j) ~\vert~ \mathrm{bs}(\pi) = \pi' \text{ and } \Gamma(\pi) = \mathcal S \} \, .
\end{align*}

Note that the index set of this union is finite, so with this decomposition of $\cW(i,j)$ we are starting to get close to the claim of Lemma~\ref{thm.support-thm-1}. However, some values $(\pi',\mathcal S ) \in \cW_0(i,j) \times 2^{\cC}$ might never appear when resolving walks. In other words, the restriction of $\Gamma$ to the set $\{ \pi \in \cW(i,j) ~\vert~ \mathrm{bs}(\pi) = \pi'\}$ for a fixed $\pi' \in \cW_0(i,j)$ might not be surjective. We therefore call a pair $(\pi',\mathcal S ) \in \cW_0(i,j) \times 2^{\cC}$ \emph{admissible} if 
\[ \mathcal S  \in \mathrm{range}\left(\Gamma|_{\{ \pi \in \cW(i,j) \vert \ \mathrm{bs}(\pi) = \pi'\}}\right) \equiv \mathrm{adm(\pi')} \, ,\] and our next step is to compute the set $\mathrm{adm(\pi')} \subseteq 2^{\cC}$ for any given element $\pi' \in \cW_0(i,j)$.

\begin{lemma} \label{lem.monoids}
 For given $\pi' \in \cW_0(i,j)$, the equality of sets $\mathrm{adm(\pi')} = \mathcal{N}_{\pi'}$ holds.
\end{lemma}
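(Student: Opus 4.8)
The statement is an equality of two subsets of $2^{\cC}$, namely $\mathrm{adm}(\pi') = \mathcal{N}_{\pi'}$, so I will prove the two inclusions separately. For $\mathrm{adm}(\pi') \subseteq \mathcal{N}_{\pi'}$, I take an arbitrary walk $\pi \in \cW(i,j)$ with $\mathrm{bs}(\pi) = \pi'$ and must show that $\Gamma(\pi) \in \mathcal{N}_{\pi'}$. The idea is to reconstruct $\pi$ from $\pi'$ by successively \emph{inserting} the irreducible cycles that occur in the cycle resolution of $\pi$, in reverse order: if the cycle resolution of $\pi$ is $(c_1,\dots,c_\nu)$ with in-between walks $d_1,\dots,d_\nu$, then $d_\nu = \mathrm{bs}(\pi) \cup_{a_\nu} c_\nu = \pi' \cup_{a_\nu} c_\nu$, and more generally $d_{i-1}$ is obtained from $d_i$ by inserting $c_i$; thus $\pi = d_0$ is obtained from $\pi'$ by a finite sequence of cycle insertions. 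By Lemma~\ref{lem.cycle_insertion}(2), each insertion preserves the base walk, so this reconstruction is consistent; by Lemma~\ref{lem.cycle_insertion}(1), each insertion adds exactly one equivalence class to $\Gamma$. The key geometric point is that an equivalence class $\c$ can only be inserted into a walk $\rho$ if $\mathrm{nodes}(\c)\cap\mathrm{nodes}(\rho)\neq\emptyset$, i.e.\ $\c \in \mathrm{touch}(\rho)$, and after insertion $\mathrm{touch}$ can only grow by classes adjacent to $\c$ in $\cG_{\cC}$. Tracking which classes become reachable, one sees that the set of classes appearing in the resolution forms (a union of) node sets of paths in $\cG_{\cC}$ starting in $\mathrm{touch}(\pi')$, which is exactly how $\mathcal{N}_{\pi'}$ is generated as a set monoid from $\Lambda_{\pi'}(\mathcal{P}_{\pi'})$. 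Hence $\Gamma(\pi)\in\mathcal{N}_{\pi'}$.

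For the reverse inclusion $\mathcal{N}_{\pi'} \subseteq \mathrm{adm}(\pi')$, since $\mathrm{adm}(\pi')$ is closed under nothing a priori while $\mathcal{N}_{\pi'}$ is the smallest union-closed set containing $\Lambda_{\pi'}(\mathcal{P}_{\pi'})$, I would first check that $\mathrm{adm}(\pi')$ is itself closed under union, and then that it contains every $\Lambda_{\pi'}(\xi)$ for $\xi\in\mathcal{P}_{\pi'}$. Closure under union: given walks $\pi_1,\pi_2$ with $\mathrm{bs}(\pi_i)=\pi'$, one can interleave their cycle insertions into $\pi'$ to build a single walk $\pi$ with $\mathrm{bs}(\pi)=\pi'$ and $\Gamma(\pi)=\Gamma(\pi_1)\cup\Gamma(\pi_2)$; the interleaving is legitimate because once a class is inserted it stays touched, so the precondition for later insertions is only ever weakened. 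Membership of path images: given a (possibly trivial or empty) path $\xi = (\xi(1),\dots,\xi(\ell))$ on $\cG_{\cC}$ with $\xi(1)\in\mathrm{touch}(\pi')$, I insert a representative of $\xi(1)$ into $\pi'$ (possible since it touches $\pi'$), then a representative of $\xi(2)$ into the result (possible since $\xi(2)$ is adjacent to $\xi(1)$ in $\cG_{\cC}$, hence shares a node with the just-inserted cycle, hence touches the new walk), and so on along $\xi$; the resulting walk $\pi$ has $\mathrm{bs}(\pi)=\pi'$ by iterated Lemma~\ref{lem.cycle_insertion}(2) and $\Gamma(\pi)=\{\xi(1),\dots,\xi(\ell)\}=\Lambda_{\pi'}(\xi)$ by iterated Lemma~\ref{lem.cycle_insertion}(1). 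The empty path gives $\Gamma(\pi')=\emptyset$, which is in $\mathrm{adm}(\pi')$ since $\pi'$ itself resolves to $\pi'$ with empty resolution.

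**The main obstacle.** The delicate step is the first inclusion: showing that every class appearing in the cycle resolution of $\pi$ is forced to sit on a $\cG_{\cC}$-path emanating from $\mathrm{touch}(\pi')$. One must argue that the cycle resolution process, which peels off \emph{irreducible} cycles one at a time, never "creates" a class that is $\cG_{\cC}$-disconnected from the classes touching the current base walk. Concretely, I would run the reconstruction (insertion) direction rather than the destruction direction: order the classes of $\Gamma(\pi)$ by the earliest step of the reverse resolution at which a representative is inserted, and show by induction on this order that each newly inserted class is adjacent in $\cG_{\cC}$ to a previously inserted class or touches $\pi'$ directly — this is essentially forced because the cycle being inserted must share a vertex with the current walk, and the current walk is $\pi'$ plus previously inserted cycles. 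Making the bookkeeping of "which vertices of the current walk belong to which class" precise, and handling the fact that a single insertion step may, via Lemma~\ref{lem.cycle_insertion}, reshuffle the resolution order, is where the real care is needed; everything else is a routine application of the insertion lemma.
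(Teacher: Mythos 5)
Your proposal is correct and follows essentially the same route as the paper: both inclusions rest on the duality between cycle resolution and cycle insertion, on Lemma~\ref{lem.cycle_insertion}, and on tracking the connectivity of the inserted classes in the graph of cycles back to $\mathrm{touch}(\pi')$. The only cosmetic difference is that for $\mathrm{adm}(\pi')\subseteq\mathcal{N}_{\pi'}$ the paper chases each removed cycle's incision point \emph{forward} through later removal steps until it lands on $\pi'$, whereas you run the equivalent induction over the insertion order; just be aware that Lemma~\ref{lem.cycle_insertion} only applies when a cycle is inserted at the \emph{first} touch point of the current walk (which the paper arranges explicitly in the converse inclusion, and which you do not need for the forward inclusion since the base walk is already known there).
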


\begin{proof}[Proof of Lemma~\ref{lem.monoids}]
\textbf{Inclusion $\mathrm{adm(\pi')} \subseteq \mathcal{N}_{\pi'}$:} Let $\mathcal{S} \in \mathrm{adm(\pi')}$. If $\mathcal{S} = \emptyset$, then $\mathcal{S} \in \mathcal{N}_{\pi'}$ because, as we noted above, $\mathcal{N}_{\pi'}$ contains the empty set. Thus, suppose that $\mathcal{S} \neq \emptyset$. We need to show that $\mathcal S$ is the union of nodes on one or multiple paths on the graph of cycles $\cG_{\mathcal{C}}$ that start at the touch set $\mathrm{touch}(\pi')$. By definition of $\mathrm{adm(\pi')}$, there must be $\pi \in \cW(i, j)$ that resolves to $\pi'$ and such that $\Gamma(\pi) = \mathcal{S}$. Consider the cycle resolution $(c_1,\dots,c_\mu)$ of such $\pi$ with in-between walks $d_{i+1}= d_{i}\backslash c_i$ and cycle equivalence classes $\mathbf{c}_i \ni c_i$. Note that $\mu \geq 1$ because $\mathcal{S}$ is non-empty. When deleting $c_i$ from $d_i$, the endpoints of $c_i$ are glued together at the incision point $j_i$.  We now show that for any $c_i$ there exists a path $\chi_i$ on the graph of cycles such that
\begin{enumerate}[label=(\alph*)]
\item $\chi_i$ starts at $\mathbf{c}_i$ and ends at $\mathrm{touch}(\pi')$ and
\item all nodes of $\chi_i$ are elements of $\{ \mathbf{c}_1,\mathbf{c}_2,\dots, \mathbf{c}_{\mu}\} = \Gamma(\pi) = \mathcal{S}$.
\end{enumerate}
If this claim is proven, then $\cup_{i=1}^{\mu} \mathrm{nodes}(\overline{\chi_i}) = \Lambda_{\pi^\prime}(\overline{\chi_i}) = \mathcal{S} \in \mathcal{N}_{\pi^\prime}$, where $\overline{\chi_i}$ is $\chi_i$ read in the reverse direction. Indeed, the inclusion from left to right holds by property (b) whereas the inclusion from right to left holds because for all $i$ we have $\mathbf{c}_i \in \mathrm{nodes}(\overline{\chi_i})$ according to property (b).

To prove the claim we used, consider the cycle $c_i$ and its incision point $j_i = c_i(1)$. Then, there are two options: 
\begin{enumerate}[label=(\roman*)]
\item Either $j_i$ is a node of $\pi'$, in which case $\mathbf{c}_i \in \mathrm{touch}(\pi')$ and we can define $\chi'_i$ as the trivial path $\chi'_i = (\mathbf{c}_i)$ consisting of one node.
\item Or, $j_i \notin \mathrm{nodes}(\pi')$, in which case $j_i$ must have been removed at some later steps of the cycle resolution. Hence, there must be a cycle $c_{k_1}$ with $ k_1 > i$ such that $j_i \in \mathrm{nodes}(c_{k_1})$. In particular, $c_i$ and $c_{k_1}$ share a node, so that either $\mathbf{c}_i = \mathbf{c}_{k_1}$ or $\mathbf{c}_i - \mathbf{c}_{k_1}$ is an edge in the graph of cycles. 
\end{enumerate}
We can now repeat this argument for the incision point $j_{k_1}$ of $c_{k_1}$, leading us
\begin{enumerate}
\item either to a path $\chi_i' = (\mathbf{c}_i - \mathbf{c}_{k_1})$, in case $c_{k_1}(1)  = j_{k_1} \in \mathrm{nodes}(\pi')$,
\item or to another cycle $c_{k_2}$ with $ k_2 > k_1$ such that $j_{k_1} \in \mathrm{nodes}(c_{k_2})$, in case $j_{k_1} \notin \mathrm{nodes}(\pi')$.
\end{enumerate}
In the latter case, we continue recursively. This recursion terminates because the cycle resolution is a finite sequence and we obtain a walk $\chi_i' = (\mathbf{c}_i - \mathbf{c}_{k_1} - \cdots - \mathbf{c}_{k_l})$ with $\mathbf{c}_{k_l} \in \mathrm{touch}(\pi')$. These walks $\chi_i'$ satisfy both property (a) and (b), and by removing cycles from these walks we obtain paths $\xi_i$ that too satisfy both (a) and (b).

\textbf{Converse inclusion $\mathrm{adm(\pi')} \supseteq \mathcal{N}_{\pi'}$:} Since $\pi'$ resolves to itself, we get that $\emptyset \in \mathcal{N}_{\pi'}$. Therefore, we can start with (possibly trivial) paths $\xi_1,\dots,\xi_r$ on the graph of cycles $\cG_{\cC}$ with $\xi_k(1) \in \mathrm{touch}(\pi')$ and such that $\cup_{i = 1}^r \Lambda_{\pi^\prime}(\xi_i) = \mathcal{T} \in \mathcal{N}_{\pi^\prime}$. Let us write the nodes of $\xi_1,\dots,\xi_r$, which are cycle classes, sequentially as
\[ L = (\xi_1(1),\dots, \xi_1(\length{\xi_1}),\dots,\xi_r(1),\dots, \xi_r(\length{\xi_r}))\, . \]
We insert the cycle class $\xi_1(1)$ into $\psi_0 = \pi'$ at position $i_1 = \min \left\{ i ~\vert~ \pi'(i) \in \mathrm{nodes}(\xi_1(1))\right\}$ to get $\psi_1 = \psi_0 \cup_{i_1} \xi_1(1)$.\footnote{To be more exact, we insert the unique representative of $\xi_1(1)$ that starts at $\pi'(i_1)$, but we will hide this detail to ease notation.} Similarly, we insert the $k$-th element $L_k$ of $L$ into $\psi_{k-1}$ at position $i_1 = \min \left\{ i ~\vert~ \psi_{k-1}(i) \in \mathrm{nodes}(L_k)\right\}$. This insertion is well-defined as $L_k$ always touches $\pi'$ or shares a node with $L_{k-1}$. By Lemma~\ref{lem.cycle_insertion}, we have $\mathrm{bs}(\psi_k) = \mathrm{bs}(\psi_{k-1})$ and that the set $\mathcal{S}_k$ of cycles in the cycle resolution of $\psi_k$ is $\mathcal{S}_{k-1} \cup \{ L_k \}$ (starting with $\mathcal{S}_0 = \emptyset$). Therefore, after running through all of $L$, we end up with a path $\psi_N$ with $\mathrm{bs}(\psi_N) = \mathrm{bs}(\psi_{N-1}) = \dots = \mathrm{bs}(\psi_0) = \pi'$ and $\Gamma(\psi_N) = \mathcal S_N = \bigcup_{i=1}^r \mathrm{nodes}(\xi_i) = \mathcal{T}$ as desired.
\end{proof}

We are now ready to prove Lemma~\ref{thm.support-thm-1}.

\begin{proof}[Proof of Lemma~\ref{thm.support-thm-1}]
Lemma~\ref{lem.monoids} and the discussions preceding Lemma~\ref{lem.monoids} show that $\cW(i,j)$ admits the decomposition
\begin{align*}
\cW(i,j) = \bigcup_{(\pi',\mathcal S) \in \cW_0(i,j) \times \mathcal{N}_{\pi'}} \{ \pi \in \cW(i,j) ~\vert~ \mathrm{bs}(\pi) = \pi' \text{ and } \Gamma(\pi) = \mathcal S \} \, .
\end{align*}
Thus, the set  
\begin{align*}
\mathbf{W}_{\tau}(i,j) = \bigcup_{\pi \in \cW(i,j) } \weightset{\pi} + \{ \tau \}
\end{align*}
decomposes as
\begin{align*}
\mathbf{W}_{\tau}(i,j) = \bigcup_{(\pi',\mathcal S) \in \cW_0(i,j) \times \mathcal{N}_{\pi'}} B_{\tau}((\pi',\mathcal S))
\end{align*}
where 
\begin{align*}
B_{\tau}((\pi',\mathcal S)) = \bigcup_{ \{\pi \in \cW(i,j) ~\vert~ \mathrm{bs}(\pi) = \pi' \text{ and } \Gamma(\pi) = \mathcal S \}} \weightset{\pi} + \{ \tau \} \, .
\end{align*}

Recall from Lemma~\ref{lem.weight-decomp} that the multi-weight of a walk in $\cW(i,j)$ is the sum of the multi-weight of its base walk and the multi-weights of the cycles in its cycle resolution. For $ \pi \in \cW(i,j)$ such that $\mathrm{bs}(\pi) = \pi' \text{ and } \Gamma(\pi) = \mathcal S$, both the base walk and the equivalence classes that appear in the cycle resolution are fully specified, and the only free parameters are the non-negative integers $n_{\c}$ for $\c \in \mathcal{S}$ that specify how often the class $\c$ appears in the cycle resolution. Note also that once $\c \in \mathcal{S}$ appears, inserting $\c$ arbitrarily many times into the path $\pi$ immediately after its first appearance will lead to a new path $\tilde{\pi}$ such that $\mathrm{bs}(\pi) = \mathrm{bs}(\tilde{\pi})$ and $\Gamma(\pi) = \Gamma(\tilde{\pi})$. Thus,
\begin{align*}
B_{\tau}((\pi',\mathcal S)) &= \{ \tau \}+ \weightset{\pi'} + \sum_{\c \in \mathcal{S}} \{ n_{\c} \cdot w(\c) ~\vert~ n_{\c} \geq 1, \ w(\c) \in \weightset{\c} \}\\
&= \bigcup_{(a_0,\dots,a_{\mu}) \in E_{\tau}((\pi',\mathcal S))}\mathrm{con}_+(a_0;\dots,a_{\mu}) 
\end{align*}
where the indexing tuple $(a_0;\dots,a_{\mu})$ in the second line runs over the finite set 
\begin{align*}
 E_{\tau}((\pi',\mathcal S)) = (\{ \tau \} + \weightset{\pi'} )\times \prod_{\c \in \mathcal{S}} \weightset{\c} \, .
\end{align*}
We have thus completed the proof.
\end{proof}

\subsubsection{Deriving Lemma 4.8 from Lemma~\ref{thm.support-thm-1}} \label{subsec.deriving-Lemma}
While Lemma~\ref{thm.support-thm-1} is sufficient to provide a finitary decomposition of $\mathbf{W}_{\tau}(i,j) $, this decomposition in terms of positive cones is not necessarily economical as the set $\mathcal{N}_{\pi'}$ is rather large. For instance, if the graph of cycles does not have any edges and, say, $k$ nodes all of which touch $\pi'$, then $\mathcal{N}_{\pi'}$ contains $2^k$ elements, each with one positive cone associated to it. In this example, we can efficiently summarize all of these positive cones into a single non-negative cone, which significantly reduces the number of Diophantine equations to be considered at a later stage. To implement this and similar efficiency improvement, we will introduce new sets $\mathcal{M}_{\pi'}$ that replace the sets $\mathcal{N}_{\pi'}$. Before doing so, we will need the concept of access points (defined below).

\begin{definition} \label{def.access-point}
Let $\Graph$ be an undirected graph with vertex set $\mathcal V$, let $\mathcal{S} \subseteq \mathcal V$, and let $v \in \mathcal V$ and $w \in \mathcal V\backslash\mathcal{S}$ be two nodes. We say that 
 \begin{itemize}
     \item $v$ is an $\mathcal{S}$-access point for $w$ if there exists a path $\xi(1) - \ldots - \xi(r-1) -\xi(r)$, where $r \geq 2$, such that $\xi(1) \in \mathcal{S}$ and $\xi(r-1) = v $ and $\xi(r) = w$.
     \item $v$ is an $\mathcal{S}$-access point if it is an $\mathcal{S}$-access point for some $w \in \mathcal V\backslash\mathcal{S}$.  
 \end{itemize}
 We denote the set of all $\mathcal{S}$-access points by $\mathcal{A}_{(\Graph,\mathcal{S})}$. Note that elements of $\mathcal{S}$ can, but do not have to be, $\mathcal{S}$-access points.
\end{definition}
Recall that, given a graph of cycles $\cG_{\cC}$ and a touch set $\mathrm{touch}(\pi')$, we defined $\mathcal{P}_{\pi'}$ as
\begin{itemize}
\item the set of all paths on $\cG_{\cC}$ that start at the touch set $\mathrm{touch}(\pi')$
\item plus the empty path.
\end{itemize}
We now define $\mathcal{Q}_{\pi'} \subseteq \mathcal{P}_{\pi'}$ such that $\xi \in \mathcal{P}_{\pi^\prime}$ is in $\mathcal{Q}_{\pi'}$ if and only if $\xi$ is the empty path or
\begin{itemize}
    \item all nodes of $\xi$, including its starting node $\xi(1)$, are $\mathrm{touch}(\pi')$-access points
    \item and the starting node $\xi(1)$ is the \emph{only} node on $\xi$ that belongs to the touch set $\mathrm{touch}(\pi')$.
\end{itemize}
In particular, $\mathcal{Q}_{\pi'}$ contains the empty path as well as all trivial paths through nodes in the intersection of $\mathrm{touch}(\pi')$ and the $\mathrm{touch}(\pi')$-access points. Moreover, if $\xi \in \mathcal{Q}_{\pi'}$, then $\xi(1)$ is an element of the touch set $\mathrm{touch}(\pi')$.
Since $\mathcal{Q}_{\pi'} \subseteq \mathcal{P}_{\pi'}$, we can restrict the node set projection $\Lambda_{\pi^\prime}$ defined above to $\mathcal{Q}_{\pi'}$. Thus, we can consider the set monoid $\mathcal{M}_{\pi'}$ generated by $\mathcal{Q}_{\pi'}$ that is, the set monoid defined as the unique smallest subset $\mathcal{M}_{\pi'} \subseteq 2^{\cC}$ such that 
\begin{itemize}
\item $\Lambda_{\pi'}(\mathcal{Q}_{\pi'}) \subseteq \mathcal{M}_{\pi'}$ and
\item if $\mathcal A, \mathcal B \in \mathcal{M}_{\pi'}$, then $\mathcal A \cup \mathcal B \in \mathcal{M}_{\pi'}$.
\end{itemize}

\begin{remark}
The second condition in the definition of the set $\mathcal{Q}_{\pi'}$, which ensures that $\xi$ does not return to the touch set, is solely there to keep the generating set of the monoid $\mathcal{M}_{\pi'}$ as small as possible. If we would drop this condition and instead work with a set of paths $\mathcal{Q}_{\pi'}'$ without this requirement, then the same set monoid $\mathcal{M}_{\pi'}$ would be generated. This claim follows because, given a path that returns to the touch set, we can split this path into subpaths starting at every return point and ending at the predecessor of the next return point. For instance, if $\c_1 - \c_2 - \c_3 - \c_4 - \c_5$ is a path of access points with $\c_1,\c_3,\c_4 \in \mathrm{touch}(\pi')$, then we can split this path into its subpaths $\c_1 - \c_2$ and $(\c_3)$ and $\c_4-\c_5$. 
\end{remark}

In addition, we define the \emph{closure} $\mathrm{cl}(\mathcal S)$ of a set $\mathcal{S} \in \mathcal{M}_{\pi'}$ as
\begin{align*}
    \mathrm{cl}(\mathcal S) = \Big[\mathcal S &\cup \mathrm{touch}(\pi')\\
    &\cup \{ \c \in \cC\backslash\mathrm{touch}(\pi') ~\vert~ \mathcal S \text{ contains a } \mathrm{touch}(\pi')\text{-access point for } \c  \}\Big] \, .
\end{align*}
In particular, the closure of the empty set is $\mathrm{cl}(\emptyset) = \mathrm{touch}(\pi').$ The following observation is straightforward.

\begin{lemma} \label{lem.closures}
Given arbitrary $\mathcal{S}_i \in \mathcal{M}_{\pi'}$ sets for $i=1,\ldots,m$, the equality of sets $\cup_i \mathrm{cl}(\mathcal{S}_i) = \mathrm{cl} \left( \cup_i \mathcal{S}_i \right)$ holds. \hfill $\qedsymbol$
\end{lemma}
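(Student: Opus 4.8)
The plan is to reduce the claimed set equality to a trivial distributivity fact by unfolding the definition of the closure operator. First I would record that since $\mathcal{M}_{\pi'}$ is a set monoid, hence closed under unions, the union $\bigcup_i \mathcal{S}_i$ again lies in $\mathcal{M}_{\pi'}$; thus $\mathrm{cl}\bigl(\bigcup_i \mathcal{S}_i\bigr)$ is a well-defined object and the statement makes sense. The monoid/closure hypotheses enter only at this bookkeeping level — guaranteeing that both sides of the asserted equality are well-formed — and play no further role.

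Next I would separate the closure into its three constituent pieces. For $\mathcal{S} \subseteq \cC$ set
\begin{align*}
R(\mathcal{S}) = \{ \c \in \cC \setminus \mathrm{touch}(\pi') ~\vert~ \mathcal{S} \text{ contains a } \mathrm{touch}(\pi')\text{-access point for } \c \},
\end{align*}
so that by definition $\mathrm{cl}(\mathcal{S}) = \mathcal{S} \cup \mathrm{touch}(\pi') \cup R(\mathcal{S})$. Expanding each side of the lemma using this and the distributivity of union over union gives
\begin{align*}
\bigcup_i \mathrm{cl}(\mathcal{S}_i) = \Bigl(\bigcup_i \mathcal{S}_i\Bigr) \cup \mathrm{touch}(\pi') \cup \Bigl(\bigcup_i R(\mathcal{S}_i)\Bigr), \qquad \mathrm{cl}\Bigl(\bigcup_i \mathcal{S}_i\Bigr) = \Bigl(\bigcup_i \mathcal{S}_i\Bigr) \cup \mathrm{touch}(\pi') \cup R\Bigl(\bigcup_i \mathcal{S}_i\Bigr).
\end{align*}
The first two summands already agree on both sides, so the lemma reduces to the single identity $\bigcup_i R(\mathcal{S}_i) = R\bigl(\bigcup_i \mathcal{S}_i\bigr)$.

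Finally I would prove this identity by observing that whether a vertex $v$ is a $\mathrm{touch}(\pi')$-access point for a given cycle class $\c$ is a property of the pair $(v, \c)$ alone (it only asserts the existence of a suitable path in the graph of cycles from $\mathrm{touch}(\pi')$ through $v$ to $\c$, cf.\ Definition~\ref{def.access-point}) and does not reference the ambient set in which $v$ lives. Hence, for any $\mathcal{S}$, membership $\c \in R(\mathcal{S})$ is the existential statement ``there is $v \in \mathcal{S}$ that is a $\mathrm{touch}(\pi')$-access point for $\c$'', together with $\c \notin \mathrm{touch}(\pi')$. Since existential quantification over a union of sets is the disjunction of the existential quantifications over the individual sets, both inclusions $\bigcup_i R(\mathcal{S}_i) \subseteq R\bigl(\bigcup_i \mathcal{S}_i\bigr)$ and $\bigcup_i R(\mathcal{S}_i) \supseteq R\bigl(\bigcup_i \mathcal{S}_i\bigr)$ follow immediately, which completes the argument. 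I do not expect any genuine obstacle: the only points requiring a modicum of care are unfolding the three-part definition of $\mathrm{cl}$ correctly and noting that the access-point condition is independent of $\mathcal{S}$; everything else is elementary set algebra.
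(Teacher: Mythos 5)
Your proof is correct: the paper states Lemma~\ref{lem.closures} as a ``straightforward observation'' and omits the proof entirely, and your unfolding of $\mathrm{cl}$ into $\mathcal{S} \cup \mathrm{touch}(\pi') \cup R(\mathcal{S})$ together with the observation that the $\mathrm{touch}(\pi')$-access-point condition depends only on the pair $(v,\c)$ (so that $R$ commutes with unions by distributivity of existential quantification) is exactly the intended justification. Nothing further is needed.
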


To prove Lemma 4.8, we use the following auxiliary result.

\begin{lemma} \label{lem.advanced-path-decomposition}
Let $\psi \in \mathcal{P}_{\pi'}$ be a non-empty path. Then, one of the two following statements is true:
\begin{enumerate}
\item $\mathrm{nodes}(\psi) \subseteq \mathrm{touch}(\pi') = \mathrm{cl}(\emptyset)$.
\item There are non-empty subpaths $\chi^1,\dots, \chi^m \subseteq \psi$ of $\psi$ such that $\chi^1,\dots, \chi^m \in \mathcal{Q}_{\pi'}$ and $\mathrm{nodes}(\psi) \subseteq \mathrm{cl}\left( \cup_{k=1}^m \mathrm{nodes}(\chi^i)  \right)$.
\end{enumerate}
\end{lemma}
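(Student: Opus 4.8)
\textbf{Proof plan for Lemma~\ref{lem.advanced-path-decomposition}.}

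The plan is to walk along the path $\psi = (\psi(1) - \dots - \psi(\length{\psi}))$ from its starting node $\psi(1) \in \mathrm{touch}(\pi')$ and record the maximal stretches of $\psi$ that leave the touch set. Concretely, I would scan the sequence of nodes $\psi(1), \psi(2), \dots$ and mark those indices $\ell$ with $\psi(\ell) \in \mathrm{touch}(\pi')$; between two consecutive marked indices (or after the last marked index) the path makes an excursion through nodes not in $\mathrm{touch}(\pi')$. If $\psi$ never leaves $\mathrm{touch}(\pi')$ then we are in case 1, since $\mathrm{cl}(\emptyset) = \mathrm{touch}(\pi')$ by definition. Otherwise, let $\ell_1 < \ell_2 < \dots < \ell_m$ enumerate the marked indices that are immediately followed by a node outside $\mathrm{touch}(\pi')$ (equivalently, $\psi(\ell_r) \in \mathrm{touch}(\pi')$ but $\psi(\ell_r + 1) \notin \mathrm{touch}(\pi')$), and for each such $r$ let $\chi^r$ be the maximal subpath of $\psi$ that starts at index $\ell_r$ and continues through indices $\ell_r, \ell_r+1, \ell_r+2, \dots$ for as long as the nodes $\psi(\ell_r+1), \psi(\ell_r+2), \dots$ stay outside $\mathrm{touch}(\pi')$; $\chi^r$ ends at the last such node, or includes the next return to $\mathrm{touch}(\pi')$ only as a check that we stop in time — cleaner is to define $\chi^r = \psi(\ell_r, \ell_r')$ where $\ell_r' = \min\{\ell > \ell_r ~\vert~ \psi(\ell) \in \mathrm{touch}(\pi')\} - 1$, or $\ell_r' = \length{\psi}$ if no such $\ell$ exists.

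The key claim to verify is that each $\chi^r$ lies in $\mathcal{Q}_{\pi'}$, which amounts to checking two things. First, $\chi^r(1) = \psi(\ell_r) \in \mathrm{touch}(\pi')$ and, by the maximality in the definition of $\ell_r'$, none of $\chi^r(2), \dots, \chi^r(\length{\chi^r})$ is in $\mathrm{touch}(\pi')$ — so the starting node is the only touch-set node on $\chi^r$. Second, every node of $\chi^r$ must be a $\mathrm{touch}(\pi')$-access point: for $\chi^r(1)$ this is witnessed by the path $\chi^r(1) - \chi^r(2)$ (since $\chi^r(1) \in \mathrm{touch}(\pi')$ and $\chi^r(2) \notin \mathrm{touch}(\pi')$, but we need $r-1 \geq 1$, i.e., length $\geq 2$; if $\chi^r$ has length one, that single node is in $\mathrm{touch}(\pi')$ and I should handle it via the trivial-path clause of $\mathcal{Q}_{\pi'}$, which is allowed provided it is an access point — one checks this using the continuation of $\psi$), and for an interior or terminal node $\chi^r(s)$ with $s \geq 2$, the prefix $\chi^r(1) - \dots - \chi^r(s)$ is a path starting in $\mathrm{touch}(\pi')$ and ending at $\chi^r(s) \notin \mathrm{touch}(\pi')$, so its penultimate node is a $\mathrm{touch}(\pi')$-access point for $\chi^r(s)$, and applying this with $s$ and with $s-1$ (using $\chi^r(s-1)$ as the node $w$ when $s-1 \geq 2$) shows every node is an access point; the edge cases $s=2$ are direct. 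This establishes $\chi^r \in \mathcal{Q}_{\pi'}$.

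Finally I would prove the containment $\mathrm{nodes}(\psi) \subseteq \mathrm{cl}\left(\bigcup_{r=1}^m \mathrm{nodes}(\chi^r)\right)$. Write $\mathcal{T} = \bigcup_{r=1}^m \mathrm{nodes}(\chi^r)$. A node $v$ on $\psi$ is either (i) in $\mathrm{touch}(\pi')$, hence in $\mathrm{cl}(\mathcal T)$ since $\mathrm{touch}(\pi') \subseteq \mathrm{cl}(\mathcal T)$ always; or (ii) not in $\mathrm{touch}(\pi')$, in which case $v$ lies on one of the excursions, so $v \in \mathrm{nodes}(\chi^r) \subseteq \mathcal T \subseteq \mathrm{cl}(\mathcal T)$ for the appropriate $r$. (That every node outside $\mathrm{touch}(\pi')$ on $\psi$ is captured by some $\chi^r$ is exactly the point of taking the $\chi^r$ to be the \emph{maximal} off-touch stretches.) Hence case 2 holds. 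The main obstacle I anticipate is purely bookkeeping: pinning down the trivial-path and length-one edge cases in the access-point verification so that they genuinely fit the definition of $\mathcal{Q}_{\pi'}$ (which explicitly permits trivial paths through access points that are in $\mathrm{touch}(\pi')$), and making sure the indices $\ell_r, \ell_r'$ are defined so that consecutive excursions do not overlap and together cover all off-touch nodes of $\psi$. No deep idea is needed beyond this careful segmentation of the path at its returns to the touch set.
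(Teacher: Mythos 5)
Your segmentation of $\psi$ is too coarse, and this creates a genuine gap: you cut only at the returns of $\psi$ to $\mathrm{touch}(\pi')$, but membership in $\mathcal{Q}_{\pi'}$ requires that \emph{every} node of $\chi^r$ — including its last one — be a $\mathrm{touch}(\pi')$-access point, and your access-point verification never establishes this for the terminal node. Your prefix argument applied to the prefix ending at $\chi^r(s)$ only certifies that the \emph{penultimate} node $\chi^r(s-1)$ is an access point (namely, for $\chi^r(s)$); taking $s$ up to $\length{\chi^r}$ therefore covers $\chi^r(1),\dots,\chi^r(\length{\chi^r}-1)$ but never $\chi^r(\length{\chi^r})$ itself. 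And that last node genuinely need not be an access point: take the graph of cycles $\c_1 - \c_2$ with $\c_1 \in \mathrm{touch}(\pi')$, $\c_2 \notin \mathrm{touch}(\pi')$ and $\c_2$ a leaf. For $\psi = (\c_1,\c_2)$ your construction yields $\chi^1 = (\c_1,\c_2)$, but $\c_2$ cannot be the penultimate node of any path from the touch set to a non-touch node (its only neighbor is $\c_1$), so $\chi^1 \notin \mathcal{Q}_{\pi'}$. The correct decomposition here is the trivial path $(\c_1)$, with $\c_2$ recovered through the closure because $\c_1$ is an access point for it.

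The paper's proof repairs exactly this by truncating earlier: it cuts at the first index $l_1 > k_1$ at which $\psi(l_1)$ is \emph{either} back in the touch set \emph{or} fails to be an access point (the set $A$ in the proof), takes $\chi^1 = (\psi(k_1),\dots,\psi(l_1-1))$, and then observes that the discarded node $\psi(l_1)$ still lands in $\mathrm{cl}(\mathrm{nodes}(\chi^1))$ because its predecessor $\psi(l_1-1)$ is an access point for it. The recursion also needs the observation that if $\psi(l_1)$ is off the touch set and not an access point, then $\psi(l_1+1)$ must be back in the touch set (otherwise $\psi(l_1)$ would be an access point for $\psi(l_1+1)$), so the next segment again starts legitimately in $\mathcal{P}_{\pi'}$. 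Everything else in your plan — the coverage of all off-touch nodes by the excursions and the final closure containment — is sound, but without the extra cut at non-access-points the produced subpaths do not lie in $\mathcal{Q}_{\pi'}$ and the lemma is not established.
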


\begin{proof}[Proof of Lemma~\ref{lem.advanced-path-decomposition}]
Suppose that $\mathrm{nodes}(\psi) \centernot\subseteq \mathrm{touch}(\pi')$, that is, suppose for $\psi$ the first of the two statements in Lemma~\ref{lem.advanced-path-decomposition} is not true, and write $\psi=(\psi(1),\dots, \psi(r))$. Then, we first note that the set $\mathrm{nodes}(\psi)$ contains at least one $\mathrm{touch}(\pi')$-access point. This claim follows from the following argument: By assumption, there must be $1 \leq k \leq r$ such that $\psi(k) \notin \mathrm{touch}(\pi')$. Moreover since $\psi$ starts in $\mathrm{touch}(\pi')$ by definition of $\mathcal{P}_{\pi^\prime}$, we must have that $k \geq 2$ and that $\psi(k-1)$ is a $\mathrm{touch}(\pi')$-access point for $\psi(k)$.\\

\textbf{Claim 1}: Let $\psi(k_1)$ be the first $\mathrm{touch}(\pi')$-access point on $\psi$. Then, all of the nodes $\psi(1),\dots, \psi(k_1-1), \psi(k_1)$ are elements of $\mathrm{touch}(\pi')$.\\ 

By definition, $\psi(1) \in \mathrm{touch}(\pi')$. Thus, the claim follows if $k_1=1$, otherwise let $2 \leq l \leq k_1$. If we had that $\psi(l) \notin \mathrm{touch}(\pi')$, then $\psi(l-1)$ were a $\mathrm{touch}(\pi')$-access point with $l-1 < k_1$, thus contradicting the fact that $\psi(k_1)$ is the first such access point on $\psi$. Consequently, we have proven the above Claim 1. \\

Next, returning to the overall proof of Lemma~\ref{lem.advanced-path-decomposition}, consider the set 
\begin{align*}
A = \{ l ~\vert~  \ k_1 < l \leq r, \ \psi(l) \in \mathrm{touch}(\pi') \text{ or } \psi(l) \text{ is not a $\mathrm{touch}(\pi')$-access point} \} \, . 
\end{align*}
There are two options:
\begin{itemize}
\item \textbf{Case 1}: $A = \emptyset$. In this case, the path $\chi^1 = (\psi(k_1),\dots, \psi(r))$ is an element of $\mathcal{Q}_{\pi'}$ and $\mathrm{nodes}(\psi) \subseteq \mathrm{cl}(\mathrm{nodes}(\chi^1) )$. Indeed, $\chi^1$ starts in the touch set by definition of $k_1$ and, since $A$ is empty, all nodes on $\chi^1$ are $\mathrm{touch}(\pi')$-access and $\chi^1$ never returns to $\mathrm{touch}(\pi')$ after the first node, so that $\chi^1 \in \mathcal{Q}_{\pi'}$. Moreover, since all nodes of $\psi$ up to $\psi(k_1)$ belong to the touch set by the above Claim 1 and since all nodes on $\psi$ after $\psi(k_1)$ belong to $\mathrm{nodes}(\chi^1)$, the inclusion $\mathrm{nodes}(\psi) \subseteq \mathrm{cl}(\mathrm{nodes}(\chi^1) )$ follows by definition of the closure. Therefore, in this case, the second statement in Lemma~\ref{lem.advanced-path-decomposition} holds for $\psi$.
\item \textbf{Case 2}: $A \neq \emptyset$. In this case, set $l_1 = \min A$, implying that $l_1 > k_1$ and that $\psi(l_1)$ is a touch point or not a $\mathrm{touch}(\pi')$-access point. Consider the path $\chi^1 = (\psi(k_1),\dots, \psi(l_1-1))$, which is non-empty since $l_1 > k_1$. Since $l_1$ is the minimal element of $A$, all nodes of $\chi_1$ must be $\mathrm{touch}(\pi')$-access points and none of these nodes other than $\psi(k_1)$ belongs to the touch set. Therefore, $\chi^1 \in \mathcal{Q}_{\pi'}$. Moreover, observe that $\psi(l_1) \in \mathrm{cl}(\mathrm{nodes}(\chi^1))$: either $\psi(l_1) \in \mathrm{touch}(\pi')$ and the claim follows, or not, in which case $\psi(l_1-1)$ is a $\mathrm{touch}(\pi')$-access point for $\psi(l_1)$. As a consequence, using that all nodes on $\psi$ up to $\psi(k_1)$ belong to the touch set, we have that $\{ \psi(1),\dots, \psi(l_1) \} \subseteq \mathrm{cl}(\mathrm{nodes}(\chi^1))$.

If $l_1 = r$, then we have thus proven that the second statement in Lemma~\ref{lem.advanced-path-decomposition} holds for $\psi$. Therefore, we suppose that $l_1<r$ and consider the remaining path $(\psi(l_1),\dots, \psi(r))$. We consider two subcases:
\begin{itemize}
\item \textbf{Case 2a}: If $\psi(l_1) \in \mathrm{touch}(\pi')$, then we set $\psi^1 = (\psi(l_1),\dots, \psi(r))$. This non-empty path $\psi^1$ is in $\mathcal{P}_{\pi'}$ because $\psi(l_1) \in \mathrm{touch}(\pi')$. Moreover, $\psi^1$ is strictly shorter than $\psi$.
\item \textbf{Case 2b}: If $\psi(l_1) \notin \mathrm{touch}(\pi')$, then $\psi(l_1)$ is not a $\mathrm{touch}(\pi')$-access point according to the above definition of the set $A$ and $l_1 \in A$. We then argue that $\psi(l_1+1)$ must in the touch set. Indeed, if $\psi(l_1+1)$ was not in the touch point, then the path $(\psi(k_1),\dots, \psi(l_1),\psi(l_1+1))$ would make $\psi(l_1)$ a $\mathrm{touch}(\pi')$-access point for $\psi(l_1+1)$, a contradiction. Thus, the non-empty path $\psi^1 = (\psi(l_1+1),\dots, \psi(r))$ is an element $\mathcal{P}_{\pi'}$ and strictly shorter than $\psi$.
\end{itemize}
To summarize, in both subcases we get that $\mathrm{nodes}(\psi) \subseteq \mathrm{cl}(\mathrm{nodes}(\chi^1)) \cup \mathrm{nodes}(\psi^1) $ with $\chi^1 \in \mathcal{Q}_{\pi'}$ and $\psi^1 \in \mathcal{P}_{\pi'}$ with $len(\psi^1) < len(\psi)$. We can now reapply the whole argument with $\psi^1$ replacing $\psi$ and continue recursively to obtain paths $\chi^1,\dots, \chi^m \in \mathcal{Q}_{\pi'}$ such that $\mathrm{nodes}(\psi) \subseteq \mathrm{cl}\left(\cup_{k=1}^m \mathrm{nodes}(\chi^i)  \right)$, thus showing that the second statement of Lemma~\ref{lem.advanced-path-decomposition} holds for $\psi$.
\end{itemize}
We have thus completed the proof.
\end{proof}

Returning to our eventual goal of proving Lemma 4.8, for any non-negative integer $\tau \geq 0$ and any $\mathcal{S} \in \mathcal{M}_{\pi'}$ with any $\pi' \in \cW_0(k,i)$, we define the set
\begin{equation}\label{def.ctau}
\begin{split}
C_{\tau}((\pi',\mathcal S)) &= \{ \tau \} + \weightset{\pi'} + \weightset{\mathcal{S}} + \sum_{\c \in \mathrm{cl}(\mathcal{S})} \{ n_{\c} \cdot w(\c) ~\vert~ n_{\c} \geq 0, \ w(\c) \in \weightset{\c} \}  \\
&= \bigcup_{(a_0,\dots,a_{\mu}) \in D_{\tau}((\pi',\mathcal S))}\mathrm{con}(a_0;\dots,a_{\mu}) 
\end{split}
\end{equation}
where 
\begin{align} \label{def.tuple-sets}
 D_{\tau}((\pi',\mathcal S))= \left( \{ \tau \} + \weightset{\pi'} + \weightset{\mathcal{S}} \right) \times \prod_{\c \in \mathrm{cl}(\mathcal{S})} \weightset{\c} 
 \end{align}
and $ \weightset{\mathcal{S}} = \sum_{\c \in \mathcal{S}} \weightset{\c}$. We have now defined all sets involved in the statement of Lemma 4.8 and can finish its proof.

\begin{proof}[Proof of Lemma 4.8]
To deduce Lemma 4.8 from Lemma~\ref{thm.support-thm-1}, we only need to show that for every $\pi' \in \cW_0(k,i)$ and $\tau \geq 0$ the equality of sets
\begin{align*}
 \bigcup_{\mathcal{T} \in \mathcal{N}_{\pi'}} \,\,\bigcup_{(a_0,\dots,a_{\mu})  \in E_{\tau}(\pi',\mathcal T)}\mathrm{con}_+(a_0;\dots,a_{\mu}) = \bigcup_{\mathcal{S} \in \mathcal{M}_{\pi'}}\,\, \bigcup_{(a_0,\dots,a_{\mu})   \in D_{\tau}(\pi',\mathcal S)}\mathrm{con}(a_0;\dots,a_{\mu})
\end{align*}
or, equivalently, that the equality of sets
\begin{align*}
 \bigcup_{\mathcal{T} \in \mathcal{N}_{\pi'}} B_{\tau}((\pi',\mathcal T)) = \bigcup_{\mathcal{S} \in \mathcal{M}_{\pi'}} C_{\tau}((\pi',\mathcal S))
\end{align*}
holds.We show the set inclusions $\subseteq$ and $\supseteq$ separately. 

\textbf{Inclusion $\subseteq$}: Let $\mathcal{T} \in \mathcal{N}_{\pi'}$. First, we observe that $B_{\tau}((\pi',\emptyset)) = \{\tau \} + \weightset{\pi'}$ and that $C_{\tau}((\pi',\emptyset)) = \{\tau \} + \weightset{\pi'} +\sum_{\c \in \mathrm{touch}(\pi')} \{ n_{\c} \cdot w(\c) ~\vert~ n_{\c} \geq 0, \ w(\c) \in \weightset{\c} \}$, so that $B_{\tau}((\pi',\emptyset)) \subseteq C_{\tau}((\pi',\emptyset))$. Therefore, we can now assume that $\mathcal{T} \neq \emptyset$.

Our goal is to construct a set $\mathcal{S} \in \mathcal{M}_{\pi'}$ such that $B_{\tau}((\pi',\mathcal T)) \subseteq C_{\tau}((\pi',\mathcal S))$. By definition of the monoid $\mathcal{N}_{\pi'}$, we can write $\mathcal{T}$ as $\mathcal{T} = \cup_{j=1}^r \Lambda_{\pi'}(\psi_j) = \cup_{j=1}^r \mathrm{nodes}(\psi_j)$ for some non-empty paths $\psi_1,\dots,\psi_r \in \mathcal{P}_{\pi'}$ on the graph of cycles $\cG_{\cC}$ that start at the touch set. Consider the path $\psi_j$. By Lemma \ref{lem.advanced-path-decomposition}, we have that $\mathrm{nodes}(\psi_j) \subseteq \mathrm{touch}(\pi')$ or there exists a finite sequence of non-empty paths $\chi_1^j,\dots,\chi_{m_j}^j \in \mathcal{Q}_{\pi'}$, all of which are subpaths of $\psi_j$, such that $\mathrm{nodes}(\psi_j) \subseteq \mathrm{cl}( \cup_{k=1}^{m_j} \mathrm{nodes}(\chi_k^j)) $. We set
\begin{align*}
    \mathcal{S}= \bigcup_{j} \, \bigcup_{k=1}^{m_j} \mathrm{nodes}(\chi_k^j) \in \mathcal{M}_{\pi'} \, ,
\end{align*}
where $j$ runs over all indices for which $\mathrm{nodes}(\psi_j) \centernot\subseteq \mathrm{touch}(\pi')$ (if no such indices exist, then we set $\mathcal{S} = \emptyset \in \mathcal{M}_{\pi'}$). Using Lemma~\ref{lem.closures}, it then follows that
\begin{align*}
\mathrm{cl}(\mathcal{S})
&= \mathrm{touch}(\pi') \cup \mathrm{cl}(\mathcal{S})\\
&= \mathrm{touch}(\pi') \cup \bigcup_j\, \mathrm{cl} \left( \bigcup_{k=1}^{m_j} \mathrm{nodes}(\chi_k^j)\right)
\supseteq \mathrm{touch}(\pi') \cup \bigcup_j \,\mathrm{nodes}(\psi_j)
\supseteq \mathcal{T} \, .
\end{align*}
Moreover, we find that
\begin{align*}
\mathcal{S} = \bigcup_{j} \, \bigcup_{k=1}^{m_j} \mathrm{nodes}(\chi_k^j) \subseteq \bigcup_{j} \, \mathrm{nodes}(\psi^j) \subseteq \mathcal{T} \, .
\end{align*}
Now, recalling the notation $n_\c \cdot \weightset{\c} = \{ n_{\c} \cdot w(\c) ~\vert~ w(\c) \in \weightset{\c} \}$, any element $y \in B_{\tau}((\pi',\mathcal T)) $ is an element of
\begin{align*}
   \{\tau \} +  \weightset{\pi'} + \sum_{\c \in \mathcal{T}} n_\c \cdot \weightset{\c}
\end{align*}
 for some combination $(n_{\c})_{\c\in \mathcal{T}}$ of positive integers $n_\c \geq 1.$ Hence, we can conclude that
 \begin{align*}
   \{\tau \} +  \weightset{\pi'} + \sum_{\c \in \mathcal{T}} n_\c \cdot \weightset{\c} &=  \{\tau \} +  \weightset{\pi'} + \weightset{\mathcal{S}} \\ &+ \sum_{\c \in \mathcal{S}} (n_\c -1) \cdot \weightset{\c} +  \sum_{\c \in \mathcal{T}\backslash\mathcal{S}} n_\c \cdot \weightset{\c} \\
   &\subseteq C_{\tau}((\pi',\mathcal S))
\end{align*}
and thus $B_{\tau}((\pi',\mathcal T)) \subseteq  C_{\tau}((\pi',\mathcal S))$. Here, the `$=$' is just a rewriting of the left-hand-side that uses $\mathcal{S} \subseteq \mathcal{T}$ (as shown above), and the `$\subseteq$' follows because $\mathcal{S} \subseteq \mathrm{cl}(\mathcal{S})$ (by definition of the closure) and $\mathcal{T} \subseteq \mathrm{cl}(\mathcal{S})$ (as shown above).

\textbf{Converse Inclusion $\supseteq$}: Let $\mathcal{S} \in \mathcal{M}_{\pi'}$ and write $C_{\tau}((\pi',\mathcal S))$ as the union of sets of the form $ \{\tau \} +  \weightset{\pi'} + \weightset{\mathcal{S}} + \sum_{\c \in \mathrm{cl}(\mathcal{S})} n_\c \cdot \weightset{\c}$ where, this time, $(n_{\c})_{\c \in \mathcal{S}}$ is a finite sequence of non-negative integers $n_\c \geq 0$ (instead of positive integers). We can rewrite such a set as 
\begin{align*}
\{\tau \} +  \weightset{\pi'} + \sum_{\c \in \mathcal{S}} (n_\c+1) \cdot \weightset{\c} +  \sum_{\c \in \mathrm{cl}(\mathcal{S}) \backslash\mathcal{S}} n_\c \cdot \weightset{\c} \, .
\end{align*}
Consider the set $\mathcal{T} = \mathcal{S} \cup \{\c\in \mathrm{cl}(\mathcal{S})\backslash\mathcal{S} ~\vert~ n_{\c} \geq 1 \}$ and set $m_{\c} = n_\c+1$ if $\c \in \mathcal{S}$ and $m_{\c} = n_\c$ if $\c \in \mathrm{cl}(\mathcal{S})\backslash\mathcal{S}$ and $n_{\c} \geq 1$. In other words, we simply drop all those summands in the second sum on the right-hand-side for which $n_\c = 0$. Thus, the above set is equal to $\{\tau \} +  \weightset{\pi'} + \sum_{\c \in \mathcal{T}} m_\c \cdot \weightset{\c}$ with positive coefficients $m_\c \geq 1$. Therefore, we have finished the proof if we can argue that $\mathcal{T}$ is an element of the monoid $\mathcal{N}_{\pi'}$, as we do now.

If $\mathcal{S}$ is empty, then $\mathcal{T}$ is a subset of $\mathrm{touch}(\pi') = \mathrm{cl}(\emptyset)$, such that we can write $\mathcal{T}$ as a finite union of trivial paths in $\mathcal{P}_{\pi'}$, and hence $\mathcal{T} \in \mathcal{N}_{\pi^\prime}$. If $\mathcal{S} \neq \emptyset$, then, by definition of $\mathcal{M}_{\pi^\prime}$, we can write $\mathcal{S} = \cup_{j=1}^r \mathrm{nodes}(\xi_j)$ for some non-empty paths $\xi_j \in \mathcal{Q}_{\pi'} \subseteq  \mathcal{P}_{\pi'}$ with  $j=1,\dots,r $. For $\c' \in  \mathrm{cl}(\mathcal{S})\backslash\mathcal{S}$ there are two cases:
\begin{itemize}
\item If $\mathbf{c}' \in \mathrm{touch}(\pi')$, then we let $\psi^{\mathbf c'}$ be the trivial path $(\mathbf c')$, which is an element of $\mathcal{P}_{\pi'}$.
\item If $\mathbf{c}' \notin \mathrm{touch}(\pi')$, then, using the definition of the closure, we see that $\mathcal{S}$ contains a $\mathrm{touch}(\pi')$-access point $\tilde{\c}$ for $\c'$. Because  $\mathcal{S} = \cup_{j=1}^r \mathrm{nodes}(\xi_j)$, this access point $\tilde{\mathbf c}$ lies on $\xi_j$ for some $1 \leq j \leq r$, say $\tilde{\c} = \xi_j(k)$. Consequently, the path $\psi^{\c'} = (\xi_j(1),\dots, \xi_j(k),\c')$ exists and is an element of $\mathcal{P}_{\pi'}$.
\end{itemize}
Finally, we obtain that $\mathcal{T} = [\cup_{j=1}^r \mathrm{nodes}(\xi_j)] \cup [\cup_{\c' \in \mathcal{T}\backslash\mathcal{S}} \mathrm{nodes}(\psi^{\c'})] \in \mathcal{N}_{\pi'}$.
\end{proof}

\subsection{Proofs of all theorems}

\begin{proof}[Proof of Theorem 1]
We have already proven Theorem 1 in the main paper under the assumption that Theorem 2 holds. Below, we prove Theorem 2 without making use of Theorem 1 in that proof.
\end{proof}

\begin{proof}[Proof of Theorem 2]
We prove the statement by considering the five cases separately:
\begin{enumerate}
\item \underline{$\mu = 0$ and $\nu = 0$.}
In this case, eq.~(3) reduces to the trivial equation $c = 0$.

\item \underline{$\mu = 0$ and $\nu \neq 0$ and $c > 0$.}
If $c \!\! \mod g_{a^\prime} \neq 0$ with $\gcd(a^\prime_1, \, \ldots, \, a^\prime_\nu)$, then according to Lemma 4.12 there is no integer solution. In particular, there is then no non-negative integer solution, thus proving the ``only-if'' statement in the first sentence. For the ``if-and-only-if'' statement in the second sentence, observe that eq.~(3) reduces to $c = \sum_{\beta \,=\, 1}^{\nu} n^\prime_\beta \cdot a^\prime_\beta$ where $c > 0$ and $a^\prime_\beta > 0$ for all $1 \leq \beta \leq \nu$. The statement follows because every term in the sum on the right-hand-side of this equation is positive.

\item \underline{($\mu = 0$ and $\nu \neq 0$ and $c \leq 0$) or ($\mu \neq 0$ and $\nu = 0$ and $c \geq 0$).} We first consider the subcase $\mu = 0$ and $\nu \neq 0$ and $c \leq 0$. Then, eq.~(3) reduces to $c = \sum_{\beta \,=\, 1}^{\nu} n^\prime_\beta \cdot a^\prime_\beta$ where $c \leq 0$ and $a^\prime_\beta > 0$. Recalling that every term in the sum on the right-hand-side of this equation is positive, there is a non-negative solution only if $c = 0$. Conversely, if $c = 0$, then $n^\prime_1 = \dots = n^\prime_\nu = 0$ is a non-negative solution.

The subcase $\mu \neq 0$ and $\nu = 0$ and $c \geq 0$ is equivalent to the first subcase up to replacing $c$ with $-c$ and replacing $\sum_{\beta \,=\, 1}^{\nu} n^\prime_\beta \cdot a^\prime_\beta$ with $\sum_{\alpha \,=\, 1}^{\mu} n_\beta \cdot a_\alpha$.

\item \underline{$\mu \neq 0$ and $\nu = 0$ and $c < 0$.} This case is equivalent to case 2 up to replacing $c$ with $-c$ and replacing $\sum_{\beta \,=\, 1}^{\nu} n^\prime_\beta \cdot a^\prime_\beta$ with $\sum_{\alpha \,=\, 1}^{\mu} n_\beta \cdot a_\alpha$.

\item \underline{$\mu \neq 0$ and $\nu \neq 0$.} If $c \!\! \mod \! g_{aa^\prime} \neq 0$ with $g_{aa^\prime} = \gcd(a_1, \, \ldots, \, a_\mu, a^\prime_1, \, \ldots, \, a^\prime_\nu,)$, then according to Lemma 4.12 there is no integer solution. In particular, there is then no non-negative integer solution, thus proving the ``only-if'' part.

For the ``if'' part, suppose that $c \!\! \mod \! g_{aa^\prime} = 0$. We can then divide eq.~(3) by $g_{aa^\prime}$ to get the modified linear Diophantine equation
\begin{equation}\label{eq:linear-diophantine-gcd-1-in-proof}
\tilde{c} + \sum_{\alpha \,=\, 1}^{\mu} n_\alpha \cdot \tilde{a}_\alpha = \sum_{\beta \,=\, 1}^{\nu} m_\beta \cdot \tilde{a}_\beta^\prime \, ,
\end{equation}
where $\tilde{c} = \tfrac{c}{g_{aa^\prime}}$ is an integer and $\tilde{a}_\alpha = \tfrac{a_\alpha}{g_{aa^\prime}}$ for all $1 \leq \alpha \leq \mu$ as well as $\tilde{a}_\beta^\prime = \tfrac{a^\prime_\beta}{g_{aa^\prime}}$ for all $1 \leq \beta \leq \nu$ are positive integers. Note that $\gcd(\tilde{a}_1, \, \ldots, \, \tilde{a}_\mu, \, \tilde{a}^\prime_1, \, \ldots, \, \tilde{a}^\prime_\nu) = 1$ by definition of the $\tilde{a}_\alpha$ and $\tilde{a}^\prime_\beta$, and that $\gcd(\tilde{a}_1, \, \ldots, \, \tilde{a}_\mu, \, \tilde{a}^\prime_1, \, \ldots, \, \tilde{a}^\prime_\nu)$ by associativity of the greatest common divisor equals $\gcd(\gcd(\tilde{a}_1, \, \ldots, \, \tilde{a}_\mu), \, \gcd(\tilde{a}^\prime_1, \, \ldots, \, \tilde{a}^\prime_\nu))$. Thus, letting $\tilde{g}_a = \gcd(\tilde{a}_1, \, \ldots, \, \tilde{a}_\mu)$ and $\tilde{g}_{a^\prime} = \gcd(\tilde{a}^\prime_1, \, \ldots, \, \tilde{a}^\prime_\nu)$, we get $\gcd(\tilde{g}_a, \, \tilde{g}_{a^\prime}) = 1$. In addition, since all $\tilde{a}_1, \, \ldots, \, \tilde{a}_\mu$ and all $\tilde{a}^\prime_1, \, \ldots, \, \tilde{a}^\prime_\nu$ are positive, also both $\tilde{g}_a$ and $\tilde{g}_{a^\prime}$ are positive. Now consider the linear Diophantine equation
\begin{equation}\label{eq:simple-linear-diophantine-in-proof}
\tilde{c} + \tilde{g}_a \cdot k_a = \tilde{g}_{a^\prime} \cdot k_b
\end{equation}
with unknowns $k_a$ and $k_b$. Since $\gcd(\tilde{g}_a, \, \tilde{g}_{a^\prime}) = 1$, according to Lemma 4.12 there is at least one integer solution $(k_a, k_b) = (k_a^0, k_b^0)$ to this equation. Then, for any integer $q$, also $(k_a, k_b) = (k_a^0 + q \cdot \tilde{g}_{a^\prime}, k_b^0 + q \cdot \tilde{g}_a)$ is an integer solution to eq.~\eqref{eq:simple-linear-diophantine-in-proof}. Since both $\tilde{g}_a$ and $\tilde{g}_{a^\prime}$ are positive, for any pair of integers $k_a^{\text{min}}$ and $k_b^{\text{min}}$, we can choose $q$ sufficiently large such that $k_a^0 + q \cdot \tilde{g}_{a^\prime} \geq k_a^{\text{min}}$ and at the same time $k_b^0 + q \cdot \tilde{g}_a \geq k_b^{\text{min}}$. Thus, there is an integer $\tilde{q}$ such that $k_a^0 + \tilde{q} \cdot \tilde{g}_{a^\prime} \geq \tilde{g}_a \cdot f(\tfrac{\tilde{a}_1}{\tilde{g}_a}, \, \ldots, \, \tfrac{\tilde{a}_\mu}{\tilde{g}_a})$ and at the same time $k_b^0 + \tilde{q} \cdot \tilde{g}_a \geq \tilde{g}_{a^\prime} \cdot f(\tfrac{\tilde{a}^\prime_1}{\tilde{g}_{a^\prime}}, \, \ldots, \,\tfrac{\tilde{a}^\prime_\nu}{\tilde{g}_{a^\prime}})$. Since $\gcd(\tfrac{\tilde{a}_1}{\tilde{g}_a}, \, \ldots, \, \tfrac{\tilde{a}_\mu}{\tilde{g}_a}) = 1$ and $\gcd(\tfrac{\tilde{a}^\prime_1}{\tilde{g}_{a^\prime}}, \, \ldots, \,\tfrac{\tilde{a}^\prime_\nu}{\tilde{g}_{a^\prime}}) = 1$, the Frobenius numbers $ f(\tfrac{\tilde{a}_1}{\tilde{g}_a}, \, \ldots, \, \tfrac{\tilde{a}_\mu}{\tilde{g}_a})$ and $f(\tfrac{\tilde{a}^\prime_1}{\tilde{g}_{a^\prime}}, \, \ldots, \,\tfrac{\tilde{a}^\prime_\nu}{\tilde{g}_{a^\prime}})$ exist according to Lemma 4.13. Again according to Lemma 4.13, there are non-negative integer $n_1, \, \ldots, \, n_\mu$ such that $k_a^0 + \tilde{q} \cdot \tilde{g}_{a^\prime} = \sum_{\alpha \, = \,1}^\mu n_\alpha \cdot \tfrac{\tilde{a}_\alpha}{\tilde{g}_{a}}$ as well as non-negative integers $n^\prime_1, \, \ldots, \, n^\prime_\nu$ such that $k_b^0 + \tilde{q} \cdot \tilde{g}_a = \sum_{\beta \, = \,1}^\nu n^\prime_\beta \cdot \tfrac{\tilde{a}^\prime_\beta}{\tilde{g}_{a^\prime}}$. With these choices we arrive at
\begin{align}
\tilde{c} + \tilde{g}_a \cdot \left(\sum_{\alpha \, = \,1}^\mu n_\alpha \cdot \frac{\tilde{a}_\alpha}{\tilde{g}_a}\right) &= \tilde{g}_{a^\prime} \cdot \left(\sum_{\beta \, = \,1}^\nu n^\prime_\beta \cdot \frac{\tilde{a}^\prime_\alpha}{\tilde{g}_{a^\prime}}\right) \\
\Leftrightarrow \qquad c +\sum_{\alpha \, = \,1}^\mu n_\alpha \cdot a_\alpha &= \sum_{\beta \, = \,1}^\nu n^\prime_\beta \cdot a^\prime_\beta\, .
\end{align}
The equivalence of these two equations follows by multiplication with $g_{aa^\prime}$ because $g_{aa^\prime} \cdot \tilde{a}_\alpha = a_\alpha$ and $g_{aa^\prime} \cdot \tilde{a}^\prime_\beta = a^\prime_\beta$ and $c = g_{aa^\prime} \cdot \tilde{c}$. Thus, $n_1, \, \ldots , \, n_\mu, \, m_1, \, \ldots , \, m_\nu$ is a non-negative integer solution to original linear Diophantine equation~(3).
\end{enumerate}
We have thus proven all of the five cases and thereby completed the proof.
\end{proof}

\begin{definition}[Notation for the below statements and proofs]\label{def:notation-proof-theorem-3}
Given a tuple $(a_0; a_1,\ldots,a_{\mu})$ with $\mu \geq 1$ where $a_0 \in \mathbb{N}_0$ and $a_\alpha \in \mathbb{N}$ for all $1 \leq \alpha \leq \mu$, we write $\mathbf{a} = (a_0;a_1,\dots,a_{\mu})$ and $g(\mathbf{a}) = \gcd(a_1,\ldots,a_{\mu})$. Note that the indices inside the $\gcd$ start with $1$, that is, the $\gcd$ does not include $a_0$ as argument.
\end{definition}

\begin{lemma} \label{lem.first-estimates}
For a tuple $\mathbf{a} = (a_0,a_1,\ldots,a_{\mu})$ as in Definition~\ref{def:notation-proof-theorem-3}, let the positive integer $D \in \mathbb N$ be such that
\begin{align*}
 (D-a_0) \!\!\!\!\!\mod g(\mathbf{a}) = 0 \quad \text{and} \quad D > a_0 + g(\mathbf{a})\cdot f\left(\tfrac{a_1}{g(\mathbf{a})},\ldots, \tfrac{a_{\mu}}{g(\mathbf{a})}\right)
\end{align*}
where $f(\cdot)$ denotes the Frobenius number (cf.~Lemma 4.13). Then, $D \in \mathrm{con}(a_0;a_1,\ldots,a_{\mu})$, with $\con{\cdot}$ as defined by eq.~(1).
\end{lemma}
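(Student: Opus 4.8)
The plan is to reduce the claim to a single application of the Frobenius-number result (Lemma~\ref{lemma:frobenius-number}) after rescaling by the greatest common divisor. Write $g = g(\mathbf{a}) = \gcd(a_1,\dots,a_{\mu})$ for brevity. First I would unpack the congruence hypothesis: $(D - a_0) \bmod g = 0$ says precisely that $g$ divides $D - a_0$, so there is an integer $m \in \mathbb{Z}$ with $D - a_0 = g\cdot m$, namely $m = (D-a_0)/g$. Next I would translate the size hypothesis: since $g > 0$, the inequality $D > a_0 + g\cdot f(a_1/g,\dots,a_{\mu}/g)$ is equivalent to $g\cdot m = D - a_0 > g\cdot f(a_1/g,\dots,a_{\mu}/g)$, hence to $m > f(a_1/g,\dots,a_{\mu}/g)$.

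Then I would check that Lemma~\ref{lemma:frobenius-number} applies to the tuple $(a_1/g,\dots,a_{\mu}/g)$: each $a_\alpha/g$ is a positive integer because $a_\alpha \in \mathbb{N}$ and $g \mid a_\alpha$, and $\gcd(a_1/g,\dots,a_{\mu}/g) = 1$ by definition of $g$, so the Frobenius number $f(a_1/g,\dots,a_{\mu}/g)$ is well-defined. Applying Lemma~\ref{lemma:frobenius-number} with the integer $m$ in the role of ``$a_0$'' — legitimate by the previous step, since $m$ strictly exceeds the Frobenius number — produces non-negative integers $n_1,\dots,n_{\mu} \in \mathbb{N}_0$ with $m = \sum_{\alpha=1}^{\mu} n_\alpha \cdot (a_\alpha/g)$. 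Multiplying by $g$ and adding $a_0$ gives $D = a_0 + g\cdot m = a_0 + \sum_{\alpha=1}^{\mu} n_\alpha a_\alpha$, which by the definition~\eqref{eq:cones} of the affine cone is exactly $D \in \mathrm{con}(a_0;a_1,\dots,a_{\mu})$, as claimed.

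I do not expect a genuine obstacle: the argument is a short chain of equivalences feeding into Lemma~\ref{lemma:frobenius-number}. The only point requiring mild care is the degenerate case $\mu = 1$ with $a_1/g = 1$, where $f(1) = -1$; but Lemma~\ref{lemma:frobenius-number} is stated precisely for the $\gcd = 1$ situation, so no separate treatment is needed, and $m > f(a_1/g,\dots,a_{\mu}/g) \geq -1$ automatically forces $m \geq 0$, with $m = 0$ handled by $n_1 = \dots = n_{\mu} = 0$.
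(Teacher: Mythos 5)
Your proposal is correct and follows essentially the same route as the paper's proof: divide $D-a_0$ by $g(\mathbf{a})$, observe the quotient exceeds the Frobenius number of the rescaled coprime tuple, and apply Lemma~\ref{lemma:frobenius-number} before rescaling back. Your extra remark on the degenerate case ($m=0$ when the Frobenius number is $-1$) is a minor but welcome refinement of the paper's claim that the quotient is ``positive.''
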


\begin{proof}[Proof of Lemma~\ref{lem.first-estimates}]
This claim is an immediate consequence of the defining property of the Frobenius number: By assumption, $\tfrac{D-a_0}{g(\mathbf{a})}$ is a positive integer larger than $f(\tfrac{a_1}{g(\mathbf{a})},\ldots, \tfrac{a_{\mu}}{g(\mathbf{a})})$. Therefore, by the definition of the Frobenius property, there are non-negative integers $n_1,\ldots,n_{\mu}$ such that $\tfrac{D-a_0}{g(\mathbf{a})} = \sum_{\alpha=1}^{\mu} n_{\alpha} \cdot \tfrac{a_{\alpha}}{g(\mathbf{a})}$, which rearranges to $D = a_{0}+ \sum_{\alpha=1}^{\mu} n_{\alpha} \cdot a_{\alpha} \in \mathrm{con}(a_0;a_1,\ldots,a_{\mu})$.
\end{proof}

\begin{corollary}\label{cor:b1aaprime}
For tuples $\mathbf{a} = (a_0,a_1,\ldots,a_{\mu})$ and $\mathbf{a}' = (a'_0,a'_1,\ldots,a'_{\nu})$ as in Definition~\ref{def:notation-proof-theorem-3}, suppose there is a non-negative integer $D \in \mathbb N$ such that
\begin{align*}
(D-a_0) \!\!\!\!\!\mod g(\mathbf{a}) = 0 \quad \text{and} \quad (D-a^\prime_0) \!\!\!\!\!\mod g(\mathbf{a}^\prime) = 0
\end{align*}
as well as
\begin{align*}
  D > \max \left[ a_0 + g(\mathbf{a})\cdot f\left(\tfrac{a_1}{g(\mathbf{a})},\ldots, \tfrac{a_{\mu}}{g(\mathbf{a})}\right), \, a_0' + g(\mathbf{a}')\cdot f\left(\tfrac{a'_1}{g(\mathbf{a}')},\ldots, \tfrac{a'_{\nu}}{g(\mathbf{a})}\right)  \right] \, .
\end{align*}
Then, $D \in \mathcal{B}_1(\mathbf{a},\mathbf{a}') \equiv \mathrm{con}(a_1;a_1,\ldots,a_{\mu}) \cap \mathrm{con}(a'_0;a'_1,\ldots,a'_{\nu})$.
\end{corollary}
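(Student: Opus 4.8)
The plan is to obtain this corollary as an immediate consequence of Lemma~\ref{lem.first-estimates}, applied separately to each of the two tuples $\mathbf{a}$ and $\mathbf{a}'$. The point is that the hypothesis imposed on $D$ splits into two independent halves, one concerning $\mathbf{a}$ and one concerning $\mathbf{a}'$, and each half is precisely the hypothesis required by Lemma~\ref{lem.first-estimates}.

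First I would unpack the assumed inequality on $D$. Since $D$ strictly exceeds the maximum of the two quantities on the right-hand side, we obtain both $D > a_0 + g(\mathbf{a}) \cdot f\!\left(\tfrac{a_1}{g(\mathbf{a})}, \ldots, \tfrac{a_\mu}{g(\mathbf{a})}\right)$ and $D > a_0' + g(\mathbf{a}') \cdot f\!\left(\tfrac{a_1'}{g(\mathbf{a}')}, \ldots, \tfrac{a_\nu'}{g(\mathbf{a}')}\right)$. Combining the first of these with the divisibility condition $(D - a_0) \bmod g(\mathbf{a}) = 0$ gives exactly the hypothesis of Lemma~\ref{lem.first-estimates} for the tuple $\mathbf{a}$, so that lemma yields $D \in \mathrm{con}(a_0; a_1, \ldots, a_\mu)$. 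Running the identical argument with the tuple $\mathbf{a}'$, this time using the divisibility condition $(D - a_0') \bmod g(\mathbf{a}') = 0$ together with the second inequality, yields $D \in \mathrm{con}(a_0'; a_1', \ldots, a_\nu')$.

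Since $D$ then lies in both cones, it lies in their intersection, which is by definition $\mathcal{B}_1(\mathbf{a}, \mathbf{a}')$, and the proof is complete. I do not expect any genuine obstacle here: the only thing to verify is that the (deliberately conjunctive) hypothesis of the corollary implies the two separate hypotheses of Lemma~\ref{lem.first-estimates}, which is immediate from the definition of $\max$. The one mild point of care is that Lemma~\ref{lem.first-estimates} is stated for tuples as in Definition~\ref{def:notation-proof-theorem-3}, i.e.\ with at least one $a_\alpha$ (so that $g(\mathbf{a})$ is a well-defined positive integer), but this is exactly the standing assumption on $\mathbf{a}$ and $\mathbf{a}'$ in the corollary, so no degenerate case needs separate treatment.
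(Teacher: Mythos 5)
Your argument is correct and is exactly the paper's proof: the corollary is obtained by applying Lemma~\ref{lem.first-estimates} to each tuple separately and intersecting the two cones. The paper simply states that the claim ``immediately follows'' from that lemma, which is precisely the two-fold application you spell out.
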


\begin{proof}[Proof of Corollary~\ref{cor:b1aaprime}]
This claim immediately follows from Lemma~\ref{lem.first-estimates}.
\end{proof}

\begin{lemma} \label{lem.D-upper-estimate}
For tuples $\mathbf{a} = (a_0,a_1,\ldots,a_{\mu})$ and $\mathbf{a}' = (a'_0,a'_1,\ldots,a'_{\nu})$ as in Definition~\ref{def:notation-proof-theorem-3}, suppose \newline $(a_0'-a_0)\!\!\! \mod g(\mathbf{a},\mathbf{a'})= 0$ where $g(\mathbf{a},\mathbf{a'}) = \gcd(a_1,\ldots,a_{\mu},a'_1,\ldots,a'_{\nu})$.
Then, there exists $D \in \mathcal{B}_1(\mathbf{a},\mathbf{a}')$ such that
\begin{align} \label{ineq.D-bound}
 D \leq \max(a_0,a_0') + \frac{g(\mathbf{a}')\cdot g(\mathbf{a})}{g(\mathbf{a},\mathbf{a'})} &\cdot \Bigg[\frac{|a_0'-a_0|}{g(\mathbf{a},\mathbf{a'})} + \nonumber \\
&\max\left[ f\left(\tfrac{a_1}{g(\mathbf{a})},\ldots, \tfrac{a_{\mu}}{g(\mathbf{a})}\right),f\left(\tfrac{a'_1}{g(\mathbf{a}')},\ldots, \tfrac{a'_{\nu}}{g(\mathbf{a})}\right)\right] +1 \Bigg] \, .
 \end{align}
\end{lemma}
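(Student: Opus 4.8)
The plan is to construct the element $D \in \mathcal{B}_1(\mathbf{a},\mathbf{a}')$ explicitly via the same two-stage argument used in the proof of Theorem~\ref{thm:our-linear-diophantine} (case 5), but this time keeping track of the size of the solution. First I would reduce modulo $g(\mathbf{a},\mathbf{a}')$: writing $g_a = g(\mathbf{a})$, $g_{a'} = g(\mathbf{a}')$ and $g_{aa'} = g(\mathbf{a},\mathbf{a}') = \gcd(g_a,g_{a'})$, the hypothesis $(a_0'-a_0) \!\!\mod g_{aa'} = 0$ lets me consider the auxiliary linear Diophantine equation
\begin{align*}
a_0 + g_a \cdot k_a = a_0' + g_{a'}\cdot k_b
\end{align*}
in the two unknowns $k_a,k_b$. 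Since $\gcd(g_a,g_{a'}) = g_{aa'}$ divides $a_0'-a_0$, Lemma~\ref{lemma:linear-diophantine-general} guarantees an integer solution $(k_a^0,k_b^0)$, and the general integer solution has the form $(k_a^0 + q\cdot \tfrac{g_{a'}}{g_{aa'}},\, k_b^0 + q\cdot \tfrac{g_a}{g_{aa'}})$ for $q \in \mathbb{Z}$. One can moreover choose the base solution $(k_a^0,k_b^0)$ so that, say, $0 \le k_a^0 < \tfrac{g_{a'}}{g_{aa'}}$ and then control $k_b^0$ correspondingly; this is where the term $\tfrac{|a_0'-a_0|}{g_{aa'}}$ in the bound will come from, since $k_b^0 = \tfrac{(a_0 - a_0') + g_a k_a^0}{g_{a'}}$.

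Next I would pick $q$ minimal so that both $k_a := k_a^0 + q\cdot\tfrac{g_{a'}}{g_{aa'}}$ exceeds $f\bigl(\tfrac{a_1}{g_a},\ldots,\tfrac{a_\mu}{g_a}\bigr)$ and $k_b := k_b^0 + q\cdot\tfrac{g_a}{g_{aa'}}$ exceeds $f\bigl(\tfrac{a_1'}{g_{a'}},\ldots,\tfrac{a_\nu'}{g_{a'}}\bigr)$ (both Frobenius numbers exist by Lemma~\ref{lemma:frobenius-number} since the reduced tuples have gcd $1$). Then Lemma~\ref{lem.first-estimates} applies on each side: $k_a$ being a non-negative integer above the Frobenius number of the $\tfrac{a_\alpha}{g_a}$ means $a_0 + g_a k_a \in \mathrm{con}(a_0;a_1,\ldots,a_\mu)$, and similarly $a_0' + g_{a'} k_b \in \mathrm{con}(a_0';a_1',\ldots,a_\nu')$. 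Setting $D := a_0 + g_a k_a = a_0' + g_{a'}k_b$ gives $D \in \mathcal{B}_1(\mathbf{a},\mathbf{a}')$. It remains to bound $D$ from above: $D = a_0 + g_a k_a$, and $k_a = k_a^0 + q\tfrac{g_{a'}}{g_{aa'}} \le \tfrac{g_{a'}}{g_{aa'}}\bigl(\lceil \tfrac{k_a^0 g_{aa'}}{g_{a'}}\rceil + \max[f(\ldots),f(\ldots)] + 1\bigr)$-type estimate, which after substituting the bound on $k_a^0,k_b^0$ in terms of $\tfrac{|a_0'-a_0|}{g_{aa'}}$ collapses to the claimed inequality~\eqref{ineq.D-bound}. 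The symmetry between the two sides is handled by using $\max(a_0,a_0')$ as the additive constant.

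**Main obstacle.** The bookkeeping for the choice of $q$ and the base solution $(k_a^0,k_b^0)$ is where care is needed: one must simultaneously ensure (i) both $k_a,k_b \ge 0$, (ii) $k_a$ is above the first Frobenius number, (iii) $k_b$ is above the second, and (iv) the resulting $D$ does not overshoot the stated bound. Concretely, the increments $\tfrac{g_{a'}}{g_{aa'}}$ and $\tfrac{g_a}{g_{aa'}}$ in $k_a$ and $k_b$ per unit of $q$ are different, so raising $q$ to clear one Frobenius threshold may overshoot the other by an amount controlled by those increments; this is precisely why the bound carries the factor $\tfrac{g_{a'}g_a}{g_{aa'}}$ multiplying the bracket. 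I expect the cleanest route is: normalize so $0 \le k_a^0 < g_{a'}/g_{aa'}$, derive $|k_b^0| \le \tfrac{|a_0'-a_0|}{g_{a'}} + \tfrac{g_a}{g_{a'}}\cdot\tfrac{g_{a'}}{g_{aa'}} = \tfrac{|a_0'-a_0|}{g_{a'}} + \tfrac{g_a}{g_{aa'}}$, then take $q = \max\bigl[f(\tfrac{a_1}{g_a},\ldots), \; f(\tfrac{a_1'}{g_{a'}},\ldots) + \text{(correction for }k_b^0)\bigr] + 1$ and verify all four conditions hold, finally reading off $D = a_0 + g_a k_a$ and simplifying. The arithmetic is routine once this setup is fixed; no conceptual difficulty beyond careful constant-chasing.
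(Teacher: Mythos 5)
Your proposal is correct and follows essentially the same route as the paper's proof: reduce to the two-variable equation $a_0+g(\mathbf{a})k_a=a_0'+g(\mathbf{a}')k_b$, invoke Bézout/Lemma~\ref{lemma:linear-diophantine-general} and parametrize the integer solutions, normalize the base solution to be small (the paper bounds the Bézout coefficient $v_0$ by $g(\mathbf{a})/g(\mathbf{a},\mathbf{a'})$, mirroring your normalization of $k_a^0$), shift by $t_0=\max[f(\cdot),f(\cdot)]+1$ increments so that both sides clear their Frobenius thresholds, apply Lemma~\ref{lem.first-estimates} to each cone, and read off the bound from the size of the shifted solution. The remaining arithmetic you defer is exactly the computation the paper carries out.
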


\begin{proof}[Proof of Lemma~\ref{lem.D-upper-estimate}]
Due to symmetry, we can without loss of generality assume that $a_0 \leq a_0'$. Then, $\max(a_0,a_0') =a_0'$. By Lemma~\ref{lem.first-estimates}, we have
\begin{align*}
\underbrace{(D_0(\mathbf{a}),\infty)}_{\text{open interval}} \cap \left\{a_0 + g(\mathbf{a}\} \cdot \mathbb Z \right\} \subseteq \mathrm{con}(a_0;a_1,\ldots,a_{\mu})\, ,
\end{align*}
where 
\begin{align*}
D_0(\mathbf{a}) = a_0 + g(\mathbf{a})\cdot \left[ f\left(\tfrac{a_1}{g(\mathbf{a})},\ldots, \tfrac{a_{\mu}}{g(\mathbf{a})}\right) +1 \right] \, ,
\end{align*}
and an analogous result holds for $\mathbf{a}'$ instead of $\mathbf{a}$. Therefore, $\mathcal{B}_1(\mathbf{a},\mathbf{a}')$ contains the set
\begin{align*}
(D_0(\mathbf{a}) ,\infty) \cap (D_0(\mathbf{a}') ,\infty) \cap \mathcal{B}_2(\mathbf{a},\mathbf{a}) \subseteq \mathcal{B}_1(\mathbf{a},\mathbf{a}') \, ,
\end{align*}
where
\begin{align*}
\mathcal{B}_2(\mathbf{a},\mathbf{a}) = \left\{a_0 + g(\mathbf{a}) \cdot \mathbb Z \right\} \cap \left\{a'_0 + g(\mathbf{a}') \cdot \mathbb Z \right\}\, . 
\end{align*}
Using the assumption that $\gcd(g(\mathbf{a}),g(\mathbf{a}')) = g(\mathbf{a},\mathbf{a'})$ divides $(a_0'-a_0)$, we see that the set $\mathcal{B}_2(\mathbf{a},\mathbf{a})$ is non-empty because the equation
\begin{align} \label{eq.help-short-Diophantine}
g(\mathbf{a}) \cdot n - g(\mathbf{a}') \cdot m  = a_0'-a_0.
\end{align}
admits a solution $(n,m) \in \mathbb{Z} \times \mathbb{Z}$. Namely, if $u,v \in \mathbb Z$ are any integers for which
\begin{align*}
g(\mathbf{a}) \cdot u - g(\mathbf{a}') \cdot v  = g(\mathbf{a},\mathbf{a'})
\end{align*} 
(noting that $g(\mathbf{a},\mathbf{a'}) = \gcd(g(\mathbf a), g(\mathbf a'))$, such a pair of integers exists according to Bézout's identity), then the solutions of eq.~\eqref{eq.help-short-Diophantine} are pairs $(n,m)$ of integers of the form 
\begin{align*}
n = \frac{a_0'-a_0}{g(\mathbf{a},\mathbf{a'})}\cdot u + t \cdot \frac{g(\mathbf{a'})}{g(\mathbf{a},\mathbf{a'})} \, , \quad m = \frac{a_0'-a_0}{g(\mathbf{a},\mathbf{a'})}\cdot v + t \cdot \frac{g(\mathbf{a})}{g(\mathbf{a},\mathbf{a'})}
\end{align*}
for arbitrary $t \in \mathbb{Z}$. Note that for $v$ we can choose a value $v_0$ such that $0 \leq v_0 \leq \tfrac{g(\mathbf{a})}{g(\mathbf{a},\mathbf{a'})}$, which we will do going forward. Now choose
\begin{align*}
t_0 &= \max\left[ f\left(\tfrac{a_1}{g(\mathbf{a})},\ldots, \tfrac{a_{\mu}}{g(\mathbf{a})}\right),\,f\left(\tfrac{a'_1}{g(\mathbf{a}')},\ldots, \tfrac{a'_{\nu}}{g(\mathbf{a})}\right)\right] +1 \, ,\\
m_0 &= \frac{a_0'-a_0}{g(\mathbf{a},\mathbf{a'})}\cdot v_0 + t_0 \cdot \frac{g(\mathbf{a})}{g(\mathbf{a},\mathbf{a'})}\, , \\
D &= a_0' + g(\mathbf{a}')\cdot m_0 \, .
\end{align*}
Then, the choice of $m_0$ implies that $D \in \mathcal{B}_2(\mathbf{a},\mathbf{a})$. Moreover, since $m_0 \geq t_0 \cdot \tfrac{g(\mathbf{a})}{g(\mathbf{a},\mathbf{a'})}$, we have 
\begin{align*}
D \geq a_0' + \frac{g(\mathbf{a}')\cdot g(\mathbf{a})}{g(\mathbf{a},\mathbf{a'})} \cdot t_0 \geq D_0(\mathbf{a'})
\end{align*}
and 
\begin{align*}
D \geq a_0' + \frac{g(\mathbf{a}')\cdot g(\mathbf{a})}{g(\mathbf{a},\mathbf{a'})} \cdot t_0 \geq a_0 + \frac{g(\mathbf{a}')\cdot g(\mathbf{a})}{g(\mathbf{a},\mathbf{a'})} \cdot t_0 \geq  D_0(\mathbf{a}).
\end{align*}
We have thus shown that
\begin{align*}
D \in (D_0(\mathbf{a}) ,\infty) \cap (D_0(\mathbf{a}') ,\infty) \cap \mathcal{B}_2(\mathbf{a},\mathbf{a}) \subseteq \mathcal{B}_1(\mathbf{a},\mathbf{a}')\, .
\end{align*}
Finally, we need to show that $D$ satisfies inequality~\eqref{ineq.D-bound}. Indeed, since $v_0 \leq \tfrac{g(\mathbf{a})}{g(\mathbf{a},\mathbf{a'})}$, we have
\begin{align*}
g(\mathbf{a}')\cdot m_0 &\leq \frac{g(\mathbf{a})g(\mathbf{a}')}{g(\mathbf{a},\mathbf{a'})} \cdot \left( \frac{a_0'-a_0}{g(\mathbf{a},\mathbf{a'})} + t_0 \right) \\
&= \frac{g(\mathbf{a})g(\mathbf{a}')}{g(\mathbf{a},\mathbf{a'})} \cdot \left\{ \frac{a_0'-a_0}{g(\mathbf{a},\mathbf{a'})} + \max\left[ f\left(\tfrac{a_1}{g(\mathbf{a})},\ldots, \tfrac{a_{\mu}}{g(\mathbf{a})}\right),f\left(\tfrac{a'_1}{g(\mathbf{a}')},\ldots, \tfrac{a'_{\nu}}{g(\mathbf{a})}\right)\right] +1 \right\}.
\end{align*} 
Since $\max(a_0,a_0') =a_0'$ and $|a_0'-a_0| = a_0'-a_0$, inequality~\eqref{ineq.D-bound} follows.
\end{proof}

\begin{lemma} \label{lem.main-p-estimate}
Let $\tau,\tau' \geq 0$, let $i,j,k$ be (not necessarily distinct) vertices of a multi-weighted graph $(\cG,\w)$, let $\pi \in \cW_0(k,i)$ and $\pi' \in \cW_0(k,j)$ be cycle-free walks, and let $\cS \in \cM_{\pi}$ and $\cS' \in \cM_{\pi'}$. Consider tuples $\mathbf{a} = (a_0,a_1,\ldots,a_{\mu}) \in D_{\tau}(\pi,\cS)$ and $\mathbf{a}' = (a'_0,a'_1,\ldots,a'_{\nu}) \in D_{\tau'}(\pi',\cS')$ for which $\mu,\nu \geq 1$ and $ \mathcal{B}_1(\mathbf{a},\mathbf{a}') \neq \emptyset$. Then, there exists an element $D \in \mathcal{B}_1(\mathbf{a},\mathbf{a}')$ such that
\begin{align*}
D \leq \left(K^2+1\right) \cdot \left[\max\left(\tau,\tau'\right) + L +M \right] + K\left[\left(K-1\right)^2 +1\right] \, .
\end{align*}
\end{lemma}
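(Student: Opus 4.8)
The plan is to combine the elementary size constraints that the memberships $\mathbf{a}\in D_\tau(\pi,\mathcal S)$ and $\mathbf{a}'\in D_{\tau'}(\pi',\mathcal S')$ impose on the entries of the tuples with the arithmetic estimate of Lemma~\ref{lem.D-upper-estimate}. Throughout, write $g=g(\mathbf a)$, $g'=g(\mathbf a')$, $h=g(\mathbf a,\mathbf a')=\gcd(a_1,\dots,a_\mu,a_1',\dots,a_\nu')$, and set $P=\max(\tau,\tau')+L+M$. First I would record entry bounds. By eq.~\eqref{def.tuple-sets}, $a_0=\tau+w(\pi)+\sum_{\mathbf c\in\mathcal S}w(\mathbf c)$ with $w(\pi)\in\weightset{\pi}$ and $w(\mathbf c)\in\weightset{\mathbf c}$, while $\mu=|\mathrm{cl}(\mathcal S)|$ and each $a_\alpha$ is a weight $w(\mathbf c)\in\weightset{\mathbf c}$ of some $\mathbf c\in\mathrm{cl}(\mathcal S)\subseteq\mathcal C$. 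Since $\pi\in\cW_0(k,i)$ is cycle-free, $w(\pi)\leq L$ by definition of $L$; since $\mathcal S\subseteq\mathcal C$ and all summands are nonnegative, $\sum_{\mathbf c\in\mathcal S}w(\mathbf c)\leq M$; hence $a_0\leq\tau+L+M\leq P$, and symmetrically $a_0'\leq P$, so $\max(a_0,a_0')\leq P$ and $|a_0'-a_0|\leq P$. Moreover each $a_\alpha\leq\max\weightset{\mathbf c}\leq K$ and each $a_\beta'\leq K$, and by weak acyclicity $a_\alpha,a_\beta'\geq1$, so $1\leq g,g'\leq K$ and $h\geq1$; in particular $gg'/h\leq K^2$ and $K\geq 1$.

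Next I would verify that Lemma~\ref{lem.D-upper-estimate} applies. Its hypothesis is that $h$ divides $a_0'-a_0$: since $\mathcal B_1(\mathbf a,\mathbf a')\neq\emptyset$, pick a common point of the two cones and write it both as $a_0+\sum_\alpha n_\alpha a_\alpha$ and as $a_0'+\sum_\beta n_\beta' a_\beta'$ with $n_\alpha,n_\beta'\in\mathbb N_0$; subtracting exhibits $a_0'-a_0$ as an integer combination of $a_1,\dots,a_\mu,a_1',\dots,a_\nu'$, hence divisible by $h$. Lemma~\ref{lem.D-upper-estimate} then produces some $D\in\mathcal B_1(\mathbf a,\mathbf a')$ with
\[
D\;\leq\;\max(a_0,a_0')\;+\;\frac{gg'}{h}\left[\frac{|a_0'-a_0|}{h}+\max\big(F,F'\big)+1\right],
\]
where $F=f(a_1/g,\dots,a_\mu/g)$ and $F'=f(a_1'/g',\dots,a_\nu'/g')$ are Frobenius numbers of tuples of positive integers with greatest common divisor $1$ (Lemma~\ref{lemma:frobenius-number}). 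The first summand is $\leq P$ and, using $gg'/h^2\leq gg'/h\leq K^2$ together with $|a_0'-a_0|\leq P$, the second term contributes $\leq K^2P$; these two pieces already give $(K^2+1)P$.

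The remaining work, and the main obstacle, is to show $\tfrac{gg'}{h}\big[\max(F,F')+1\big]\leq K(K-1)^2$: the crude estimates $F,F'\leq(K-1)^2-1$ combined with $gg'/h\leq K^2$ only yield the far weaker bound $K^2(K-1)^2$, so one must keep $g$ and $g'$ symbolic and exploit a cancellation between the factor $g^2$ in the denominator of the Schur--Brauer estimate and the factor $g$ in the numerator $gg'/h$. Concretely, by the Schur--Brauer bound recalled in Remark~\ref{rem.search-avoiding-condition}, with $d_\alpha=a_\alpha/g$ (positive integers, $\min_\alpha d_\alpha\geq1$, $\max_\alpha d_\alpha\leq K/g$) one gets $F\leq(\min_\alpha d_\alpha-1)(\max_\alpha d_\alpha-1)-1\leq(\max_\alpha d_\alpha-1)^2-1\leq(K/g-1)^2-1$, and symmetrically $F'\leq(K/g'-1)^2-1$; this also covers the degenerate cases $\mu=1$ (resp.\ $\nu=1$), where $d_1=1$ and $F=-1$. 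Hence $\max(F,F')+1\leq\max\!\big((K/g-1)^2,(K/g'-1)^2\big)$, and then
\[
\frac{gg'}{h}\left(\frac{K}{g}-1\right)^2=\frac{g'}{hg}\,(K-g)^2\leq g'\,(K-g)^2\leq K\,(K-1)^2
\]
using $hg\geq1$, $g'\leq K$ and $1\leq g\leq K$, and likewise $\tfrac{gg'}{h}(K/g'-1)^2\leq K(K-1)^2$. Collecting the three contributions yields $D\leq P+K^2P+K(K-1)^2=(K^2+1)P+K(K-1)^2$, and since $K\geq1$ this is at most $(K^2+1)\big[\max(\tau,\tau')+L+M\big]+K\big[(K-1)^2+1\big]$, which is the claimed bound.
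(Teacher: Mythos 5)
Your proof is correct and follows essentially the same route as the paper's: verify the divisibility hypothesis from the non-empty cone intersection, invoke Lemma~\ref{lem.D-upper-estimate}, bound $a_0, a_0'$ by $\max(\tau,\tau')+L+M$ and $g(\mathbf a), g(\mathbf a')$ by $K$, and control the Frobenius term via the Schur--Brauer bound by cancelling the $1/g$ (resp.\ $1/g'$) factors inside the Frobenius arguments against the prefactor $g(\mathbf a)g(\mathbf a')/g(\mathbf a,\mathbf a')$. Your bookkeeping in that last step even yields the marginally sharper intermediate bound $K(K-1)^2$ in place of the paper's $K[(K-1)^2+1]$, which you correctly relax to the stated inequality.
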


\begin{proof}[Proof of Lemma~\ref{lem.main-p-estimate}]
Using Lemma 4.12, we see that the assumption $\mathcal{B}_1(\mathbf{a},\mathbf{a}') \neq \emptyset$ implies that $g(\mathbf{a},\mathbf{a}')$ divides $(a_0'-a_0)$. Then, by Lemma~\ref{lem.D-upper-estimate}, there exists $D \in \mathcal{B}_1(\mathbf{a},\mathbf{a}')$ that satisfies inequality~\eqref{ineq.D-bound}. We use the bounds (using the quantities defined in Theorem 3)
\begin{align*}
g(\mathbf{a}) \leq \max_{i=1,\ldots, \mu} a_i \leq K \qquad &\text{and} \qquad g(\mathbf{a}') \leq \max_{i=1,\ldots, \nu} a'_i  \leq K  \quad \text{and}\\
a_0 \leq \tau + L + M \qquad &\text{and} \qquad a_0' \leq \tau' + L +M \, ,
\end{align*}
which directly follow from the definition of the sets $D_{\tau}(\pi,\cS)$ and $D_{\tau'}(\pi',\cS')$. The latter two inequalities in particular imply that 
\begin{align*}
\max(a_0,a_0') \leq \max(\tau,\tau') + L +M
\end{align*}
and thus
\begin{align*}
&\max(a_0,a_0') + \frac{g(\mathbf{a}')\cdot g(\mathbf{a})}{g(\mathbf{a},\mathbf{a'})} \cdot \frac{|a_0'-a_0|}{g(\mathbf{a},\mathbf{a'})} \\
&\leq (K^2 +1) \cdot \max(a_0,a_0') \\
&\leq (K^2+1) \cdot \left(\max(\tau,\tau') + L+M\right)\, .
\end{align*}
Using the Brauer-Schur bound on the Frobenius number, see Remark 4.14, we get
\begin{align*}
 &\frac{g(\mathbf{a}')\cdot g(\mathbf{a})}{g(\mathbf{a},\mathbf{a'})} \cdot \left[f\left(\tfrac{a_1}{g(\mathbf{a})},\ldots, \tfrac{a_{\mu}}{g(\mathbf{a})}\right) +1 \right] \\
 &\leq g(\mathbf{a}') \cdot g(\mathbf{a}) \cdot \left[ \left( \max_{i=1,\ldots, \mu} \tfrac{a_i}{g(\mathbf{a})} -1\right) \left( \min_{i=1,\ldots, \mu} \tfrac{a_i}{g(\mathbf{a})} -1\right) +1\right] \\
 &\leq  g(\mathbf{a}') \cdot \left[ \left( \max_{i=1,\ldots, \mu} a_i -1\right) \left( \min_{i=1,\ldots, \mu} \tfrac{a_i}{g(\mathbf{a})} -1 \right)  +1\right] \\
  &\leq  g(\mathbf{a}') \cdot \left[ \left( \max_{i=1,\ldots, \mu} a_i -1 \right)^2  +1\right] \\
 &\leq K \cdot \left[\left(K-1\right)^2 +1\right] \, . 
\end{align*}
Combining inequality~\eqref{ineq.D-bound} and the previous bounds, the claim follows.
\end{proof}

\begin{proof}[Proof of Theorem 3]
The ``if'' direction is trivial. For the ``only-if'' direction, assume that $(i, t-\tau)$ and $(j,t)$ have a common ancestor $(k,t-\tau_k)$ in $\DAG$. Let $\mathcal{S}_{\w}(\DAG)$ be the multi-weighted summary graph of $\DAG$. According to Theorem 1, there are tuples $\mathbf{a} = (a_0,\ldots,a_{\mu}) \in D_{\tau}(\pi_0, \mathcal{S})$ and $\mathbf{a}' = (a'_0,\ldots,a'_{\nu}) \in D_{0}(\pi_0', \mathcal{S}')$ with $\pi_0$, $\mathcal{S}$, $\pi_0'$ and $\mathcal{S}'$ as in that theorem, such that $\mathcal{B}_1(\mathbf{a},\mathbf{a}') \neq \emptyset$. Here, $\mu =0$ and/or $\nu = 0$ is allowed, and we need to distinguish the following three mutually exclusive and collectively exhaustive cases:
\begin{enumerate}
    \item \underline{$\nu =0$:} In this case, $\tau_k = a_0' \in \w(\pi_0')+\w(\mathcal{S}')$. Thus, $\tau_k \leq L+M \leq p'(\DAG)$.
    \item \underline{$\nu \neq 0$ and $\mu = 0$:} In this case, $\tau_k = \tau + a_0 \in \{\tau\} + \w(\pi_0)+\w(\mathcal{S})$. Thus, $\tau_k \leq \tau + L+M \leq p'(\DAG)$.
    \item \underline{$\nu \neq 0$ and $\mu \neq 0$:} In this last case, Lemma~\ref{lem.main-p-estimate} applies (setting $\tau' = 0$), and we get the existence of a common ancestor $(k,t-\tau_k^\prime)$ of $(i, t-\tau)$ and $(j,t)$ with $\tau_k^\prime \leq p'(\DAG)$. 
\end{enumerate}
\end{proof}

\section{Pseudocode}\label{sec:pseudocode}
Here, we provide pseudocode for our solution of Problem 1 (construction of finite marginal ts-ADMGs) and Problem 2 (common-ancestor search in ts-DAGs) by means of the number-theoretic approach of Theorem 1 and Theorem 2. The pseudocode stays close to the theoretical results that it is based on and, for the purpose of simplicity, ignores practically important aspects such as memoization.

\begin{algorithm}[tbp]
    \caption{Pseudocode for the function \textsc{ts-admg}.}
    \label{algo:ts-ADMG}
    \begin{algorithmic}[1]
        \Require Time series ADMG $\ADMG$ with variable index set $\VarIndices$, non-empty subset $\VarIndicesObserved \subseteq \VarIndices$ corresponding to the observable component time series, length of the observed time window $\ptimewindow \in \mathbb{N}_0$
        \Ensure Marginal ts-ADMG $\tsADMG{\ObservedVertices}{\ADMG}$ where $\ObservedVertices = \VarIndicesObserved \times \TimeIndicesObserved$ with $\TimeIndicesObserved = \{t-\tau ~\vert~ 0 \leq \tau \leq \ptimewindow\}$
        \Function{ts-admg}{$\ADMG$, $\VarIndicesObserved$, $\ptimewindow$}
            \State $\ObservedVertices$ $\leftarrow$ $\VarIndicesObserved \times \TimeIndicesObserved$ with $\TimeIndicesObserved = \{t-\tau ~\vert~ 0 \leq \tau \leq \ptimewindow\}$ \Comment{Set of observed vertices, see Definition 3.1.}
            \State $\DAG$ $\leftarrow$ $\CanonicaltsDAG{\ADMG}$ \Comment{Canonical ts-DAG of $\ADMG$, see Definition 3.4.}
            \State $\ADMG^\prime$ $\leftarrow$ \textsc{simple-ts-admg($\DAG$, $\ptimewindow$)} \Comment{Simple marginal ts-ADMG of $\DAG$, see Section 3.3.2.}
            \State $\ADMG_{out}$ $\leftarrow$ $\ADMGprojection{\ObservedVertices}{\ADMG^\prime}$ \Comment{ADMG latent projection of $\ADMG^\prime$ to $\ObservedVertices$, see Section 3.3.2.}
            \State \Return $\tsADMG{\ObservedVertices}{\ADMG} = \ADMG_{out}$
        \EndFunction
    \end{algorithmic}
\end{algorithm}

Algorithm~\ref{algo:ts-ADMG} provides pseudocode for the top-level function \textsc{ts-admg} that returns the finite marginal ts-ADMG $\tsADMG{\ObservedVertices}{\ADMG}$ of an infinite ts-ADMG $\ADMG$, where $\ObservedVertices = \VarIndicesObserved \times \TimeIndicesObserved$ with $\TimeIndicesObserved = \{t-\tau ~\vert~ 0 \leq \tau \leq \ptimewindow\}$. This function implements the reduction steps explained in Sections 3.3.1 and 3.3.2, see also Figure 4, and calls the function \textsc{simple-ts-admg} that performs the simple marginal ts-ADMG projection.

\begin{algorithm}[tbp]
    \caption{Pseudocode for the function \textsc{simple-ts-ADMG}.}
    \label{algo:simple-ts-ADMG}
    \begin{algorithmic}[1]
        \Require Time series DAG $\DAG$ with variable index set $\VarIndices$, observed time window length $\ptimewindow \in \mathbb{N}_0$
        \Ensure Simple marginal ts-ADMG $\tsADMG{\ObservedVertices^\prime}{\DAG}$ with $\ObservedVertices^\prime = \VarIndicesObserved \times \TimeIndicesObserved$ with $\TimeIndicesObserved = \{t-\tau ~\vert~ 0 \leq \tau \leq \ptimewindow\}$
        \Function{simple-ts-ADMG}{$\DAG$, $\ptimewindow$}
            \State $\Graph$ $\leftarrow$ copy of the segment of $\DAG$ on $[t-\ptimewindow, \ldots, t]$ \Comment{Copy directed edges from $\DAG$ according$\ldots$}
            \Statex \Comment{$\ldots$ to Proposition 3.7.}
            \ForAll{$(i, j, \Delta\tau) \in \VarIndices \times \VarIndices \times [0, \ldots, \ptimewindow]$} \Comment{Run through all pairs of$\ldots$}
                \If{$i < j$ or $\Delta\tau > 0$} \Comment{$\ldots$}
                    \ForAll{$\tau_j = 0$ to $\tau_j = \ptimewindow-\Delta\tau$} \Comment{$\ldots$}
                        \State $v_1, v_2$ $\leftarrow$ $(i, t-\tau_j-\Delta\tau), (j, t-\tau_j)$ \Comment{$\ldots$ vertices.}
                        \ForAll{$(k,t-\tau_k), (l,t-\tau_l) \in \parents{\DAG}{v_1} \times \parents{\DAG}{v_2}$} \Comment{Run through all pairs of$\ldots$}
                            \If{$\tau_k > \ptimewindow$ and $\tau_l > \ptimewindow$} \Comment{$\ldots$ parents of $(v_1, v_2)$$\ldots$}
                                \Statex \Comment{$\ldots$ before the observed time window.}
                                \If{$\tau_k > \tau_l$ or ($\tau_k = \tau_l$ and $k<l$)} \Comment{Shift the temporally later parent$\ldots$}
                                    \State $w_1, w_2$ $\leftarrow$ $(k, t-\tau_k+\tau_l), (l, t)$ \Comment{$\ldots$ vertex to time $t$, using$\ldots$}
                                \Else \Comment{$\ldots$ lexicographical order$\ldots$}
                                    \State $w_1, w_2$ $\leftarrow$ $(k, t), (l, t-\tau_l+\tau_k)$ \Comment{$\ldots$ if the parent vertices$\ldots$}
                                \EndIf \Comment{$\ldots$ are concurrent.}
                                \If{\textsc{have-common-ancestor}($\DAG$, $w_1$, $w_2$)} \Comment{Use Proposition 3.8 and the$\ldots$}
                                    \Statex \Comment{$\ldots$ repeating edges property of$\ldots$ }
                                    \Statex \Comment{$\ldots$ ts-DAGs to decide whether $v_1 \headhead v_2$.}
                                    \ForAll{$\tilde{\tau}_j \in [\tau_j, \ldots, \ptimewindow-\Delta \tau]$} \Comment{Add bidirected edges between $(v_1, v_2)$$\ldots$}
                                        \State $\tilde{v}_1, \tilde{v}_2$ $\leftarrow$ $(i, t-\tilde{\tau}_j-\Delta\tau), (j, t-\tilde{\tau}_j)$ \Comment{$\ldots$ and between all backwards$\ldots$}
                                        \State $\Graph$ $\leftarrow$ Add $\tilde{v}_1 \headhead \tilde{v}_2$ to $\Graph$ \Comment{$\ldots$ shifted pairs of vertices according$\ldots$}
                                        \Statex \Comment{$\ldots$ to the last paragraph of Section 3.3.3.}
                                    \EndFor
                                \EndIf
                            \EndIf
                        \EndFor
                    \EndFor
                \EndIf
            \EndFor
            \State \Return $\tsADMG{\ObservedVertices^\prime}{\DAG} = \Graph$
        \EndFunction
    \end{algorithmic}
\end{algorithm}

Algorithm~\ref{algo:simple-ts-ADMG} provides pseudocode for the function \textsc{simple-ts-ADMG} that returns the simple finite marginal ts-ADMG $\tsADMG{\ObservedVertices^\prime}{\DAG}$ of an infinite ts-DAG $\DAG$, where $\ObservedVertices^\prime = \VarIndices \times \TimeIndicesObserved$. This function implements the results in Section 3.3.3 and calls a function \textsc{have-common-ancestor} that decides about common-ancestorship in ts-DAGs.

\begin{algorithm}[tbp]
    \caption{Pseudocode for the function \textsc{have-common-ancestor}.}
    \label{algo:have-common-ancestor}
    \begin{algorithmic}[1]
        \Require Time series DAG $\DAG$ with variable index set $\VarIndices$, vertices $(i,t-\tau)$ and $(j, t)$ with $\tau \geq 0$ in $\DAG$ (not necessarily distinct) 
        \Ensure As Boolean value: Whether or not $(i,t-\tau)$ and $(j, t)$ have a common ancestor
        \Function{have-common-ancestor}{$\DAG$, $(i, t-\tau)$, $(j, t)$}
            \State $\Graph_{w}$ $\leftarrow$ $\mathcal{S}_{\w}(\DAG)$ \Comment{Multi-weighted summary graph of $\DAG$, see Definition 4.5.}
            \State $\cC$ $\leftarrow$ \textsc{get-cycle-classes}($\Graph_{w}$) \Comment{See Section~\ref{subsubsec.cycles}.}
            \State $\w $-$\mathrm{dict}$ $\leftarrow$ \textsc{get-cycle-weights}($\cC, \Graph_{w}$) \Comment{See Section 4.3.1.}
            \ForAll{$k \in \VarIndices$} \Comment{Run through all vertices in $\Graph_{w}$.}
                \State $\mathcal{W}_0(k,i) \ \leftarrow$ \textsc{get-cycle-free-paths}($k,i, \Graph_{w}$)
                \State $\mathcal{W}_0(k,j) \ \leftarrow$ \textsc{get-cycle-free-paths}($k,j, \Graph_{w}$)  \Comment{See Lemma 4.8 and the discussion after.}
                \ForAll{$(\pi,\pi') \in \mathcal{W}_0(k,i) \times \mathcal{W}_0(k,j)$}
                    \State $\weightset{\pi}$ $\leftarrow$ \textsc{get-weightset}($\pi, \Graph_{w}$)
                    \State $\weightset{\pi'}$ $\leftarrow$ \textsc{get-weightset}($\pi', \Graph_{w}$) \Comment{See Section 4.3.1.}
                    \State $\mathcal{M}_{\pi} \ \leftarrow$ \textsc{get-monoid}($\pi, \cC$)
                    \State $\mathcal{M}_{\pi'} \ \leftarrow$ \textsc{get-monoid}($\pi', \cC$)  \Comment{See Section~\ref{subsec.deriving-Lemma}.}
                    \ForAll{$(\mathcal{S},\mathcal{S}') \in \mathcal{M}_{\pi} \times \mathcal{M}_{\pi'}$}
                        \State $D_{\tau}(\pi,\mathcal S)$ $\leftarrow$ \textsc{tuple-set}($\tau$, $\pi$, $\mathcal{S}, \w$-$\mathrm{dict}$, $\weightset{\pi}$)
                        \State $D_{0}(\pi',\mathcal S')$ $\leftarrow$ \textsc{tuple-set}($0$, $\pi'$, $\mathcal{S}', \w$-$\mathrm{dict}$, $\weightset{\pi}$) \Comment{See equation~\eqref{def.tuple-sets}.}
                        \ForAll{$(a_0;a_1, \ldots, a_\mu), (a^\prime_0;a_1^\prime,\ldots,a^\prime_\nu) \in D_{\tau}(\pi,\mathcal S) \times D_{0}(\pi',\mathcal S')$}
                            \If{\textsc{has-nni-solution}($(a_0;a_1, \ldots, a_\mu)$, $(a^\prime_0;a_1^\prime,\ldots,a^\prime_\nu)$)}
                                \State \Return \textsc{True} \Comment{See Proposition 4.6 and Theorem 1.}
                            \EndIf
                        \EndFor
                    \EndFor
                \EndFor
            \EndFor
            \State \Return \textsc{False} \Comment{See Proposition 4.6 and Theorem 1.}
        \EndFunction
    \end{algorithmic}
\end{algorithm}

\begin{algorithm}[tbp]
    \caption{Pseudocode for the function \textsc{get-monoid}.}
    \label{algo:get-monoid}
    \begin{algorithmic}[1]
        \Require Cycle-free path $\pi$, set of irreducible cycle classes $\cC$.
        \Ensure Set monoid $\mathcal{M}_{\pi}$, a subset of the power set of $\cC$.
        \Function{get-monoid}{$\pi,\cC$}
            \State $\mathrm{touch}(\pi)$ $\leftarrow$ \textsc{compute-touch-set}$(\pi,\cC)$ \Comment{See Section~\ref{subsubsec.cycles}.}
            \State $\Graph_{\cC}$ $\leftarrow$ \textsc{compute-graph-of-cycles}$(\pi,\cC)$ \Comment{See Definition~\ref{def.graph-of-cycles}.}
            \State $\mathcal{A}_{\cC,\pi}$ $\leftarrow$ \textsc{compute-access-points}$(\Graph_{\cC},\mathrm{touch}(\pi))$ \Comment{See Definition~\ref{def.access-point}.}
            \State $\mathcal{Q}_{\pi}$ $\leftarrow$ \textsc{compute-generating-set}$(\Graph_{\cC},\mathrm{touch}(\pi),\mathcal{A}_{\cC,\pi})$ \Comment{See Section~\ref{subsec.deriving-Lemma}.}
            \State $\mathcal{Q}_{\pi}'$ $\leftarrow$ \textsc{set-project}$(\mathcal{Q}_{\pi})$ \Comment{See Section~\ref{subsec.deriving-Lemma}.}
            \State $\mathcal{M}_{\pi}$ $\leftarrow$ \textsc{compute-monoid-from-generating-set}$(\mathcal{Q}_{\pi}')$ \Comment{See Section~\ref{subsec.deriving-Lemma}.}
            \State \Return $\mathcal{M}_{\pi}$
        \EndFunction
    \end{algorithmic}
\end{algorithm}

Algorithm~\ref{algo:have-common-ancestor} provides pseudocode for the function \textsc{have-common-ancestor} that returns whether or not the vertices $(i,t-\tau)$ and $(j, t)$ have a common ancestor in an infinite ts-DAG $\DAG$. Here, common-ancestorship is understood in the sense of Remark 3.9. This function jointly implements Proposition 4.6 and Theorem 1 and relies on the subroutines \textsc{get-cycle-classes}, \textsc{get-cycle-free-paths}, \textsc{get-cycle-weights}, \textsc{get-weightset}, \textsc{get-monoid}, \textsc{tuple-set} and \textsc{has-nni-solution}. The first function \textsc{get-cycle-classes} computes a list of all equivalence classes of irreducible cycles in the multi-weighted summary graph, and the second function \textsc{get-cycle-free-paths} takes two indices $k$ and $i$ and the multi-weighted summary graph as an input and returns all cycle-free directed paths from $k$ to $i$.\footnote{Note that both of these function disregard the multi-weights because the summary graph itself is sufficient for their purposes.} Since implementations of irreducible cycle enumeration and search routines for cycle-free paths are freely available, we will not provide further details on these two subroutines.
Next, the function \textsc{get-cycle-weights} computes a dictionary that assigns the weight-set $\weightset{\c}$ to every $\c \in \cC$ from the edge weight-sets of the multi-weighted graph $\Graph_{w} = \mathcal{S}_{\w}(\DAG)$. Since this procedure amounts to a simple execution of the definition of $\weightset{\c}$ as the sums over the weight-sets of the edges on $\c$, we omit the pseudocode for this subroutine. The function \textsc{get-weightset} is very similar to \textsc{get-cycle-weights} as it computes the weight-set of a cycle-free path $\pi$ as the sum of the weight-sets of the path's edges. Again, we we do not provide pseudocode for this simple operation.

The function \textsc{get-monoid} is the most complicated subroutine, and we provide pseudocode for it in Algorithm~\ref{algo:get-monoid} below.

The function \textsc{tuple-set} takes the delay $\tau$, a cycle-free path $\pi$, a set of irreducible cycle classes $\mathcal{S}$ as well as the weight-set dictionary for cycle classes $\w$-$\mathrm{dict}$ and the weight-set $\weightset{\pi}$ as input and computes the direct product set $D_{\tau}(\pi,\mathcal S)$ from its defining equation~\eqref{def.tuple-sets}. Again, this procedure is a simple computation of a product set from simpler sets and is straightforward to execute, which is why we do not provide pseudocode. 

Finally, the function \textsc{has-nni-solution} returns (as Boolean value) whether a linear Diophantine equation as in eq.~(3) as non-negative integer solution. We omit pseudocode for this function, since, at least in its most basic implementation, such pseudocode amounts to a one-to-one translation of Theorem 2. See also the paragraph below Theorem 2 for a discussion on how to improve on that basic implementation.

Thus, we are left with describing pseudocode for the subroutine \textsc{get-monoid}, which we do in Algorithm~\ref{algo:get-monoid}. This algorithm reduces to the subroutines \textsc{compute-touch-set}, \textsc{compute-graph-of-cycles}, \textsc{compute-access-points}, \textsc{compute-generating-set} as well as \textsc{compute-monoid-from-generating-set}.

The function \textsc{compute-touch-set} for every cycle class $\c$ checks where $\c$ shares a node with $\pi$ and, if this is the case, then adds $\c$ to the touch set. Similarly, \textsc{compute-graph-of-cycles} checks for all pairs $\c \neq \c' \in \cC$ whether the two cycles share any nodes and, if this is the case, then adds an edge $(\c,\c')$ to the graph of cycles. The function \textsc{set-project} takes in a set of paths $\{ \xi \}$ and computes the set of node sets $\{ \mathrm{nodes}(\xi) \}$, which is a straighforward operation for which we do not provide pseudocode. 

\begin{algorithm}[tbp]
    \caption{Pseudocode for the function \textsc{compute-access-points}.}
    \label{algo:compute-access-points}
    \begin{algorithmic}[1]
        \Require Undirected Graph $\Graph$ with vertex set $\mathcal{V}$, subset $\mathcal{S} \subseteq \mathcal{V}$.
        \Ensure Set of $\mathcal{S}$-access points on $\Graph$. \Comment{See Definition \ref{def.access-point}.}
        \Function{compute-access-points}{$\Graph,\mathcal{S}$}
            \State $\mathcal{A}(\Graph,\mathcal{S})$ $\leftarrow$ $\emptyset$
            \ForAll{$v$ in $\mathcal{V}$}
                \If{\textsc{neighbors}$(v,\Graph) \neq \emptyset$}
                    \ForAll{$v' $ in \textsc{neighbors}($v,\Graph$)$\setminus \mathcal{S}$} 
                        \ForAll{$v''$ in $\mathcal{S}$}
                        \If{\textsc{is-path}($v,v''$,\textsc{subset-delete}($\{v'\},\Graph$))}
                            \State $\mathcal{A}(\Graph,\mathcal{S})$ $\leftarrow$ Add $v$ to $\mathcal{A}(\Graph,\mathcal{S})$
                            \State \textbf{break loop starting in line 5}
                            \EndIf
                        \EndFor
                    \EndFor
                \EndIf
            \EndFor
            \State \Return $\mathcal{A}(\Graph,\mathcal{S})$.   
        \EndFunction
    \end{algorithmic}
\end{algorithm}

Algorithm~\ref{algo:compute-access-points} implements \textsc{compute-access-points}. This algorithms assumes (1) the availability of a function \textsc{neighbors}, which computes the set of direct neighbors of a node $v$ on an undirected graph $\Graph$, (2) a function \textsc{subset-delete} that inputs a subset of nodes $\mathcal{V}'$ on an undirected graph $\Graph$ with vertex set $\mathcal{V} \supseteq \mathcal{V}'$ and returns the subgraph $\Graph\backslash\mathcal{V}'$ from which all nodes in $\mathcal{V}'$ and their adjacent edges have been deleted, and (3) the availability of a function \textsc{is-path}, which decides whether there is a (possibly trivial) path between two given nodes $v$ and $v'$ in an undirected graph $\Graph$. All of these functions are available in standard graph theory packages and easily implementable, which is why we do not provide pseudocode.

\begin{algorithm}[tbp]
    \caption{Pseudocode for the function \textsc{compute-generating-set}.}
    \label{algo:compute-generating-set}
    \begin{algorithmic}[1]
        \Require An undirected graph $\Graph$ with vertex set $\mathcal{V}$, a subset of nodes $\mathcal S \subseteq \mathcal{V}$, the set $\mathcal{A}(\Graph,\mathcal{S})$ of $\mathcal{S}$-access points on $\Graph$. 
        \Ensure The set of paths $\mathcal{Q}$ starting in and not returning to $\mathcal{S}$ with nodes in  $\mathcal{A}(\Graph,\mathcal{S})$ .
        \Function{compute-generating-set}{$\Graph, \mathcal{S},\mathcal{A}(\Graph,\mathcal{S})$}  \Comment{See Section~\ref{subsec.deriving-Lemma}.}
        \State $\mathcal{H}$ $\leftarrow$ \textsc{subset-delete}($\mathcal{V}\backslash\mathcal{A}(\Graph,\mathcal{S}),\mathcal{G}$)
        \State $\mathcal{Q}$ $\leftarrow$ $\{\emptyset \}$
        \ForAll{$v$ in $\mathcal{S} \cap \mathcal{A}(\Graph,\mathcal{S})$}
            \State add trivial path $(v)$ to $\mathcal{Q}$
            \State $\mathcal{H}_v$ $\leftarrow$ \textsc{subset-delete}($\left(\mathcal{S} \cap \mathcal{A}(\Graph,\mathcal{S})\right)\backslash\{ v \},\mathcal{H}$)
            \ForAll{$v'$ in $\mathcal{A}(\Graph,\mathcal{S})\backslash\mathcal{S}$}
                \State $\mathcal{Q}$ $\leftarrow$ $\mathcal{Q} \cup $ \textsc{get-paths}($v,v',\mathcal{H}_v$ )
            \EndFor
        \EndFor
        \State \Return $\mathcal{Q}$
        \EndFunction
    \end{algorithmic}
\end{algorithm}

Next, in Algorithm~\ref{algo:compute-generating-set}, we provide pseudocode for the function \textsc{compute-generating-set}. This algorithm employs a subroutine \textsc{get-paths}, which takes in two nodes $v$ and $v'$ and a finite undirected graph $\Graph$ (to which the nodes belong) and returns the set of all paths from $v$ to $v'$ on $\Graph$. Again, such a routine is available in standard packages, which is why we do not provide pseudocode.

\begin{algorithm}[tbp]
    \caption{Pseudocode for the function \textsc{compute-monoid-from-generating-set}.}
    \label{algo:compute-monoid-from-generating-set}
    \begin{algorithmic}[1]
        \Require $\mathcal{Q}$, a finite set of sets.
        \Ensure The set monoid generated by $\mathcal{Q}$.
        \Function{compute-monoid-from-generating-set}{$\mathcal{Q}$}  \Comment{See Section~\ref{subsec.deriving-Lemma}.}
        \If{$\mathcal{Q} = \{\emptyset \}$}
            \State \Return $\mathcal{M} = \{\emptyset \}$  
        \Else
            \State $\mathcal{M}^1$ $\leftarrow$ $\mathcal{Q}$
            \For{$i = 1 \dots, |\mathcal{Q}|-1$}
                \State $\mathcal{M}^{i+1}$ $\leftarrow$ $\mathcal{M}^{i}$
                \ForAll{$(\mathcal{S},\mathcal{T})$ in $\mathcal{Q} \times \mathcal{M}^{i}$} 
                        \If{$\mathcal{S}\cup \mathcal{T}$ is not in $\mathcal{M}^{i}$}
                            \State add $\mathcal{S}\cup \mathcal{T}$ to $\mathcal{M}^{i+1}$
                         \EndIf
                \EndFor
                \If{$\mathcal{M}^{i} = \mathcal{M}^{i+1}$}
                    \State \Return  $\{\emptyset \} \cup \mathcal{M}^{i+1}$
                    \State \textbf{break}
                \EndIf
            \EndFor
            \State \Return $\{\emptyset \} \cup \mathcal{M}^{|\mathcal{Q}|}$
        \EndIf 
        \EndFunction
    \end{algorithmic}
\end{algorithm}

We finish by describing pseudocode for the last remaining subroutine \newline \textsc{compute-monoid-from-generating-set} of Algorithm~\ref{algo:get-monoid}. This subroutine computes a set monoid from a given generating set, for which we provide pseudocode in Algorithm~\ref{algo:compute-monoid-from-generating-set}.

\end{appendix}

\newpage

\bibliographystyle{apalike}
\bibliography{library}

\end{document}